\definecolor{red}{rgb}{0.7,0.15,0.15}
\definecolor{green}{rgb}{0,0.5,0}
\definecolor{blue}{rgb}{0,0,0.7}
\makeatletter \@addtoreset{equation}{section}
\newtheorem{theorem}{Theorem}[section]
\newtheorem{assumption}[theorem]{Assumption}
\newtheorem{corollary}[theorem]{Corollary}
\newtheorem{proposition}[theorem]{Proposition}
\newtheorem{definition}[theorem]{Definition}
\newtheorem{remark}[theorem]{Remark}
\def\no{\noindent}
\def\beq{\begin{eqnarray}}
\def\eeq{\end{eqnarray}}
\def\be*{\begin{eqnarray*}}
\def\ee*{\end{eqnarray*}}
\def \E{\mathbb{E}}
\def \F{\mathbb{F}}
\def \H{\mathbb{H}}
\def \L{\mathbb{L}}
\def \M{\mathbb{M}}
\def \N{\mathbb{N}}
\def \P{\mathbb{P}}
\def \R{\mathbb{R}}
\def \S{\mathbb{S}}
\def \Pr{\mathrm{P}}
\def\Ac{{\cal A}}
\def\Cc{{\cal C}}
\def\Gc{{\cal G}}
\def\Hc{{\cal H}}
\def\Jc{{\cal J}}
\def\Lc{{\cal L}}
\def\Mc{{\cal M}}
\def\Nc{{\cal N}}
\def\Pc{{\cal P}}
\def\Rc{{\cal R}}
\def\Uc{{\cal U}}
\def\Vc{{\cal V}}
\def\Wc{{\cal W}}
\def\Zc{{\cal Z}}
\def\Jb{{\bar J}}
\def\Mb{{\overline M}}
\def\Nb{{\overline{N}}}
\def\Pb{{\overline \P}}
\def\Ub{{\overline U}}
\def\Vb{{\overline V}}
\def\Wb{{\overline W}}
\def\Xb{{\overline X}}
\def\gammabb{{\overline{\gamma}}}
\def \Acb{ {\overline{\Ac}} }
\def \Ucb{ {\overline{\Uc}} }
\def\x{\times}
\def\Om{\Omega}
\def\om{\omega}
\def\Lambdab{\overline{\Lambda}}
\def\alphabb{\overline{\alpha}}
\def\Vt{\widetilde{V}}
\def\Wt{\widetilde{W}}
\def\Xt{\widetilde{X}}
\def\Mcb{\overline{\Mc}}
\def\Pib{\overline{\Pi}}
\def\Rrb{\overline{\Rr}}
\def\0{\mathbf{0}}
\def \mub{\overline{\mu}}
\def \muh{\widehat{\mu}}
\def\normeL2#1{\left\|{#1}\right\|_{L^2}}
\def\betah{\widehat \beta}
\def\betabb{\overline \beta}
\def\Eh{\widehat E}
\def\Mh{\widehat M}
\def\Nh{\widehat N}
\def\hh{\widehat h}
\def\Hh{\widehat H}
\def\Uh{\widehat U}
\def\uh{\widehat u}
\def\Vh{\widehat V}
\def\vh{\widehat v}
\def\Xh{\widehat X}
\def\xh{\widehat x}
\def\yh{\widehat y}
\def\Zh{\widehat Z}
\def\zh{\widehat z}
\def\Vf{\mathfrak{V}}
\def\Phih{\widehat \Phi}
\def\betat{\widetilde{\beta}}
\def\gammat{\widetilde{\gamma}}
\def\xit{\widetilde{\xi}}
\def\Ut{\widetilde{U}}
\def\Xt{\widetilde{X}}
\def \Lim{\displaystyle\lim}
\def \Liminf{\displaystyle\liminf}
\def \Limsup{\displaystyle\limsup}
\def\esup{{\rm ess \, sup}}
\def \alphab {\boldsymbol{\alpha}}
\def \Xbb{\mathbf{X}}
\def \Zbb{\mathbf{Z}}
\def \1{\mathds{1}}
\def \alphab {\boldsymbol{\alpha}}
\def \betab {\boldsymbol{\beta}}
\def \gammab {\boldsymbol{\gamma}}
\def \xbb {\boldsymbol{x}}
\def \Gammabb {\overline{\Gamma}}
\def \Xbb{\mathbf{X}}
\def \Zbb{\mathbf{Z}}
\def\Er{{\rm E}}
\def\Gr{{\rm G}}
\def\Ir{{\rm I}}
\def\Rr{{\rm R}}
\def \Rr{\mathrm{R}}
 \title{ A non--exchangeable mean field control problem with controlled interactions
 
 %The problem of Principal with multiple agents in cooperation and its connections to Principal--agent problem with McKean--Vlasov dynamics and multitask Principal--agent problem
 }
\author{
 Mao Fabrice Djete\footnote{\'Ecole Polytechnique Paris, Centre de Math\'ematiques Appliqu\'ees, mao-fabrice.djete@polytechnique.edu. This work benefits from the financial support of the Chairs {\it Financial Risk} and {\it Finance and Sustainable Development}} 
    }
             \date{\today}
\begin{document}

\maketitle
 
\begin{abstract}

This paper introduces and analyzes a new class of mean--field control (\textsc{MFC}) problems in which agents interact through a \emph{fixed but controllable} network structure. 
In contrast with the classical \textsc{MFC} framework --- where agents are exchangeable and interact only through symmetric empirical distributions --- we consider systems with heterogeneous and possibly asymmetric interaction patterns encoded by a structural kernel, typically of graphon type. 
A key novelty of our approach is that this interaction structure is no longer static: it becomes a genuine \emph{control variable}. 
The planner therefore optimizes simultaneously two distinct components: a \emph{regular control}, which governs the local dynamics of individual agents, and an \emph{interaction control}, which shapes the way agents connect and influence each other through the fixed structural kernel. 

\medskip
We develop a generalized notion of relaxed (randomized) control adapted to this setting, prove its equivalence with the strong formulation, and establish existence, compactness, and continuity results for the associated value function under minimal regularity assumptions. 
Moreover, we show that the finite $n$--agent control problems with general (possibly asymmetric) interaction matrices converge to the mean--field limit when the corresponding fixed step--kernels converge in cut--norm, with asymptotic consistency of the optimal values and control strategies. 
Our results provide a rigorous framework in which the \emph{interaction structure itself is viewed and optimized as a control object}, thereby extending mean--field control theory to non--exchangeable populations and controlled network interactions.

\end{abstract}
%\newpage

%\vspace{3mm}
%\no{\bf Keywords.} 

\vspace{3mm}
\no{\bf MSC2010.} 60K35, 60H30, 91A13, 91A23, 91B30.

\section{Introduction}\label{sec:intro}

Classical mean--field models provide powerful frameworks for studying the collective behavior of large populations of interacting agents.  
In the standard setting without common noise, interactions are typically represented through the empirical distribution of the agents' states (and sometimes of their controls).  
Even though the presence of controls may suggest asymmetry, the dependence on the empirical distribution enforces a \emph{symmetric} structure: all agents interact in an identical way and are therefore \emph{exchangeable} in law.  
This symmetry assumption plays a crucial role in ensuring the tractability of the analysis, both from theoretical and numerical perspectives, and naturally leads to elegant limiting descriptions in terms of McKean--Vlasov dynamics and associated mean--field partial differential equations. See \citeauthor*{lasry2006jeux2} \cite{lasry2006jeux2,lasry2007mean}, \citeauthor*{huang2006large} \cite{huang2006large}, \citeauthor*{carmona2018probabilisticI} \cite{carmona2018probabilisticI}

\begin{comment}
    
\medskip

{\color{blue}

\citeauthor*{feng2020linearquadraticstochasticdifferentialgames} \cite{feng2020linearquadraticstochasticdifferentialgames}, \citeauthor*{doi:10.1287/moor.2021.1179} \cite{doi:10.1287/moor.2021.1179}, \citeauthor*{DelarueMean2017} \cite{DelarueMean2017}, 

\medskip
\citeauthor*{SUN200631} \cite{SUN200631},
}

\end{comment}

\medskip
However, many systems of practical and theoretical interest violate the exchangeability hypothesis.  
Agents may belong to different classes or communities, interact through structured networks, or influence each other asymmetrically according to social, economic, or spatial heterogeneity.  
Examples include opinion formation in non--homogeneous social graphs, systemic risk models with heterogeneous exposures, energy networks with asymmetric couplings, and multi--population control systems (see some examples in \citeauthor*{MOJackson_social_2008} \cite{MOJackson_social_2008}).  
In such situations, the mean--field approximation based solely on symmetric empirical measures fails to capture the diversity of interaction patterns.

\medskip
To capture such heterogeneous or asymmetric interaction structures, recent research has focused on extending the classical mean--field framework beyond the exchangeable setting. See \citeauthor*{ErhanGraphon2023} \cite{ErhanGraphon2023}, \citeauthor*{jabin2024meanfieldlimitnonexchangeablesystems} \cite{jabin2024meanfieldlimitnonexchangeablesystems}, \citeauthor*{crucianelli2024interactingparticlesystemssparse} \cite{crucianelli2024interactingparticlesystemssparse}, \citeauthor*{Coppini2024NonlinearGM} \cite{Coppini2024NonlinearGM}, $\cdots$ for the study without control. In the case of controlled system, we refer to \citeauthor*{decrescenzo2024meanfieldcontrolnonexchangeable} \cite{decrescenzo2024meanfieldcontrolnonexchangeable}, \citeauthor*{cao2025probabilisticanalysisgraphonmean} \cite{cao2025probabilisticanalysisgraphonmean}, \citeauthor*{kharroubi2025stochasticmaximumprincipleoptimal} \cite{kharroubi2025stochasticmaximumprincipleoptimal}, $\cdots$. Building on the concept of \emph{graphons} developed by \citeauthor*{Lovsz2012LargeNA}~\cite{Lovsz2012LargeNA}, these papers analyze a limit formulation describing the behavior of systems with infinitely many interacting agents/particles. See \citeauthor*{Gao2020LinearQG} \cite{Gao2020LinearQG}, \citeauthor*{Aurell2021StochasticGG} \cite{Aurell2021StochasticGG}, \citeauthor*{10.1007/s00245-023-09996-y} \cite{10.1007/s00245-023-09996-y}, \citeauthor*{repec:spr:finsto:v:28:y:2024:i:2:d:10.1007_s00780-023-00527-9} \cite{repec:spr:finsto:v:28:y:2024:i:2:d:10.1007_s00780-023-00527-9}, \citeauthor*{doi:10.1287/moor.2022.1329} \cite{doi:10.1287/moor.2022.1329}, $\cdots$ for a game--theoretic approach.
%These efforts aim to develop models that account for heterogeneity among agents, localized or network-based interactions, and more general dependence structures that cannot be reduced to a simple empirical mean.

\medskip
In most of the existing literature, the interaction structure among agents is assumed to be fixed, typically represented by a given network topology.  
However, in many realistic situations, the pattern of interactions itself can be influenced or designed — for instance, through communication protocols, social influence mechanisms, or network reconfiguration.  
This raises a natural and intriguing question: \emph{can one control the interaction structure of the system?}  
From a mathematical perspective, this corresponds to treating the underlying graph or connectivity kernel as a \emph{control variable} rather than a fixed parameter.  
The objective of this paper is precisely to investigate such a setting.  
We now provide a high--level description of the framework we consider (see \Cref{sec:main} for details).
Let $\Xbb^n=(X^{1,n},\dots,X^{n,n})$ denote the private state processes of $n$ interacting agents, each evolving according to the system of stochastic differential equations
\begin{align*}
    \mathrm{d}X^i_t 
    &= b\!\left(t, X^i_t, M^{1,i,n}_{\gammab^n,t}, M^{2,i,n}_{\gammab^n,t}, \alpha^{i,n}(t,\Xbb^n) \right)\mathrm{d}t 
    + \sigma(t, X^i_t)\,\mathrm{d}W^i_t, \quad t \in [0,T],
    \\
    M^{1,i,n}_{\gammab^n,t} 
    &= \frac{1}{n} \sum_{j=1}^n \delta_{\left( \gamma^n_{ij}(t, \Xbb^n),\, X^j_t,\, \xi^n_{ij} \right)}, 
    \qquad
    M^{2,i,n}_{\gammab^n,t} 
    = \frac{1}{n} \sum_{j=1}^n \delta_{\left( \gamma^n_{ji}(t, \Xbb^n),\, X^j_t,\, \xi^n_{ij} \right)},
\end{align*}
where $T>0$ is a fixed time horizon and $(W^1,\dots,W^n)$ are independent Brownian motions.  
For each agent $i$, the function $\alpha^{i,n}$ is a Borel map representing the agent’s \emph{regular} control, while $\gammab^n = (\gamma^n_{ij})_{1 \le i,j \le n}$ corresponds to the collection of \emph{interaction controls}.

\medskip
A key feature of this formulation is the introduction of the pair $(M^{1,i,n}_{\gammab^n,t}, M^{2,i,n}_{\gammab^n,t})$, which encodes both outgoing and incoming interaction effects.  
The deterministic matrix $(\xi^n_{ij})_{1 \le i,j \le n}$ specifies a fixed underlying structure of interactions—typically representing the strength or type of potential connection between agents.  
In contrast, the terms $(\gamma^n_{ij})_{1 \le i,j \le n}$ represent \emph{control variables} that determine how these potential interactions are activated or modulated over time.  
In particular,
\[
    \text{the variable }\gamma^n_{ij}\text{ describes the  interaction decision of agent }i\text{ towards agent }j.
\]
The simultaneous presence of both $M^{1,i,n}_{\gammab^n,t}$ and $M^{2,i,n}_{\gammab^n,t}$ is \emph{fundamental} to the model.  
Indeed, $M^{1,i,n}_{\gammab^n,t}$ characterizes the \emph{outgoing influence} of agent $i$—that is, how her/his chosen interaction strategy shapes her/his connections with the rest of the population. 
Conversely, $M^{2,i,n}_{\gammab^n,t}$ captures the \emph{incoming influence}, reflecting how agent $i$ is impacted by the interaction decisions of others towards her/him.  
This dual representation allows us to disentangle and analyze both the \emph{active} and \emph{passive} roles each agent plays within the network of interactions.
Incorporating both components is essential for modeling realistic systems in which influence and response are not necessarily symmetric, such as social, economic, or networked engineering applications. With the \emph{regular} controls $\alphab^n:=(\alpha^{i,n})_{1 \le i \le n}$ and \emph{interaction} controls $\gammab^n:=(\gamma_{i,j}^n)_{1 \le i,j \le n}$, the reward of agent $i$ is
\begin{align*}
    \E \left[ \int_0^T L\left(t, X^i_t, M^{1,i,n}_{\gammab^n,t}, M^{2,i,n}_{\gammab^n,t}, \alpha^{i,n}(t, \Xbb) \right)\mathrm{d}t + g \left( X^i_T, \Rr^{i,n}_T \right)  \right]\quad\mbox{with}\quad \Rr^{i,n}_T:=\frac{1}{n} \sum_{j=1}^n \delta_{\left( X^j_T ,\, \xi^n_{ij}  \right)}.
\end{align*}

In this work, we consider a setting in which a central planner seeks to maximize the average performance of a large population of interacting agents.  
When the number of players $n$ becomes very large, it is natural to study the limiting behavior of the corresponding $n$--player control problem, leading to a \emph{mean--field control} formulation that captures the aggregate dynamics of the system.  
In our framework, this limiting problem can be informally described as follows (see \Cref{sec:main} for the precise formulation):  
the planner selects a pair of controls—an \emph{interaction control} $\gamma$ and a \emph{regular control} $\alpha$—in order to maximize
\begin{align*}
    \E \Bigg[ 
        \int_0^T 
            L \!\Big( s, X_s, M^{1,\gamma}_{\mu,s}(X_s,U), M^{2,\gamma}_{\mu,s}(X_s,U), \alpha(s,X_s,U) \Big)\,\mathrm{d}s 
        + g\!\left( X_T, \Rr^{\gamma,\alpha}_T(U) \right) 
    \Bigg],
\end{align*}
where the random measure
\[
    \Rr^{\gamma,\alpha}_t(u) := \Lc\!\left( X_t,\,\Gr(u,U) \right),
    \qquad 
    \mu_t := \Lc\!\left( X_t,\,U \right),
\]
and the state process $(X_t)_{t \in [0,T]}$ satisfies the McKean--Vlasov type dynamics
\begin{align*}
    \mathrm{d}X_t
    &= b\!\left( t, X_t, M^{1,\gamma}_{\mu,t}(X_t,U), M^{2,\gamma}_{\mu,t}(X_t,U), \alpha(t,X_t,U) \right)\mathrm{d}t 
    + \sigma(t,X_t)\,\mathrm{d}W_t,
    \\
    M^{1,\gamma}_{\mu,t}(x,u)
    &:= \Lc\!\left( \gamma(t,x,u,X_t,U),\, X_t,\, \Gr(u,U) \right),
    \qquad
    M^{2,\gamma}_{\mu,t}(x,u)
    := \Lc\!\left( \gamma(t,X_t,U,x,u),\, X_t,\, \Gr(u,U) \right).
\end{align*}

\medskip
In this formulation, the pair $(X_t,U)$ represents respectively the \emph{state} and the \emph{label} of an individual agent, and $\Gr : [0,1]^2 \to \Er$ denotes the limiting \emph{interaction kernel} (or \emph{graphon}) associated with the fixed structure of connections, obtained as the limit of the step--kernels built from the adjacency matrices $(\xi^n_{ij})_{1 \le i,j \le n}$ in the finite--player system.  
This structure enables the limiting dynamics to retain the heterogeneity and asymmetry of the underlying network while allowing for control over how interactions are formed and weighted.

\medskip
To the best of our knowledge, this type of control problem—where the interaction structure itself is a controlled variable—has not been studied before in the literature.  
The most closely related contribution we are aware of is the work of \citeauthor*{10.1214/23-AAP1993} \cite{10.1214/23-AAP1993}, which investigates a similar idea within a \emph{mean--field game} (\textsc{MFG}) framework rather than a \emph{mean--field control} (\textsc{MFC}) one.  
In their setting, the interaction control takes a particularly tractable form, arising in a model of \emph{mutual holdings}, where the optimal interaction decision depends only on the target agent $j$ and is independent of the acting agent $i$.  
This structure greatly facilitates the analysis but restricts the generality of the model.  
By contrast, the framework developed in the present paper operates in a more general \textsc{MFC} setting, allowing for fully heterogeneous and asymmetric interaction decisions. It is worth mentioning that this setting incorporate the framework of graphon mean field control studied in  \cite{decrescenzo2024meanfieldcontrolnonexchangeable}, \cite{cao2025probabilisticanalysisgraphonmean}, \cite{kharroubi2025stochasticmaximumprincipleoptimal}.

\medskip
From a mathematical standpoint, introducing \emph{interaction controls} in a non--exchangeable framework presents significant analytical challenges.  
In particular, we aim to study this problem under minimal regularity assumptions on the coefficients—typically only continuity or Lipschitz continuity—without relying on convexity or differentiability.  
In such low--regularity regimes, a classical and powerful approach is the \emph{relaxed control} (or compactification) method, originally formalized by \citeauthor*{el1987compactification}  \cite{el1987compactification} and later adapted to the \textsc{MFC} setting by \citeauthor*{lacker2017limit} \cite{lacker2017limit} and \citeauthor*{djete2019general} \cite{djete2019general}.

\medskip
However, the traditional relaxed formulation, even in its mean--field control variant, is not directly applicable to our setting because of the specific structure of the \emph{interaction control} $\gamma(t,x,u,x',u')$.  
Indeed, the term $M^{1,\gamma}_{\mu,t}$ treats $(x,u)$ as fixed while integrating over $(x',u')$ to form the interaction law, whereas the term $M^{2,\gamma}_{\mu,t}$ does the opposite.  
The same control function $\gamma$ thus appears in two fundamentally different roles—acting once as an \emph{outgoing} and once as an \emph{incoming} influence—which destroys the symmetry structure usually exploited in relaxed formulations.  
This asymmetry, combined with the dependence on the interaction kernel $\Gr$, renders standard compactness and measurability arguments insufficient.  

\medskip
To overcome these obstacles, inspired by the techniques developed in \citeauthor*{MFD-2020-closed} \cite{MFD-2020-closed}, we develop an \emph{extended notion of admissibility and relaxation}, specifically designed to handle non--symmetric couplings and controls acting directly on the interaction structure.  
This generalized framework provides the analytical foundation required to establish compactness, stability, and existence results for non--exchangeable mean--field control problems with controlled interactions.
The main contributions of this work can be summarized as follows:
\begin{enumerate}
    \item We introduce a general notion of \emph{relaxed} (or \emph{randomized}) controls specifically adapted to the structure of controlled interactions.  
    This formulation guarantees compactness and stability of the induced laws of controlled trajectories, even in the absence of exchangeability or convexity.

    \item We establish the \emph{equivalence} between the relaxed formulation and the original (strong) formulation of the mean--field control problem.  
    As a consequence, we prove the \emph{existence} of optimal relaxed controls, and under suitable convexity assumptions, the existence of optimal (non--relaxed) controls.  
    Furthermore, we show that the value function $\nu \mapsto V_{\mathrm{MFC}}(\nu)$ is \emph{continuous} with respect to the initial distribution in the Wasserstein topology.

    \item We prove that the \emph{closed--loop} formulation considered here—where controls depend on the state process—is equivalent to the \emph{open--loop} formulation, in which controls depend only on the exogenous sources of randomness.  
    This equivalence extends classical results in standard mean--field control theory to the present non--exchangeable and interaction--controlled setting.

    \item We demonstrate the convergence of finite $n$--agent control problems, with general (possibly non--symmetric) interaction matrices $(\xi^n_{ij})_{1 \le i,j \le n}$, toward the mean--field limit.  
    In particular, we show that convergence holds whenever the \emph{step--kernels} associated with $(\xi^n_{ij})_{1 \le i,j \le n}$ converge in \emph{cut--norm} to a limiting kernel $\Gr$, thereby connecting discrete interaction structures with their continuum limit.
\end{enumerate}

\medskip
In doing so, we extend the classical mean--field control framework far beyond the symmetric (exchangeable) paradigm.  
Our approach provides a unified and flexible formulation capable of modeling large populations of agents interacting through general, possibly heterogeneous or asymmetric, network structures—represented in the continuum by measurable graphon--type kernels.  
Crucially, this framework not only accounts for the influence of such interaction structures but also allows them to be \emph{controlled} or \emph{optimized} as part of the decision process (see also the examples and discussions in \Cref{sec:examples}).  

\medskip
From a methodological perspective, the resulting theory bridges several analytical domains: it combines the probabilistic treatment of McKean--Vlasov dynamics with tools from graph limit theory, stochastic control, and measure--valued processes.  
This synthesis opens new avenues for studying high--dimensional and networked control systems in a mathematically rigorous yet highly adaptable way.

%\medskip
%The paper is structured as follows.

\medskip
\noindent\textbf{Outline of the paper.}
\Cref{sec:main} introduces the general setup and states the main results, including the relaxed formulation, the equivalence between relaxed and strong controls, and the existence and continuity properties of the value function. 
The end of \Cref{sec:main} establishes the connection between the finite $n$--player optimization problems and their mean--field counterparts, proving the convergence of value functions and optimal controls as the number of agents grows. 
\Cref{sec:examples} illustrates the framework through two representative examples highlighting, respectively, the structure of optimal interaction controls and an application to social network dynamics. 
Finally, \Cref{sec:proof} is devoted to the technical proofs and auxiliary results underlying the main theorems.

\medskip
{\bf \large Notations.}
\noindent $(i)$
Let $(E,\Delta)$ be a Polish space and $p \ge 1$.  
We denote by $\Pc(E)$ the set of all Borel probability measures on $E$, and by $\Pc_p(E)$ the subset of measures $\mu \in \Pc(E)$ satisfying 
$\int_E \Delta(e,e_0)^p\,\mu(\mathrm{d}e) < \infty$ for some (hence any) $e_0 \in E$.  
For $p=0$, we simply set $\Pc_0(E):=\Pc(E)$.  
The space $\Pc(E)$ is endowed with the weak topology, while $\Pc_p(E)$ is equipped with the $p$--Wasserstein distance
\[
	\Wc_p(\mu,\mu')
	~:=~
	\Bigg(
		\inf_{\lambda \in \Lambda(\mu,\mu')} 
		\int_{E\times E} \Delta(e,e')^p\,\lambda(\mathrm{d}e,\mathrm{d}e')
	\Bigg)^{1/p},
\]
where $\Lambda(\mu,\mu')$ is the collection of all couplings of $(\mu,\mu')$.  
It is well known that $(\Pc_p(E),\Wc_p)$ is a Polish space (\cite[Theorem~6.18]{villani2008optimal}).  
For any $\mu \in \Pc(E)$ and $\mu$--integrable $\varphi:E\to\R$, we write $\langle \varphi,\mu \rangle := \int_E \varphi(e)\,\mu(\mathrm{d}e).$
For two metric spaces $(E,\Delta)$ and $(E',\Delta')$ and $(\mu,\mu') \in \Pc(E)\times\Pc(E')$, we denote by $\mu\otimes\mu'$ their product measure on $E\times E'$.

\medskip
\noindent $(ii)$
Let $(\Omega,\Hc,\P)$ be a probability space, and $\Gc\subset\Hc$ a sub--$\sigma$--algebra.  
For a Polish space $E$ and a random variable $\xi:\Omega\to E$, we denote by
\[
	\Lc^{\P}(\xi \mid \Gc)(\omega)
	\quad\text{or equivalently}\quad
	\P^{\Gc}_{\omega}\circ\xi^{-1}
\]
the conditional distribution of $\xi$ given $\Gc$ under $\P$.

\medskip
\noindent $(iii)$
Let $\N^*$ denote the set of positive integers, and $T>0$.  
For a Polish space $(\Sigma,\rho)$, we denote by $C([0,T];\Sigma)$ the space of continuous paths on $[0,T]$ taking values in $\Sigma$.  
When $\Sigma=\R^k$ for some $k\in\N^*$, we write simply $\Cc^k:=C([0,T];\R^k)$.  
Finally, for a measurable space $E$, we denote by $\M(E)$ the set of Borel measures $q(\mathrm{d}t,\mathrm{d}e)$ on $[0,T]\times E$ whose time--marginal is the Lebesgue measure, i.e.
\[
	q(\mathrm{d}e,\mathrm{d}t)=q(t,\mathrm{d}e)\,\mathrm{d}t,
	\quad\text{for some measurable family } (q(t,\cdot))_{t\in[0,T]} \subset \Pc(E).
\]

\section{Setup and main results} \label{sec:main}

\medskip
Let $T>0$ be a maturity and $d \in \N^\star$ be a dimension. Unless specified otherwise, all random elements are defined on a fixed filtered probability space 
$(\Omega, \H, (\Hc_t)_{t \in [0,T]}, \P)$ satisfying the usual conditions.  
We denote by $W$ an $\R^d$--valued $\H$--Brownian motion,  
by $U$ an $[0,1]$--valued random variable uniformly distributed on $[0,1]$ independent of $W$,  
and by $\xi$ an $\Hc_0$--measurable random variable with initial distribution $\nu \in \Pc(\R^d)$. Let $\Er$ be a compact metric space that represents the set of possible values of the \emph{fixed interaction structure}.  
The dynamics and reward of the system are governed by the following bounded measurable maps:
\[
(b,L): [0,T] \times \R^d \times \Pc\!\bigl(A_{\rm int} \times \R^d \times [0,1] \times \Er\bigr)^2 \times A_{\rm reg}
    \;\longrightarrow\; \R^d \times \R,
\]
which determine respectively the drift and the running cost, together with
\[
\sigma: [0,T] \times \R^d \longrightarrow \S^{d },
\qquad
g: \R^d \times \Pc(\R^d \times \Er) \longrightarrow \R,
\]
where $\S^{d}$ denotes the space of $d \times d$ matrices.  
We assume that the diffusion coefficient $\sigma$ is Lipschitz continuous in the state variable $x$, uniformly in time $t$.  
Additional regularity and structural conditions on $(b,L,g)$ will be introduced later, when required for the analysis.

\medskip
\subsection{The strong (\emph{closed--loop}) formulation}
Let $A_{\rm reg}$ and $A_{\rm int}$ be two compact sets that represent the set of values of the \emph{regular} control and \emph{interaction} control respectively. We write $\Cc_{\mathrm{pr}}^d:=C\left([0,T];\Pc(\R^d \x [0,1]) \right)$.
We denote by \( \Ac_{\rm reg} \) the set of \emph{regular controls}, consisting of progressively Borel measurable maps 
\[
    \alpha : [0,T] \times \R^d \times [0,1] \to A_{\mathrm{reg}} .
\] 
The set of \emph{interaction controls} is denoted by \( \Ac_{\mathrm{int}} \), and consists of Borel measurable maps 
\[
    \gamma : [0,T] \times (\R^d \times [0,1]) \times (\R^d \times [0,1]) \to A_{\mathrm{int}} .
\]
Let \( \Gr : [0,1] \times [0,1] \to \Er \) denote a kernel representing the step kernel of a graph. We say that $\eta \in \Pc_\nu$ if $\eta \in \Pc(\R^d \x [0,1])$ with $\eta(\mathrm{d}x,[0,1])=\nu(\mathrm{d}x)$ and $\eta(\R^d, \mathrm{d}u)=\mathrm{d}u$. The set $\Pc_\nu$ will play the role of the initial measures. Let $\eta \in \Pc_\nu$. For any pair of controls \( \alpha \in \Ac_{\rm reg} \), \( \gamma \in \Ac_{\mathrm{int}} \), we define the controlled process 
\[
    (X^{\gamma,\alpha}_s)_{s \in [t,T]} := (X_s)_{s \in [0,T]},\quad (\mu^{\gamma,\alpha}_s)_{s \in [t,T]} := (\mu_s)_{s \in [0,T]}
\] 
with initial law \( \Lc(X^{\gamma,\alpha}_0,U) = \eta \), satisfying for \( s \in [0,T] \),
\begin{align} \label{eq:general_mck_ci}
    \mathrm{d}X_s
    = b\!\left( s, X_s, M^{1,\gamma}_{\mu,s}(X_s,U), M^{2,\gamma}_{\mu,s}(X_s,U), \alpha(s,X_s,U) \right) \mathrm{d}s + \sigma(s,X_s)\,\mathrm{d}W_s.
\end{align}

The auxiliary measures are given by:
\[
    \Rr^{\gamma,\alpha}_s(u) := \Lc\!\left( X_s,\,\Gr(u,U) \, \right),
    \qquad 
    \mu_s := \Lc\!\left(X_s,\,U \, \right),
\]
and
\begin{align*}
    M^{1,\gamma}_{\mu,s}(s,x,u)
    &:= \Lc\!\left( \gamma(s,x,u,X_s,U),\, X_s,\,U,\, \Gr(u,U) \, \right), \\
    M^{2,\gamma}_{\mu,s}(s,x,u)
    &:= \Lc\!\left( \gamma(s,X_s,U,x,u),\, X_s,\,U,\, \Gr(u,U) \, \right).
\end{align*}

\medskip

The corresponding mean--field control problem is 
\[
    V_{\mathrm{MFC}}(\nu)
    := \sup_{\eta \,\in\, \Pc_\nu} \sup_{\gamma \in \Ac_{\mathrm{int}},\, \alpha \in \Ac_{\rm reg}} J(\eta,\gamma,\alpha),
\]
with performance functional
\begin{align*}
    J(\eta,\gamma,\alpha)
    := \E \Bigg[ \int_0^T L \!\Big( s, X_s, M^{1,\gamma}_{\mu,s}(X_s,U), M^{2,\gamma}_{\mu,s}(X_s,U), \alpha(s,X_s,U) \Big)\,\mathrm{d}s 
    + g\!\left( X_T, \Rr^{\gamma,\alpha}_T(U) \right) \Bigg].
\end{align*}

We now introduce the standing assumptions ensuring the well--posedness of the control problem.

\begin{assumption}\label{assum:main1_MF_CI}
\begin{enumerate}
    \item[\textnormal{(i)}] 
    The drift $b(t,x,m^1,m^2,a)$ is Lipschitz continuous in $(m^1,m^2)$ uniformly in $(t,x,a)$, and the functions 
    \[
        (t,x,m^1,m^2,a,m) \longmapsto \bigl((b,L)(t,x,m^1,m^2,a),\, g(x,m)\bigr)
    \]
    are continuous in $(x,m^1,m^2,a,m)$ for each fixed $t \in [0,T]$.

    \item[\textnormal{(ii)}] 
    \textbf{Non--degeneracy:} there exists $\theta>0$ such that 
    \[
        \theta I_d \le \sigma\sigma^\top(t,x), \qquad \forall (t,x)\in[0,T]\times\R^d.
    \]
\end{enumerate}
\end{assumption}

\begin{remark}
    $(i)$ Under {\rm \Cref{assum:main1_MF_CI}}, if $b$ is also uniformly Lipschitz in $x$, the McKean--Vlasov equation \eqref{eq:general_mck_ci} admits a unique strong solution whenever the control functions $\gamma$ and $\beta$ are Lipschitz continuous $($see for instance {\rm \cite[ TheoremA.3.]{djete2019mckean}}$)$. 
However, since our objective is to analyze the control problem under the more general framework of {\rm \Cref{assum:main1_MF_CI}}, we shall not rely on this Lipschitz regularity. 
In this setting, only the \emph{weak uniqueness} of \eqref{eq:general_mck_ci} will in general be available $($strong existence remains true however, see {\rm \citeauthor*{AJVeretennikov_1981} \cite{AJVeretennikov_1981}}$)$. 
As we shall see in {\rm \Cref{prop:relaxed_uniqueness}}, such property can still be established under mild measurability and integrability conditions on the controls $(\gamma,\beta)$.

\medskip
$(ii)$ We emphasize that the boundedness of the coefficients and the compactness of the sets $A_{\rm int}$, $A_{\rm reg}$, and $\Er$ are not essential assumptions. 
These conditions are imposed only to streamline the exposition and highlight the core arguments of the paper. 
All results could be extended to more general unbounded or non--compact settings under standard growth and integrability conditions, at the cost of additional technicalities in the proofs.

\end{remark}

\medskip
\subsection{The relaxed/randomized formulation} One of the main objectives of this work is to establish fundamental properties of the above control problem---in particular, the existence of optimal controls and the continuity of the value function $\nu \mapsto V_{\mathrm{MFC}}(\nu)$---under the general framework of {\rm \Cref{assum:main1_MF_CI}}. 
To handle this level of generality, it is more convenient to adopt a {\it randomized} (or {\it relaxed}) formulation of the control problem, which will provide the necessary compactness and stability properties for the subsequent analysis. 
We now introduce this relaxed formulation in detail.

\medskip
Let us denote by $\Acb_{\rm int}$ the set of Borel measurable maps
\[
    \Acb_{\rm int} \;:=\; 
    \Bigl\{\, 
        \gammabb : [0,T] \times \bigl( \R^d \times [0,1]^2 \bigr)^2  \times [0,1]^2 
        \;\longrightarrow\; A_{\rm int} 
    \,\Bigr\},
\]
and by $\Acb_{\rm reg}$ the set of Borel measurable maps
\[
    \Acb_{\rm reg} \;:=\; 
    \Bigl\{\, 
        \betabb : [0,T] \times \bigl( \R^d \times [0,1]^2 \bigr)  \times [0,1]^2 
        \;\longrightarrow\; A_{\rm reg} 
    \,\Bigr\}.
\]
It is straightforward to see $ \Ac_{\rm int} \subset \Acb_{\rm int}$ and $ \Ac_{\rm reg} \subset \Acb_{\rm reg}$.  The class $\Acb_{\rm int}$ (resp. $\Acb_{\rm reg}$) thus represents a {\it randomized} version of the control set $\Ac_{\rm int}$ (resp. $\Ac_{\rm reg}$), where the additional argument in $[0,1]$ serves as an auxiliary randomization variable. For $\eta \in \Pc_\nu$ and any pair of controls $(\gammabb,\alphabb) \in \Ac_{\rm int} \times \Ac_{\rm reg}$, 
we denote by 
\[
    \left(\mu^{\gammabb,\betabb}_t := \Lc\bigl(\Xb^{\gammabb,\betabb}_t,U \bigr)\right)_{t \in [0,T]}
\]
a weak solution to the McKean--Vlasov dynamics with initial distribution 
$\Lc\bigl(\Xb^{\gammabb,\betabb}_0,U \bigr)=\eta$ and dynamics given by
\begin{align} \label{eq:relaxed_general_mck_ci}
    \mathrm{d}X_s
    &= \int_{[0,1]^3} 
        b\!\left( 
            s, X_s,\,
            \Mb^{1,\gammabb}_{\mu,s}(X_s,U,v,\pi),\,
            \Mb^{2,\gammabb}_{\mu,s}(X_s,U,v,\pi),\,
            \alphabb(s,X_s,U,v,\widetilde{v},\pi)
        \right)
        \,\mathrm{d}v\,\mathrm{d}\widetilde{v}\,\mathrm{d}\pi\, \mathrm{d}s
        + \sigma(s,X_s)\,\mathrm{d}W_s,
\end{align}
where the conditional distributions appearing in the interaction terms are defined by
\[
    \overline{\Rr}^{\gammabb,\alphabb}_s(u) := \Lc\!\bigl(X_s,\,\Gr(u,U)\bigr),
    \qquad 
    \mu_s := \Lc\!\bigl(X_s,\,U\bigr),
\]
and, for each $(s,x,u,v,\pi) \in [0,T]\times\R\times[0,1]^2$,
\begin{align*}
    \Mb^{1,\gammabb}_{\mu,s}(x,u,v,\pi)
    &:= \Lc\!\Bigl( 
        \gamma\bigl(s,x,u,v,X_s,U,V,\Vb,\pi\bigr),\, 
        X_s,\,U,\,\Gr(u,U)
    \Bigr), \\
    \Mb^{2,\gammabb}_{\mu,s}(x,u,v,\pi)
    &:= \Lc\!\Bigl( 
        \gamma\bigl(s,X_s,U,V,x,u,v,\Vb,\pi\bigr),\, 
        X_s,\,U,\,\Gr(u,U)
    \Bigr).
\end{align*}
Here, the auxiliary variables $V$ and $\Vb$ are independent, uniformly distributed over $[0,1]$, and independent of $(X,U,W)$. 

The associated {\it relaxed} mean--field control problem is 
\[
    \Vb_{\mathrm{MFC}}(\nu)
    := \sup_{\eta \,\in\, \Pc_\nu} \,\sup_{\gammabb \in \Acb_{\mathrm{int}},\, \alphabb \in \Acb_{\rm reg}} \Jb(\eta,\gammabb,\alphabb),
\]
with the reward
\begin{align*}
    &\Jb(\eta,\gammabb,\alphabb)
    \\
    &:= \E \Bigg[ \int_0^T \int_{[0,1]^3} L \!\Big( s, X_s, \Mb^{1,\gammabb}_{\mu,s}(X_s,U,v,\pi), \Mb^{2,\gammabb}_{\mu,s}(X_s,U,u,\pi), \alpha(s,X_s,U,v,\widetilde{v},\pi) \Big)\,\mathrm{d}v\,\mathrm{d}\widetilde{v}\,\mathrm{d}\pi\,\mathrm{d}s 
    + g\!\left( X_T, \Rrb^{\gamma,\alpha}_T(U) \right) \Bigg].
\end{align*}

\begin{remark}
$(i)$ Readers accustomed to relaxed controls may find our parametrization unusual.  
In the classical approach, a predictable $K$--valued control $(\beta_t)_{t\in[0,T]}$ is replaced by a predictable
kernel $\bigl(\Lambda_t(\mathrm{d}u)\bigr)_{t\in[0,T]}$ on $K$ $($often written $\Lambda_t(\mathrm{d}u)\mathrm{d}t$ $)$; see, e.g., {\rm \cite{el1987compactification}} for the standard formulation.

\smallskip
Our definition is \emph{equivalent} to the classical one.  
Indeed, since $K$ is a Polish space, there exists a Borel “sampling” map
\[
S:\Pc(K)\times[0,1]\longrightarrow K
\]
with the property that for any $\mu\in\Pc(K)$ and $V\sim{\rm Unif}[0,1]$,
\[
\Lc\!\big(S(\mu,V)\big)=\mu.
\] 
Therefore, given a predictable kernel $\Lambda_t(\omega)(\mathrm{d}u)$, the process
\[
\alpha_t(\omega,V)\;:=\;S\bigl(\Lambda_t(\omega),V\bigr)
\]
is a $K$--valued predictable control such that
$\Lc\!\big(\alpha_t(\omega,V)\big)=\Lambda_t(\omega)$ for a.e.\ $(\omega,t)$.  
Conversely, any predictable selector $\alpha_t(\omega,V)$ induces the relaxed kernel
$\Lambda_t(\omega):=\Lc\!\big(\alpha_t(\omega,V)\,|\,\omega,t\big)$.
Hence our formulation simply \emph{parametrizes} randomized/relaxed controls by measurable selectors driven by i.i.d.\ uniforms, rather than by measure--valued processes.

\smallskip
This representation is particularly convenient here because it mirrors the structure of the interaction class $\Ac_{\rm int}$ through explicit auxiliary uniforms:
\begin{itemize}
\item $v$ randomizes the control conditioned on the current pair $(x,u)$,
\item $v'$ randomizes the component associated with $(x',u')$,
\item $\overline v$ randomizes jointly over $(x,u,v,x',u',v')$ when needed (e.g., to couple components),
\item $\pi$ randomizes across the entire augmented tuple $(t,x,u,v,x',u',v',\overline v)$ (e.g., to form mixtures).
\end{itemize}
These auxiliary variables provide a canonical sampling device that replaces abstract kernels by concrete, measurable control maps without loss of generality.

\medskip
$(ii)$ We will show that, for every pair of randomized controls $(\gammabb,\alphabb) \in \Acb_{\rm int} \times \Acb_{\rm reg}$, 
there exists a unique weak solution to the McKean--Vlasov equation \eqref{eq:relaxed_general_mck_ci}.  
In particular, the corresponding flow of marginal laws 
\[
    \bigl( \Lc\bigl(\Xb^{\gammabb,\betabb}_t, U \bigr) \bigr)_{t \in [0,T]}
\]
is uniquely determined by $(\gammabb,\alphabb)$ and the initial distribution $\eta$.  
A detailed proof of this well--posedness result can be found in  {\rm \Cref{prop:relaxed_uniqueness}}.

\end{remark}
For any pair of randomized controls $(\gammabb,\alphabb) \in \Acb_{\rm int} \times \Acb_{\rm reg}$ 
and controls $(\gamma,\alpha) \in \Ac_{\rm int} \times \Ac_{\rm reg}$, 
we define the associated joint measures
\begin{align*}
    &\Lambdab^{\gammabb,\betabb}_t(\mathrm{d}x,\mathrm{d}u,\mathrm{d}r_1,\mathrm{d}r_2,\mathrm{d}a)\,\mathrm{d}t
    \\
    &:= 
    \Lc\!\left( 
        X_t,\,U,\,
        \Mb^{1,\gamma}_{\mu,t}(X_t,U,V,\Vf),\,
        \Mb^{2,\gamma}_{\mu,t}(X_t,U,V,\Vf),\,
        \alpha(t,X_t,U,V,\Vt,\Vf)
    \right)
    (\mathrm{d}x,\mathrm{d}u,\mathrm{d}r_1,\mathrm{d}r_2,\mathrm{d}a)\,\mathrm{d}t,
\end{align*}
and
\begin{align*}
    \Lambda^{\gamma,\alpha}_t(\mathrm{d}x,\mathrm{d}u,\mathrm{d}r_1,\mathrm{d}r_2,\mathrm{d}a)\,\mathrm{d}t
    &:=
    \Lc\!\left(
        X_t,\,U,\,
        M^{1,\gamma}_{\mu,t}(X_t,U),\,
        M^{2,\gamma}_{\mu,t}(X_t,U),\,
        \alpha(t,X_t,U)
    \right)
    (\mathrm{d}x,\mathrm{d}u,\mathrm{d}r_1,\mathrm{d}r_2,\mathrm{d}a)\,\mathrm{d}t
\end{align*}
where $(\Vt,\Vf) \sim {\rm Unif}([0,1])^{\otimes 2}$ and $(\Vt,\Vf) \perp (X_t,U,V,\Vb)$.
We then introduce the corresponding collections of admissible pairs:
\begin{align*}
    \Pib(\nu)
    &:= 
    \Bigl\{
        \bigl( \mu^{\gammabb,\alphabb},\, \Lambdab^{\gammabb,\alphabb} \bigr)
        \;:\;
        (\gammabb,\alphabb) \in \Acb_{\rm int} \times \Acb_{\rm reg}
    \Bigr\},
    \qquad
    \Pi(\nu)
    :=
    \Bigl\{
        \bigl( \mu^{\gamma,\alpha},\, \Lambda^{\gamma,\alpha} \bigr)
        \;:\;
        (\gamma,\alpha) \in \Ac_{\rm int} \times \Ac_{\rm reg}
    \Bigr\}.
\end{align*}

The next theorem shows that introducing randomization in the control variables does not modify the attainable set of laws, and hence both formulations of the mean--field control problem are equivalent.

\begin{theorem}
    \label{thm:equivalence_relaxed_strong}
    Let {\rm\Cref{assum:main1_MF_CI}} hold, and let $\nu \in \Pc_p(\R^d)$ with $p \in \{0\} \cup [1,\infty)$. 
    Then the set $\Pib(\nu)$ is closed in the Wasserstein topology $\Wc_p$, 
    and moreover, it coincides with the topological closure $($in $\Wc_p$ $)$ of the set $\Pi(\nu)$. 
    Consequently, the relaxed and strong formulations of the mean--field control problem are equivalent, i.e.,
    \begin{align*}
        \Vb_{\mathrm{MFC}}(\nu)
        \;=\;
        V_{\mathrm{MFC}}(\nu).
    \end{align*}
\end{theorem}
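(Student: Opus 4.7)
The plan is to decompose the statement into three set--theoretic facts: the trivial inclusion $\Pi(\nu)\subset\Pib(\nu)$, the closedness of $\Pib(\nu)$ in $\Wc_p$, and the approximation result $\Pib(\nu)\subset \overline{\Pi(\nu)}$. Together they give $\Pib(\nu)=\overline{\Pi(\nu)}$ as well as the closedness claim. The identity $\Vb_{\mathrm{MFC}}(\nu)=V_{\mathrm{MFC}}(\nu)$ then follows by noting that the reward functional $(\mu,\Lambda)\mapsto \langle L,\Lambda\rangle + \langle g(\cdot,\Rr(\cdot)),\mu_T\rangle$ is $\Wc_p$--continuous under the boundedness and continuity assumptions of \Cref{assum:main1_MF_CI}(i), so that its supremum over a set and over its closure coincide.

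The inclusion $\Pi(\nu)\subset\Pib(\nu)$ is immediate, as any strong pair $(\gamma,\alpha)\in\Ac_{\rm int}\times\Ac_{\rm reg}$ embeds into $\Acb_{\rm int}\times\Acb_{\rm reg}$ by letting $(\gammabb,\alphabb)$ ignore the auxiliary uniforms $(v,\widetilde v,\pi)$, which leaves \eqref{eq:relaxed_general_mck_ci} unchanged. For the closedness of $\Pib(\nu)$, I would take $(\mu_n,\Lambdab_n)\in\Pib(\nu)$ with $\Wc_p$--limit $(\mu_\infty,\Lambdab_\infty)$. The compactness of $A_{\rm int},A_{\rm reg},\Er$ combined with the uniform Lipschitz regularity of $\sigma$ provides tightness of the associated controlled state processes. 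Passing to the limit in the Stroock--Varadhan martingale problem, while invoking the continuity hypotheses on $(b,L,g)$, identifies $(\mu_\infty,\Lambdab_\infty)$ as arising from a weak solution of \eqref{eq:relaxed_general_mck_ci}. A disintegration of $\Lambdab_\infty$ along $(t,x,u)$ and $(x',u')$, followed by a measurable selection realized through the auxiliary uniforms $(v,\widetilde v,\pi)$, extracts a randomized pair $(\gammabb_\infty,\alphabb_\infty)$ that attains the limiting flow. The weak uniqueness asserted in \Cref{prop:relaxed_uniqueness}, which rests on the non--degeneracy Assumption \Cref{assum:main1_MF_CI}(ii), guarantees consistency of this identification and hence that $(\mu_\infty,\Lambdab_\infty)\in\Pib(\nu)$.

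The delicate step, and the expected main obstacle, is the approximation $\Pib(\nu)\subset\overline{\Pi(\nu)}$. In the classical exchangeable mean--field control theory, this inclusion is obtained through a Krylov--Gyöngy mimicking theorem combined with a Girsanov change of measure, producing a feedback control whose induced flow of one--dimensional marginals matches a target measure flow. The difficulty here is structural: the interaction control $\gamma(t,x,u,x',u')$ enters twice, once with $(x,u)$ frozen and $(x',u')$ averaged to build $\Mb^{1,\gamma}_{\mu,t}$, and once with the roles reversed to build $\Mb^{2,\gamma}_{\mu,t}$. A naive mimicking argument only preserves one of these conditional marginals at a time, destroying the other. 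To overcome this, I intend to adapt the approach of \cite{MFD-2020-closed}: first discretize $[0,T]$ and $\R^d\times[0,1]$, approximate $(\gammabb^*,\alphabb^*)$ by kernels that are piecewise constant on the induced cells, and then represent each piecewise constant randomized kernel as a measurable function of the current state by drawing fresh independent uniforms at each grid step. The double randomization built into $\Acb_{\rm int}$ via both $v$ and $\widetilde v$ is precisely what allows the outgoing and incoming interaction measures to be matched simultaneously on each cell. Continuity of the solution map in $\Wc_p$, supported by Girsanov estimates under non--degeneracy of $\sigma$, then ensures that the resulting sequence in $\Pi(\nu)$ converges to $(\mu^*,\Lambdab^*)$, completing the proof.
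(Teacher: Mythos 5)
Your decomposition into three facts and the reduction of the value identity to continuity of the reward functional matches the paper's overall architecture (it invokes \Cref{prop:closed_relaxed} for closedness, \Cref{prop:strong_relaxed_density} for density, then concludes). The trivial inclusion $\Pi(\nu)\subset\Pib(\nu)$ and the final continuity argument are fine.

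The closedness step, however, has a genuine gap. Passing to the limit in the martingale problem and disintegrating $\Lambdab_\infty$ along $(t,x,u)$ only tells you that $(\mu_\infty,\Lambdab_\infty)$ is \emph{admissible}: it produces a kernel $\Gamma_t(x,u)(\mathrm{d}r_1,\mathrm{d}r_2,\mathrm{d}a)$ with the correct Fokker--Planck consistency. But membership in $\Pib(\nu)$ requires much more: for a.e.\ $(t,\pi)$, the interaction marginal $\Delta_{t,\pi}(\mathrm{d}x,\mathrm{d}u,\mathrm{d}r_1,\mathrm{d}r_2)$ must belong to $\Mcb$, i.e.\ $r_1$ and $r_2$ must be \emph{jointly} realizable as $\Nb^{1,\gammabb}_{\mu}(x,u,v)$ and $\Nb^{2,\gammabb}_{\mu}(x,u,v)$ for a \emph{single} interaction map $\gammabb$ acting once in the outgoing and once in the incoming position. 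A generic disintegration followed by measurable selection gives no control over this pairing — you could easily get two kernels $(r_1,r_2)$ that cannot both be conditional laws of the same $\gamma$. Nor is it clear a priori that this highly nonlinear constraint survives weak limits. This is exactly where the paper's \Cref{prop:reformulation_of_relaxed} (the characterization $\Mcb=\Mcb_{\rm aux}$ via ``compatible'' tuples of two i.i.d.\ copies linked by a common $\gamma$ and the kernel $\Gr$) enters: compatibility is expressed purely through conditional-law identities between test functionals, which \emph{are} stable under joint weak/stable convergence, and the closedness proof consists precisely of checking that compatibility passes to the limit. Your sketch also omits the mixture constraint that $\Delta_{t,\pi}$ must have a $\pi$-independent $(x,u)$-marginal equal to $\mu_t$. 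Without the $\Mcb_{\rm aux}$ reformulation, or some replacement for it, the closedness claim is unproved.

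Your identification of the obstacle in the density step (that the same $\gamma$ appears in two positions, so a naive mimicking step destroys one interaction marginal while fixing the other) is correct, and adapting \cite{MFD-2020-closed} is indeed what the paper does. But ``drawing fresh independent uniforms at each grid step'' would not yield a strong closed-loop control in $\Ac_{\rm int}\times\Ac_{\rm reg}$; the paper instead constructs deterministic Borel selectors $\Phi_k(x)=(\varphi_k(x),\widetilde\varphi_k(x))$ and $c_n(t)$ that encode the randomization as measurable functions of the state and time, and then establishes a sup-over-compacts convergence of the averaged interaction kernels (using parabolic regularity of the conditional densities $f_u^{k,n}(t,x)$ and the chattering construction in \Cref{prop:appro_control}). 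The double role of the auxiliary uniforms $(v,\widehat v)$ in that construction is what lets both $\Nb^1$ and $\Nb^2$ be matched simultaneously, so you are right that the two-sided randomization is the crucial device; the technical route differs in requiring uniform control on both integrated faces of $\gammabb^{k,n}$ rather than a Girsanov/$\Wc_p$-continuity estimate alone.
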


\begin{remark}
    $(i)$ {\rm\Cref{thm:equivalence_relaxed_strong}} highlights a fundamental structural property of the mean--field control problem: 
    the introduction of randomization through $(\gammabb,\alphabb)$ does not enlarge the attainable set of law/control pairs. 
    In other words, every relaxed control can be approximated, in the Wasserstein sense, by a sequence of strong controls. 
    This equivalence allows us to work indifferently within either formulation depending on the analytical setting --- 
    for instance, the relaxed formulation is better suited for compactness and stability arguments.

    \medskip
    $(ii)$ It is also worth emphasizing that {\rm\Cref{thm:equivalence_relaxed_strong}} highlights a key structural feature of the problem: 
what ultimately matters is the law-valued pair $\bigl(\mu^{\gamma,\alpha},\,\Lambda^{\gamma,\alpha}\bigr)$ and its associated {\rm Fokker--Planck} equation, rather than the individual sample paths of the controlled process $X^{\gamma,\alpha}$ itself.  
In other words, the dynamics and optimization depend only on the evolution of the joint distribution of states and controls, which fully encodes the system’s behavior at the mean-field level.  
This viewpoint often leads to a more tractable analysis, as illustrated in {\rm\cite{MFD-2020,MFD-2020-closed}}, where focusing directly on the measure dynamics provides a streamlined route to existence, stability, and convergence results.

\end{remark}

The previous result implies both the existence of an optimal control and 
the regularity of the value function with respect to the initial law.

\begin{proposition}
    \label{prop:existence_continuity}
    Under the assumptions of {\rm\Cref{thm:equivalence_relaxed_strong}}, for any $\nu \in \Pc_p(\R^d)$ with $p \in \{0\} \cup [1,\infty)$, 
    there exists an optimal pair of randomized controls 
    $(\gammabb^\star,\alphabb^\star) \in \Acb_{\rm int} \times \Acb_{\rm reg}$ and $\eta^\star \in \Pc_\nu$
    such that
    \begin{align*}
        V_{\mathrm{MFC}}(\nu)
        \;=\;
        \Jb\bigl(\eta^\star, \gammabb^\star, \alphabb^\star \bigr).
    \end{align*}
    Moreover, the value function
    \[
        V_{\mathrm{MFC}} : \Pc_p(\R^d) \longrightarrow \R
    \]
    is continuous with respect to the Wasserstein topology.
\end{proposition}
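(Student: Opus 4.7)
The plan is to derive both statements from \Cref{thm:equivalence_relaxed_strong}, which identifies $\Pib(\nu)$ with the $\Wc_p$--closure of $\Pi(\nu)$ and, crucially, ensures its closedness. My first step for \emph{existence} will be to select a maximizing sequence $(\eta^n,\gammabb^n,\alphabb^n)\in\Pc_\nu\times\Acb_{\rm int}\times\Acb_{\rm reg}$ with $\Jb(\eta^n,\gammabb^n,\alphabb^n)\to V_{\mathrm{MFC}}(\nu)$ and consider the associated elements $(\mu^{\gammabb^n,\alphabb^n},\Lambdab^{\gammabb^n,\alphabb^n})\in\Pib(\nu)$. Because $b$ and $\sigma$ are bounded and $A_{\rm int},A_{\rm reg},\Er$ are compact, standard BDG and Gronwall estimates deliver uniform moment bounds on $X$ and a uniform modulus of continuity of $t\mapsto \mu^{\gammabb^n,\alphabb^n}_t$, which yield relative compactness in $\Wc_p$ of the $\mu$--component; the $\Lambdab$--component is tight because its values lie in $\Pc$ of the product of $\R^d\times[0,1]$ (with uniformly tight marginals) and the compact space $\Pc(A_{\rm int}\times\R^d\times[0,1]\times\Er)^2\times A_{\rm reg}$. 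Extracting a convergent subsequence, the limit $(\mu^\star,\Lambdab^\star)$ lies in $\Pib(\nu)$ by closedness, producing the desired $(\eta^\star,\gammabb^\star,\alphabb^\star)$. Continuity of $\Jb$ on $\Pib(\nu)$, which follows from the continuity and boundedness of $L$ and $g$ in \Cref{assum:main1_MF_CI}(i) together with convergence of $\mu^n$ and $\Lambdab^n$, then gives optimality of the limit.

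For \emph{continuity of $V_{\mathrm{MFC}}$}, I will fix $\nu^n\to\nu$ in $\Wc_p$ and prove upper and lower semicontinuity separately. Upper semicontinuity: choose optimizers $(\eta^n,\gammabb^n,\alphabb^n)$ for each $\nu^n$ (existence just obtained), and repeat the tightness argument; the limiting pair satisfies the $\nu$--marginal constraint, hence belongs to $\Pib(\nu)$, and continuity of $\Jb$ yields $\limsup_n V_{\mathrm{MFC}}(\nu^n)\le V_{\mathrm{MFC}}(\nu)$. Lower semicontinuity: for $\varepsilon>0$ take $(\eta,\gammabb,\alphabb)$ $\varepsilon$--optimal for $V_{\mathrm{MFC}}(\nu)$; I will construct $\eta^n\in\Pc_{\nu^n}$ with $\eta^n\to\eta$ in $\Wc_p$ by composing the optimal $\Wc_p$--coupling of $\nu^n$ with $\nu$ with the conditional distribution of $\eta$ given its $\R^d$--marginal (so the $[0,1]$--marginal remains uniform). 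Feeding the same $(\gammabb,\alphabb)$ into the dynamics started from $\eta^n$ gives, by continuous dependence of the McKean--Vlasov flow on the initial law (which rests on \Cref{prop:relaxed_uniqueness} and a coupling argument), convergence of $\Jb(\eta^n,\gammabb,\alphabb)\to\Jb(\eta,\gammabb,\alphabb)$, from which $\liminf_n V_{\mathrm{MFC}}(\nu^n)\ge V_{\mathrm{MFC}}(\nu)-\varepsilon$ and then the claim.

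The main obstacle will be the continuity of the reward functional on the limiting pair, because the interaction measures $\Mb^{1,\gammabb}_{\mu,s}$ and $\Mb^{2,\gammabb}_{\mu,s}$ couple the randomized control $\gammabb$ with the full joint law $\mu$ in a non--symmetric way: one needs to pass to the limit both in the outer expectation and inside the conditional laws appearing in $L$ and $g$. My strategy is to encode this joint dependence directly through the measure $\Lambdab^{\gammabb,\alphabb}$ so that $\Jb$ is written as the integral of a bounded continuous function against $\Lambdab^{\gammabb,\alphabb}$ plus a terminal term involving only $\mu_T$ and $\Gr$, which reduces the continuity question to $\Wc_p$--convergence of these objects — already furnished by the tightness/closedness argument. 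A secondary technical point will be the construction of $\eta^n$ in the lower semicontinuity step with the prescribed marginals; this will be handled by disintegration of $\eta$ along its $\R^d$--marginal and standard Wasserstein stability of the resulting compositions.
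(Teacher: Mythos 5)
Your existence argument and the upper--semicontinuity half are essentially the paper's: compactness of $\Pib(\nu)$ from the bounded coefficients and the fixed initial $\R^d$--marginal, continuity of the reward functional $\Gc$ (your $\Jb$) from boundedness and continuity of $(L,g)$, and closedness from \Cref{thm:equivalence_relaxed_strong}. Where you diverge is the lower--semicontinuity step. The paper fixes a \emph{continuous strong} control $(\gamma,\beta)\in\Pi(\nu)$ (legitimate by the density part of \Cref{thm:equivalence_relaxed_strong}) and applies \Cref{prop:strong_approx} to build $(\mu^n,\Lambda^n)\in\Pi(\nu^n)$ converging to $(\mu^{\gamma,\beta},\Lambda^{\gamma,\beta})$; you instead keep an $\varepsilon$--optimal \emph{relaxed} control $(\gammabb,\alphabb)$ fixed and perturb only the initial law via $\eta^n\in\Pc_{\nu^n}\to\eta$. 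This is a legitimate alternative route, and your gluing construction of $\eta^n$ (optimal $\Wc_p$--coupling of $(\nu^n,\nu)$ composed with the disintegration of $\eta$) does preserve the uniform $[0,1]$--marginal and gives $\Wc_p(\eta^n,\eta)\le\Wc_p(\nu^n,\nu)\to0$. However, the continuous--dependence step should be sourced from \Cref{prop:a.e.cong} applied with the constant control sequence $(\gammabb^n,\beta^n)\equiv(\gammabb,\alphabb)$, not from \Cref{prop:relaxed_uniqueness} together with a coupling argument: uniqueness alone does not yield stability in the initial law, and the paper's stability proof is a tightness--and--identification scheme, not a pathwise coupling. One further imprecision in your tightness claim: $\Pc(A_{\rm int}\times\R^d\times[0,1]\times\Er)$ is \emph{not} compact because of the $\R^d$ factor; tightness of the $\Lambdab$--component actually comes from the fact that the interaction measures' $\R^d$--marginals agree with those of $\mu_t$, which are uniformly tight thanks to the moment bounds you invoked.
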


\begin{remark}
    $(i)$ The existence of an optimal control and the continuity of the value function obtained in 
    {\rm\Cref{prop:existence_continuity}}
    are direct consequences of the topological closure property established in {\rm\Cref{thm:equivalence_relaxed_strong}}. 
    The compactness of $\Pib(\nu)$ in the Wasserstein topology ensures the existence of minimizers,
    while the continuity of the coefficients in {\rm\Cref{assum:main1_MF_CI}} 
    implies the upper semicontinuity $($and thus continuity $)$ of the value function 
    $\nu \mapsto V_{\mathrm{MFC}}(\nu)$.
    This provides a complete well--posedness framework for the mean--field control problem 
    under minimal regularity assumptions.

    \medskip
    $(ii)$ It is worth noting that, under suitable convexity assumptions on the coefficients of the problem, one can recover an optimal \emph{strong} $($or \emph{closed--loop}$)$ control from an optimal \emph{relaxed} control.  
This reconstruction typically relies on measurable selection arguments and convexity of the control--dependence of the drift and cost functions $($see in particular  {\rm \cite[ Theorem 2.13.]{djete2023stackelbergmeanfieldgames}}, where similar difficulties arise due to the dependence on the law of the control$)$.  
To keep the exposition focused and accessible, we do not present this general recovery result here. 
Nevertheless, in a specific example discussed later in {\rm \Cref{prop_example2}}, we explicitly illustrate how such a procedure can be carried out in our framework.
\end{remark}

\subsection{The strong (\emph{open--loop}) formulation}
The formulations introduced so far are \emph{closed--loop}: controls depend (measurably) on the current state. 
We now consider an \emph{open--loop} setting in which controls depend only on the primitive noises (and initial data) of the system.
For this formulation, we assume that
\begin{assumption} \label{assum:Lip_b}
    The map $(t,x,m^1,m^2,a) \mapsto b(t,x,m^1,m^2,a)$ is Lipchitz in $x$ uniformly in $(t,m^1,m^2,a)$.
\end{assumption}
Let us denote by $\Ac^o_{\rm int}$ the set of Borel maps $\gamma: [0,T] \x \left(\R^d \x [0,1] \x \Cc^d \right)^2 \to A_{\rm int}$ and $\Ac^o_{\rm reg}$ the set of  Borel maps $\alpha: [0,T] \x \R^d \x [0,1] \x \Cc^d  \to A_{\rm reg}$.
We consider $(W^u)_{u \in [0,1]}$ a collection of Brownian motions.
Given $\eta \in \Pc_\nu$ and, $(\gamma,\alpha) \in \Ac^o_{\rm int} \x \Ac^o_{\rm reg}$, for a.e. $u \in [0,1]$, we define $ X^{\gamma,\alpha,u}$ by: $\Lc\bigl( X^{\gamma,\alpha,u}_0 \bigr)(\mathrm{d}x)\mathrm{d}u=\eta(\mathrm{d}x,\mathrm{d}u)$ and 
\begin{align*}
    \mathrm{d}X^{\gamma,\alpha,u}_s
    = b\!\left( s, X^{\gamma,\alpha,u}_s, M^{1,\gamma,u}_{\mu,s}, M^{2,\gamma,u}_{\mu,s}, \alpha^u_s \right) \mathrm{d}s + \sigma(s,X^{\gamma,\alpha,u}_s)\,\mathrm{d}W^u_s
\end{align*}
with $\alpha_s^u=\alpha \left(s,  X^{\gamma,\alpha,u}_0,\;u,\,W^u_{s \wedge \cdot} \right)$, $\Rr^{\gamma,\alpha,u}_s := \Lc\!\bigl(X^{\gamma,\alpha,v}_s,\,\Gr(u,v)\bigr)(\mathrm{d}x)\mathrm{d}v$, $\mu^{o,\gamma,\alpha}_s := \Lc\!\bigl(X^{\gamma,\alpha,u}_s\bigr)(\mathrm{d}x)\mathrm{d}u$ and for a.e. $\om$,
\begin{align*}
    M^{1,\gamma,u}_{\mu,s}(\om)
    &:=
     \int_0^1\Lc\!\left( \gamma\bigr(s, X^{\gamma,\alpha,u}_0(\om),u,W^u_{s \wedge \cdot}(\om), X^{\gamma,\alpha,v}_0,v,W^v_{s \wedge \cdot}   \bigl),\, X^{\gamma,\alpha,v}_s,\, \Gr(u,v) \, \right) \mathrm{d}v
     \\
     M^{2,\gamma,u}_{\mu,s}(\om)
    &:=
     \int_0^1\Lc\!\left( \gamma\bigr(s, X^{\gamma,\alpha,v}_0,v,W^v_{s \wedge \cdot}, X^{\gamma,\alpha,u}_0(\om),u,W^u_{s \wedge \cdot}(\om)   \bigl),\, X^{\gamma,\alpha,v}_s,\, \Gr(u,v) \, \right) \mathrm{d}v.
\end{align*}
The previous equations are well--defined under {\rm\Cref{assum:main1_MF_CI}} and \Cref{assum:Lip_b} by standard fixed point techniques (see for instance \cite{djete2019mckean}, \cite{decrescenzo2024meanfieldcontrolnonexchangeable} or \cite{cao2025probabilisticanalysisgraphonmean}). The {\it open--loop} formulation of mean--field control problem is then defined by 
\[
    V^o_{\mathrm{MFC}}(\nu)
    := \sup_{\eta \,\in\, \Pc_\nu} \sup_{\gamma \in \Ac^o_{\mathrm{int}},\, \alpha \in \Ac^o_{\rm reg}} J^o(\eta,\gamma,\alpha),
\]
with
\begin{align*}
    J^o(\eta,\gamma,\alpha)
    :=  \int_0^1\E \Bigg[ \int_0^T L \!\Big( s, X^{\gamma,\alpha,u}_s, M^{1,\gamma,u}_{\mu,s}, M^{2,\gamma,u}_{\mu,s}, \alpha_s^u \Big)\,\mathrm{d}s 
    + g\!\left( X^{\gamma,\alpha,u}_T, \Rr^{\gamma,\alpha,u}_T \right) \Bigg] \mathrm{d}u.
\end{align*}
We will now give an equivalence result between this {\it open--loop} formulation and our initial formulation. For this purpose, we set  
\begin{align*}
    \Lambda^{o,\gamma,\alpha}_t(\mathrm{d}x,\mathrm{d}u,\mathrm{d}r_1,\mathrm{d}r_2,\mathrm{d}a)\,\mathrm{d}t
    &:=
    \Lc\!\left(
        X^{\gamma,\alpha,u}_t,\,
        M^{1,\gamma,u}_{\mu,t},\,
        M^{2,\gamma,u}_{\mu,t},\,
        \alpha^u_t
    \right)
    (\mathrm{d}x,\mathrm{d}r_1,\mathrm{d}r_2,\mathrm{d}a)\,\mathrm{d}u\,\mathrm{d}t
\end{align*}
and
\begin{align*}
    \Pi^o(\nu)
    :=
    \Bigl\{
        \bigl( \mu^{o,\gamma,\alpha},\, \Lambda^{o,\gamma,\alpha} \bigr)
        \;:\;
        (\gamma,\alpha) \in \Ac^o_{\rm int} \times \Ac^o_{\rm reg}
    \Bigr\}.
\end{align*}
The next result shows that, under mild regularity assumptions, the \emph{open--loop} and \emph{closed--loop} formulations describe the same admissible laws and therefore yield identical value functions.

\begin{proposition} \label{prop:equiv_open}
    Let {\rm\Cref{assum:main1_MF_CI}} and {\rm \Cref{assum:Lip_b}} be true. For any $\nu \in \Pc(\R^d)$, we have $\Pi^o(\nu) \subset \Pib(\nu)$. Consequently,
    \begin{align*}
        V^o_{\mathrm{MFC}}(\nu)
        =
        V_{\mathrm{MFC}}(\nu).
    \end{align*}
\end{proposition}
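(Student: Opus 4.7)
The plan is to prove the inclusion $\Pi^o(\nu) \subset \Pib(\nu)$ by explicit construction and deduce the equality of values by combining this with \Cref{thm:equivalence_relaxed_strong} for one direction and a strong representation argument for the other. Fix $(\eta,\gamma,\alpha) \in \Pc_\nu \times \Ac^o_{\rm int} \times \Ac^o_{\rm reg}$. I first randomize the label by introducing $U \sim \mathrm{Unif}[0,1]$ independent of the Brownian family $(W^u)_{u\in[0,1]}$ and setting $X_t := X^{\gamma,\alpha,U}_t$, so that $\Lc(X_t,U)(\mathrm{d}x,\mathrm{d}u)\,\mathrm{d}t = \mu^{o,\gamma,\alpha}_t$. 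The open-loop controls are functions of the history $(X^{\gamma,\alpha,U}_0, W^U_{t\wedge\cdot})$, which lives in the Polish space $\R^d \times \Cc^d$. Let $\kappa_t(x,u) \in \Pc(\R^d \times \Cc^d)$ denote a Borel version of the regular conditional law of this history given $(X_t,U)=(x,u)$, and let $S:\Pc(E)\times[0,1]\to E$ be the standard Borel sampling map associated to a Polish space $E$ (so that $S(\lambda,v)\sim\lambda$ whenever $v\sim\mathrm{Unif}[0,1]$). The randomized closed-loop controls are then defined by
\begin{align*}
    \alphabb(t,x,u,v,\widetilde v,\pi) &:= \alpha\!\left(t,\, S(\kappa_t(x,u),v),\, u\right), \\
    \gammabb(t,x,u,v,x',u',v',\overline v,\pi) &:= \gamma\!\left(t,\, S(\kappa_t(x,u),v),\, u,\, S(\kappa_t(x',u'),v'),\, u'\right),
\end{align*}
the extra uniforms $\widetilde v,\overline v,\pi$ being present only to fit the definitions of $\Acb_{\rm int}$ and $\Acb_{\rm reg}$.

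By the defining property of $S$, integrating $v$ (resp.\ $v'$) against the uniform law reproduces, in distribution, the open-loop arguments $(X^U_0, W^U_{t\wedge\cdot})$ conditionally on $(X_t,U)$ (resp.\ an independent copy of it). Consequently the drift of the relaxed closed-loop dynamics \eqref{eq:relaxed_general_mck_ci}, once the auxiliary uniforms are integrated out, matches the $(t,X_t,U)$-conditional expectation of the open-loop drift, and the kernels $\Mb^{1,\gammabb}_{\mu,t},\, \Mb^{2,\gammabb}_{\mu,t}$ evaluated at $(X_t,U,V,\Vf)$ agree in distribution with $M^{1,\gamma,U}_{\mu,t},\, M^{2,\gamma,U}_{\mu,t}$. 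The identity $\Lambdab^{\gammabb,\alphabb}_t = \Lambda^{o,\gamma,\alpha}_t$ is then reduced to showing that $X_t$ itself is a weak solution of \eqref{eq:relaxed_general_mck_ci} driven by $(\gammabb,\alphabb)$. This is the heart of the argument and proceeds through a mimicking step in the spirit of Gy\"ongy and Brunick--Shreve: since the reduced drift is exactly the conditional expectation of the original one given the current state $(t,X_t,U)$ and $\sigma(t,X_t)$ depends only on the current state, the non-degeneracy in \Cref{assum:main1_MF_CI}(ii) makes the associated martingale problem identifiable; the weak uniqueness result \Cref{prop:relaxed_uniqueness} then closes this direction.

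The inclusion $\Pi^o(\nu) \subset \Pib(\nu)$ yields $V^o_{\rm MFC}(\nu) \le \Vb_{\rm MFC}(\nu) = V_{\rm MFC}(\nu)$ via \Cref{thm:equivalence_relaxed_strong}. For the reverse direction, under \Cref{assum:main1_MF_CI} and \Cref{assum:Lip_b} the closed-loop McKean--Vlasov equation, with its flow of measures held fixed, admits a pathwise unique strong solution by a Veretennikov-type theorem for SDEs with bounded Borel drift and Lipschitz non-degenerate diffusion; this provides a Borel functional $F$ such that $X_t = F(t,X_0,U,W_{t\wedge\cdot})$, and composing the closed-loop controls with $F$ delivers an equivalent open-loop pair, hence $V_{\rm MFC}(\nu) \le V^o_{\rm MFC}(\nu)$. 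The main obstacle throughout is the mimicking/identification step of the second paragraph: one must handle simultaneously the dual role of $\gamma$ as outgoing and incoming influence (through $M^1, M^2$), the label--state coupling via $U$, and the mean-field character of the drift. The explicit auxiliary uniforms built into the definitions of $\Acb_{\rm int}$ and $\Acb_{\rm reg}$ are precisely the device that allows the construction above to encode, in fully measurable form, the conditional distributions produced by the open-loop formulation.
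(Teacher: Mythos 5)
Your proof takes a genuinely different route from the paper's for the inclusion $\Pi^o(\nu)\subset\Pib(\nu)$, and the strategy can be made to work, but there is an imprecision in the pivotal step and the route is substantially heavier than what the paper actually does.

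\medskip
\textbf{Comparison for the inclusion $\Pi^o(\nu)\subset\Pib(\nu)$.}
The paper does not Markovianize at all. Its admissibility definition allows the kernel $\Gamma_t$ to be any progressive random kernel, not a function of the current state; so the paper simply takes $\Gamma_t = \delta_{(R^1_t,R^2_t,\beta_t)}$, the Dirac at the open--loop drift arguments, which makes admissibility of $(\mu^{o,\gamma,\beta},\Lambda^{o,\gamma,\beta})$ immediate. It then checks that $\Lambda^{o,\gamma,\beta}_t(\cdot,A_{\rm reg})\in\Mcb_{\rm aux}$ by exhibiting a compatible tuple built from the open--loop noise together with an independent copy $(\Xt_0,\Ut,\Wt)$, invoking $\Mcb_{\rm aux}=\Mcb$ (\Cref{prop:reformulation_of_relaxed}). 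Membership in $\Pib(\nu)$ is then a corollary of the abstract representation theorem \Cref{prop:charac_relaxed}, which produces a pair $(\gammabb,\betabb)$ for any relaxed control, once and for all. Your proof instead carries out, explicitly and by hand, exactly the Markovianization that \Cref{prop:charac_relaxed} provides abstractly: conditioning the history on $(X_t,U)$ via $\kappa_t$, sampling with $S$, and matching the resulting relaxed drift to a conditional expectation. What this buys is an explicit feedback form for $(\gammabb,\alphabb)$; what it costs is a non--trivial FP--matching argument that the paper avoids entirely.

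\medskip
\textbf{The imprecision.}
You write that "the identity $\Lambdab^{\gammabb,\alphabb}_t=\Lambda^{o,\gamma,\alpha}_t$ is reduced to showing that $X_t$ itself is a weak solution of \eqref{eq:relaxed_general_mck_ci}." This is false as stated: the open--loop process $X^o$ has a history--dependent drift $b(t,X^o_t,R^1_t,R^2_t,\beta_t)$ and therefore is \emph{not} a solution of the Markovian relaxed SDE, whose drift is a function of $(t,X_t,U)$ only. What the mimicking argument actually gives is that the marginal flow $\mu^o_t=\Lc(X^o_t,U)$ solves the relaxed Fokker--Planck equation with drift $\hat b(t,x,u)=\E[b(t,X^o_t,R^1,R^2,\beta)\mid X^o_t=x,U=u]$; combining this with linear FP well--posedness (bounded Borel drift, non--degenerate Lipschitz diffusion) and then the McKean--Vlasov uniqueness of \Cref{prop:relaxed_uniqueness}, one concludes $\mu^{\gammabb,\alphabb}=\mu^o$, not that $X^o$ solves the relaxed SDE. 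The identity $\Lambdab^{\gammabb,\alphabb}=\Lambda^{o,\gamma,\alpha}$ then requires a further, unstated, joint--law verification: since $\Mb^1,\Mb^2,\alphabb$ are all functions of the \emph{same} sample $S(\kappa_t(x,u),v)$ and $\Lambdab$ depends only on the marginal flow and these deterministic maps, the joint conditional law of $(M^1,M^2,\alpha)$ given $(X^o_t,U)$ is correctly reproduced. This coupling through a common $v$ is essential and should be spelled out; without it the argument only matches the individual marginals of $M^1$, $M^2$, $\alpha$, not their joint law.

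\medskip
For the reverse inequality $V_{\rm MFC}(\nu)\le V^o_{\rm MFC}(\nu)$ your argument is essentially the paper's (strong well--posedness $\Rightarrow$ Borel functional $F$ $\Rightarrow$ composition), the only difference being that you invoke Veretennikov for general bounded Borel controls where the paper restricts to Lipschitz $(\gamma,\alpha)$ and appeals to the density results; both routes are fine. Overall: correct in outline and an interesting alternative for the first inclusion, but the "mimicking" passage needs to be restated carefully (the open--loop $X^o$ is not the relaxed solution; only the flows coincide), and the joint--law step must be made explicit.
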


\begin{remark}
    This result confirms that the formulation through the driving noise does not enlarge the set of admissible mean--field dynamics. 
    In particular, the optimization over open--loop controls leads to the same attainable laws and optimal value as in the closed--loop case.
    Hence, both viewpoints can be used interchangeably depending on analytical or numerical convenience.
\end{remark}

\subsection{The \emph{n--particle} formulation}
We now introduce the associated \emph{\(n\)--particle} formulation. Let \( n \ge 1 \). We define \( \mathcal{A}_{n,\rm reg} \) as the set of progressively Borel measurable maps \( \beta^n : [0,T] \times (\Cc^d)^n \to A_{\rm reg} \) and  \( \mathcal{A}_{n,\rm int} \) as the set of progressively Borel measurable maps \( \beta^n : [0,T] \times (\Cc^d)^n \to A_{\rm int} \).
%, and \( \mathcal{U}_n \).
Let \( (\xi^n_{ij})_{1 \le i,j \le n} \subset \Er \) be a given \( n \times n \) matrix. Let  $(W^i)_{1 \le i \le n}$ be a sequence of independent $\R^d$--valued $\H$--Brownian motions and the initial distribution be $\nu^n \in \Pc\bigr((\R^d)^n \bigl)$. Given interaction controls \( \boldsymbol{\gamma}^n := (\gamma^{n}_{ij})_{1 \le i,j \le n} \subset \mathcal{A}_{n,\rm int} \) and regular controls \( \boldsymbol{\alpha}^n := (\alpha^{1,n}, \dots, \alpha^{n,n}) \subset \mathcal{A}_{n,\rm reg} \), we define the interacting particle system \( \Xbb^n := (X^{1,n}, \dots, X^{n,n}) \) via: $\Lc(\Xbb^n_0)=\nu^n$ and
\begin{align} \label{eq:n_particle}
    \mathrm{d}X^i_t = b\left(t,X^i_t,M^{1,i,n}_{\gammab^n,t}, M^{2,i,n}_{\gammab^n,t},\alpha^{i,n}(t,\Xbb^n) \right)\mathrm{d}t + \sigma(t, X^i_t)\mathrm{d}W^i_t,
\end{align}
where the associated empirical measures are given by
\begin{align*}
    M^{1,i,n}_{\gammab^n,t} 
    = \frac{1}{n} \sum_{j=1}^n \delta_{\left( \gamma^n_{ij}(t, \Xbb^n),\, X^j_t,\,u^j_n,\, \xi^n_{ij} \right)},\qquad 
    M^{2,i,n}_{\gammab^n,t} 
    = \frac{1}{n} \sum_{j=1}^n \delta_{\left( \gamma^n_{ji}(t, \Xbb^n),\, X^j_t,\,u^j_n,\, \xi^n_{ij} \right)},
\end{align*}
for all \(u^i_n := \frac{i}{n}\) with \(i = 1, \dots, n\).
For each \( 1 \le i \le n \), the collection \( (\gamma^n_{ij})_{1 \le j \le n} \) represents the controls used by (or assigned to) player \( i \) to adjust their interactions with the rest of the population.

\medskip
The $n$--player optimization is defined by
\begin{align*}
    V_n(\nu^n):=\sup_{\gammab^n,\alphab^n} J_n(\nu^n,\gammab^n,\alphab^n),\;J_n(\nu^n,\gammab^n,\alphab^n):=\frac{1}{n} \sum_{i=1}^n\E \left[ \int_0^T L\left(t, X^i_t, M^{1,i,n}_{\gammab^n,t}, M^{2,i,n}_{\gammab^n,t}, \alpha^{i,n}(t, \Xbb) \right)\mathrm{d}t + g \left( X^i_T, \Rr^{i,n}_T \right)  \right]
\end{align*}
with $\Rr^{i,n}_t:=\frac{1}{n} \sum_{j=1}^n \delta_{\left( X^j_t ,\, \xi^n_{ij}  \right)}$.

\medskip
The step kernel associated with $(\xi^n_{ij})_{1 \le i,j \le n}$ is defined by the map \(\Gr^n : [0,1]^2 \to \Er\) as follows:
\begin{align*}
    \Gr^n(u,v) &:= \sum_{1 \le i,j \le n} \xi^n_{ij} \, \mathbf{1}_{(u^i_n - \frac{1}{n},\, u^i_n]}(u)\, \mathbf{1}_{(u^j_n - \frac{1}{n},\, u^j_n]}(v),
\end{align*}
for all \(u^i_n := \frac{i}{n}\) with \(i = 1, \dots, n\).
We assume that
\begin{align} \label{eq:cond_kernel}
    \lim_{n \to \infty} \| f \circ \Gr^n - f \circ \Gr \|_{\Box} = 0,
    \qquad \mbox{ for all Lipschitz map } f:\Er \to\R,
\end{align}
where the \emph{cut--norm} of a kernel $T:[0,1]^2 \to \R$ is defined by
\[
    \|T\|_{\Box} := \sup_{A,B \subset [0,1]} 
    \Bigg|\int_{A \times B} T(x,y)\,\mathrm{d}x\,\mathrm{d}y\Bigg|.
\]

\begin{remark}
    This assumption ensures that the sequence of step--kernels $\Gr^n$ converges to the limiting kernel $\Gr$ in the \emph{cut--norm} topology, when tested against bounded Lipschitz functions.  
Depending on the regularity of the coefficients $b$, $L$, and $g$, this requirement can in fact be relaxed.  
Notice that, when $\Er \subset \R^\ell$ for some $\ell \ge 1$, convergence in $\L^1$---that is,
\[
    \lim_{n \to \infty} \|\Gr^n - \Gr\|_{\L^1([0,1]^2)} = 0,
\]
implies convergence in the cut--norm sense stated in \eqref{eq:cond_kernel}.  
Hence, our assumption \eqref{eq:cond_kernel} is strictly weaker than the usual $\L^1$--type convergence commonly imposed in the literature on non--exchangeable $($or graphon--based$)$ mean--field systems $($see for instance {\rm \cite{cao2025probabilisticanalysisgraphonmean}}$)$. We also impose no continuity assumptions on the map $(u,v) \mapsto \Gr (u,v)$.
\end{remark}

Our objective now is to describe the connection between the {\it n}--player formulation and the mean--field control problem. For each $(\gammab^n,\alphab^n) \in (\Ac_{n,\,\rm int})^{n^2} \x (\Ac_{n,\,\rm reg})^n$, we define 
$$
    \mu^{n,\gammab^n,\alphab^n}_t:=\frac{1}{n} \sum_{i=1}^n \delta_{\left( X^i_t,\,u^i_n \right)}\quad\mbox{for all }t \in [0,T],
$$ 
with 
\begin{align*}
    \Lambda^{n,\gammab^n,\alphab^n}_t(\mathrm{d}x,\mathrm{d}u,\mathrm{d}r_1,\mathrm{d}r_2,\mathrm{d}a)\,\mathrm{d}t
    &:= \frac{1}{n} \sum_{i=1}^n \delta_{\left( X^i_t,\,u^i_n,\,M^{1,i,n}_{\gammab^n,\,t},\,M^{2,i,n}_{\gammab^n,\,t},\,\alpha^{i,n}(t,\Xbb^n) \right)}(\mathrm{d}x,\mathrm{d}u,\mathrm{d}r_1,\mathrm{d}r_2,\mathrm{d}a)\,\mathrm{d}t
\end{align*}
and
\begin{align*}
    \Pr^n
    :=
    \P \circ\left( \mu^{n,\gammab^n,\alphab^n},\, \Lambda^{n,\gammab^n,\alphab^n}  \right)^{-1}.
\end{align*}
For each $m \in \Pc(\R^d \x [0,1])$ and $u \in [0,1]$, we set  $\Rr(m,u):=\Lc(X^m, \Gr (u,U^m))$ where $(X^m,U^m)$ are random variables s.t. $m=\Lc(X^m,U^m)$.
\begin{theorem} \label{thm:from_n_to_limit}
    Let {\rm\Cref{assum:main1_MF_CI}} hold. 
    Assume that the sequence of initial laws $(\nu^n)_{n \ge 1} \subset \Pc\bigl((\R^d)^n\bigr)$ is s.t. the sequence
    \begin{align} \label{eq:cond_initial}
        \left(\Lc\left( \frac{1}{n} \sum_{j=1}^n \delta_{X^i_0}  \right) \right)_{n \ge 1}
    \end{align}
    is relatively compact for the weak topology. 
    Then, the sequence of laws
    \[
        \left(\Pr^n := \P \circ\!\left(\mu^{n,\gammab^n,\alphab^n},\, \Lambda^{n,\gammab^n,\alphab^n}\right)^{-1}\right)_{n \ge 1}
    \]
    is relatively compact for the weak topology.
    Moreover, every limit point $\Pr=\P \circ (\mu,\Lambda)^{-1}$ of a convergent subsequence $(\Pr^{n_k})_{k \ge 1}$ is supported on the set
    \[
        \bigcup_{\nu' \in \Pc(\R^d)} \Pib(\nu')
    \]
    and
    \begin{align*}
        \lim_{k \to \infty} J_{n_k}(\nu^{n_k},\gammab^{n_k},\alphab^{n_k}) = \E \left[ \int_0^T \langle L(t,\cdot),\Lambda_t \rangle \mathrm{d}t +\int_{\R^d \x [0,1]} g\left( x, \Rr(\mu_T,u) \right) \mu_T(\mathrm{d}x,\mathrm{d}u) \right].
    \end{align*} 
\end{theorem}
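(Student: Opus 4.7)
The plan is to carry out the three standard mean--field--limit steps---tightness of $(\Pr^n)$, identification of every weak limit as an element of $\bigcup_{\nu'}\Pib(\nu')$ via a martingale problem, and continuous passage to the limit in the reward---with two non--standard ingredients: the cut--norm assumption \eqref{eq:cond_kernel} plays the role usually played by $L^1$--convergence of the step kernels, and the enlarged control class $\Acb_{\rm int}\times\Acb_{\rm reg}$ is exactly broad enough to absorb the limits of the interaction patterns $(\gamma^n_{ij})$, which is why the limit lies in $\Pib(\nu')$ rather than $\Pi(\nu')$.

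For \textbf{tightness}, boundedness of $b$ and boundedness/Lipschitzness of $\sigma$ yield uniform--in--$(i,n,\gammab^n,\alphab^n)$ moment and H\"older--type estimates on $X^{i,n}$; combined with \eqref{eq:cond_initial}, the standard empirical--measure criterion gives tightness of $\mu^{n,\gammab^n,\alphab^n}$ in $C([0,T];\Pc(\R^d\times[0,1]))$. For $\Lambda^{n,\gammab^n,\alphab^n}$, compactness of $A_{\rm reg}$, $A_{\rm int}$, $\Er$, together with the tightness of the $\R^d$--marginal inherited from $\mu^n$, provides tightness in the appropriate space of measures on $[0,T]\times\R^d\times[0,1]\times\Pc(\cdots)^2\times A_{\rm reg}$, hence tightness of the $\Pr^n$.

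For \textbf{identification}, Prokhorov and Skorokhod's theorem let me assume almost sure convergence $(\mu^{n_k},\Lambda^{n_k})\to(\mu,\Lambda)$. Applying It\^o's formula to $\varphi(X^{i,n}_t)\psi(u^i_n)$ with $\varphi\in C^2_b(\R^d)$ and $\psi\in C_b([0,1])$, averaging over $i$, and using the independence of the $W^i$'s to kill a martingale term of vanishing quadratic variation, the pair $(\mu^n,\Lambda^n)$ solves a discrete martingale problem whose limit is precisely the relaxed martingale problem associated to \eqref{eq:relaxed_general_mck_ci}. Disintegrating $\Lambda$ and invoking a measurable--selection / randomization lemma in the spirit of the proof of \Cref{thm:equivalence_relaxed_strong} produces a pair $(\gammabb,\alphabb)\in\Acb_{\rm int}\times\Acb_{\rm reg}$ with initial law $\nu'$ realizing $(\mu,\Lambda)=(\mu^{\gammabb,\alphabb},\Lambdab^{\gammabb,\alphabb})$. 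The \emph{main obstacle} is this identification: the measures $M^{1,i,n}$ and $M^{2,i,n}$ involve the same family $\gamma^n_{ij}$ in asymmetric outer/inner roles ($i$ fixed vs.\ $j$ fixed), and the finite--$n$ coupling between these two roles via the pair $(i,j)$ generically dissolves in the limit, which is exactly why the auxiliary uniforms $v,\widetilde v,\pi$ in $\Acb_{\rm int}$ are needed to represent the limit. The cut--norm hypothesis \eqref{eq:cond_kernel} is the precise tool to handle the $\xi^n_{ij}$'s: for Lipschitz $f:\Er\to\R$ and continuous bounded $\phi$, expressions of the form $\tfrac{1}{n^2}\sum_{i,j}h(u^i_n)\phi(X^{j,n}_t)f(\xi^n_{ij})$ converge to $\int h(u)\phi(x)f(\Gr(u,v))\mu_t(\mathrm{d}x,\mathrm{d}v)\mathrm{d}u$, and a density argument reduces general bounded continuous test functions on $\Er$ to Lipschitz ones.

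For \textbf{convergence of the rewards}, continuity and boundedness of $L$ together with the a.s.\ weak convergence $\Lambda^{n_k}\to\Lambda$ imply $\int_0^T\langle L(t,\cdot),\Lambda^{n_k}_t\rangle\mathrm{d}t\to\int_0^T\langle L(t,\cdot),\Lambda_t\rangle\mathrm{d}t$; continuity of $g$ together with the weak convergence $\mu^{n_k}_T\to\mu_T$ and the cut--norm convergence $\Gr^n\to\Gr$ (used via the same Lipschitz--test--function argument as above, applied to $m\mapsto\Rr(m,u)$) yield convergence of the terminal term. Taking expectations and using dominated convergence gives the claimed identity for $\lim_k J_{n_k}(\nu^{n_k},\gammab^{n_k},\alphab^{n_k})$.
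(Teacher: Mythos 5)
Your three--phase plan (tightness, identification, reward convergence) matches the paper's overall strategy, and you correctly flag the main obstacle: the single family $\gamma^n_{ij}$ plays asymmetric outgoing/incoming roles in $M^{1,i,n}$ and $M^{2,j,n}$, and the finite--$n$ coupling through the pair $(i,j)$ is not visible from the one--particle empirical data $(\mu^n,\Lambda^n)$ alone. But your proposed resolution of this obstacle does not close the gap.

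The issue is in the identification step. Passing to the limit in the It\^o/Fokker--Planck identity (plus Skorokhod) only shows that $(\mu,\Lambda)$ is \emph{admissible} in the sense of \Cref{sec:relaxed} --- i.e.\ the correct PDE/martingale relation holds. It does \emph{not} show that, for a.e.\ $t$, the marginal $\Lambda_t(\mathrm{d}x,\mathrm{d}u,\mathrm{d}r_1,\mathrm{d}r_2,A_{\rm reg})$ is a mixture of elements of the constrained class $\Mcb$ with a $t$--independent $(x,u)$--marginal, which is exactly what "$(\mu,\Lambda)$ is a relaxed control" means. Invoking the randomization/measurable--selection lemma (\Cref{prop:charac_relaxed}) is circular here: that proposition takes as input a pair $(\mu,\Lambda)$ already known to be a relaxed control and produces a representing $(\gammabb,\alphabb)$; it cannot manufacture the $\Mcb$--mixture structure for an arbitrary admissible limit. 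Without verifying that structure, you have not placed the limit inside $\Pib(\nu')$.

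What the paper actually does, and what your outline is missing, is to track the pair--indexed data along the sequence. It lifts $(\mu^n,\Lambda^n)$ to the auxiliary empirical measure
\[
  \Gammabb^n_t
  =\frac{1}{n^2}\sum_{i,j}
  \delta_{(\gamma^{ij}_n(t,\Xbb^n),\,\xi^n_{ij},\,H^{i,n}_t,\,H^{j,n}_t,\,\Gamma^n_t)}
\]
on $\Delta=A_{\rm int}\times\Er\times\Xi^2\times\Pc(\Xi)$, where $H^{i,n}_t$ records the $i$--th particle's state, label, interaction laws and action. The limit of $\Gammabb^n$ is then shown (Step 2 of the paper's proof) to satisfy the \emph{compatibility} conditions of $\Mcb_{\rm aux}$: conditional independence of $H,\Hh$ given $\Theta$; the identities $M^1=\Lc(\gamma,\Xh,E\mid H,\Theta)$ and $\Mh^2=\Lc(\gamma,X,E\mid\Hh,\Theta)$; the edge--mark consistency $E=\Gr(U,\Uh)$ via \Cref{prop:limit_kernel_1} (this is where the cut--norm assumption is actually applied, through the double--sum identities you wrote, but crucially tested jointly with the pair data); and marginal consistency with $\mu_t$. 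Only then does \Cref{prop:reformulation_of_relaxed} ($\Mcb_{\rm aux}=\Mcb$) deliver $\Lambda_t\in\Mcb$ and hence $(\mu,\Lambda)\in\Pib(\nu')$. Your plan never introduces this pair--level bookkeeping object, which is precisely what prevents the $(i,j)$ coupling from "dissolving" in the limit.

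The tightness step and the reward--convergence step in your plan are sound (the paper uses the same decomposition $X^i=(X^i-X^i_0)+X^i_0$ for tightness, and the same Lipschitz--test/cut--norm argument for the terminal cost, though it additionally has to identify the limit of the auxiliary $\mub^n_t=\frac1n\sum_i\delta_{(X^i_t,\Rr^{i,n}_t)}$ through the same lifted measure).
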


\begin{remark}
    $(i)$ This result provides the compactness foundation linking the finite $n$--particle formulation to the mean--field control problem. 
    In particular, any accumulation point of the sequence $(\Pr^n)_{n \ge 1}$ represents the law of a relaxed mean--field control satisfying the limiting McKean--Vlasov dynamics. 
    Hence, the theorem ensures that the mean--field formulation is the natural asymptotic limit of the $n$--player optimization problems, and that no loss of admissible dynamics occurs in the passage from the finite system to its continuum counterpart.

    \medskip
    $(ii)$ It is important to note that the assumptions on the initial distributions concern only the sequence specified in {\rm\eqref{eq:cond_initial}}.  
In particular, we neither require the random variables $(X^i_0)_{1 \le i \le n}$ to be independent nor assume that they share a common distribution.  
All convergence statements are understood in the weak topology, and it is unnecessary to work with Wasserstein distances $\Wc_p$ for $p>1$, since the coefficients are uniformly bounded and the sets $A_{\rm int}$, $A_{\rm reg}$, and $\Er$ are compact.  
While our analysis could be extended to unbounded settings by suitably adapting the estimates and tightness arguments, we restrict attention to this bounded framework for clarity and simplicity of exposition.

\end{remark}

Next, let us provide an approximation of any McKean--Vlasov limit through a $n$--particle system. Let $(\gamma, \alpha) \in \Ac_{\rm int} \x \Ac_{\rm reg}$. We consider a sequence of Lipschitz map $(\gamma^\ell,\alpha^\ell)_{\ell \ge 1}$ s.t. $\Lim_{\ell \to \infty} (\gamma^\ell, \alpha^\ell)=(\gamma,\alpha)$ a.e.  For any $(t,x_1,\cdots,x_n) \in [0,T] \x (\Cc^d)^n$, we set
\begin{align*}
    \gamma^{\ell,n}_{ij}(t,x_1,\dots,x_n) &:= \gamma^\ell(t, x_i(t), u^i_n, x_j(t), u^j_n), \\
    \alpha^{\ell,i,n}(t,x_1,\dots,x_n) &:= \alpha^\ell(t, x_i(t), u^i_n).
\end{align*}

\begin{theorem} \label{thm:strong_to_n}
    Under {\rm\Cref{assum:main1_MF_CI}}, if 
    \begin{align*}
        \Lim_{\ell \to \infty}\Lim_{n \to \infty} \Lc \bigl( \mu^{n,\gammab^{\ell,n},\alphab^{\ell,n}}_0 \bigr)=\delta_{\eta}\quad\mbox{in }\Wc_p\mbox{ for some }\eta \in \Pc(\R^d \x [0,1])\mbox{ and } p \in \{ 0\} \cup [1,\infty),
    \end{align*}
    we have
    \begin{align*}
        \Lim_{\ell \to \infty}\Lim_{n \to \infty} \Lc \bigl( \mu^{n,\gammab^{\ell,n},\alphab^{\ell,n}},\,\Lambda^{n,\gammab^{\ell,n},\alphab^{\ell,n}} \bigr)=\delta_{\left( \mu^{\gamma,\alpha},\,\Lambda^{\gamma,\alpha} \right)}\quad\mbox{in }\Wc_p\quad \mbox{and}\quad \Lim_{\ell \to \infty}\Lim_{n \to \infty} J_n(\nu^n,\gammab^{\ell,n},\alphab^{\ell,n})=J(\eta,\gamma,\alpha).
    \end{align*}
\end{theorem}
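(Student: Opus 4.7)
\textbf{Plan for the proof of Theorem \ref{thm:strong_to_n}.} The strategy is to decouple the iterated limit and argue first at fixed $\ell$ via a propagation--of--chaos identification, then send $\ell \to \infty$ through a stability argument for the limiting McKean--Vlasov dynamics with respect to the control pair.

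\emph{Inner limit $n \to \infty$ at fixed $\ell$.} For each fixed $\ell$, the maps $(\gamma^\ell, \alpha^\ell)$ are Lipschitz by construction, so the McKean--Vlasov equation \eqref{eq:general_mck_ci} driven by them and starting from $\eta$ admits a unique strong solution whose law--valued summary I denote $(\mu^{\gamma^\ell,\alpha^\ell}, \Lambda^{\gamma^\ell,\alpha^\ell}) \in \Pib(\nu)$; this is precisely the content of the remark following Assumption \ref{assum:main1_MF_CI}. Applying Theorem \ref{thm:from_n_to_limit} to the $n$--particle system driven by $(\gammab^{\ell,n}, \alphab^{\ell,n})$ gives that $\Lc(\mu^{n,\gammab^{\ell,n},\alphab^{\ell,n}}, \Lambda^{n,\gammab^{\ell,n},\alphab^{\ell,n}})$ is relatively compact in the weak topology, with every accumulation point supported on $\bigcup_{\nu'} \Pib(\nu')$. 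Matching the initial empirical measure forces $\nu'$ to be the marginal of $\eta$, and weak uniqueness of the Lipschitz McKean--Vlasov equation forces the whole sequence to converge to the Dirac mass $\delta_{(\mu^{\gamma^\ell,\alpha^\ell},\Lambda^{\gamma^\ell,\alpha^\ell})}$. Upgrading from weak convergence to $\Wc_p$ convergence is immediate since all coefficients are bounded and $A_{\rm int}, A_{\rm reg}, \Er$ are compact.

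\emph{Outer limit $\ell \to \infty$.} Since $(\gamma^\ell, \alpha^\ell) \to (\gamma, \alpha)$ a.e.\ with uniform bounds, and $\Pib(\nu)$ is closed in $\Wc_p$ by Theorem \ref{thm:equivalence_relaxed_strong}, one can extract a convergent subsequence of $(\mu^{\gamma^\ell,\alpha^\ell}, \Lambda^{\gamma^\ell,\alpha^\ell})$ in $\Pib(\nu)$ and identify the limit as $(\mu^{\gamma,\alpha}, \Lambda^{\gamma,\alpha})$ via the weak--uniqueness argument underlying Proposition \ref{prop:relaxed_uniqueness}, which crucially exploits the non--degeneracy of $\sigma$ and the continuity of $b$ in its measure and control arguments. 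Combining with the inner step and a triangle--type argument yields the claimed double--limit convergence. The convergence of the costs $J_n(\nu^n, \gammab^{\ell,n}, \alphab^{\ell,n}) \to J(\eta, \gamma, \alpha)$ then follows from the convergence of $\Lambda^{n,\gammab^{\ell,n},\alphab^{\ell,n}}$ to $\Lambda^{\gamma,\alpha}$, together with boundedness and continuity of $L$ and $g$, via dominated convergence and the Portmanteau theorem.

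\emph{Main obstacle.} The principal difficulty lies in the inner identification step under the weak cut--norm hypothesis \eqref{eq:cond_kernel}. Because $M^{1,i,n}_{\gammab^{\ell,n},t}$ and $M^{2,i,n}_{\gammab^{\ell,n},t}$ couple the state empirical measure with the adjacency weights $(\xi^n_{ij})$ in a non--symmetric way, standard propagation--of--chaos arguments tailored to $\L^1$ convergence of the kernel do not apply directly. The key estimate one must establish is, for Lipschitz test $\varphi$,
\[
\frac{1}{n}\sum_{j=1}^n \varphi\bigl(\gamma^\ell(t, X^i_t, u^i_n, X^j_t, u^j_n), X^j_t, u^j_n, \xi^n_{ij}\bigr) \;\longrightarrow\; \int \varphi\bigl(\gamma^\ell(t, X^i_t, u^i_n, x', u'), x', u', \Gr(u^i_n, u')\bigr)\,\mu^{\gamma^\ell,\alpha^\ell}_t(\mathrm{d}x', \mathrm{d}u'),
\]
and the analogous statement for $M^{2,i,n}$. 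The Lipschitz structure of $\gamma^\ell$ allows one to treat the control argument as a continuous function of $(X^j_t, u^j_n)$; combining this with the cut--norm convergence of $\Gr^n$ applied to the Lipschitz post--composition inherited from $\varphi$, and with the standard Lipschitz propagation of chaos for the particles $X^j$, closes the inner step and the rest reduces to routine weak--convergence bookkeeping.
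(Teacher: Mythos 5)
Your decomposition into an inner $n\to\infty$ limit at fixed $\ell$ followed by an outer $\ell\to\infty$ limit matches the paper's structure (the paper proves the inner limit for Lipschitz $(\gamma,\beta)$ and invokes \Cref{prop:a.e.cong} for the outer one), and your ``main obstacle'' paragraph correctly identifies the crux: tracking the interaction terms $M^{1,i,n}$, $M^{2,i,n}$ through the limit via cut--norm convergence of the step--kernels plus Lipschitz propagation of chaos. However, the logical flow of your inner step has a genuine gap.

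You claim that applying \Cref{thm:from_n_to_limit} (compactness plus limits supported on $\bigcup_{\nu'}\Pib(\nu')$), together with ``weak uniqueness of the Lipschitz McKean--Vlasov equation,'' forces the whole sequence to converge to $\delta_{(\mu^{\gamma^\ell,\alpha^\ell},\,\Lambda^{\gamma^\ell,\alpha^\ell})}$. This does not follow. The conclusion of \Cref{thm:from_n_to_limit} only tells you that a limit point $(\mu,\Lambda)$ is an admissible \emph{relaxed} pair, i.e.\ is generated by \emph{some} $(\gammabb,\betabb)\in\Acb_{\rm int}\times\Acb_{\rm reg}$; it does not tell you that $\Lambda$ is the occupation measure generated by the specific Lipschitz feedback maps $(\gamma^\ell,\alpha^\ell)$. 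Weak uniqueness (\Cref{prop:relaxed_uniqueness}) gives uniqueness of the state flow $\mu$ \emph{once the control pair is fixed}; it provides no mechanism to identify which control pair the limit $\Lambda$ is associated with. What is actually needed is exactly the content of your ``obstacle'' paragraph — showing that along the limit, $M^{k,i,n}_{\gammab^{\ell,n},t}$ converges to $M^{k,\gamma^\ell}_{\mu^{\gamma^\ell,\alpha^\ell},t}(X^i_t,u^i_n)$, $k=1,2$, so that $\Lambda$ inherits the closed--loop feedback structure — and this must be the main argument, not an addendum. The paper does precisely this in its direct proof of \Cref{thm:strong_to_n}: it introduces the lifted empirical measures $\muh^{i,n},\mub^n,\mu^n$, identifies the kernel structure of their limit via \Cref{prop:limit_kernel} (the cut--norm identification lemma), and only \emph{then} uses It\^o's formula together with \Cref{prop:relaxed_uniqueness} to pin down $\mu$. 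The compactness--plus--uniqueness shortcut you propose, citing \Cref{thm:from_n_to_limit}, is too weak to replace this.

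The outer $\ell\to\infty$ step has the same issue in miniature: closedness of $\Pib(\nu)$ (from \Cref{thm:equivalence_relaxed_strong}) plus the weak--uniqueness argument of \Cref{prop:relaxed_uniqueness} only tells you that a subsequential limit is some relaxed pair; it does not identify it as $(\mu^{\gamma,\alpha},\Lambda^{\gamma,\alpha})$. The tool the paper uses is \Cref{prop:a.e.cong} (stability under a.e.\ convergence of the control maps), whose proof is not a Girsanov/entropy argument but a Krylov--type density estimate combined with H\"older/Arzel\`a--Ascoli compactness of the conditional densities. You should cite \Cref{prop:a.e.cong} rather than attempt to rebuild the conclusion from closedness and Girsanov uniqueness.
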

\begin{remark}
    The theorem establishes a \emph{propagation of chaos} property adapted to the controlled McKean--Vlasov setting: 
    under consistent initialization, the empirical laws of the $n$--particle system asymptotically behave as independent copies of the mean--field limit driven by the same control pair $(\gamma,\alpha)$.
\end{remark}

Combining the previous convergence theorem with the structural equivalence result of {\rm\Cref{thm:equivalence_relaxed_strong}}, 
    the next proposition shows that the mean--field control value $V_{\mathrm{MFC}}(\nu)$ accurately captures the asymptotic performance of the optimal $n$--player systems. 
    Hence, the mean--field formulation provides a consistent and rigorous limit theory for large controlled populations.

\begin{proposition} \label{prop:conv_value_function}
    Under {\rm\Cref{assum:main1_MF_CI}}, assume that the sequence of initial laws $(\nu^n)_{n \ge 1}$ satisfies
    \[
        \Lim_{n \to \infty}\Lc\left( \frac{1}{n} \sum_{j=1}^n \delta_{X^i_0} \right)
        = \delta_{\nu}
        \quad \text{in } \Wc_p\mbox{ for some }p \in \{ 0\} \cup [1,\infty).
    \]
    Then, the sequence of $n$--player value functions converges to the mean--field value:
    \[
        \lim_{n \to \infty} V_n(\nu^n)
        = V_{\mathrm{MFC}}(\nu).
    \]
\end{proposition}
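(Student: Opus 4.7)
The plan is to establish the two inequalities $\limsup_{n\to\infty} V_n(\nu^n)\le V_{\mathrm{MFC}}(\nu)$ and $\liminf_{n\to\infty} V_n(\nu^n)\ge V_{\mathrm{MFC}}(\nu)$ separately, by combining the strong/relaxed equivalence of \Cref{thm:equivalence_relaxed_strong}, the compactness result of \Cref{thm:from_n_to_limit}, and the approximation result of \Cref{thm:strong_to_n}.

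For the upper bound, fix $\varepsilon>0$ and choose, for each $n$, $\varepsilon$--optimal controls $(\gammab^n,\alphab^n)$ with $J_n(\nu^n,\gammab^n,\alphab^n)\ge V_n(\nu^n)-\varepsilon$. The assumption $\Lc(\tfrac{1}{n}\sum_j\delta_{X^j_0})\to\delta_\nu$ in $\Wc_p$ trivially implies the relative-compactness hypothesis required by \Cref{thm:from_n_to_limit}. Hence there exists a subsequence along which the laws $\Pr^{n_k}:=\P\circ(\mu^{n_k,\gammab^{n_k},\alphab^{n_k}},\Lambda^{n_k,\gammab^{n_k},\alphab^{n_k}})^{-1}$ converge weakly to some $\Pr=\P\circ(\mu,\Lambda)^{-1}$ supported on $\bigcup_{\nu'\in\Pc(\R^d)}\Pib(\nu')$. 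The deterministic limit of the initial empirical measures forces the first marginal of $\mu_0$ to equal $\nu$ almost surely, so in fact $\Pr$ is supported on $\Pib(\nu)$. Combining the reward-limit identity of \Cref{thm:from_n_to_limit} with the definition of $\Vb_{\mathrm{MFC}}(\nu)$ and \Cref{thm:equivalence_relaxed_strong} yields
\[
    \lim_{k\to\infty} J_{n_k}(\nu^{n_k},\gammab^{n_k},\alphab^{n_k})\;\le\;\Vb_{\mathrm{MFC}}(\nu)\;=\;V_{\mathrm{MFC}}(\nu),
\]
and a standard subsequence argument upgrades this to $\limsup_n V_n(\nu^n)\le V_{\mathrm{MFC}}(\nu)$ after letting $\varepsilon\to 0$.

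For the lower bound, fix $\varepsilon>0$ and pick $\eta\in\Pc_\nu$ and $(\gamma,\alpha)\in\Ac_{\rm int}\times\Ac_{\rm reg}$ such that $J(\eta,\gamma,\alpha)\ge V_{\mathrm{MFC}}(\nu)-\varepsilon$. Take Lipschitz approximations $(\gamma^\ell,\alpha^\ell)\to(\gamma,\alpha)$ a.e.\ and build the particle controls $(\gammab^{\ell,n},\alphab^{\ell,n})$ as in \Cref{thm:strong_to_n}. Provided the initial empirical measures $\mu^{n,\gammab^{\ell,n},\alphab^{\ell,n}}_0=\tfrac{1}{n}\sum_i\delta_{(X^i_0,\,i/n)}$ converge in $\Wc_p$ to $\eta$, \Cref{thm:strong_to_n} yields
\[
    \lim_{\ell\to\infty}\lim_{n\to\infty} J_n(\nu^n,\gammab^{\ell,n},\alphab^{\ell,n})\;=\;J(\eta,\gamma,\alpha)\;\ge\; V_{\mathrm{MFC}}(\nu)-\varepsilon.
\]
A diagonal extraction $(\ell_n)$ then produces admissible $n$-player controls with $J_n(\nu^n,\gammab^{\ell_n,n},\alphab^{\ell_n,n})\ge V_{\mathrm{MFC}}(\nu)-2\varepsilon$ for all $n$ large, giving $\liminf_n V_n(\nu^n)\ge V_{\mathrm{MFC}}(\nu)-2\varepsilon$; letting $\varepsilon\to 0$ concludes.

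The main obstacle lies in the initial-measure matching required by the lower bound. The hypothesis controls only the marginal limit $\tfrac{1}{n}\sum\delta_{X^i_0}\to\nu$, whereas the target coupling $\eta\in\Pc_\nu$ encodes the joint law with the deterministic labels $u^i_n=i/n$. Realizing an arbitrary $\eta\in\Pc_\nu$ therefore calls for a sorting/quantile-coupling step: relabel the particles by a permutation $\pi_n$ so that $\tfrac{1}{n}\sum\delta_{(X^{\pi_n(i)}_0,\,i/n)}\to\eta$ in $\Wc_p$, simultaneously permuting the adjacency matrix $(\xi^n_{ij})$ and the associated controls so that the $n$-player value is unchanged and the step--kernel still satisfies \eqref{eq:cond_kernel} with respect to the correspondingly relabeled limit of $\Gr$. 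Equivalently, one may first prove the bound for the canonical coupling $\eta=\nu\otimes\mathrm{Unif}[0,1]$ — produced automatically by any exchangeable arrangement of the $X^i_0$'s — and then recover arbitrary $\eta\in\Pc_\nu$ either by such a deterministic relabeling or by encoding the desired $\eta$--coupling through the freedom in the interaction and regular controls themselves. Once this measure-theoretic sorting step is settled, the remainder of the argument is a routine diagonal extraction from the two convergence theorems above.
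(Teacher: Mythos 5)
Your two--inequality decomposition and the upper--bound argument match the paper's own proof: pass to nearly optimal $n$--player controls, invoke \Cref{thm:from_n_to_limit} to get weak compactness and support in the relaxed set, observe that the deterministic limit of the initial $x$--marginals forces limit points into $\Pib(\nu)$, and close with the reward identity and \Cref{thm:equivalence_relaxed_strong}. Where you depart is in explicitly flagging the coupling obstruction in the lower bound, and you are right to flag it: the hypothesis fixes only the limit of $\tfrac{1}{n}\sum_j\delta_{X^j_0}$, whereas \Cref{thm:strong_to_n} can only approximate $J(\eta,\gamma,\alpha)$ for the \emph{specific} $\eta$ to which $\tfrac{1}{n}\sum_i\delta_{(X^i_0,\,i/n)}$ happens to converge, and that $\eta$ is determined by the arrangement of the initial data, not by the optimizer.

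Your proposed remedy, however, does not close the gap. Permuting the particles by $\pi_n$ and simultaneously permuting $(\xi^n_{ij})$ so that $V_n(\nu^n)$ is unchanged also permutes the step--kernel $\Gr^n$; its limit becomes a graphon $\Gr'$ equivalent to $\Gr$, and the pair $(\eta,\Gr')$ so produced is always permutation--equivalent to the original arrangement--determined pair $(\eta^*,\Gr)$, so $\sup_{\gamma,\alpha}J(\eta,\gamma,\alpha;\Gr')=\sup_{\gamma,\alpha}J(\eta^*,\gamma,\alpha;\Gr)$ --- relabeling is a null move and never reaches $\sup_{\eta\in\Pc_\nu}$. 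Concretely, take $d=1$, $\Er=[0,1]$, $\Gr(u,u')=u$, $b=L=0$, $\sigma=1$, $g(x,\bar m)=\tanh(x)\int e\,\bar m(\mathrm{d}x',\mathrm{d}e)$, $\xi^n_{ij}=u^i_n$ and i.i.d.\ initial data $X^i_0\sim\nu$; no control enters, so
\[
    V_n(\nu^n)
    = \frac{1}{n}\sum_{i=1}^n\E\bigl[\tanh(X^i_T)\bigr]\,u^i_n
    \longrightarrow \frac{1}{2}\int_{\R}\phi\,\mathrm{d}\nu,
    \qquad \phi(x):=\E\bigl[\tanh(x+W_T)\bigr],
\]
while $V_{\rm MFC}(\nu)=\sup_{\eta\in\Pc_\nu}\E^\eta[\phi(X_0)\,U]$ is the comonotonic value, which strictly exceeds $\tfrac12\int\phi\,\mathrm{d}\nu$ whenever $\phi$ is non--constant $\nu$--a.e. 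So the obstruction you identify is genuine, your sorting step does not settle it, and the paper's own lower--bound proof also invokes \Cref{thm:strong_to_n} for an arbitrary $\eta\in\Pc_\nu$ without verifying the required convergence of the joint empirical measure. A correct version appears to need either a strengthened hypothesis on $\tfrac{1}{n}\sum_i\delta_{(X^i_0,\,i/n)}\Rightarrow\eta^*$ with the conclusion recast as $V_n(\nu^n)\to\sup_{\gamma,\alpha}J(\eta^*,\gamma,\alpha)$, or a separate argument that $\sup_{\gamma,\alpha}J(\eta,\gamma,\alpha)$ is $\eta$--independent --- which the example above shows cannot hold without further structural assumptions.
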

An immediate consequence of {\rm\Cref{thm:equivalence_relaxed_strong}}, {\rm\Cref{thm:from_n_to_limit}}, {\rm\Cref{thm:strong_to_n}}, and {\rm\Cref{prop:conv_value_function}} is that the optimal controls of the finite--player problems $V_n(\nu^n)$ asymptotically coincide with those of the mean--field control problem $V_{\rm MFC}(\nu)$.  
This correspondence between the finite--dimensional and the mean--field formulations is summarized in the following corollary.

\begin{corollary} \label{cor_contruction_cong_optimal_control}
    Let us stay in the context of {\rm\Cref{prop:conv_value_function}}.  
    Suppose $(\gammabb,\alphabb)$ is an optimal relaxed control for $V_{\mathrm{MFC}}(\nu)$.  
    Then, there exists a sequence of $n$--player controls 
    \[
        (\gammab^n,\alphab^n) \in \Ac_{n,{\rm int}}^{n^2} \times \Ac_{n,{\rm reg}}^n,
        \qquad n \ge 1,
    \]
    such that, for each $n$, the pair $(\gammab^n,\alphab^n)$ is $\varepsilon_n$--optimal for $V_n(\nu^n)$ and satisfies
    \begin{align*}
        \lim_{n \to \infty} J_n(\nu^n,\gammab^n,\alphab^n) = V_{\mathrm{MFC}}(\nu),
        \qquad 
        \varepsilon_n := \big|\,V_n(\nu^n) - V_{\mathrm{MFC}}(\nu)\,\big|.
    \end{align*}
    In particular, if the relaxed optimal control $(\gammabb,\alphabb)$ happens to be \emph{strong} $($i.e., closed--loop$)$, 
    the approximating sequence $(\gammab^n,\alphab^n)$ can be explicitly constructed as described in {\rm\Cref{thm:strong_to_n}}.

    \medskip
    Conversely, consider a sequence of controls 
    \[
        (\gammab^n,\alphab^n) \in \Ac_{n,{\rm int}}^{n^2} \times \Ac_{n,{\rm reg}}^n, \qquad n \ge 1,
    \]
    such that each $(\gammab^n,\alphab^n)$ is $\varepsilon_n$--optimal for $V_n(\nu^n)$, with $\varepsilon_n \to 0$ as $n \to \infty$.  
    If there exists a convergent subsequence for which
    \[
        \P \circ \bigl(\mu^{n,\gammab^n,\alphab^n},\, \Lambda^{n,\gammab^n,\alphab^n}\bigr)^{-1}
        \;\Longrightarrow\;
        \delta_{(\mu^{\gammabb,\alphabb},\,\Lambdab^{\gammabb,\alphabb})},
    \]
    then the limiting control $(\gammabb,\alphabb)$ is optimal for the mean--field control problem $V_{\mathrm{MFC}}(\nu)$.
\end{corollary}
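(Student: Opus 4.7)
The plan is to treat the two directions separately, and in both cases to combine the preceding results (Theorems~\ref{thm:equivalence_relaxed_strong}, \ref{thm:from_n_to_limit}, \ref{thm:strong_to_n} and Proposition~\ref{prop:conv_value_function}) via a diagonal extraction.

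For the forward direction, I would start from the optimal relaxed pair $(\gammabb^\star,\alphabb^\star)$ and associated $\eta^\star\in\Pc_\nu$ whose existence is guaranteed by Proposition~\ref{prop:existence_continuity}. By Theorem~\ref{thm:equivalence_relaxed_strong}, $\Pib(\nu)$ coincides with the $\Wc_p$--closure of $\Pi(\nu)$, so I can pick a sequence of strong controls $(\gamma^k,\alpha^k)\in\Ac_{\rm int}\times\Ac_{\rm reg}$ and initial laws $\eta^k\in\Pc_\nu$ with $(\mu^{\gamma^k,\alpha^k},\Lambda^{\gamma^k,\alpha^k})\to(\mu^{\gammabb^\star,\alphabb^\star},\Lambdab^{\gammabb^\star,\alphabb^\star})$ in $\Wc_p$; continuity of $(L,g)$ together with \Cref{assum:main1_MF_CI} then yields $J(\eta^k,\gamma^k,\alpha^k)\to V_{\mathrm{MFC}}(\nu)$. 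For each such strong $(\gamma^k,\alpha^k)$, Theorem~\ref{thm:strong_to_n} produces Lipschitz regularizations $(\gamma^{k,\ell},\alpha^{k,\ell})$ and the associated $n$--player controls $(\gammab^{k,\ell,n},\alphab^{k,\ell,n})$ with $\lim_\ell\lim_n J_n(\nu^n,\gammab^{k,\ell,n},\alphab^{k,\ell,n})=J(\eta^k,\gamma^k,\alpha^k)$. A standard diagonal extraction $(\ell_n,k_n)$ then produces a single sequence $(\gammab^n,\alphab^n)$ with $J_n(\nu^n,\gammab^n,\alphab^n)\to V_{\mathrm{MFC}}(\nu)$. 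Combined with Proposition~\ref{prop:conv_value_function}, which gives $V_n(\nu^n)\to V_{\mathrm{MFC}}(\nu)$, one immediately obtains $V_n(\nu^n)-J_n(\nu^n,\gammab^n,\alphab^n)\to 0$, hence the sequence is $\varepsilon_n$--optimal in the sense stated.

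For the converse direction, I take an $\varepsilon_n$--optimal sequence $(\gammab^n,\alphab^n)$ with $\varepsilon_n\to 0$ and assume weak convergence of $\P\circ(\mu^{n,\gammab^n,\alphab^n},\Lambda^{n,\gammab^n,\alphab^n})^{-1}$ to $\delta_{(\mu^{\gammabb,\alphabb},\Lambdab^{\gammabb,\alphabb})}$. Theorem~\ref{thm:from_n_to_limit} applies and tells me that this limit point belongs to $\bigcup_{\nu'}\Pib(\nu')$; since the hypothesis on $(\nu^n)$ forces the marginal of $\mu^{\gammabb,\alphabb}_0$ to be $\nu$, it actually lies in $\Pib(\nu)$. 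The same theorem gives convergence of the rewards $J_n(\nu^n,\gammab^n,\alphab^n)\to \Jb(\eta,\gammabb,\alphabb)$ for the corresponding initial datum $\eta\in\Pc_\nu$. Using $\varepsilon_n$--optimality and Proposition~\ref{prop:conv_value_function},
\begin{align*}
\Jb(\eta,\gammabb,\alphabb)=\lim_n J_n(\nu^n,\gammab^n,\alphab^n)\geq\lim_n\bigl(V_n(\nu^n)-\varepsilon_n\bigr)=V_{\mathrm{MFC}}(\nu),
\end{align*}
and the reverse inequality is automatic from the definition of $V_{\mathrm{MFC}}$; hence $(\gammabb,\alphabb)$ is optimal.

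The main obstacle I anticipate is bookkeeping around the initial distribution when executing the diagonal extraction. One must make sure that the empirical initial measures $\mu^{n,\gammab^n,\alphab^n}_0$ built from the deterministic labels $u^i_n=i/n$ and the given $\nu^n$ actually converge to the chosen optimal $\eta^\star\in\Pc_\nu$ (not just to some $\eta\in\Pc_\nu$), which is precisely the role of the hypothesis of Theorem~\ref{thm:strong_to_n}; this in turn requires an appropriate coupling/ordering of the samples from $\nu^n$ with the label grid, and the matching regularity to invoke the $\ell\to\infty$ limit before $n\to\infty$. Once this coupling is in place, the remainder reduces to a direct application of the three preceding limit theorems.
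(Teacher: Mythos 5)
Your proof is correct and follows the route the paper has in mind: the paper states this corollary without a proof, presenting it as "an immediate consequence" of Theorems~\ref{thm:equivalence_relaxed_strong}, \ref{thm:from_n_to_limit}, \ref{thm:strong_to_n} and Proposition~\ref{prop:conv_value_function}, and your argument is precisely the natural unpacking of that claim via diagonal extraction for the forward direction and a direct application of Theorem~\ref{thm:from_n_to_limit} plus $\varepsilon_n$--optimality for the converse. Two remarks worth recording. First, your flagging of the initial-distribution bookkeeping as the main obstacle is well placed; the hypothesis of Theorem~\ref{thm:strong_to_n} that $\mu^{n,\gammab^{\ell,n},\alphab^{\ell,n}}_0$ converge to the chosen $\eta^\star\in\Pc_\nu$ (and not merely to some element of $\Pc_\nu$) is genuinely nontrivial since the labels $u^i_n=i/n$ are deterministic, and the paper's own proof of Proposition~\ref{prop:conv_value_function} silently invokes the same matching when applying Theorem~\ref{thm:strong_to_n} for arbitrary $\eta\in\Pc_\nu$; a relabeling/coupling of the samples from $\nu^n$ with the grid is indeed needed and you are right to single it out. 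Second, and purely as a caveat on the corollary's wording rather than your argument: the statement asserts $\varepsilon_n$--optimality with the specific $\varepsilon_n:=|V_n(\nu^n)-V_{\mathrm{MFC}}(\nu)|$, which is not a formal consequence of $J_n(\nu^n,\gammab^n,\alphab^n)\to V_{\mathrm{MFC}}(\nu)$ and $V_n(\nu^n)\to V_{\mathrm{MFC}}(\nu)$ alone (one may well have $V_n(\nu^n)-J_n(\nu^n,\gammab^n,\alphab^n)>\varepsilon_n$ for individual $n$); what your construction delivers cleanly is $V_n(\nu^n)-J_n(\nu^n,\gammab^n,\alphab^n)\to 0$, which is the substantive conclusion.
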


\begin{remark}
    These results rigorously establish the convergence of the finite--player cooperative optimization problem toward its mean--field counterpart.  
In other words, as the number of agents \(n\) becomes large, the optimal performance and control strategies of the \(n\)--player system are asymptotically captured by those of the mean--field control problem with controlled interactions.

\medskip
This convergence has two major implications. 
First, it provides a solid theoretical foundation for using the mean--field control framework as a tractable approximation of large cooperative systems: solving the mean--field problem yields an asymptotically optimal strategy for the finite system.  
Second, it confirms the internal consistency of our extended framework with interaction controls.  
Despite the additional layer of complexity introduced by allowing agents $($ or a planner $)$ to control the structure of their interactions, the collective behavior remains stable in the large--population limit and converges to a well--defined mean--field model.

\medskip
From an applied standpoint, this result ensures that policies or strategies designed at the mean--field level remain meaningful and near--optimal in large but finite systems—such as financial networks, communication infrastructures, or large economic or social systems—where interaction patterns can be influenced or strategically adjusted.

\end{remark}

\section{Examples and discussions} \label{sec:examples}

\paragraph*{Example 1: A bang--bang situation.}

We begin with a minimal example illustrating the structure and interpretation of the controlled interaction framework introduced above.  
To keep the exposition simple, we work in the one--dimensional setting $d=1$ and consider $A_{\rm int} = [0,1], 
\, L \equiv 0,
\, \sigma(t,x) \equiv 1.$
The drift of the controlled state dynamics is defined by
\[
b(t,x,m^1,m^2,a)
:=
\int_{A_{\rm int} \x \R}
    z\,\Phi\!\left(t,x,y,\,m^1(A_{\rm int},\mathrm{d}y',\Er)\right)
    m^1(\mathrm{d}z,\mathrm{d}y,\Er),
\]
and the terminal reward takes the form
\[
g(x,\overline{m}) = G\!\left(\overline{m}(\mathrm{d}y',\Er)\right),
\]
where $\Phi:[0,T]\x\R\x\R\x\Pc(\R)\to\R$ and $G:\Pc(\R)\to\R$ are bounded Borel maps.  
In this specification, the drift $b$ represents the aggregate effect of pairwise interactions modulated by the sign and magnitude of $\Phi$, while $G$ encodes the terminal evaluation of the overall distribution of states.

\medskip
The associated mean--field control problem thus reads
\[
    V_{\mathrm{MFC}}(\nu)
    :=
    \sup_{\gamma \in \Ac_{\mathrm{int}}} 
    \widehat{J}(\gamma),
    \qquad
    \widehat{J}(\gamma)
    :=
    \E\!\left[ G\!\left( \mu_T^{\gamma} \right) \right],
\]
where the controlled state process satisfies
\[
    \mathrm{d}X_t^{\gamma}
    =
    \left( 
        \int_{\R} 
        \gamma(t,X_t^{\gamma},y)\,
        \Phi\!\left(t,X_t^{\gamma},y,\mu_t^{\gamma}\right)
        \mu_t^{\gamma}(\mathrm{d}y)
    \right)
    \mathrm{d}t
    + \mathrm{d}W_t,
    \qquad 
    \mu_t^{\gamma} = \Lc(X_t^{\gamma}),
    \qquad
    \Lc(X_0^{\gamma}) = \nu.
\]

\medskip
We denote by
\[
    \Phih(t,x,m) := \int_{\R} \Phi(t,x,y,m)^{+}\,m(\mathrm{d}y)
\]
the positive part of the interaction kernel averaged with respect to the population law.

\medskip
Given a map $U: \Pc(\R) \to \R$. we will say that a Borel map $F:\Pc(\R) \x \R \to \R$ is a linear functional derivative of $U$ if: for each $m$ and $m'$, we have $ \int_0^1 \int_\R \left|F\left(e\; m + (1-e)\; m',x\right) \right| \left(m+m' \right)(\mathrm{d}x)\; \mathrm{d}e< \infty$ and$ U(m)-U(m')=\int_0^1 \int_\R F\left(e\; m + (1-e)\; m',x\right) \left(m-m' \right)(\mathrm{d}x) \mathrm{d}e.$
    We will denote $F$ by $\delta_m U$. Since $\delta_m U$ is defined up to a constant, we use the convention, $\int_\R \delta_m U(m,x)m(\mathrm{d}x)=0$ whenever $\int_\R |\delta_m U(m,x)|m(\mathrm{d}x) < \infty$.

\medskip
We assume throughout that $\Phi(t,x,y,m)$ is Lipschitz in $(x,y,m)$ uniformly in $t$, admits a linear functional derivative $\delta_m\Phi(t,x,y,m)(z)$ differentiable in $z$, and that the functional $G$ possesses a linear derivative $\delta_m G(m)(x)$ differentiable in $x$.

\begin{proposition}
    \label{prop_example1}
    Suppose that for all $(t,x,y,m)$,
    \[
        \partial_x \delta_m G(m)(x) \ge 0,
        \qquad
        \partial_y \Phi(t,x,y,m)(z) \ge 0,
        \qquad
        \partial_z \delta_m \Phi(t,x,y,m)(z) \ge 0.
    \]
    Then the optimal interaction control in $V_{\mathrm{MFC}}(\nu)$ is given explicitly by the \emph{bang--bang} rule
    \[
        \widehat{\gamma}(t,x,y)
        =
        \mathbf{1}_{\{\Phi(t,x,y,\mu^\star_t) \ge 0\}},
    \]
    where the optimal state process satisfies
    \[
        \mathrm{d}Y_t
        =
        \Phih(t,Y_t,\mu^\star_t)\,\mathrm{d}t
        + \mathrm{d}W_t,
        \qquad 
        \mu_t^{\star} = \Lc(Y_t),
        \qquad
        \Lc(Y_0) = \nu.
    \]
\end{proposition}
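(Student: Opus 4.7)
The approach is to establish the optimality of $\widehat\gamma$ by a stochastic comparison argument. The pivotal pointwise observation is that $\gamma(t,x,y)\,\Phi(t,x,y,m)\le\Phi^+(t,x,y,m)$ for every $\gamma\in[0,1]$, so that once the correct measure flow is frozen, the bang--bang choice $\widehat\gamma(t,x,y)=\mathbf{1}_{\{\Phi(t,x,y,\mu^\star_t)\ge 0\}}$ maximizes the drift pointwise. The plan proceeds in three steps: $(a)$ show that $G$ and the aggregate drift $\Phih$ respect the stochastic order $\preceq_{\rm st}$ on $\Pc(\R)$; $(b)$ establish, via a synchronous coupling, that any admissible $\mu^\gamma_t$ is dominated by $\mu^\star_t$ in this order; and $(c)$ conclude $\widehat J(\gamma)\le\E\bigl[G(\mu^\star_T)\bigr]=\widehat J(\widehat\gamma)$.

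For $(a)$, the linear--derivative representation yields
\begin{align*}
G(m_2)-G(m_1)=\int_0^1\!\!\int_\R \delta_mG\bigl(em_2+(1-e)m_1\bigr)(x)\,(m_2-m_1)(\mathrm{d}x)\,\mathrm{d}e.
\end{align*}
The assumption $\partial_x\delta_mG\ge 0$ makes the inner integrand nondecreasing in $x$, and whenever $m_1\preceq_{\rm st} m_2$, the signed measure $m_2-m_1$ pairs non--negatively with any nondecreasing function, yielding $G(m_2)\ge G(m_1)$. The identical calculation applied to $\delta_m\Phi$, combined with $\partial_z\delta_m\Phi\ge 0$, shows that $m\mapsto\Phi(t,x,y,m)$ is nondecreasing for $\preceq_{\rm st}$, and hence so is $\Phi^+$. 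Together with $\partial_y\Phi\ge 0$, which makes $y\mapsto\Phi^+(t,x,y,m)$ nondecreasing, this gives the double monotonicity $m_1\preceq_{\rm st} m_2\Rightarrow\int_\R\Phi^+(t,z,y,m_1)\,m_1(\mathrm{d}y)\le\Phih(t,z,m_2)$ uniformly in $(t,z)$.

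Step $(b)$ is the main analytic obstacle, because both $\mu^\gamma$ and $\mu^\star$ appear in each other's drifts, so a direct application of one--dimensional SDE comparison to the coupled McKean--Vlasov system is circular. To break this circularity, we run a joint Picard iteration on a common probability space, sharing the Brownian motion $W$ and the initial datum $X_0$: set $X^{\gamma,(0)}_t=Y^{(0)}_t=X_0$ and, for $n\ge 0$, define $(X^{\gamma,(n+1)},Y^{(n+1)})$ as the ordinary (non--McKean--Vlasov) SDEs
\begin{align*}
\mathrm{d}X^{\gamma,(n+1)}_t &= \biggl(\int_\R\gamma\bigl(t,X^{\gamma,(n+1)}_t,y\bigr)\Phi\bigl(t,X^{\gamma,(n+1)}_t,y,\mu^{\gamma,(n)}_t\bigr)\mu^{\gamma,(n)}_t(\mathrm{d}y)\biggr)\mathrm{d}t + \mathrm{d}W_t,\\
\mathrm{d}Y^{(n+1)}_t &= \Phih\bigl(t,Y^{(n+1)}_t,\mu^{\star,(n)}_t\bigr)\mathrm{d}t + \mathrm{d}W_t,
\end{align*}
driven by the frozen deterministic flows $\mu^{\gamma,(n)}$ and $\mu^{\star,(n)}$. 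The inductive hypothesis $\mu^{\gamma,(n)}_t\preceq_{\rm st}\mu^{\star,(n)}_t$, combined with the pointwise bound $\gamma\Phi\le\Phi^+$ and the double monotonicity of $\Phi^+$ from $(a)$, produces the pointwise drift ordering; the classical one--dimensional pathwise comparison theorem (applicable since the diffusion is constant and the drifts are bounded) then yields $X^{\gamma,(n+1)}_t\le Y^{(n+1)}_t$ a.s., hence $\mu^{\gamma,(n+1)}_t\preceq_{\rm st}\mu^{\star,(n+1)}_t$. Convergence of the Picard scheme, guaranteed by the Lipschitz regularity of $\Phi$ inherited by $\Phih$ (after a routine smoothing of $\gamma$ by Lipschitz controls if needed, followed by stability of the McKean--Vlasov flow), transfers the stochastic dominance to the limit, giving $\mu^\gamma_t\preceq_{\rm st}\mu^\star_t$ for all $t\in[0,T]$.

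Combining $(a)$ and $(b)$ immediately yields $\E\bigl[G(\mu^\gamma_T)\bigr]\le\E\bigl[G(\mu^\star_T)\bigr]$. Since $\widehat\gamma$ is a Borel map (by continuity of $\Phi$ in $(x,y,m)$ and Borel regularity of $t\mapsto\mu^\star_t$), hence admissible in $\Ac_{\rm int}$, and satisfies $\int_\R\widehat\gamma(t,x,y)\Phi(t,x,y,\mu^\star_t)\mu^\star_t(\mathrm{d}y)=\Phih(t,x,\mu^\star_t)$, it reproduces the closed flow $(Y,\mu^\star)$, so that $\widehat J(\widehat\gamma)=\E\bigl[G(\mu^\star_T)\bigr]$ and the optimality $\widehat J(\gamma)\le\widehat J(\widehat\gamma)$ follows.
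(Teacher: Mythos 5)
Your proof is correct and takes a genuinely different route from the paper. The paper passes through the $n$--player problem: it computes the adjoint variable $Z^{i,n}_t=\partial_{x^i}v^n(t,\Xbb^n_t)$ via the Clark--Ocone formula, writes it through Jacobian (tangent) flows, shows that the monotonicity assumptions make the off--diagonal coupling terms of the tangent ODE nonnegative (a Metzler--matrix argument), hence $Z^{i,n}\ge0$, reads off the bang--bang rule at the $n$--player level, and then invokes \Cref{prop:conv_value_function} to pass to the limit. You instead work directly at the mean--field level with first--order stochastic dominance: the assumptions make $G$ and $\Phih$ monotone for $\preceq_{\rm st}$, the pointwise inequality $\gamma\Phi\le\Phi^+$ orders the drifts once the measure flow is frozen, and the synchronous coupling plus the classical one--dimensional comparison theorem yields $\mu^\gamma_t\preceq_{\rm st}\mu^\star_t$ through a Picard iteration. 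Your route is more elementary and self--contained — no BSDEs, no Malliavin calculus, no $n$--player limit — and, pleasingly, it exposes the probabilistic content of the sign conditions (monotonicity in the stochastic order) rather than the dual/adjoint content (nonnegativity of the sensitivity). The cost is that you need the pathwise comparison theorem (which requires one of the two drifts to be Lipschitz in $x$; $\Phih$ is, so this is fine), and the Picard convergence step implicitly requires $\gamma$ to be Lipschitz so that the iterates for $X^\gamma$ converge. You acknowledge this; to make it airtight you should state explicitly that for a general Borel $\gamma$ you first approximate by Lipschitz $\gamma^k\to\gamma$ a.e., apply the whole argument to each $\gamma^k$, then pass to the limit using the paper's \Cref{prop:a.e.cong} together with the fact that $\preceq_{\rm st}$ is preserved under weak convergence of the left member. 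With that sentence added, the proof is complete and correct.
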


\begin{remark}
    $(i)$ The above result shows that, under suitable monotonicity and positivity conditions, the optimal interaction policy exhibits a \emph{threshold} $($or bang--bang $)$ structure:  
    each agent/particle interacts with another only when the contribution of the interaction kernel $\Phi$ to the collective drift is nonnegative.  
    In other words, agents/particles optimally ``activate'' connections that enhance the collective performance and ``deactivate'' those that are detrimental.  
    The resulting dynamics are thus driven by the regions of the state space where interactions are mutually beneficial.  
    This simple form provides a clear intuition on how structural properties of the mean--field functional $G$ and the kernel $\Phi$ translate into interpretable control laws, even in potentially high--dimensional systems.

    \medskip
    $(ii)$ The proof of {\rm \Cref{prop_example1}} is based on the $n$--player approximation established in {\rm \Cref{prop:conv_value_function}}. 
Alternatively, the result could also be derived directly by using the framework of backward stochastic differential equations on the Wasserstein space developed in {\rm \cite{djete2025notionbsdewassersteinspace}}. 
For the sake of completeness and to keep the exposition self--contained, we adopt here the $n$--player approach.

\end{remark}

\paragraph*{Example 2: A simple model of interdependent behavior on social media.} \label{para_example2} 

The phenomenon of social media provides a natural and intuitive setting in which individuals strategically choose how to interact with others.  
Each user decides whom to follow, whose content to engage with, and how actively to maintain those connections—while simultaneously being affected by the attention and influence they receive from others.  
This interplay between outgoing and incoming connections is inherently asymmetric, and serves as a canonical example of a system where agents \emph{control their interaction structure} rather than merely their state.

\medskip
Let $n \ge 1$, and consider a population of $n$ users represented by the state processes $\Xbb^n = (X^{1,n}, \dots, X^{n,n})$.  
For each user $i$, the process $X^{i,n}_t$ represents a quantitative measure of their visibility or reputation in the network—such as the number of followers, the average engagement with their posts, or a broader score of social influence.  
The evolution of these variables captures the dynamics of online attention and is described, for $1 \le i \le n$, by the system
\begin{align*}
    \mathrm{d}X^{i,n}_t
    &=
    \Bigg(
        \frac{1}{n} \sum_{j=1}^n 
        \gamma^n_{ij}(t,\Xbb^n)\,b_1\!\left(t,\, \xi^n_{ij},\, X^{j,n}_t \right)
        +
        \frac{1}{n} \sum_{j=1}^n 
        \gamma^n_{ji}(t,\Xbb^n)\,b_2\!\left(t,\, \xi^n_{ij},\, X^{j,n}_t \right)
    \Bigg)\mathrm{d}t
    + \sigma\,\mathrm{d}W^i_t.
\end{align*}

\medskip
The interpretation is straightforward yet rich.  
Each function $\gamma^n_{ij}$ represents the decision of user $i$ about the \emph{intensity or quality of their connection} with user $j$—for instance, how much attention or engagement $i$ devotes to $j$.  
This decision enters the dynamics through the first term, modulated by the weight $\xi^n_{ij}$, which encodes the preexisting affinity or structural link between $i$ and $j$ (for example, shared interests or proximity in a latent social graph).  
Conversely, the second term captures the reverse influence: how the choices of other users toward $i$, namely $(\gamma^n_{ji})_{1 \le j \le n}$, affect $i$’s visibility or reputation.  
In social media, one can follow others without being followed back—so the impact of one’s own connection choices and that of others’ choices toward you are not necessarily balanced.  
This asymmetry is an essential aspect of modern social platforms.

\medskip
A social planner—think of a platform designer or regulator—seeks to promote the overall health and engagement of the system by coordinating or incentivizing the connection patterns.  
The planner’s objective is to maximize the average welfare of the users, defined as
\begin{align*}
    \frac{1}{n} \sum_{i=1}^n
    \E\!\left[
        \int_0^T 
        \frac{1}{n}\sum_{j=1}^n 
        L\!\left(t, \gamma^n_{ij}(t,\Xbb^n),\, \xi^n_{ij},\, X^{j,n}_t \right)\mathrm{d}t
        + g\!\left(X^{i,n}_T\right)
    \right].
\end{align*}
The instantaneous reward $L$ measures the benefit of the chosen connections—such as increased exposure, mutual engagement, or advertising impact—while $g$ quantifies the terminal visibility or influence level of each user.  

\medskip
As the population becomes very large, the system admits a mean–field limit in which each user interacts with the overall distribution of others rather than with finitely many individuals.  
The corresponding mean–field control problem is
\[
    V_{\mathrm{MFC}}(\nu)
    :=
    \sup_{\gamma \in \Ac_{\mathrm{int}}}
    \widehat{J}(\gamma),
    \qquad
    \widehat{J}(\gamma)
    :=
    \E\!\left[
        \int_0^T \int_{(\R \x [0,1])^2}
        L\!\left(\gamma(t,x,x'),\,\Gr(u,u'),\,x'\right)
        \mu^\gamma_t(\mathrm{d}x,\mathrm{d}u)
        \mu^\gamma_t(\mathrm{d}x',\mathrm{d}u')\,\mathrm{d}t
        + g\!\left(X^\gamma_T\right)
    \right],
\]
where $\Gr:[0,1]^2\to\Er$ denotes the limiting interaction kernel representing the large–scale structure of the platform.  
The controlled state process satisfies $\Lc(X_0^\gamma)=\nu$ and evolves as
\[
    \mathrm{d}X_t^{\gamma}
    =
    \Bigg(
        \int_{\R\x[0,1]}
            \gamma(t,X_t^{\gamma},x)\,b_1\!\left(t,\,\Gr(U,u),\,x\right)
            +
            \gamma(t,x,X_t^{\gamma})\,b_2\!\left(t,\,\Gr(U,u),\,x\right)
        \mu_t^{\gamma}(\mathrm{d}x,\mathrm{d}u)
    \Bigg)\mathrm{d}t
    + \sigma\,\mathrm{d}W_t,
    \qquad
    \mu_t^{\gamma}=\Lc(X_t^{\gamma},U).
\]

\medskip
This formulation highlights how the control $\gamma$ now regulates the \emph{interaction intensity} between typical pairs of agents in the mean–field limit.  
The kernel $\Gr$ captures the exogenous structure of potential links (for instance, similarity or recommendation weights), while $\gamma$ describes how those potential links are exploited in equilibrium or by a social planner.

\medskip
In addition to the standing assumptions of \Cref{sec:main}, we impose the convexity of $A_{\rm int} \subset \R$ and for $(t,x,r)\in [0,T]\x\R\x\Er$, the map $e \mapsto L(t,e,r,x)$ is concave 
which guaranties the recovery of strong \emph{closed--loop} control but relaxed one, enabling the existence of an optimal \emph{closed--loop} control.

\begin{proposition} \label{prop_example2}
    Under the above assumptions, there exists an optimal \emph{closed--loop} control for the mean--field social network model.  
    Moreover, this optimal control can be used to construct an asymptotically optimal sequence of controls for the finite $n$--agent systems, as stated in {\rm\Cref{cor_contruction_cong_optimal_control}}.
\end{proposition}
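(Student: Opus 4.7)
The plan is to combine the existence of an optimal relaxed control given by Proposition~\ref{prop:existence_continuity} with a convex-averaging projection that exploits the linear-in-$\gamma$ structure of the drift and the concavity of the running cost, to recover a strong closed-loop optimizer; the $n$-agent statement will then follow from Corollary~\ref{cor_contruction_cong_optimal_control}. Applying Proposition~\ref{prop:existence_continuity} (with $A_{\rm reg}$ treated as a singleton, since there is no regular control in this example) yields an optimal pair $(\eta^\star, \gammabb^\star) \in \Pc_\nu \times \Acb_{\rm int}$ with $\Jb(\eta^\star, \gammabb^\star) = V_{\rm MFC}(\nu)$. I then define the candidate closed-loop control by averaging out the randomization variables,
\[
\gamma^\star(t, x, u, x', u') := \int_{[0,1]^4} \gammabb^\star(t, x, u, v, x', u', v', \overline v, \pi)\, \mathrm{d}v\, \mathrm{d}v'\, \mathrm{d}\overline v\, \mathrm{d}\pi,
\]
which is Borel measurable by Fubini and takes values in $A_{\rm int}$ by convexity of $A_{\rm int}$, so that $\gamma^\star \in \Ac_{\rm int}$.

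The crucial observation is that the drift of the example is linear in $(m^1, m^2)$ through the pairing with the kernels $b_1$ and $b_2$. Consequently, integrating $b(s, X_s, \Mb^{1, \gammabb^\star}_{\mu, s}, \Mb^{2, \gammabb^\star}_{\mu, s})$ against the external auxiliary uniforms $(v, \tilde v, \pi)$ --- and unfolding the internal uniforms $(V, \Vb)$ that appear inside the conditional laws defining $\Mb^{1, \gammabb^\star}$ and $\Mb^{2, \gammabb^\star}$ --- precisely reproduces the drift associated with the strong control $\gamma^\star$ and the flow $\mu_s^{\gammabb^\star}$. Hence $\mu^{\gammabb^\star}$ solves the McKean--Vlasov equation driven by the Borel control $\gamma^\star$ with initial datum $\eta^\star$, and the weak-uniqueness property recorded in the remark following Assumption~\ref{assum:main1_MF_CI} forces $\mu^{\gammabb^\star} = \mu^{\gamma^\star}$. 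The reward comparison is then immediate: since $g$ depends only on the terminal state, its contribution coincides under the two controls, while concavity of $e \mapsto L(t, e, r, x')$ and Jensen's inequality applied pointwise under the uniform measure on $[0,1]^4$ yield
\[
\int_{[0,1]^4} L\bigl(t, \gammabb^\star, \Gr(u, u'), x'\bigr)\, \mathrm{d}v\, \mathrm{d}v'\, \mathrm{d}\overline v\, \mathrm{d}\pi \;\le\; L\bigl(t, \gamma^\star(t, x, u, x', u'), \Gr(u, u'), x'\bigr).
\]
Integrating against $\mu_t^\star \otimes \mu_t^\star$ and in time gives $\Jb(\eta^\star, \gammabb^\star) \le \widehat J(\gamma^\star) \le V_{\rm MFC}(\nu)$, and the identity $\Jb(\eta^\star, \gammabb^\star) = V_{\rm MFC}(\nu)$ then forces equality throughout and establishes the optimality of the closed-loop $\gamma^\star$.

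For the $n$-agent statement, since $\gamma^\star$ is a genuine closed-loop map, I invoke Theorem~\ref{thm:strong_to_n} with Lipschitz approximations $\gamma^\ell \to \gamma^\star$ a.e., together with the associated symmetric finite-player controls $\gamma^{\ell, n}_{ij}(t, \mathbf{x}) := \gamma^\ell(t, x_i(t), u^i_n, x_j(t), u^j_n)$; Corollary~\ref{cor_contruction_cong_optimal_control} combined with a diagonal extraction then produces the desired asymptotically optimal sequence $(\gammab^n)_n$. The main technical obstacle is the drift-identification step: one must carefully expand the relaxed measures $\Mb^{1, \gammabb^\star}_{\mu, s}$ and $\Mb^{2, \gammabb^\star}_{\mu, s}$ and check that the double integration --- external over $(v, \tilde v, \pi)$ and internal over $(V, \Vb)$ --- exactly reproduces the averaging defining $\gamma^\star$, after which the weak-uniqueness appeal, the Jensen step, and the invocation of Corollary~\ref{cor_contruction_cong_optimal_control} are essentially bookkeeping.
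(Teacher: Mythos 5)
Your proof follows the same barycentric-projection strategy as the paper: obtain an optimal relaxed control from Proposition~\ref{prop:existence_continuity}, average out the auxiliary randomization uniforms to define a deterministic closed-loop $\gamma^\star$, use linearity of the drift in $\gamma$ (and Fubini) to show the resulting state flow coincides with the relaxed one, and then apply Jensen with the concavity of $e\mapsto L(t,e,r,x)$ to conclude that the projected control performs at least as well. This matches the paper's argument step for step, and your handling of the $n$-agent approximation via Theorem~\ref{thm:strong_to_n} and Corollary~\ref{cor_contruction_cong_optimal_control} is also the paper's intended route.
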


\medskip
This example illustrates how mean–field control with controlled interactions can model strategic behavior in online social environments.  
In such systems, visibility and influence are both consequences and determinants of connection decisions, leading to a feedback loop between individual strategies and collective dynamics.  
The mean–field perspective provides a principled way to analyze and optimize such large, asymmetric networks—bridging ideas from social interaction theory, stochastic control, and network science.

\begin{comment}
    
{\color{blue}
\paragraph*{ Example 3: Neural networks training} Another situation that comes to mind when referring to a situation that enables control of the interactions is the training of neural networks. Indeed, finding the right weights in the training of a neural networks is essentially choosing how to connect them adequately. Let us start by describing a feedforward neural network. 

A feedforward neural network is a composition of layer functions. The set of layer functions with input dimension $d_1$, output dimension $d_2$, and activation function $\rho: \R \to \R$ denoted $\mathrm{L}^{\rho}_{d_1,d_2}$ is defined by
\begin{align*}
    \mathrm{L}^{\rho}_{d_1,d_2}
    :=
    \Big\{
        \phi:\R^{d_1} \to \R^{d_2}\;\mbox{s.t.}\;\exists\;b \in \R^{d_2}\;\mbox{and}\;w \in \S^{d_2 \x d_1}\;\mbox{satisfying}\;\forall\;(i,x) \in  [\![1,d_2]\!] \x \R^{d_1},\;\phi(x)_i=\rho \Big(b_i + \sum_{j=1}^{d_1} w_{ij}x_j \Big)
    \Big\}.
\end{align*}

The set of feedforward neural network functions with $\mathcal{H}$ hidden layers, one output layer, the activation function $\rho$ is then defined 
\begin{align*}
    \mathrm{N}^{\rho}[d_1,...,d_{\Hc+1}]
    :=
    \Big\{
        \phi:\R^{d_0} \to \R^{d_{\Hc+1}}\;\mbox{s.t.}\;\forall\;i \in  [\![0,\Hc-1]\!],\;\exists\;\phi_i \in \mathrm{L}^{\rho}_{d_i,d_{i+1}}\;\mbox{and}\;\phi_{\Hc} \in \mathrm{L}^{\mathrm{Id}}_{d_{\Hc},d_{\Hc+1}}\;\mbox{satisfying}\;\phi=\phi_0 \circ ...\circ \phi_{\Hc}
    \Big\}.
\end{align*}

}

\end{comment}

\section{Proofs of the main results}  \label{sec:proof}

The remainder of the paper is devoted to the proofs of the main results stated above. We first establish several auxiliary results that will play a key role in the analysis. In the next section, we introduce an alternative formulation of the relaxed problem on a canonical space, which provides a convenient framework for handling the sequences of probability measures involved in the proofs.

\subsection{A relaxed formulation} \label{sec:relaxed}

\paragraph*{Admissible pair} 
Before proceeding, we clarify what constitutes an admissible pair of controls by setting out the minimal structural and measurability assumptions required in our framework.
For any Polish space \(E\), we recall that we denote by \(\M(E)\) the set of measures \(m(\mathrm{d}e,\mathrm{d}t)\) such that $\tfrac{1}{T} m(\mathrm{d}e,\mathrm{d}t) \in \Pc(E \times [0,T]) 
    \quad \text{and} \quad 
    m(E \times \mathrm{d}t) = \mathrm{d}t .$ 
We set 
\[
    \Hc := \Pc(A_{\mathrm{int}} \times \R^d \x [0,1] \x \Er), 
    \qquad 
    \Rc := \Hc \times \Hc \times A_{\mathrm{reg}} .
\]

\medskip

A pair \((\mu, \Lambda) \in \Cc^d_{\mathrm{pr}} \times \M( \R^d \times [0,1] \times \Rc)\) is said to be \emph{admissible} if the following conditions hold:  
\[
    \mu_t = \Lc(X_t,U), 
    \qquad 
    \Lambda_t(\mathrm{d}x,\mathrm{d}u,\mathrm{d}r_1,\mathrm{d}r_2,\mathrm{d}a) 
    = \E\!\left[ \delta_{(X_t,U)}(\mathrm{d}x,\mathrm{d}u)\, \Gamma_t(\mathrm{d}r_1,\mathrm{d}r_2,\mathrm{d}a) \right] \quad \text{a.e. } t,
\] 
where on the probability space \((\Om,\F,\P)\):  

\begin{itemize}
    \item The random variables \( (X_0, U) \) and \( W \) are \( \P \)--independent;
    \item The process \( W \) is a \( (\P, \F) \)--Brownian motion;
    \item The random variable \( U \) is uniformly distributed on \([0,1]\), i.e. \(\Lc(U) = \mathrm{Unif}([0,1])\);
    \item The process \( X \) satisfies the stochastic differential equation:
    \[
        \mathrm{d}X_t 
        = \int_{\Rc} b(t, X_t, r_1, r_2, a)\,\Gamma_t(\mathrm{d}r_1, \mathrm{d}r_2, \mathrm{d}a)\,\mathrm{d}t 
        + \sigma(t, X_t)\,\mathrm{d}W_t .
    \]
\end{itemize}

\paragraph*{The relaxation of the set of controls of the interactions}
The kernel $\Gr$ is fixed.  
We denote by
\[
    \Uc := \Big\{\, \gamma : (\R^d \x [0,1])^2 \to A_{\rm int}\;\text{Borel map}\,\Big\},
    \qquad
    \overline{\Uc} := \Big\{\, \gammabb : \big((\R^d \x [0,1]^2)^2 \x [0,1]\big) \to A_{\rm int}\;\text{Borel map}\,\Big\}.
\]

\medskip
Let us consider random variables
\[
    (X,U) \;\perp\; V \;\perp\; \Vb,
    \qquad U,\,V,\,\Vb \sim {\rm Unif}([0,1]),
\]
all mutually independent. We denote by
\[
    \mu := \Lc(X,U) \in \Pc(\R^d \x [0,1]).
\]

\medskip
For any $\gamma \in \Uc$, we associate the probability kernels
\begin{align*}
    N_\mu^{1,\gamma}(x,u) 
    &:= \Lc\Bigl(\gamma(x,u,X,U),\,X,\,U,\,\Gr(u,U)\Bigr), 
    \\
    N_\mu^{2,\gamma}(x,u) 
    &:= \Lc\Bigl(\gamma(X,U,x,u),\,X,\,U,\,\Gr(u,U)\Bigr),
\end{align*}
which encode respectively the law induced by $\gamma$ when $(x,u)$ interacts with $(X,U)$ in the first or second position.

\medskip
Similarly, for any $\gammabb \in \Ucb$, we define
\begin{align*}
    \Nb^{1,\gammabb}_\mu(x,u,v) 
    &:= \Lc\Bigl(\gammabb(x,u,v,X,U,V,\Vb),\,X,\,U,\,\Gr(u,U)\Bigr), 
    \\
    \Nb^{2,\gammabb}_\mu(x,u,v) 
    &:= \Lc\Bigl(\gammabb(X,U,V,x,u,v,\Vb),\,X,\,U,\,\Gr(u,U)\Bigr),
\end{align*}
where the auxiliary uniforms $(V,\Vb)$ provide additional randomization in the interaction.

\medskip
With these notations, we introduce the subsets
\[
    \Mc := \Bigl\{\, 
        \Lc\bigl(X,\,U,\,N^{1,\gamma}_\mu(X,U),\,N^{2,\gamma}_\mu(X,U)\bigr)
        \;:\; \gamma \in \Uc,\;\Lc(X) \in \Pc(\R^d) 
    \Bigr\}
\]
and
\[
    \Mcb := \Bigl\{\, 
        \Lc\bigl(X,\,U,\,\Nb^{1,\gammabb}_\mu(X,U,V),\,\Nb^{2,\gammabb}_\mu(X,U,V)\bigr)
        \;:\; \gammabb \in \Ucb,\;\Lc(X) \in \Pc(\R^d) 
    \Bigr\},
\]
which are subsets of
\[
    \Pc\Bigl(\R^d \x [0,1] \x \Pc(A_{\rm int} \x \R^d \x [0,1] \x \Er)^2\Bigr).
\]
\medskip
We now provide an alternative characterization of the set $\Mcb$, which will be particularly useful 
when dealing with sequences in $\Mcb$. To this end, on the probability space $(\Om,\F,\P)$ and consider 
a tuple of random variables
\[
    \Gamma \;=\; \Big( \gamma,\; Z=(X,U),\; N=(N^1,N^2),\; \Zh=(\Xh,\Uh),\; \Nh=(\Nh^1,\Nh^2) \Big).
\]

\noindent
We say that $\Gamma$ is \emph{compatible} if the following conditions are satisfied:
\begin{itemize}
    \item[(i)] The pairs $(Z,N)$ and $(\Zh,\Nh)$ are independent and identically distributed, that is
    \[
        \Lc\bigl(Z,\,N,\,\Zh,\,\Nh\bigr) \;=\; \Lc\bigl(Z,\,N\bigr) \otimes \Lc\bigl(Z,\,N\bigr)
    \]
    and $\Lc(U)={\rm Unif}([0,1])$.

    \item[(ii)] For $\P$--a.e. realization of $\gamma \in A_{\rm int}$ and $Z=(X,U)\in \R^d\x[0,1]$, the kernels $N^1$ and $\Nh^2$ are consistent with $\gamma$ in the sense that
    \begin{align*}
        N^1 &= \Lc\bigl(\gamma,\,\Zh,\,\Gr(U,\Uh)\;\mid\;Z,N\bigr),
        \\
        \Nh^2 &= \Lc\bigl(\gamma,\,Z,\,\Gr(\Uh,U)\;\mid\;\Zh,\Nh\bigr).
    \end{align*}
\end{itemize}

\noindent
Given this notion, we define the auxiliary set
\[
    \Mcb_{\rm aux} := \Bigl\{\, \P \circ (Z,N)^{-1}\;:\;\Gamma \text{ is compatible} \Bigr\}.
\]

\begin{proposition} \label{prop:reformulation_of_relaxed}
    The two constructions of randomized interaction laws coincide, i.e.
    \[
        \Mcb = \Mcb_{\rm aux}.
    \]
\end{proposition}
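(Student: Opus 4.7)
The proof reduces to two inclusions.

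For the forward direction $\Mcb \subset \Mcb_{\rm aux}$, the plan is essentially tautological: given $\gammabb \in \Ucb$ and $\mu \in \Pc(\R^d \x [0,1])$ with uniform second marginal, I enlarge the probability space to support two i.i.d.\ copies $((X_i, U_i), V_i, \Vb_i)_{i=1,2}$ with $(X_i, U_i) \sim \mu$ and independent uniforms $V_i, \Vb_i$, and set
\[
Z := (X_1, U_1),\;\; \Zh := (X_2, U_2),\;\; N^k := \Nb^{k,\gammabb}_\mu(X_1, U_1, V_1),\;\; \Nh^k := \Nb^{k,\gammabb}_\mu(X_2, U_2, V_2),
\]
for $k=1,2$, and $\gamma := \gammabb(X_1, U_1, V_1, X_2, U_2, V_2, \Vb_2)$. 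Condition (i) is immediate. Since $N$ is a measurable function of $(Z, V_1)$ while $(\Zh, V_2, \Vb_2)$ is independent of $(Z, V_1)$, the tower property yields
\[
\Lc\bigl(\gamma, \Zh, \Gr(U, \Uh) \mid Z, N\bigr) = \E\bigl[\Lc(\gamma, \Zh, \Gr(U, \Uh) \mid Z, V_1) \mid Z, N\bigr] = \E\bigl[\Nb^{1,\gammabb}_\mu(Z, V_1) \mid Z, N\bigr] = N^1,
\]
and a symmetric computation delivers $\Nh^2 = \Lc(\gamma, Z, \Gr(\Uh, U) \mid \Zh, \Nh)$, so (ii) holds.

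The heart of the argument is the reverse direction $\Mcb_{\rm aux} \subset \Mcb$, for which I decode $\gammabb$ from a compatible tuple via measurable sampling. Setting $\mu_0 := \Lc(Z)$, standard regular conditional probability and Borel isomorphism results on the ambient Polish spaces produce three ingredients: a Borel map $F : \R^d \x [0,1]^2 \to \Pc(A_{\rm int} \x \R^d \x [0,1] \x \Er)^2$ with $\Lc((X, U), F(X, U, V)) = \Lc(Z, N)$ whenever $(X, U) \sim \mu_0$ and $V$ is an independent uniform; a Borel kernel $K(z, n, \zh, \nh) \in \Pc(A_{\rm int})$ realizing $\Lc(\gamma \mid Z, N, \Zh, \Nh)$; and a Borel sampling map $S : \Pc(A_{\rm int}) \x [0,1] \to A_{\rm int}$ with $\Lc(S(\rho, W)) = \rho$ for $W$ independent uniform. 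Define
\[
\gammabb(x, u, v, \xh, \uh, \vh, w) := S\bigl(K\bigl((x, u), F(x, u, v), (\xh, \uh), F(\xh, \uh, \vh)\bigr), w\bigr),
\]
which is Borel, hence $\gammabb \in \Ucb$. On a new space carrying independent $((X, U), V, W, (\Xh, \Uh), \Vh)$ with the prescribed marginals, integrating out $W$ via $S$ expresses $\Nb^{1,\gammabb}_{\mu_0}(X, U, V)$ as the mixture of $K(\cdot)\otimes\delta_{(\Xh, \Uh, \Gr(U, \Uh))}$ over $(\Xh, \Uh, \Vh) \sim \mu_0 \otimes {\rm Unif}$. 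Unfolding compatibility condition (ii) on the original space through the tower with $(\Zh, \Nh)$, substituting $K$ for $\Lc(\gamma \mid Z, N, \Zh, \Nh)$, and using $(\Zh, \Nh) \sim \Lc(Z, N) = \Lc((\Xh, \Uh), F(\Xh, \Uh, \Vh))$, one obtains exactly the same mixture for $F^1(X, U, V)$. Hence $\Nb^{1,\gammabb}_{\mu_0}(X, U, V) = F^1(X, U, V)$ a.s., and the symmetric half of (ii) yields the analogous identity for $\Nb^{2,\gammabb}_{\mu_0}$. Consequently $\Lc(Z, N) = \Lc(X, U, \Nb^{1,\gammabb}_{\mu_0}(X, U, V), \Nb^{2,\gammabb}_{\mu_0}(X, U, V)) \in \Mcb$.

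The principal obstacle is to guarantee that the single decoded Borel map $\gammabb$ simultaneously reproduces both interaction kernels $\Nb^{1,\gammabb}$ and $\Nb^{2,\gammabb}$ consistently with $N^1$ and $N^2$; this is exactly what the symmetric form of condition (ii)—placing $(Z, N)$ and $(\Zh, \Nh)$ on the same footing—delivers through the tower property. The remaining technical input is the existence of the Borel sampling maps $F$ and $S$, which is standard for Polish spaces Borel--isomorphic to $[0,1]$.
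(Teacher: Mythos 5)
Your proof is correct and follows essentially the same strategy as the paper: the inclusion $\Mcb \subset \Mcb_{\rm aux}$ by building two i.i.d.\ copies and checking compatibility via the tower property (using that $N$ is $(Z,V_1)$--measurable), and the reverse inclusion by decoding a Borel $\gammabb \in \Ucb$ from the compatible tuple via regular conditional probability and Borel sampling. Your presentation is slightly more explicit than the paper's — you name the ingredients $F$, $K$, $S$ separately, whereas the paper bundles $S\circ K$ into a single selected $\gammabb$ and uses $\Nb$ in place of your $F$ — but the decomposition and the way compatibility condition (ii), unfolded through the conditional kernel and the product form of $\Lc(Z,N,\Zh,\Nh)$, forces $\Nb^{1,\gammabb}_{\mu_0}=F^1$ and $\Nb^{2,\gammabb}_{\mu_0}=F^2$ are identical in substance.
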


\noindent
In words, $\Mcb_{\rm aux}$ characterizes elements of $\Mcb$ as laws of $(Z,N)$ that can be extended 
to an admissible system $(Z,N,\Zh,\Nh)$ consisting of two independent and identically distributed copies, 
linked consistently through the common action $\gamma$ and the coupling mechanism $\Gr$. 
This formulation will be particularly useful when manipulating sequences of elements in $\Mcb$, 
since admissibility provides a symmetric extension that is stable under weak limits.

\begin{proof}
Let $M \in \Mcb$ be associated to $\gammabb \in \Ucb$. We take the random variables $\left( X',U',V'\right)$ independent of $(X,U,V,\Vb)$ s.t. $\Lc(X',U',V)=\Lc(X,U,V)$. Remember that $\Lc(X,U)=\mu$.  We define 
\begin{align*}
    \Gamma^M
    =
    \left( \gammabb (X,U,V,X',U',V',\Vb),\,(X,U),\, \Nb^{1,\gammabb}_\mu(X,U,V),\,\Nb^{2,\gammabb}_\mu(X,U,V),\,(X',U'),\,\Nb^{1,\gammabb}_\mu(X',U',V'),\,\Nb^{2,\gammabb}_\mu(X',U',V') \right).
\end{align*}
Simple computations allow us to check that $\Gamma^M$ is \emph{compatible} in the sense of the definition of $\Mcb_{\rm aux}$. We can deduce that $M \in \Mcb_{\rm aux}$.

\medskip
Now, let $M \in \Mcb_{\rm aux}$ be associated to a \emph{compatible} $\Gamma$.
By standard measurable arguments, there exist a Borel map 
\[
    \gammabb: \left(\Hc^2 \times \R^d \x [0,1] \right)^2  \times [0,1] \to A_{\rm int}
\] 
and a uniform random variable $\Vb$, independent of $(Z,\,N,\, \Zh,\,  \Nh)$, such that
\begin{align*}
    \Lc \bigl( \gamma,\, N,\, Z,\, \Nh,\, \Zh \bigr)
    = \Lc \Bigl( \gammabb \bigl(N,\, Z,\, \Nh,\, \Zh,\, \Vb \bigr),\, N,\, Z,\, \Nh,\, \Zh \Bigr).
\end{align*}

Similarly, there exist a Borel map 
\[
    \Nb=\bigl(\Nb^1,\,\Nb^2 \bigr): \R^d \x [0,1] \times [0,1] \to \Hc^2
\] 
and a uniform random variable $V$, independent of $(Z,\, \Zh)$, such that
\begin{align*}
    \Lc \bigl( N,\, Z \bigr) = \Lc \bigl( \Nb(Z,\, V),\, Z \bigr).
\end{align*}

\noindent
Here, the uniform random variables $\Vb$ and $V$ serve as auxiliary randomness that allows us to represent distributions in terms of measurable functions of the underlying variables. In other words, they “encode” the randomness needed to realize the laws as Borel functions.

\medskip
We assume that $V$ and $\Vb$ are independent.  
In addition, let $\Vh$ be an independent copy of $V$, also independent of $\Vb$. We then define
\begin{align*}
    \Gammabb(Z,\, V,\, \Zh,\, \Vh,\, \Ub) 
    := \gammabb \Bigl( \Nb(Z, V),\, Z,\, \Nb(\Zh, \Vh),\, \Zh,\, \Vb \Bigr).
\end{align*}
It immediately follows from the point $(i)$ verified by a \emph{compatible} $\Gamma$ that
\begin{align*}
    \Lc \Bigl( \Gammabb(Z,\, V,\, \Zh,\, \Vh,\, \Vb),\, \Nb(Z, V),\, Z,\, \Nb(\Zh, \Vh),\, \Zh \Bigr) 
    = \Lc \bigl( \gamma,\, N,\, Z,\, \Nh,\, \Zh \bigr).
\end{align*}

\noindent
The function $\Gammabb$ provides an explicit measurable representation of $\gamma$ in terms of the auxiliary randomness $V$, $\Vh$, and $\Vb$. In particular, it shows that the conditional distributions $\Nb(Z,V)$ and $\Nb(\Zh,\Vh)$, together with the independent uniform variable $\Vb$, are sufficient to reconstruct the law of $\gamma$.

\medskip
By combining the law identity just proved together with the point $(ii)$, for any smooth maps $g$ and $\varphi$, we have
\begin{align*}
    &\E \Bigl[ \langle g, \Nb^1(Z,V) \rangle \, \varphi \bigl( \Nb(Z,V),\, Z \bigr) \Bigr] 
    = \E \Bigl[ \langle g, N^1 \rangle \, \varphi \bigl( N,\, Z \bigr) \Bigr] 
    = \E \Bigl[ g \bigl( \gamma, \Zh,\,\Gr(U,\Uh) \bigr) \, \varphi \bigl( N,\, Z \bigr) \Bigr] 
    \\
    &= \E \Bigl[ g \bigl( \Gamma(Z,\, V,\, \Zh,\, \Vh,\, \Vb),\, \Zh,\,\Gr(U,\Uh) \bigr) \, \varphi \bigl( \Nb(Z,V),\, Z \bigr) \Bigr] 
    \\
    &= \E \Bigl[ \E \Bigl[ g \bigl( \Gamma(Z,\, V,\, \Zh,\, \Vh,\, \Vb),\, \Zh,\,\Gr(U,\Uh) \bigr) \mid Z, V \Bigr] \, \varphi \bigl( \Nb(Z,V),\, Z \bigr) \Bigr].
\end{align*}
Since this holds for arbitrary $g$ and $\varphi$, we deduce that $\P$--a.e.
\begin{align*}
    \Nb^1(Z,V) = \Lc \Bigl( \Gammabb(Z,\, V,\, \Zh,\, \Vh,\, \Vb),\, \Zh,\,\Gr(U,\Uh) \;\big|\; Z, V \Bigr).
\end{align*}
By the same argument, we also obtain
\begin{align*}
    \Nb^2(\Zh,\Vh) = \Lc \Bigl( \Gammabb(Z,\, V,\, \Zh,\, \Vh,\, \Vb),\, Z,\,\Gr(\Uh,U) \;\big|\; \Zh, \Vh \Bigr).
\end{align*}

\noindent
These identities show that the conditional laws $\Nb^1(Z,V)$ and $\Nb^2(\Zh,\Vh)$ can be explicitly represented using the measurable map $\Gammabb$ and the auxiliary randomness $V$, $\Vh$, and $\Vb$. In other words, $\Gammabb$ realizes the conditional distributions of $\gamma$ given $(Z,N)$ and $(\Zh,\Nh)$, respectively.

\medskip
By the law identity of point $(i)$, we deduce that
\begin{align*}
    \Nb^2(Z,V) &= \Lc \Bigl( \Gamma(\Zh,\, \Vh,\, Z,\, V,\, \Vb),\, \Zh,\,\Gr(U,\Uh) \;\big|\; Z, V \Bigr),
    \\
    \Nb^1(\Zh,\Vh) &= \Lc \Bigl( \Gamma(\Zh,\, \Vh,\, Z,\, V,\, \Vb),\, Z,\,\Gr(\Uh,U) \;\big|\; \Zh, \Vh \Bigr).
\end{align*}
These results lead to deduce that $M \in \Mcb$ associated to $\Gammabb$.

\end{proof}

\paragraph*{The set of relaxed controls} 
Building upon the relaxation of interaction controls introduced earlier, we now formalize the notion of \emph{relaxed controls}, which will play a central role in the forthcoming analysis and proofs.  
This concept provides a flexible probabilistic framework that extends the admissible control class by allowing convex combinations of admissible interaction structures, while preserving the marginal consistency in the state–label variables. 
\begin{definition}
    A pair $\left( \mu,\Lambda \right)$ is called a \emph{relaxed control} if it is admissible and satisfies, for almost every $t \in [0,T]$,
    \[
        \Lambda_t(\mathrm{d}x,\mathrm{d}u,\mathrm{d}r_1,\mathrm{d}r_2,A_{\rm reg})
        =
        \int_{\Vc} 
            \Delta_{\pi}(\mathrm{d}x,\mathrm{d}u,\mathrm{d}r_1,\mathrm{d}r_2)
        \,\Ir(\mathrm{d}\pi),
    \]
    where $\Vc$ is a Polish space, $\Ir \in \Pc(\Vc)$ is a probability measure, and the map 
    $\pi \mapsto \Delta_{\pi}(\mathrm{d}x,\mathrm{d}u,\mathrm{d}r_1,\mathrm{d}r_2)$
    is Borel measurable such that, $\Ir$--a.e.,
    \begin{align*}
        \Delta_{\pi}(\mathrm{d}x,\mathrm{d}u,\mathrm{d}r_1,\mathrm{d}r_2)
        &\in \Mcb, 
        \quad \text{and}\\
        \Delta_{\pi}(\mathrm{d}x,\mathrm{d}u,\Hc,\Hc)
        &= \Lambda_t(\mathrm{d}x,\mathrm{d}u,\Hc,\Hc)
        = \mu_t(\mathrm{d}x,\mathrm{d}u).
    \end{align*}
\end{definition}

\medskip
We denote by $\Pc_R$ the set of all relaxed controls $\left( \mu,\Lambda \right)$.  
The above condition expresses that, at each time $t$, the conditional measure 
$\Lambda_t(\mathrm{d}x,\mathrm{d}u,\mathrm{d}r_1,\mathrm{d}r_2,A_{\rm reg})$
can be viewed as a probabilistic mixture (or convex combination) of admissible configurations $\Mcb$, representing possible interaction structures.  
However, the marginal distribution in the variables $(x,u)$ is kept fixed and identical across all realizations of $\pi$, i.e.,
\[
    \Delta_{\pi}(\mathrm{d}x,\mathrm{d}u) = \mu_t(\mathrm{d}x,\mathrm{d}u),
\]
which ensures that the randomization only affects the interaction component, while preserving the underlying population distribution.

%\medskip
%This representation emphasizes that, in the relaxed formulation, the instantaneous distribution of interactions and controls at time $t$ is allowed to be a convex combination (or randomization) of admissible configurations in $\Mcb$, thereby extending the space of feasible controls while preserving the structure of admissibility.

\medskip
The next result provides a representation of any relaxed control. 
It shows that every pair $(\mu,\Lambda)$ can be realized through deterministic measurable maps 
together with auxiliary uniform randomizations, and that the corresponding state dynamics 
admits an explicit stochastic differential form.

\begin{proposition} \label{prop:charac_relaxed}
    Let $(\mu,\Lambda)$ be a relaxed control. Then there exist   measurable maps 
\begin{align*}
    \gammabb &: [0,T] \x \bigl( \R^d \x [0,1]^2 \bigr)^2 \x [0,1]^2 \longrightarrow A_{\rm int},
    \\
    \beta &: [0,T] \x \bigl( \R^d \x [0,1]^2 \bigr) \x [0,1]^2 \longrightarrow A_{\rm reg}.
\end{align*}
    and an auxiliary random variable $\bigl(\Vt,\Vf \bigr) \sim {\rm Unif}([0,1])^{\otimes^2}$, independent of $(X_t,U,V,\Vb)$, such that:
    \begin{itemize}
        \item[(i)] For almost every $(t,\pi) \in [0,T] \x [0,1]$, $\gammabb_t(\pi) \in \Ucb$, the relaxed control component $\Lambda_t$ admits the representation
        \begin{align*}
            \Lambda_t
            &= \Lc\Bigl(
                X_t,\,
                U,\,
                \Nb^{1,\gammabb_t(\Vf )}_{\mu_t}(X_t,U,V),\,
                \Nb^{2,\gammabb_t(\Vf)}_{\mu_t}(X_t,U,V),\,
                \beta\bigl(t,X_t,U,V,\Vt,\Vf\bigr)
            \Bigr),
        \end{align*}
        where $\mu_t=\Lc(X_t,U)$ and $\gammabb_t(\pi)(x,u,v,\hat x,\hat u,\hat v, \overline{v}):=\gammabb(t,x,u,v,\hat x,\hat u,\hat v,\overline{v},\pi)$.

        \item[(ii)] The controlled state process $(X_t)_{t \in [0,T]}$ satisfies the stochastic dynamics: $U \perp W$ and
        \begin{align}
        \label{eq:general_relaxed}
            \mathrm{d}X_t 
            &= \int_{[0,1]^3}  
                b\Bigl(
                    t,\,
                    X_t,\,
                    \Nb^{1,\gammabb_t(\pi)}_{\mu_t}(X_t,U,v),\,
                    \Nb^{2,\gammabb_t(\pi)}_{\mu_t}(X_t,U,v),\,
                    \beta\bigl(t,X_t,U,v,\tilde v, \pi\bigr) 
                \Bigr)
            \,\mathrm{d}v\,\mathrm{d}\tilde v \,\mathrm{d}\pi \,\mathrm{d}t 
            + \sigma(t,X_t)\,\mathrm{d}W_t.
        \end{align}
    \end{itemize}
\end{proposition}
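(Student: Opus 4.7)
\textbf{Proof plan for \Cref{prop:charac_relaxed}.} The goal is to realize any relaxed control $(\mu,\Lambda)$ as an explicit stochastic system driven by measurable maps $(\gammabb,\beta)$ and independent uniform randomizations. The argument will proceed in three stages: normalization of the mixing parameter, parametric selection of the interaction control $\gammabb$, and conditional disintegration of the regular component $\beta$.

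First, since $(\Vc,\Ir)$ is a Polish probability space, a Borel isomorphism with a Borel subset of $[0,1]$ combined with the quantile transform allows me to assume, without loss of generality, that $\Vc=[0,1]$ and $\Ir=\mathrm{Unif}([0,1])$; relabel the corresponding uniform variable as $\Vf$. The mixture definition then reads
\[
\Lambda_t(\d x,\d u,\d r_1,\d r_2,A_{\rm reg}) \;=\; \int_{[0,1]}\Delta_\pi(\d x,\d u,\d r_1,\d r_2)\,\d\pi, \qquad \Delta_\pi \in \Mcb\ \text{a.e. } \pi.
\]
For a.e. $\pi$, \Cref{prop:reformulation_of_relaxed} provides a \emph{compatible} tuple $\Gamma(\pi)$ whose $(Z,N)$--marginal equals $\Delta_\pi$, and the constructive part of its proof reconstructs a Borel map $\gammabb(\pi)\in\Ucb$ by sampling the regular conditional distribution $\Lc(\gamma\mid Z,N,\Zh,\Nh)$ through an auxiliary uniform $\Vb$. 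Since regular disintegrations on Polish spaces and the inverse--CDF sampling maps from $\Pc(A_{\rm int})$ depend Borel-measurably on the parameters, performing this construction parametrically in $(t,\pi)$ yields a Borel map
\[
\gammabb : [0,T]\x(\R^d\x[0,1]^2)^2\x[0,1]^2 \longrightarrow A_{\rm int}
\]
such that $\Delta_\pi = \Lc\bigl(X_t,U,\Nb^{1,\gammabb_t(\pi)}_{\mu_t}(X_t,U,V),\Nb^{2,\gammabb_t(\pi)}_{\mu_t}(X_t,U,V)\bigr)$ for a.e. $(t,\pi)$.

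Second, the regular control is obtained by disintegrating $\Lambda_t$ along $a$: write
\[
\Lambda_t(\d x,\d u,\d r_1,\d r_2,\d a) \;=\; K_t(x,u,r_1,r_2;\d a)\,\Lambda_t(\d x,\d u,\d r_1,\d r_2,A_{\rm reg}),
\]
introduce a fresh uniform $\Vt$ independent of $(X_t,U,V,\Vb,\Vf,W)$, and select a Borel sampling map $\tilde\beta(t,x,u,r_1,r_2,\tilde v)$ satisfying $\Lc\bigl(\tilde\beta(t,x,u,r_1,r_2,\Vt)\bigr)=K_t(x,u,r_1,r_2;\cdot)$. Defining
\[
\beta(t,x,u,v,\tilde v,\pi) \;:=\; \tilde\beta\Bigl(t,x,u,\,\Nb^{1,\gammabb_t(\pi)}_{\mu_t}(x,u,v),\,\Nb^{2,\gammabb_t(\pi)}_{\mu_t}(x,u,v),\,\tilde v\Bigr)
\]
produces a map in $\Acb_{\rm reg}$ for which assertion~(i) holds once one integrates over the independent uniforms $(V,\Vt,\Vf)$, thanks to the marginal identity of the first stage and the definition of $K_t$.

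Assertion~(ii) then follows directly from admissibility: the drift of $X_t$ equals $\int_\Rc b(t,X_t,r_1,r_2,a)\,\Gamma_t(\d r_1,\d r_2,\d a)$ with $\Lambda_t=\E[\delta_{(X_t,U)}\otimes\Gamma_t]$, and rewriting $\Gamma_t$ through the representation of~(i) by Fubini over $(V,\Vt,\Vf)$ yields the integral form \eqref{eq:general_relaxed}. The main technical obstacle is the joint $(t,\pi)$-measurability of $\gammabb$: working directly from the image--set definition of $\Mcb$ would require a delicate measurable--selection argument over a non-Polish space of Borel maps, but the alternative characterization $\Mcb=\Mcb_{\rm aux}$ of \Cref{prop:reformulation_of_relaxed} circumvents this by reducing the problem to sampling regular conditional distributions on Polish spaces, an operation which admits a parametrically Borel realization by standard measure--theoretic tools.
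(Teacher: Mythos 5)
Your proposal is correct in broad outline and follows the same decomposition as the paper's proof (normalize the mixing index, represent the interaction component via some $\gammabb_t(\pi)$, disintegrate and sample the $A_{\rm reg}$--marginal to get $\beta$, conclude~(ii) by Fubini), but it uses a genuinely different tool for the single delicate measurability step. The paper's Step~4 directly asserts that the measurability of $t\mapsto\Lambda_t$ permits a Borel selection of $\gammat(t,\cdot)=\gammabb_t(\pi)(\cdot)$, i.e.\ a measurable selection valued in the (non--Polish) space of Borel maps $\Ucb$; it does \emph{not} invoke \Cref{prop:reformulation_of_relaxed} at all. You instead route through $\Mcb=\Mcb_{\rm aux}$: for each $(t,\pi)$ the existence of a compatible tuple for $\Delta_{t,\pi}$ is a statement about probability laws on a fixed Polish space, so the selection problem becomes one of choosing, measurably in $(t,\pi)$, a law of $(\gamma,Z,N,\Zh,\Nh)$ satisfying Borel (law-level) constraints, after which regular disintegration plus a Borel sampling map $S:\Pc(A_{\rm int})\times[0,1]\to A_{\rm int}$ produce a concrete Borel $\gammabb$. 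That is a defensible and arguably cleaner account of where the measurability comes from; note, however, that neither argument spells out the actual measurable-selection theorem being applied (the compatibility conditions involve the non-continuous $\Gr$, so the admissible set of laws is Borel- but not closed-valued, and one should cite Jankov--von Neumann or similar), so your approach improves the framing rather than fully closing the gap.

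One small oversight to flag: in the definition of a relaxed control the mixing data $(\Vc,\Ir)$ and the measures $\Delta_\pi$ are all implicitly $t$-dependent, and your opening normalization ``WLOG $\Vc=[0,1]$, $\Ir=\mathrm{Unif}([0,1])$'' must therefore be carried out measurably in $t$. The paper handles this through the measurable reparametrization $c:[0,T]\times[0,1]\to[0,1]$ pushing $\mathrm{Unif}([0,1])$ onto $\Ir_t$, and your quantile-transform argument implicitly does the same, but you should say so explicitly so that the composed map $\gammabb(t,\ldots,c(t,\pi))$ remains Borel in $(t,\pi)$.
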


\begin{proof}
We proceed in 4 steps. 

\medskip
\noindent \emph{Step 1: Disintegration of the regular control component.}  
By the classical disintegration theorem, there exists a Borel measurable kernel
\[
    \Theta : [0,T] \x \R^d \x [0,1] \x \Hc^2 \;\longrightarrow\; \Pc(A_{\rm reg})
\]
such that, for all bounded measurable test functions $\varphi$, one has
\[
    \int_{A_{\rm reg}} \varphi(a)\,\Theta(t,x,u,r_1,r_2)(\mathrm{d}a)\;
    \Lambda_t(\mathrm{d}x,\mathrm{d}u,\mathrm{d}r_1,\mathrm{d}r_2,A_{\rm reg})
    =
    \int_{A_{\rm reg}} \varphi(a)\,
    \Lambda_t(\mathrm{d}x,\mathrm{d}u,\mathrm{d}r_1,\mathrm{d}r_2,\mathrm{d}a).
\]
In other words, $\Theta$ is the conditional distribution of the $A_{\rm reg}$--component given $(t,x,u,r_1,r_2)$.  

\medskip
\noindent \emph{Step 2: Measurable parametrization of $\Theta$.}  
By a measurable selection theorem, 
there exists a Borel map
\[
    \theta : [0,T] \x \R^d \x [0,1] \x \Hc^2 \x [0,1] \;\longrightarrow\; A_{\rm reg}
\]
and an auxiliary random variable $\Vt \sim {\rm Unif}([0,1])$, independent of $(X_t,U,V,\Vb)$, 
such that
\[
    \Theta(t,x,u,r_1,r_2)(\mathrm{d}a)
    = \Lc\Bigl(\theta \bigl(t,x,u,r_1,r_2,\Vt \bigl)\Bigr)(\mathrm{d}a).
\]

\medskip
\noindent \emph{Step 3: Representation of the interaction component.}  
By definition of $(\mu,\Lambda)$ being a relaxed control, 
the marginal distribution 
\[
    \Lambda_t(\mathrm{d}x,\mathrm{d}u,\mathrm{d}r_1,\mathrm{d}r_2,A_{\rm reg}) = \int_{\Vc_t} 
            \Delta_{t,\pi}(\mathrm{d}x,\mathrm{d}u,\mathrm{d}r_1,\mathrm{d}r_2)
        \,\Ir_t(\mathrm{d}\pi)
\]
where $\Vc_t$ is a Polish space, $\Ir_t$--a.e. $\pi$, $\Delta_{t,\pi}(\mathrm{d}x,\mathrm{d}u,\mathrm{d}r_1,\mathrm{d}r_2)$ belongs to $\Mcb$ and $\Delta_{t,\pi}(\mathrm{d}x,\mathrm{d}u,\Hc,\Hc)$ is independent of $\pi$. Without loss of generality, we can assume that $\Vc_t=[0,1]$. Hence, there exists $\gammabb_t(\pi) \in \Ucb$ such that
\[
    \Lambda_t(\mathrm{d}x,\mathrm{d}u,\mathrm{d}r_1,\mathrm{d}r_2,A_{\rm reg})
    = \int_{[0,1]}\Lc\Bigl(
        X_t,\,
        U,\,
        \Nb^{1,\gammabb_t(\pi)}_{\mu_t}(X_t,U,V),\,
        \Nb^{2,\gammabb_t(\pi)}_{\mu_t}(X_t,U,V)
    \Bigr) \Ir_t(\mathrm{d}\pi),
\]
where $U,V$ are independent uniforms, whose law does not depend on $t$ 
(and thus can be chosen consistently across time).  

\medskip
\noindent \emph{Step 4: Construction of the global feedback maps.}  
For each $t$, the measure $\Ir_t$ is a probability measure on a Polish space $\Vc_t$. We can find a map Borel map $[0,1] \ni s \mapsto c(t,s) \in [0,1]$ s.t. $\Lc(c(t,V))=\Ir_t$. 
Since the map $t \mapsto \Lambda_t$ is measurable, we can choose $t \mapsto \Pi_t$ measurable and therefore $[0,T] \x [0,1] \ni (t,s) \mapsto c(t,s) \in [0,1]$ measurable.
Again, by using the fact that the map $t \mapsto \Lambda_t$ is measurable, we can select a Borel measurable 
$\gammat : [0,T] \x (\R^d \x [0,1]^2)^2 \x [0,1]^2 \to A_{\rm int}$ such that
\[
    \gammat(t,x,u,v,\hat x,\hat u,\hat v,\overline{v},\pi) = \gammabb_t(\pi)(x,u,v,\hat x,\hat u,\hat v, \overline{v}),
    \qquad \text{for a.e. } t \in [0,T].
\]

We then define
\[
    \gammabb(t,x,u,v,\hat x,\hat u,\hat v,\overline{v},\pi)
    :=
    \gammat(t,x,u,v,\hat x,\hat u,\hat v,\overline{v},c(t,\pi))
\]
and
\[
    \beta(t,x,u,v,\widetilde{v},\pi)
    := \theta\Bigl(
        t,\,
        x,\,
        u,\,
        \Nb^{1,\gammabb_t(\pi)}_{\mu_t}(x,u,v),\,
        \Nb^{2,\gammabb_t(\pi)}_{\mu_t}(x,u,v),\,
        \widetilde{v}
    \Bigr).
\]

\medskip
\noindent \emph{Conclusion.}  
The above construction provides the representation of $\Lambda_t$ stated in Item (i) of the proposition.  
Finally, Item (ii) follows from this explicit representation together with the admissibility of $(\mu,\Lambda)$, 
which ensures that the dynamics of $X_t$ can be written in the differential form given in the statement.  

\end{proof}

\medskip
Let us recall that we denote by $\Acb_{\rm int}$ the set of Borel measurable \emph{interaction maps}
\[
    \Acb_{\rm int} \;:=\; 
    \Bigl\{\, 
        \gammabb : [0,T] \times \bigl( \R^d \times [0,1]^2 \bigr)^2 \times [0,1]^2 
        \;\longrightarrow\; A_{\rm int} 
    \,\Bigr\},
\]
and by $\Acb_{\rm reg}$ the set of Borel measurable \emph{regular control maps}
\[
    \Acb_{\rm reg} \;:=\; 
    \Bigl\{\, 
        \beta : [0,T] \times \bigl( \R^d \times [0,1]^2 \bigr) \times [0,1]^2 
        \;\longrightarrow\; A_{\rm reg} 
    \,\Bigr\}.
\]

\medskip
In the next result, we establish the uniqueness of the marginal law of the state process associated with a given pair of measurable controls.

\begin{proposition} \label{prop:relaxed_uniqueness}
    Let $(\gammabb,\beta) \in \Acb_{\rm int} \times \Acb_{\rm reg}$, and let $X$ be a solution to the {\rm equation~\eqref{eq:general_relaxed}} driven by the common sources of randomness $(X_0,U,W)$.  
    Then, for each $t \in [0,T]$, the joint distribution $\Lc(X_t,U)$ is uniquely determined.  
    In other words, if $\Xt$ is another process satisfying~\eqref{eq:general_relaxed} with $(\gammabb,\beta, \Xt_0,\Ut,\Wt)$ s.t. $\Lc\bigl(\Xt_0,\Ut,\Wt\bigr)=\Lc(X_0,U,W)$, then
    \[
        \mu_t=\Lc(X_t,U) = \Lc\bigl(\Xt_t,\Ut \bigr), 
        \qquad \text{for all } t \in [0,T].
    \]
\end{proposition}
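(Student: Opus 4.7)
The plan is to prove uniqueness via a Picard--type contraction in total variation, exploiting the non--degeneracy of $\sigma$ through Girsanov's theorem and then iterating over short time intervals. Since $U$ is constant in time with $\Lc(U)=\mathrm{Unif}([0,1])$, the joint law $\mu_t=\Lc(X_t,U)$ is characterized by the conditional laws $\Lc(X_t\mid U=u)$ for a.e.\ $u\in[0,1]$, so I would work conditionally on $U=u$ throughout. For a candidate flow $m=(m_t)_{t\in[0,T]}$ on $\R^d\times[0,1]$, set
\[
    B_t(m,x,u):=\int_{[0,1]^3}b\Bigl(t,x,\Nb^{1,\gammabb_t(\pi)}_{m_t}(x,u,v),\Nb^{2,\gammabb_t(\pi)}_{m_t}(x,u,v),\beta(t,x,u,v,\tilde v,\pi)\Bigr)\,\mathrm{d}v\,\mathrm{d}\tilde v\,\mathrm{d}\pi.
\]
Then $B$ is bounded measurable, and the ``frozen'' SDE $\mathrm{d}Y^m_t=B_t(m,Y^m_t,U)\,\mathrm{d}t+\sigma(t,Y^m_t)\,\mathrm{d}W_t$ with the same driving data $(X_0,U,W)$ admits a unique weak solution by classical results for SDEs with bounded measurable drift and Lipschitz non--degenerate diffusion (Stroock--Varadhan, Veretennikov), applied conditionally on $U=u$. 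Setting $F(m)_t:=\Lc(Y^m_t,U)$, any solution of~\eqref{eq:general_relaxed} is a fixed point of $F$, and it suffices to show that $F$ has at most one fixed point.

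The core ingredient is a short--time contraction estimate. Given two candidates $m,m'$, I would use Girsanov's theorem to realize the conditional laws $\Lc(Y^m\mid U=u)$ and $\Lc(Y^{m'}\mid U=u)$ as mutually absolutely continuous with respect to the driftless reference diffusion $\mathrm{d}Z_t=\sigma(t,Z_t)\,\mathrm{d}W_t$; Novikov's condition is satisfied since $\sigma^{-1}$ is bounded by non--degeneracy and $b$ is bounded, so both densities are true martingales. The Pinsker--Csisz\'ar inequality then yields, after integration over $u$,
\[
    \|F(m)_t-F(m')_t\|_{\mathrm{TV}}^2\;\le\;C\,\E\!\int_0^t\bigl|B_s(m,Y^{m}_s,U)-B_s(m',Y^{m}_s,U)\bigr|^2\,\mathrm{d}s.
\]
Combining the Lipschitz continuity of $b$ in $(m^1,m^2)$ from \Cref{assum:main1_MF_CI} with the fact that $m\mapsto\Nb^{j,\gammabb}_m(x,u,v)$ is TV--Lipschitz uniformly in $(x,u,v,\pi)$ (since each $\Nb^{j,\gammabb}_m$ is a pushforward of $m\otimes\mathrm{d}V\,\mathrm{d}\bar V$ under a measurable map), one obtains $\sup_{t\le\tau}\|F(m)_t-F(m')_t\|_{\mathrm{TV}}\le C\sqrt{\tau}\,\sup_{t\le\tau}\|m_t-m'_t\|_{\mathrm{TV}}$. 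For $\tau$ small enough this makes $F$ a strict contraction on $C([0,\tau];\Pc(\R^d\times[0,1]))$ equipped with the uniform--TV metric, and iterating on $[k\tau,(k+1)\tau]$ gives uniqueness on $[0,T]$.

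The main difficulty I anticipate is matching the Lipschitz regularity actually assumed on $b$ in \Cref{assum:main1_MF_CI} with the TV--type estimate that Girsanov naturally produces: if ``Lipschitz in $(m^1,m^2)$'' is meant in a Wasserstein sense rather than total variation, one must first derive uniform moment bounds for the candidate flows (immediate since $b$ is bounded and $\sigma$ is Lipschitz, via Gronwall) and then use that Wasserstein and total--variation distances are comparable on measures with uniformly bounded moments. Beyond this calibration step, the weak uniqueness of the frozen SDE, verification of Novikov's condition, and the time--patching are all standard.
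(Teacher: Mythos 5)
Your proposal takes essentially the same route as the paper: Girsanov's change of measure exploiting the non--degeneracy of $\sigma$, Pinsker's inequality to pass from relative entropy to total variation, a Lipschitz--in--TV estimate on the drift functional, and an iteration in time. Whether one closes the loop by a short--time Picard contraction and patching (your formulation) or by applying Gr\"onwall directly to the squared total--variation distance between the two solutions (the paper's Step~3) is a purely stylistic difference; the underlying estimate is identical.

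However, the ``calibration step'' you flag as a potential difficulty is resolved incorrectly as stated. You propose to use ``that Wasserstein and total--variation distances are comparable on measures with uniformly bounded moments,'' but there is no bound of the form $\Wc_1(\nu,\nu')\le C\,\|\nu-\nu'\|_{\rm TV}$ on an unbounded state space, even under uniform moment bounds of any finite order: with $\nu=(1-\varepsilon)\delta_0+\varepsilon\delta_K$, $\nu'=\delta_0$, $K=\varepsilon^{-1/p}$, the $p$--th moments stay bounded while $\|\nu-\nu'\|_{\rm TV}=2\varepsilon$ and $\Wc_1(\nu,\nu')=\varepsilon^{1-1/p}$, so the ratio diverges. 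What actually makes the estimate work is the pushforward structure of the interaction kernels: for fixed $\gammabb$, $t$, $\pi$ and fixed $(x,u,v)$, the measure $\Nb^{j,\gammabb_t(\pi)}_m(x,u,v)$ is the image of $m\otimes{\rm Unif}([0,1])^{\otimes 2}$ under a fixed Borel map, and image measures contract total variation, so
\[
\bigl\|\Nb^{j,\gammabb_t(\pi)}_{m}(x,u,v)-\Nb^{j,\gammabb_t(\pi)}_{m'}(x,u,v)\bigr\|_{\rm TV}\;\le\;\|m-m'\|_{\rm TV}.
\]
Thus the Lipschitz--in--$(m^1,m^2)$ hypothesis of Assumption~2.1 must be read with respect to a TV--dominated metric on $\Pc(A_{\rm int}\times\R^d\times[0,1]\times\Er)$ (total variation, or equivalently here bounded--Lipschitz), under which $|B_s(m,x,u)-B_s(m',x,u)|\le C\,\|m_s-m'_s\|_{\rm TV}$ follows directly; this is exactly what the paper's Step~3 invokes. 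No Wasserstein--to--TV comparison via moments is available or needed.
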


\begin{proof}
Let $(\mu^1_t)_{t \in [0,T]}$ and $(\mu^2_t)_{t \in [0,T]}$ be two solutions associated with the processes $(X^{1},U^1,W^1)$ and $(X^{2}, U^2,W^2)$ respectively. 
%We index the processes $X^{\mu^1}$ and $X^{\mu^1}$ by $\mu^1$ and $\mu^2$, respectively, since for each $\mu=(\mu_t)_{t \in [0,T]}$, the associated SDE admits a strong solution (see ??). Therefore, the main difference is coming from the presence of $\mu^1$ and $\mu^2$.
By definition $\mu^1_t(\R \times \cdot)=\mu^2_t(\R \times \cdot)$ for each $t$, any difference between $\mu^1_t$ and $\mu^2_t$ must lie in their $u$--conditionals. 
More precisely, writing the (Borel) disintegrations
\[
  \mu^k_t(\mathrm{d}x,\mathrm{d}u)=\mu^k_t(u)(\mathrm{d}x)\,\mathrm{d}u,
  \qquad k=1,2,
\]
we have that $u \mapsto \mu^1_t(u)$ and $u \mapsto \mu^2_t(u)$ carry all the information about the possible discrepancy.

\medskip
\emph{Step 1: A change of measure aligning the drifts.}
Define the (vector) process $B$ by
\begin{align*}
    B(t,X^1_t,U^1,\mu^1_t,\mu^2_t)
    &:=
        -\int_{[0,1]^3} b\Bigl(t, X^1_t, \Nb^{1,\gammabb_t(\pi)}_{\mu^2_t}(X^1_t,U^1,v), \Nb^{2,\gammabb_t(\pi)}_{\mu^2_t}(X^1_t,U^1,v), \beta(t,X^1_t,U^1,v,\tilde v,\pi) \Bigr) \,\mathrm{d}v\,\mathrm{d}\tilde v\,\mathrm{d}\pi
        \\
    &\quad + \int_{[0,1]^3} b\Bigl(t, X^1_t, \Nb^{1,\gammabb_t(\pi)}_{\mu^1_t}(X^1_t,U^1,v), \Nb^{2,\gammabb_t(\pi)}_{\mu^1_t}(X^1_t,U^1,v), \beta(t,X^1_t,U^1,v,\tilde v,\pi) \Bigr) \,\mathrm{d}v\,\mathrm{d}\tilde v \,\mathrm{d}\pi.
\end{align*}
Consider $Z$ solving
\[
    Z_0=1,
    \qquad
    \mathrm{d}Z_t = Z_t\, \sigma^{-1}(t,X^1_t)\, B(t,X^1_t,U^1,\mu^1_t,\mu^2_t)\, \mathrm{d}W^1_t .
\]
Since $b$ is bounded and $\sigma$ is uniformly non--degenerate, Novikov’s criterion holds, and thus $Z$ is a true martingale with $\E[Z_T]=1$. 
Define a new probability $\Pb$ on $(\Omega,\F)$ by $\mathrm{d}\Pb := Z_T \mathrm{d}\P$. 
By Girsanov’s theorem and the fact that $(X_0^1,U^1) \perp W^1$,
\[
  \Wb_\cdot := W^1_\cdot - \int_0^\cdot \sigma^{-1}(t,X^1_t)\, B(t,X^1_t,U^1,\mu^1_t,\mu^2_t)\, \mathrm{d}t
\]
is a $\Pb$--Brownian motion (conditionally on each value of $U^1$). 
In particular, for a.e.\ $u$,
\[
  \Lc^{\Pb}\bigl(\Wb \mid U^1=u\bigr)=\Lc(W^1),
  \qquad\text{and hence}\qquad
  \Lc^{\Pb}\bigl(X^1 \mid U^1=u\bigr)=\Lc^{\P}\bigl(X^2 \mid U^2=u\bigr),
\]
because $\Lc(X_0^1,U^1,W^1)=\Lc(X_0^2,U^2,W^2)$ and, under $\Pb$ the drift of $X^1$ matches that of $X^2$ when the interaction kernel is evaluated at $\mu^2$ and $U^2=u$.

\medskip
\emph{Step 2: Relative entropy on path space and the conditional density.}
For a Polish space $\Vc$ and $m,m'\in\Pc(\Vc)$, recall
\[
  H(m\mid m') := 
  \begin{cases}
    \displaystyle \int_{\Vc} \frac{\mathrm{d}m}{\mathrm{d}m'} \log\!\Big(\frac{\mathrm{d}m}{\mathrm{d}m'}\Big)\, \mathrm{d}m' 
      & \text{if } m \ll m',\\[2mm]
    +\infty & \text{otherwise.}
  \end{cases}
\]
By the definition of $\Pb$ and the standard conditional density argument,
\[
  \Lc\bigl( X^2_{t\wedge\cdot} \mid U^2=u \bigr)(\mathrm{d}x)
  = \E\!\left[ Z_t \mid X^1_{t\wedge\cdot}=x,\, U^1=u \right]\,
    \Lc\bigl( X^1_{t\wedge\cdot} \mid U^1=u \bigr)(\mathrm{d}x)
  \quad \text{a.e.}
\]
Moreover, using that $\sigma$ is non--degenerate, the (completed) filtrations generated by $(X^1,U^1)$ and by $(X^1_0,W^1,U^1)$ coincide; hence the Cameron–Martin/Girsanov computation of entropy yields
\[
  H\!\left( \Lc(X^1_{t\wedge\cdot},U^1) \,\middle\vert\, \Lc(X^2_{t\wedge\cdot},U^2) \right)
  \;=\; -\,\E \!\left[ \log \E\!\left[ Z_t \,\middle\vert\, X^1_{t\wedge\cdot},\,U^1 \right] \right]
  \;=\; \frac{1}{2}\, \E \!\left[ \int_0^t 
        \big\lvert \sigma^{-1}(s,X^1_s) B(s,X^1_s,U^1,\mu^1_s,\mu^2_s) \big\rvert^2\, \mathrm{d}s \right].
\]
%(Equivalently, one may use the data processing inequality to get “$\le$” and then invoke the filtration equality to upgrade to “$=$”.)

\medskip
\emph{Step 3: Lipschitz control in total variation and Grönwall.}
By the Lipschitz property of $b$ in its interaction arguments and the boundedness/measurability of $\gammabb$, there exists $C>0$ such that, for all $s$,
\[
  \bigl| B(s,X^1_s,U^1,\mu^1_s,\mu^2_s) \bigr|
  \;\le\; C\, \| \mu^1_s - \mu^2_s \|_{\rm TV},
\]
where $\|\cdot\|_{\rm TV}$ denotes total variation i.e.  $\|\nu-\nu'\|_{\rm TV}:=\sup_{|f| \le 1} |\langle f, \nu-\nu' \rangle|$, the supremum is being taken over Borel measurable maps. 
Consequently, for some $K>0$ and every $t\in[0,T]$,
\[
  H\!\left( \Lc(X^1_{t\wedge\cdot},U^1) \,\middle\vert\, \Lc(X^2_{t\wedge\cdot},U^2) \right)
  \;\le\; K\, \E\!\left[ \int_0^t \| \mu^1_s - \mu^2_s \|_{\rm TV}^2 \, \mathrm{d}s \right].
\]
By monotonicity of total variation,
\[
  \| \mu^1_t - \mu^2_t \|_{\rm TV}
  \;\le\; \big\| \Lc(X^1_{t\wedge\cdot},U^1) - \Lc(X^2_{t\wedge\cdot},U^2) \big\|_{\rm TV},
\]
and by Pinsker’s inequality,
\[
  \big\| \Lc(X^1_{t\wedge\cdot},U^1) - \Lc(X^2_{t\wedge\cdot},U^2) \big\|_{\rm TV}^2
  \;\le\; \tfrac12\, H\!\left( \Lc(X^1_{t\wedge\cdot},U^1) \,\middle\vert\, \Lc(X^2_{t\wedge\cdot},U^2) \right).
\]
Putting these together, we obtain
\[
  \| \mu^1_t - \mu^2_t \|_{\rm TV}^2
  \;\le\; \frac12\, H\!\left( \Lc(X^1_{t\wedge\cdot},U^1) \,\middle\vert\, \Lc(X^2_{t\wedge\cdot},U^2) \right)
  \;\le\; K\, \E\!\left[ \int_0^t \| \mu^1_s - \mu^2_s \|_{\rm TV}^2 \, \mathrm{d}s \right].
\]
By Grönwall’s lemma, it follows that $\| \mu^1_t - \mu^2_t \|_{\rm TV}=0$ for all $t\in[0,T]$, hence $\mu^1_t=\mu^2_t$ for every $t$. 
In particular, $\mu^1_t(u)=\mu^2_t(u)$ for a.e.\ $u\in[0,1]$ and all $t\in[0,T]$.

\medskip
This proves the claimed uniqueness of $\Lc(X_t,U)$ for each $t\in[0,T]$. 
Applying the same argument to any other solution $\Xt$ with $(\Xt_0,\Ut,\Wt)$ such that $\Lc(\Xt_0,\Ut,\Wt)=\Lc(X_0,U,W)$ yields
\[
  \mu_t = \Lc(X_t,U) = \Lc(\Xt_t,\Ut), \qquad t\in[0,T].
\]
\end{proof}

\medskip
In view of \Cref{prop:charac_relaxed}, every relaxed control $(\mu,\Lambda) \in \Pc_R$ can be represented through some pair $(\gammabb,\beta) \in \Acb_{\rm int} \x \Acb_{\rm reg}$ and the inital distribution $\mu_0=\Lc(X_0,U)$. 
By uniqueness in distribution of the corresponding state dynamics, this representation is unambiguous.  
It is therefore convenient to emphasize the dependence on $(\gammabb,\beta)$ by writing
\[
    (\mu,\Lambda) \equiv \bigl(\mu^{\gammabb,\beta},\,\Lambda^{\gammabb,\beta}\bigr).
\]
We will add the initial distribution in the index when it is relevant.

\medskip

\begin{proposition}[Closedness of relaxed controls] \label{prop:closed_relaxed}
    The set of relaxed controls $\Pc_R \subset \Cc^d_{\rm pr}\times \M(\R^d \x [0,1] \x \Rc)$ is closed with respect to the Wasserstein topology.
\end{proposition}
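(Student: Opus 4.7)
The plan is to take a sequence $(\mu^n,\Lambda^n)\in\Pc_R$ converging to $(\mu,\Lambda)$ in the Wasserstein topology and check that the limit still satisfies the two defining ingredients of $\Pc_R$: \emph{(a)} admissibility in the martingale--problem sense, and \emph{(b)} the mixture representation of $\Lambda_t(\cdot,A_{\rm reg})$ as a fiberwise integral over $\Mcb$ whose $(x,u)$--marginal equals $\mu_t$.

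For \emph{(a)}, I would follow the classical compactification route. For each $n$, attach a process $X^n$ with $\mu^n_t=\Lc(X^n_t,U)$ solving \eqref{eq:general_relaxed}; deduce tightness of the joint laws of $(X^n,U,W^n,\Gamma^n)$ from the boundedness of $b$ together with the Lipschitz and non--degeneracy assumptions on $\sigma$; extract a Skorokhod limit $(X,U,W,\Gamma)$; and recognize that the limiting process solves the martingale problem associated with the aggregated drift $\int_{\Rc} b(t,\cdot,r_1,r_2,a)\,\Gamma_t(\mathrm{d}r_1,\mathrm{d}r_2,\mathrm{d}a)$ and diffusion $\sigma$. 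The uniformity of $U$, the marginal identity $\mu_t(\R^d,\mathrm{d}u)=\mathrm{d}u$, the initial law, the independence $(X_0,U)\perp W$, and the law of the Brownian motion are all stable under weak limits.

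For \emph{(b)}, I would disintegrate $\Lambda^n_t(\cdot,A_{\rm reg})=\int_{[0,1]}\Delta^n_{t,\pi}\,\mathrm{d}\pi$ with each $\Delta^n_{t,\pi}\in\Mcb$ and lift the slice $\pi\mapsto\Delta^n_{t,\pi}$ to a single probability $\Theta^n_t$ on $\Mcb$, namely the pushforward of Lebesgue on $[0,1]$. Tightness of $(\Theta^n_t)$ is inherited from that of $(\mu^n_t)$, since each $\Delta^n_{t,\pi}$ has $(x,u)$--marginal $\mu^n_t$ while its remaining coordinates live in the compact space $\Hc^2$. Extracting a further subsequential limit $\Theta_t$ and applying Fubini, I would recover $\Lambda_t(\cdot,A_{\rm reg})$ as the barycenter of $\Theta_t$. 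For the support of $\Theta_t$ to remain inside $\Mcb$, the sole missing ingredient is the closedness of $\Mcb$ itself.

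This closedness is the heart of the argument and the main obstacle. I would exploit the equivalent description $\Mcb=\Mcb_{\rm aux}$ provided by \Cref{prop:reformulation_of_relaxed}. Given $M^n\in\Mcb$ with $M^n\Rightarrow M$, I choose for each $n$ a compatible tuple $\Gamma^n=(\gamma^n,Z^n,N^n,\Zh^n,\Nh^n)$ satisfying $\Lc(Z^n,N^n)=M^n$. Compactness of $A_{\rm int}$, $[0,1]$, and $\Er$, together with tightness of the $\R^d$--marginals inherited from $M^n\Rightarrow M$, yield a Skorokhod limit $\Gamma$ along a subsequence. Condition $(i)$ of the compatibility definition, which packages the i.i.d.\ duplication with $\Lc(U)=\mathrm{Unif}[0,1]$, is plainly stable under weak convergence. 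The delicate point is condition $(ii)$, the conditional law identity $N^1=\Lc(\gamma,\Zh,\Gr(U,\Uh)\mid Z,N)$, because $\Gr$ is only assumed Borel. I would handle it by enlarging each $\Gamma^n$ with the auxiliary coordinate $\Gr(U^n,\Uh^n)$ from the outset: since $(U^n,\Uh^n)\sim\mathrm{Unif}([0,1]^2)$ for every $n$, the joint law of $(U^n,\Uh^n,\Gr(U^n,\Uh^n))$ is a fixed Borel image of Lebesgue and therefore passes to the limit with no continuity requirement on $\Gr$; moreover in the limit this extra coordinate must again coincide a.s.\ with $\Gr(U,\Uh)$, since its conditional law given $(U,\Uh)$ is a Dirac mass at $\Gr(U,\Uh)$. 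Rewriting $(ii)$ in the integrated form
\[
\E\!\left[\phi(Z,N)\,\langle\psi,N^1\rangle\right]=\E\!\left[\phi(Z,N)\,\psi\!\left(\gamma,\Zh,\Gr(U,\Uh)\right)\right]
\]
for continuous bounded $\phi,\psi$ on the enlarged space and passing to the limit along the subsequence then confirms that $\Gamma$ is compatible, whence $M\in\Mcb_{\rm aux}=\Mcb$. Combining the three steps yields $(\mu,\Lambda)\in\Pc_R$.
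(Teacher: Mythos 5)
Your plan rests on the same key lemma as the paper — the auxiliary characterization $\Mcb=\Mcb_{\rm aux}$ from \Cref{prop:reformulation_of_relaxed} and the passage to the limit of compatible tuples — and you have correctly identified the same central obstacle: the kernel $\Gr$ is only Borel, so compatibility condition~(ii) cannot be tested against arbitrary continuous functions of $(U,\Uh)$. Your remedy (enlarging the tuple by the coordinate $e^n=\Gr(U^n,\Uh^n)$ whose joint law with $(U^n,\Uh^n)$ is a fixed Borel pushforward of Lebesgue, hence passes to the limit and pins down $e=\Gr(U,\Uh)$ a.s.) is sound and is, in substance, the same observation the paper exploits under the name of ``combining weak with stable convergence'' in Step~2 of its proof: because the $(U,\Uh)$--marginal is constant along the sequence, one may test against functions that are continuous in all coordinates except $(u,\hat u)$, where Borel measurability suffices. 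Your argument that the support of $\Theta_t$ must consist of measures with $(x,u)$--marginal equal to $\mu_t$ (continuity of the marginal map, $\Phi_*\Theta^n_t=\delta_{\mu^n_t}\Rightarrow\delta_{\mu_t}$) is also fine, and corresponds to the paper's ``Independence of $Z_t$ and $\Sigma_t$'' substep.

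Where your organization differs is that you factor the argument through a standalone closedness claim for $\Mcb$ and then argue by barycenters, whereas the paper never isolates that claim: it packages the compatible tuple, the mixing variable $\pi$, and the mixture label $\Sigma^k_{t,\pi}$ into a single time--space law $\overline{\Lambda}^k\in\M\big(A_{\rm int}\times(\R^d\times[0,1]\times\Hc^2)^2\times\Pc(\R^d\times[0,1]\times\Hc^2)\big)$ and takes one subsequential limit of this object. Your decomposition is conceptually cleaner, but it leaves a genuine gap in the treatment of the time variable. You propose ``extracting a further subsequential limit $\Theta_t$'' for each $t$ separately; the subsequences will in general depend on $t$, a countable diagonalization only controls a null set of times, and measurability of $t\mapsto\Theta_t$ — which is needed to reconstitute a relaxed control — does not come for free from pointwise extraction. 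To close this, you would have to lift $\Theta^n_t$ to a single element of $\M\big(\Pc(\R^d\times[0,1]\times\Hc^2)\big)$ (or the richer space the paper uses) over $[0,T]$, take one weak limit there, and then disintegrate in $t$ afterwards. Once that is done, your route and the paper's coincide up to bookkeeping. Your step~(a) is also essentially the paper's Step~1 (Fokker--Planck passage to the limit plus the fixed-point reconstruction of the state process), so no concern there.

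Two minor points: the disintegration need not have $\Ir^n_t=\mathrm{Leb}$, so $\Theta^n_t$ should be defined as the pushforward of $\Ir^n_t$, not of Lebesgue (this changes nothing downstream). And in your integrated form of condition~(ii), taking $\phi$ continuous bounded is indeed sufficient to verify a conditional-law identity, but it is worth stating explicitly that this is a density argument.
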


\begin{proof}
Fix a sequence $(\mu^k,\Lambda^k)_{k\ge1}\subset \Pc_R$ and suppose
\[
    (\mu^k,\Lambda^k)\;\Longrightarrow\;(\mu,\Lambda)
    \quad\text{in the Wasserstein topology on}\quad
    \Cc^d_{\rm pr}\times \M(\R^d \x [0,1] \x \Rc).
\]
We prove that $(\mu,\Lambda)\in\Pc_R$ by verifying (i) admissibility and (ii) the structural constraint on the interaction component, for a.e.\ $t\in[0,T]$.

\medskip
\noindent\emph{Step 1: Admissibility of the limit.}
By definition of relaxed controls, each $(\mu^k,\Lambda^k)$ solves the corresponding Fokker--Planck (or weak formulation) with test functions smooth in $x$ and bounded in the remaining variables. Passing to the limit in the Fokker--Planck equation under Wasserstein convergence : for any test smooth map $F$ and $\phi$,
\begin{align*}
   &\mathrm{d}\int_{\R^d \x [0,1]}\phi(x)F(u) \mu_t(\mathrm{d}x,\mathrm{d}u)
   \\
   &=\int_{\R^d \x [0,1] \x \Rc}\phi'(x) F(u) b(t,x,r_1,r_2,a) \Lambda_t(\mathrm{d}x,\mathrm{d}u,\mathrm{d}r_1,\mathrm{d}r_2,\mathrm{d}a)\mathrm{d}t + \frac{1}{2}\int_{\R^d } {\rm Tr}[ \phi''(x) \sigma(t,x) \sigma^\top(t,x)]F(u) \mu_t(\mathrm{d}x,\mathrm{d}u) \mathrm{d}t.
\end{align*}
Let $(t,x,u) \mapsto \Gamma(t,x,u)$ be the Borel map verifying $\Lambda= \Gamma(t,x,u)(\mathrm{d}r_1,\mathrm{d}r_2,\mathrm{d}a) \Lambda_t( \mathrm{d}x,\mathrm{d}u,\Rc) \mathrm{d}t$. We take $(X_0,U) \perp W$ with $\Lc(X_0,U)=\mu_0$, and we define $Y$ by : $\Lc(Y_0,U)=\mu_0$ and
\begin{align*}
    \mathrm{d}Y_t= \int_{\Rc} b \left(t,Y_t,r_1,r_2,a \right)\Gamma(t,Y_t,U)(\mathrm{d}r_1,\mathrm{d}r_2,\mathrm{d}a) \mathrm{d}t + \sigma(t,Y_t)\mathrm{d}W_t.
\end{align*}
We can then check that $\Lc(Y_t,U)=\mu_t$ for each $t$. This allows us to deduce that $(\mu,\Lambda)$ is \emph{admissible}.

\medskip
\noindent\emph{Step 2: The interaction marginal lies in $\Mcb$ for a.e.\ $(t,\pi)$.}
By the auxiliary characterization $\Mcb_{\rm aux}=\Mcb$, for each $k$ and a.e.\ $(t,\pi)$ there exists a \emph{compatible} tuple
\[
    \Gamma_{t,\pi}^k=\big(\gamma_{t,\pi}^k,\;Z_{t}^k,\;N_{t,\pi}^k,\;\Zh_{t}^k,\;\Nh_{t,\pi}^k\big),
    \quad
    (Z_t^k,N_{t,\pi}^k)\stackrel{d}{=}\Delta_{t,\pi}^k(\mathrm{d}x,\mathrm{d}u,\mathrm{d}r_1,\mathrm{d}r_2),
\]
such that 
\begin{align*}
    \Lambda_{t}^k(\mathrm{d}x,\mathrm{d}u,\mathrm{d}r_1,\mathrm{d}r_2,A_{\rm reg})=\int_0^1 \Delta_{t,\pi}^k(\mathrm{d}x,\mathrm{d}u,\mathrm{d}r_1,\mathrm{d}r_2)\Ir_t^k(\mathrm{d}\pi)
\end{align*}
and
$\Lc\!\big(Z_t^k,N_{t,\pi}^k,\Zh_t^k,\Nh_{t,\pi}^k\big)=\Sigma^k_{t,\pi}\otimes \Sigma^k_{t,\pi}$, $\Sigma^k_{t,\pi}:=\Lc(Z_t^k,N_{t,\pi}^k),$
\[
    N_{t,\pi}^{1,k}=\Lc\!\big(\gamma_{t,\pi}^k,\Zh_t^k,\Gr(U_{t,\pi}^k,\Uh_{t,\pi}^k)\mid Z_t^k,N^k_{t,\pi}\big),
    \quad
    \Nh_{t,\pi}^{2,k}=\Lc\!\big(\gamma_{t,\pi}^k,Z_t^k,\Gr(\Uh_{t,\pi}^k,U_{t,\pi}^k)\mid \Zh_t^k,\Nh^k_{t,\pi}\big).
\]
Define the time–space law $\overline{\Lambda}^k$
\[
    \overline{\Lambda}^k
    :=
    \int_0^1\Lc\left(\gamma_{t,\pi}^k,Z_t^k,N_{t,\pi}^k,\Zh_t^k,\Nh_{t,\pi}^k,\Sigma^k_{t,\pi}\right)\,\Ir^k_t(\mathrm{d}\pi)\,\mathrm{d}t
    \,\, \in \,\, \M\!\Big(A_{\rm int} \x \big(\R^d\x[0,1]\x\Hc^2\big)^2 \x \Pc\big(\R^d\x[0,1]\x\Hc^2 \big)\Big)
\]
and the flow $\mu^k$
\[
    \mub^k_t:=\Lc(Z^k_t),\,t \in [0,T].
\]
By Prokhorov and the uniform integrability encoded in the initial sequence $(\mu^k,\Lambda^k)_{k \ge 1}$, we deduce that $\bigl( \overline{\Lambda}^k,\mub^k \bigr)$  is relatively compact; take a convergent subsequence (not relabeled) with limit
\[
    \overline{\Lambda} \ = \
    \Lc\big(\gamma_t,Z_t,N_t,\Zh_t,\Nh_t, \Sigma_t\big)\,\mathrm{d}t\quad \mbox{and}\quad \left( \mub_t=\Lc(Z_t) \right)_{t \in [0,T]} .
\]

\smallskip
\noindent\underline{Compatibility is preserved in the limit.}
Let $\varphi,\widehat\varphi,G$ be bounded continuous. Using weak convergence and the compatibility of $\Gamma_{t,\pi}^k$,
\begin{align*}
    \int_0^T \E\!\left[\varphi(t,Z_t,N_t)\widehat\varphi(t,\Zh_t,\Nh_t) G(\Sigma_t)\right]\mathrm{d}t
    &= \lim_{k\to\infty}\int_{[0,T] \x [0,1]} \E\!\left[\varphi(t,Z_t^k,N_{t,\pi}^k)\widehat\varphi(t,\Zh_t^k,\Nh_{t,\pi}^k) G(\Sigma^k_{t,\pi})\right]\,\Ir^k_t(\mathrm{d}\pi)\,\mathrm{d}t
    \\
    &= \lim_{k\to\infty}\int_{[0,T] \x [0,1]} \langle \varphi(t,\cdot),\Sigma^k_{t,\pi}\rangle \langle \widehat\varphi(t,\cdot),\Sigma^k_{t,\pi}\rangle G(\Sigma^k_{t,\pi})\,\Ir^k_t(\mathrm{d}\pi)\,\mathrm{d}t
    \\
    &= \int_0^T \E \left[ \langle \varphi(t,\cdot),\Sigma_{t}\rangle \langle \widehat\varphi(t,\cdot),\Sigma_{t}\rangle G(\Sigma_{t})\right]\,\mathrm{d}t.
\end{align*}
Hence, $\mathrm{d}\P \otimes \mathrm{d}t$  a.e., 
\[
    \Lc\big(Z_t,N_t,\Zh_t,\Nh_t \mid \Sigma_t\big)=\Lc(Z_t,N_t\mid \Sigma_t)\otimes \Lc(Z_t,N_t \mid \Sigma_t)=\Sigma_t \otimes \Sigma_t .
\]

\smallskip
\noindent\underline{Kernel identities pass to the limit.}
Let $F,\Phi,G$ be bounded continuous. Using again the defining identities for $\Gamma_{t,\pi}^k$, and that $(U_t^k,\Uh_t^k)\sim{\rm Unif}([0,1])^{\otimes2}$ for all $k,t$ (hence independent of $k$ and $t$), we may combine weak with stable convergence to obtain
\begin{align*}
    \int_0^T \E\!\left[\langle F,N_t^{1}\rangle\,\Phi(t,Z_t,N_t)\,G(\Sigma_t)\right]\mathrm{d}t
    &= \lim_{k\to\infty}\int_{[0,T] \x [0,1]} \E\!\left[\langle F,N_{t,\pi}^{1,k}\rangle\,\Phi(t,Z_t^k,N_{t,\pi}^k)\,G(\Sigma^k_{t,\pi})\right]\,\Ir^k_t(\mathrm{d}\pi)\,\mathrm{d}t
    \\
    &= \lim_{k\to\infty}\int_{[0,T] \x [0,1]} \E\!\left[F\big(\gamma_{t,\pi}^k,\Zh_t^k,\Gr(U_t^k,\Uh_t^k)\big)\,\Phi(t,Z_t^k,N_{t,\pi}^k)\,G(\Sigma^k_{t,\pi})\right]\,\Ir^k_t(\mathrm{d}\pi)\,\mathrm{d}t
    \\
    &= \int_0^T \E\!\left[F\big(\gamma_t,\Zh_t,\Gr(U_t,\Uh_t)\big)\,\Phi(t,Z_t,N_t)\,G(\Sigma_t)\right]\mathrm{d}t.
\end{align*}
Since $F,\Phi,G$ are arbitrary, $\mathrm{d}\P \otimes \mathrm{d}t$  a.e.,
\[
    N_t^{1}=\Lc\big(\gamma_t,\Zh_t,\Gr(U_t,\Uh_t)\mid Z_t,N_t, \Sigma_t\big).
\]
A symmetric argument yields, $\mathrm{d}\P \otimes \mathrm{d}t$  a.e.,
\[
    \Nh_t^{2}=\Lc\big(\gamma_t,Z_t,\Gr(\Uh_t,U_t)\mid \Zh_t,\Nh_t, \Sigma_t\big).
\]
Therefore, given $\Sigma_t$, $(\gamma_t,Z_t,N_t,\Zh_t,\Nh_t)$ is \emph{compatible} for a.e.\ $t$.

\smallskip
\noindent\underline{Independence $Z_t$ and $\Sigma_t$.} Let $(f,G)$ be continuous bounded maps. Using the weak convergence of $(\mu^k)_{k \ge 1}$ and $\bigl(\overline{\Lambda}^k \bigr)_{k \ge 1}$, we obtain
\begin{align*}
    \int_0^T \E  \left[ f \left(t, Z_t \right) G \left( \Sigma_t \right) \right]\,\mathrm{d}t &= \Lim_{k \to \infty} \int_{[0,T] \x [0,1]} \E \left[ f \left(t, Z^k_t \right) G \left( \Sigma^k_{t,\pi} \right) \right]\,\Ir_{t}(\mathrm{d}\pi)\,\mathrm{d}t 
    \\
    &= \Lim_{k \to \infty} \int_{[0,T] \x [0,1]}  \langle f(t,\cdot), \mub^k_t \rangle \E \left[ G \left( \Sigma^k_{t,\pi} \right) \right]\,\Ir_{t}(\mathrm{d}\pi)\,\mathrm{d}t
    \\
    &=\int_{[0,T]}  \E \left[\langle f(t,\cdot), \mub_t \rangle  G \left( \Sigma_{t} \right) \right]\,\mathrm{d}t.
\end{align*}
This being true for any $(f,G)$, we deduce that $\Lc(Z_t \mid \Sigma_t)=\mub_t$ $\mathrm{d}\P \otimes\mathrm{d}t$--a.e. Since the process $(\mub_t)_{t \in [0,T]}$ is deterministic, we deduce that $Z_t \perp \Sigma_t$ for a.e. $t$.

\medskip
\noindent\emph{Step 3: Identification of the interaction marginal and conclusion.}
By construction of $\overline{\Lambda}$ and marginalization,
\[
    \Lambda_t(\mathrm{d}x,\mathrm{d}u,\mathrm{d}r_1,\mathrm{d}r_2,A_{\rm reg})
    \ =\ \Lc(Z_t,N_t)=\E \left[ \Lc(Z_t,N_t \mid \Sigma_t) \right]\quad\text{for a.e.\ }t.
\]
Since, given $\Sigma_t$, $(\gamma_t,Z_t,N_t,\Zh_t,\Nh_t)$ is compatible, the auxiliary characterization gives
\[
    \Lc(Z_t,N_t \mid \Sigma_t)\in \Mcb \quad \mbox{and}\quad Z_t \perp \Sigma_t
    \qquad\text{for a.e.\ }t.
\]

Combining Step~1 (admissibility of $(\mu,\Lambda)$) with Step~2, we conclude that $(\mu,\Lambda)$ satisfies the defining properties of a relaxed control, i.e., $(\mu,\Lambda)\in\Pc_R$.

\medskip
Hence, $\Pc_R$ is closed in the Wasserstein topology.
\end{proof}

\medskip
A \emph{relaxed} control $\left( \mu,\Lambda \right)$ will be called \emph{strong} if there exist $\alpha \in \Ac_{\rm reg}$ and $\gamma \in \Ac_{\rm int}$ s.t. a.e. $t$
\begin{align*}
    \Lambda_t(\mathrm{d}u,\mathrm{d}x,\mathrm{d}r_1,\mathrm{d}r_2,\mathrm{d}a)
    = \Lc\left( X_t,U,M^{1,\gamma}_{\mu,t}(X_t,U),M^{2,\gamma}_{\mu,t}(X_t,U), \alpha(t,X_t,U) \right) (\mathrm{d}x,\mathrm{d}u,\mathrm{d}r_1,\mathrm{d}r_2,\mathrm{d}a).
\end{align*}
We will write $\Pc_S$ for the set of strong controls. For any $(\mu,\Lambda) \in \Pc_S$, notice that, it is easy to see that it is equivalent to have: a.e. $t$, 
$$
    \Lambda_t(\mathrm{d}x,\mathrm{d}u,\mathrm{d}r_1,\mathrm{d}r_2,A_{\rm reg}) \in \Mc.
$$

\begin{comment}
    
\medskip
. We also take a sequence of continuous maps $\left( c^n:[0,T] \to [0,1] \right)_{n \ge 1}$ s.t. $\Lim_{n \to \infty} \delta_{c^n(t)}(\mathrm{d}\pi)\mathrm{d}t= \mathrm{d}\pi\,\mathrm{d}t$ in weak topology.
    We take a sub--sequence $(k_n)_{n \ge 1}$ and set 
    \begin{align*}
        \gammabb^{k,n}\left(t,x,u,v,x',u',v',\overline{v} \right) := \gammat^{k}\left(t,x,u,v,x',u',v',\overline{v},c^n(t) \right)\quad \mbox{and} \quad \beta^{n,k}\left(t,x,u,v,\widetilde{v} \right) := \beta^{k}\left(t,x,u,v,\widetilde{v}, c^n(t) \right). 
    \end{align*}

\end{comment}

\begin{proposition}[Stability through almost everywhere convergence.] \label{prop:a.e.cong} Let $(\gammabb^n,\beta^n)_{n \ge 1} \subset \Acb_{\rm int} \x \Acb_{\rm reg}$ be a sequence such that
    \[
        \lim_{n\to\infty} (\gammabb^n,\beta^n) = (\gammabb,\beta)\quad\text{a.e. on }[0,T]\x\bigl(\R^d\x[0,1]^2\bigr)^2\x[0,1]^2,
    \]
    for some $(\gammabb,\beta)\in\Acb_{\rm int}\x\Acb_{\rm reg}$.
    For each $n\ge 1$, we take $\bigl(\mu^{\gammabb^{n},\beta^{n}},\,\Lambda^{\gammabb^{n},\beta^{n}}\bigr) \in \Pc_R$,
    if
    \begin{align*}
        \lim_{n \to \infty} \mu^{\gammabb^{n},\beta^{n}}_0 =\mu^{\gammabb,\beta}_0\quad\text{in weak topology}
    \end{align*}
    then
    \[
        \lim_{n \to \infty} \bigl(\mu^{\gammabb^{n},\beta^{n}},\,\Lambda^{\gammabb^{n},\beta^{n}}\bigr)
        \;=\;
        \bigl(\mu^{\gammabb,\beta},\,\Lambda^{\gammabb,\beta}\bigr)
        \quad\text{in weak topology}.
    \]
\end{proposition}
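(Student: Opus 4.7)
The plan is a compactness--identification argument. I would first show that the family $\bigl(\mu^{\gammabb^n,\beta^n},\Lambda^{\gammabb^n,\beta^n}\bigr)_{n\ge 1}$ is relatively compact in the weak topology, then identify every accumulation point with $\bigl(\mu^{\gammabb,\beta},\Lambda^{\gammabb,\beta}\bigr)$. Since every subsequence then admits a further subsequence converging to the same limit, this forces convergence of the entire sequence.

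\medskip

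For tightness, the auxiliary components of $\Lambda^{\gammabb^n,\beta^n}$ take values in the compact space $[0,1]\x\Hc^2\x A_{\rm reg}$, so only the state marginals require attention. Boundedness of $b$, Lipschitz regularity of $\sigma(t,\cdot)$, and weak convergence of the initial laws $\mu^{\gammabb^n,\beta^n}_0\to\mu^{\gammabb,\beta}_0$ provide uniform Kolmogorov--Chentsov estimates for the laws of $X^n$, yielding tightness of $\bigl(\mu^{\gammabb^n,\beta^n}\bigr)_{n\ge 1}$ in $\Cc^d_{\rm pr}$. Along any convergent subsequence with limit $(\mu^\star,\Lambda^\star)$, Proposition \ref{prop:closed_relaxed} ensures that $(\mu^\star,\Lambda^\star)\in\Pc_R$. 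To conclude, I would pass to the limit in the weak martingale formulation associated with \eqref{eq:general_relaxed} and show that $(\mu^\star,\Lambda^\star)$ satisfies the same relaxed dynamics as $(\mu^{\gammabb,\beta},\Lambda^{\gammabb,\beta})$; Proposition \ref{prop:relaxed_uniqueness} then forces $\mu^\star=\mu^{\gammabb,\beta}$, and Proposition \ref{prop:charac_relaxed} transfers the identification to $\Lambda^\star$.

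\medskip

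The main obstacle is the passage to the limit in the drift functional, which involves the nested objects $\Nb^{i,\gammabb^n_s(\pi)}_{\mu^n_s}(x,u,v)$ together with $\beta^n(s,x,u,v,\widetilde v,\pi)$. The dependence on the measure $\mu^n_s$ is handled by Assumption \ref{assum:main1_MF_CI}(i), which provides continuity and Lipschitz control of $b$ in $(m^1,m^2)$: combined with the weak convergence $\mu^n\to\mu^\star$, this controls the kernel components. The subtle point is that the convergence $(\gammabb^n,\beta^n)\to(\gammabb,\beta)$ holds only almost everywhere with respect to Lebesgue measure, whereas one needs convergence when integrated against the laws $\mu^n_s(\mathrm{d}x,\mathrm{d}u)\otimes\mathrm{d}v\,\mathrm{d}\widetilde v\,\mathrm{d}\pi$. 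This is where the non-degeneracy Assumption \ref{assum:main1_MF_CI}(ii) becomes essential: together with boundedness of $b$, Girsanov's theorem shows that each $\mu^n_s$ is absolutely continuous with respect to Lebesgue on $\R^d\x[0,1]$ (the $U$-marginal being uniform by construction) with densities controlled uniformly in $n$ through Aronson-type heat-kernel bounds and uniform Novikov estimates. This transfers Lebesgue-a.e. convergence to $\mu^n_s\otimes\mathrm{Leb}$-a.e. convergence of $(\gammabb^n,\beta^n)$, and the boundedness of $b$ combined with dominated convergence then closes the argument. Once the drift functional converges, the identification of $(\mu^\star,\Lambda^\star)$ with $(\mu^{\gammabb,\beta},\Lambda^{\gammabb,\beta})$ follows, completing the proof.
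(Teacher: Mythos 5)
Your proposal follows essentially the same strategy as the paper: establish relative compactness of the state marginals, transfer Lebesgue-a.e. convergence of $(\gammabb^n,\beta^n)$ to convergence against the relevant measures via density estimates for the conditional laws $\Lc(X^n_t\mid U=u)$, and then identify the limit through the martingale/Fokker--Planck formulation together with the uniqueness of \Cref{prop:relaxed_uniqueness}. The paper implements the density step by invoking Krylov's $L^q$ estimate (with a Hölder splitting argument), whereas you invoke Girsanov and Aronson-type heat-kernel bounds; both routes yield the required uniform control on the conditional densities.

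One slip worth flagging: you assert that the auxiliary components live in the \emph{compact} space $[0,1]\times\Hc^2\times A_{\rm reg}$, but $\Hc=\Pc(A_{\rm int}\times\R^d\times[0,1]\times\Er)$ is not compact (the $\R^d$ factor is unbounded). Tightness of the $\Hc^2$-component still holds, but for a different reason: the kernels $\Nb^{i,\gammabb^n}_{\mu^n_t}$ have $\R^d$-marginal equal to $\mu^n_t(\mathrm{d}x,[0,1])$, so their tightness is inherited from the tightness of the $\mu^n$'s rather than from compactness of the ambient space. The conclusion you draw (``only the state marginals require attention'') is correct, but the reasoning leading to it needs this correction.
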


\begin{remark}
\textnormal{(i)}  
An immediate consequence of {\rm\Cref{prop:a.e.cong}} is the \emph{existence} of a relaxed control pair 
\((\mu^{\gammabb,\beta},\Lambda^{\gammabb,\beta})\) for any 
\(\gammabb \in \Acb_{\rm int}\) and \(\beta \in \Acb_{\rm reg}\) given an initial distribution $\mu^{\gammabb,\beta}_0$.  
Indeed, given any such pair \((\gammabb,\beta)\), we can construct a sequence of Lipschitz $($or smooth $)$ approximations 
\((\gammabb^k,\beta^k)_{k \ge 1} \subset \Acb_{\rm int} \times \Acb_{\rm reg}\) 
such that 
\[
    (\gammabb^n,\beta^n) \longrightarrow (\gammabb,\beta)
    \quad\text{a.e. on } [0,T]\times(\R^d\times[0,1]^2)^2\times[0,1]^2.
\]
For each $n$, the regularity of $(\gammabb^n,\beta^n)$ guarantees the existence of a corresponding relaxed solution 
$(\mu^{\gammabb^n,\beta^n},\Lambda^{\gammabb^n,\beta^n})$.  
By {\rm \Cref{prop:a.e.cong}}, the sequence converges in the Wasserstein sense, 
yielding a limiting pair 
\[
    (\mu^{\gammabb,\beta},\Lambda^{\gammabb,\beta})
    := \lim_{n\to\infty} (\mu^{\gammabb^n,\beta^n},\Lambda^{\gammabb^n,\beta^n}),
\]
which provides the desired relaxed control associated with the $($possibly merely measurable$)$ maps $(\gammabb,\beta)$.

\medskip
\textnormal{(ii)}  
Combining this stability property with the uniqueness result established in 
{\rm \Cref{prop:relaxed_uniqueness}}, we conclude that for every 
\((\gammabb,\beta)\in \Acb_{\rm int} \times \Acb_{\rm reg}\), 
the relaxed control pair 
\((\mu^{\gammabb,\beta},\Lambda^{\gammabb,\beta})\) 
is \emph{well--defined and unique}.  
\end{remark}

\begin{proof}
\emph{Notation.} For each $n\ge1$, set
\[
    (\mu^{n},\Lambda^{n}):=\bigl(\mu^{\gammabb^{n},\,\beta^{n}},\,\Lambda^{\gammabb^{n},\,\beta^{n}}\bigr).
\]
By \Cref{prop:charac_relaxed}, we may represent for a.e.\ $t\in[0,T]$:
\[
    \mu^{n}_t=\Lc(X^{n}_t,U),\qquad
    \Lambda^{n}_t=\int_0^1\Lc\!\Bigl(X^{n}_t,\,U,\,\Nb^{1,\gammabb^{n}_t(\pi)}_{\mu^{n}_t}(X^{n}_t,U,V),\,\Nb^{2,\gammabb^{n}_t(\pi)}_{\mu^{n}_t}(X^{n}_t,U,V),\,\beta^{n}(t,X^{n}_t,U,V,\Vt,\pi)\Bigr) \,\mathrm{d}\pi,
\]
where $(U,V,\Vt)\sim{\rm Unif}([0,1])^{\otimes3}$ are independent of $W$, and $X^n$ solves
\begin{align*}
    \mathrm{d}X^{n}_t
    = \int_{[0,1]^3}
        b\!\left(t, X^{n}_t, \Nb^{1,\gammabb^{n}_t(\pi)}_{\mu^{n}_t}(X^{n}_t,U,v), \Nb^{2,\gammabb^{n}_t(\pi)}_{\mu^{n}_t}(X^{n}_t,U,v), \beta^{n}(t,X^{n}_t,U,v,\tilde v,\pi)\right)\,\mathrm{d}v\,\mathrm{d}\tilde v\,\mathrm{d}\pi\,\mathrm{d}t
     + \sigma(t,X^{n}_t)\,\mathrm{d}W_t.
\end{align*}

\medskip
\noindent\emph{Step 1: Conditional densities and uniform estimates.}
Since $X^{n}$ satisfies a non--degenerate SDE with bounded coefficients, it follows from \cite[Chapter 2 Section 3 Theorem 4]{krylov1980controlled} that for almost every $u \in [0,1]$, the conditional law $\Lc(X^{n}_t \mid U=u)$ admits a density $f^{n}_u(t,x)$ with respect to the Lebesgue measure. This density can be chosen so that the map $(u,t,x) \mapsto f^{n}_u(t,x)$ is measurable, and there exists a constant $C > 0$ (independent of $n$ and $u$) such that, for some $q > 1$,
\begin{align} \label{eq:estimates}
    \int_0^T \int_{\R^d} |f^{n}_u(t,x)|^q \,\mathrm{d}x\,\mathrm{d}t < C.
\end{align}

Moreover, since the coefficients $b$ and $\sigma$ are bounded and the sequence of initial laws 
$\bigl(\mu^{\gammabb^{n},\beta^{n}}_0\bigr)_{n\ge1}$ is convergent (hence relatively compact) for the weak topology, it follows that the family of trajectories $\bigl(t \mapsto \mu^{n}_t\bigr)_{n\ge1}$ is relatively compact in $C\big([0,T];\Pc(\R^d\times[0,1])\big)$. 
No additional assumption on the initial distribution is required, precisely because of the boundedness of the coefficients $(b,\sigma)$; see for instance the argument in \cite[Theorem~A.2]{djete2019general}.  
The key observation is that each process can be decomposed as 
$X^n_\cdot = (X^n_\cdot - X^n_0) + X^n_0$, 
and boundedness of $(b,\sigma)$ ensures that 
\[
    \sup_{n\ge1}\E\!\left[\sup_{t\in[0,T]}|X^n_t - X^n_0|^q\right] < \infty, 
    \qquad \forall\, q \ge 1.
\]
Hence, the sequence of path laws 
$\bigl(\Lc((X^n_t - X^n_0)_{t\in[0,T]})\bigr)_{n\ge1}$ 
is relatively compact in $\Wc_q$ for any $q\ge1$. 
Combining this with the relative compactness of the initial distributions 
$\bigl(\Lc(X^n_0)\bigr)_{n\ge1}$ for the weak topology, we conclude that the sequence 
$\bigl(\Lc((X^n_t)_{t\in[0,T]})\bigr)_{n\ge1}$ 
is relatively compact in the weak topology. 
Consequently, the family $\bigl(t \mapsto \mu^n_t\bigr)_{n\ge1}$ is relatively compact in 
$C\big([0,T];\Pc(\R^d\times[0,1])\big)$.

\medskip
Let $(\mu_t=\Lc(Y_t,U))_{t \in [0,T]}$ denote the limit of a convergent subsequence, for which we retain the same notation for simplicity. We can also check that a.e. $\Lc(Y_t|U=u)$ admits a density $f_u(t,x)$ (see for instance \cite[Proposition 9.1.]{10.1214/23-AAP1993}).

\medskip
\noindent\emph{Step 2: A general convergence lemma for expectations.}
Let $(\phi^n)_{n\ge1}$ be bounded continuous functions $\phi^n:[0,T]\x\R^d\x[0,1]^2\to\R$ such that $\phi^n\to\phi$ a.e.\ for some Borel $\phi$. We claim
\begin{equation}\label{eq:conv}
    \lim_{n\to\infty}\int_{[0,T]}
    \Big|\E\big[\phi^n(t,X^n_t,U)\big]-\E\big[\phi(t,Y_t,U)\big]\Big|\;\mathrm{d}t=0.
\end{equation}
Indeed, by conditioning on $U=u$ and using the densities,
\begin{align*}
    \E\big[\phi^n(t,X^n_t,U)\big]-\E\big[\phi(t,Y_t,U)\big]
    &= \int_0^1\!\int_{\R^d} \phi^n(t,x,u) f_u^n(t,x)\,\mathrm{d}x\,\mathrm{d}u
       - \int_0^1\!\int_{\R^d} \phi(t,x,u) f_u(t,x)\,\mathrm{d}x\,\mathrm{d}u
    \\
    &= \underbrace{\int_0^1\!\int_{\R^d} \big(\phi^n-\phi^{n_0}\big)(t,x,u)\,f_u^n(t,x)\,\mathrm{d}x\,\mathrm{d}u}_{=:I_{1}^{n,n_0}(t)}
    \\
    &\quad + \underbrace{\int_0^1\!\int_{\R^d} \phi^{n_0}(t,x,u)\,f_u^n(t,x)\,\mathrm{d}x\,\mathrm{d}u
       - \int_0^1\!\int_{\R^d} \phi(t,x,u)\,f_u(t,x)\,\mathrm{d}x\,\mathrm{d}u}_{=:I_{2}^{n,n_0}(t)}.
\end{align*}
Fix $K>0$ and split $I_{1}^{n,n_0}$ according to $|x|\le K$ and $|x|>K$:
\begin{align*}
    \int_0^T |I_{1}^{n,n_0}(t)|\,\mathrm{d}t
    &\le \int_0^T\!\!\int_0^1\!\!\int_{|x|\le K}|\phi^n-\phi^{n_0}|(t,x,u)\,f_u^n(t,x)\,\mathrm{d}x\,\mathrm{d}u\,\mathrm{d}t
    \\
    &\quad + 2\sup_{k\ge1}\|\phi^k\|_\infty\,\int_0^T \P(|X^n_t|>K)\,\mathrm{d}t.
\end{align*}
By H\"older with $q$ from \eqref{eq:estimates} and $q'=\frac{q}{q-1}$,
\[
    \int_0^T\!\!\int_0^1\!\!\int_{|x|\le K}|\phi^n-\phi^{n_0}|\,f_u^n (t,x)\,\mathrm{d}x\,\mathrm{d}u\,\mathrm{d}t
    \le \|\phi^n_K-\phi^{n_0}_K\|_{L^{q'}([0,T]\x[-K,K]^d\x[0,1])}\;\|f^n\|_{L^q},
\]
where $\phi^n_K:=\phi^n\mathbf{1}_{\{|x|\le K\}}$ and $\|f^n\|_{L^q}$ is uniformly bounded by \eqref{eq:estimates}. Since $\phi^n\to\phi$ a.e., for fixed $K$ we can choose $n,n_0$ large so that the $L^{q'}$–norm is small. The tail term vanishes as $K\to\infty$ by the uniform integrability of the sequence $(t \mapsto \mu^n_t)_{n \ge 1}$ (because relatively compact):
\[
    \lim_{K\to\infty}\sup_{n\ge1}\int_0^T \P(|X^n_t|>K)\,\mathrm{d}t=0.
\]
Thus
\begin{equation}\label{eq:first_conv}
    \lim_{n_0\to\infty}\lim_{n\to\infty}\int_0^T |I_{1}^{n,n_0}(t)|\,\mathrm{d}t=0.
\end{equation}
For $I_{2}^{n,n_0}$, by definition of $\mu^n$ and weak convergence of $\mu^n\to\mu$ in $C([0,T];\Pc(\R^d \x [0,1]))$,
\begin{align}
    \int_0^T |I_{2}^{n,n_0}(t)|\,\mathrm{d}t
    &= \int_0^T \Big| \E\big[\phi^{n_0}(t,X^n_t,U)\big]-\E\big[\phi(t,Y_t,U)\big]\Big|\,\mathrm{d}t \nonumber
    \\
    &\xrightarrow[n\to\infty]{}\int_0^T \Big| \E\big[\phi^{n_0}(t,Y_t,U)\big]-\E\big[\phi(t,Y_t,U)\big]\Big|\,\mathrm{d}t \xrightarrow[n_0\to\infty]{}0. \label{eq:second_conv}
\end{align}
Combining \eqref{eq:first_conv} and \eqref{eq:second_conv} yields \eqref{eq:conv}.

\medskip
\noindent\emph{Step 3: Convergence of interaction kernels.}
Let $H:A_{\rm int}\x(\R^d\x[0,1])\x\Er\to\R$ be bounded continuous. For each $(t,x,u,v)$,
\[
    \langle H,\Nb^{1,\gammabb^n_t}_{\mu^n_t}(x,u,v)\rangle
    \;=\;
    \E\!\left[H\Big(\gammabb^n(t,x,u,v,X^n_t,U,V,\Vb),\,X^n_t,U,\,\Gr(u,U)\Big)\right].
\]
Since $\gammabb^n\to\gammabb$ a.e.\ and $(X^n,U)\Rightarrow(Y,U)$ as in Step~2, adapting in an easy way \eqref{eq:conv} to the bounded continuous test functions
\[
    \phi^n(t',x',u',v',\overline{v},\pi):=H\big(\gammabb^n(t,x,u,v,x',u',v',\overline{v},\pi),x',u',\Gr(u,u')\big),
\]
we obtain, for any $K>0$,
\[
    \lim_{n\to\infty}\int_{[0,T]\x[-K,K]^d\x[0,1]^3}
    \Big|\langle H,\Nb^{1,\gammabb^n_t(\pi)}_{\mu^n_t}(x,u,v)\rangle-\langle H,\Nb^{1,\gammabb_t(\pi)}_{\mu_t}(x,u,v)\rangle\Big|
    \,\mathrm{d}t\,\mathrm{d}x\,\mathrm{d}u\,\mathrm{d}v\,\mathrm{d}\pi=0.
\]
By the arbitrariness of $H$, we can use a diagonal extraction and then give (up to a subsequence that we do not rename for simplicity), for a.e.\ $(t,x,u,v,\pi)$ and,
\begin{equation}\label{eq:weak_conv_N}
    \Nb^{1,\gammabb^n_t(\pi)}_{\mu^n_t}(x,u,v)\ \Longrightarrow \Nb^{1,\gammabb_t(\pi)}_{\mu_t}(x,u,v)\quad\text{in weak topology}.
\end{equation}
The same argument applies to the second interaction component, yielding
\[
    \Nb^{2,\gammabb^n_t(\pi)}_{\mu^n_t}(x,u,v)\ \Longrightarrow \Nb^{2,\gammabb_t(\pi)}_{\mu_t}(x,u,v)\quad\text{in weak topology}, \text{ for a.e.\ }(t,x,u,v,\pi).
\]

\medskip
\noindent\emph{Step 4: Joint convergence of the full control tuple.}
Since $\beta^n\to\beta$ a.e., combining \eqref{eq:conv} and \eqref{eq:weak_conv_N} we obtain: for any bounded continuous
\[
    \Phi:[0,1]\x\R^d\x\Hc^2\x A_{\rm reg}\to\R,
\]
\begin{align}
    &\int_{[0,T] \x [0,1]} \Big| \E \Big[\Phi \big(U,  X^n_t, \Nb^{1,\gammabb^n_t(\pi)}_{\mu^n_t}(  X^n_t,U, V),\;  \Nb^{2,\gammabb^n_t(\pi)}_{\mu^n_t}(  X^n_t,U, V),\;\beta^n(t,X^n_t,U,V,\Vt,\pi) \big) \Big] \nonumber
    \\
    &\qquad - \E \Big[\Phi \big(U,  Y_t, \Nb^{1,\gammabb_t(\pi)}_{\mu_t}(  Y_t,U, V),\;  \Nb^{2,\gammabb_t(\pi)}_{\mu_t}(  Y_t,U, V),\;\beta(t,Y_t,U,V,\Vt,\pi) \big) \Big] \Big| \, \mathrm{d}t\, \mathrm{d}\pi \;\longrightarrow\; 0.
    \label{eq:conv_general}
\end{align}
In particular, \eqref{eq:conv_general} yields $\Lambda^n\Rightarrow\Lambda$ in Wasserstein, where
\[
    \Lambda_t=\int_0^1\Lc\!\Bigl(Y_t,U, \Nb^{1,\gammabb_t(\pi)}_{\mu_t}(Y_t,U,V), \Nb^{2,\gammabb_t(\pi)}_{\mu_t}(Y_t,U,V), \beta(t,Y_t,U,V,\Vt,\pi)\Bigr)\,\mathrm{d}\pi.
\]

\medskip
\noindent\emph{Step 5: Identification via Itô and well--posedness.}
Fix $J\in L^\infty([0,1])$ and $\varphi\in C_b^2(\R)$. Applying Itô’s formula to $X^n$ and taking expectations,
\begin{align*}
    \E\big[J(U)\varphi(X^n_t)\big]
    &= \E\big[J(U)\varphi(X_0)\big]
    + \frac{1}{2}\int_0^t \E\big[J(U){\rm Tr}\bigl[\varphi''(X^n_s)\,\sigma(s,X^n_s)\sigma(s,X^n_s)^\top\bigr]\big]\,\mathrm{d}s
    \\
    &\quad + \int_0^t \int_0^1 \E\Big[J(U)\varphi'(X^n_s)\,b\big(s,X^n_s, \Nb^{1,\gammabb^n_s(\pi)}_{\mu^n_s}(X^n_s,U,V), \Nb^{2,\gammabb^n_s(\pi)}_{\mu^n_s}(X^n_s,U,V), \beta^n(s,X^n_s,U,V,\Vt,\pi)\big)\Big]\,\mathrm{d}\pi\,\mathrm{d}s.
\end{align*}
Passing to the limit using \eqref{eq:conv} and \eqref{eq:conv_general} gives
\begin{align*}
    \E\big[J(U)\varphi(Y_t)\big]
    &= \E\big[J(U)\varphi(X_0)\big]
    + \frac{1}{2}\int_0^t \E\big[J(U){\rm Tr} \bigl[\varphi''(Y_s)\,\sigma(s,Y_s) \sigma(s,Y_s)^\top \bigr]\big]\,\mathrm{d}s
    \\
    &\quad + \int_0^t \int_0^1\E\Big[J(U)\varphi'(Y_s)\,b\big(s,Y_s, \Nb^{1,\gammabb_s(\pi)}_{\mu_s}(Y_s,U,V), \Nb^{2,\gammabb_s(\pi)}_{\mu_s}(Y_s,U,V), \beta(s,Y_s,U,V,\Vt,\pi)\big)\Big]\,\mathrm{d}\pi\,\mathrm{d}s,
\end{align*}
i.e., $(Y,U)$ solves the same martingale problem (or Fokker--Planck equation) as the state driven by $(\gammabb,\beta)$ with a given initial distribution $\mu_0^{\gammabb,\beta}=\Lim_{n \to \infty} \mu_0^{\gammabb^n,\beta^n}$. By the uniqueness in law for \eqref{eq:general_relaxed}, we identify $\mu_t=\Lc(Y_t,U)=\mu^{\gammabb,\beta}_t$ for all $t$. As \eqref{eq:conv_general} also identifies the relaxed control component, we conclude
\[
    \bigl(\mu^n,\Lambda^n\bigr)\ \Longrightarrow\ \bigl(\mu^{\gammabb,\beta},\Lambda^{\gammabb,\beta}\bigr)\quad\text{in Wasserstein}.
\]

\medskip
Since every convergent subsequence has the same limit, the full sequence converges. This completes the proof.
\end{proof}

\medskip
We now establish an approximation result that combines almost everywhere convergence with weak convergence of interaction kernels. 
Fix continuous controls $\gammabb \in \Acb_{\rm int}$ and $\beta \in \Acb_{\rm reg}$.

\medskip
We use the shorthand notation
\[
    y=(x,v),\quad z=(x,u,v),\qquad 
    \yh=(\xh,\vh),\quad \zh=(\xh,\uh,\vh).
\]
Define the relaxed interaction kernel $\Gammabb:[0,T]\x(\R^d\x[0,1]^2)^2\x[0,1]\to\Pc(A_{\rm int})$ by
\[
    \Gammabb(t,z,\zh,\pi)(\mathrm{d}e):=\Lc\big(\gammabb(t,z,\zh,\Ub,\pi)\big)(\mathrm{d}e),
\]
where $\Ub\sim{\rm Unif}([0,1])$.

\medskip
Let $p:\R^d\to\R_+$ be the probability density.  
By \Cref{prop:appro_control}, there exists a sequence of continuous maps
\[
    \Gammabb^k : [0,T]\x(\R^d\x[0,1]^2)^2\x[0,1]^2\;\longrightarrow\;\Pc(A_{\rm int}), \qquad k\ge1,
\]
together with, for each $k$, a sequence of continuous selectors
\[
    \gammabb^{k,n} : [0,T]\x(\R^d\x[0,1]^2)^2 \x [0,1]\;\longrightarrow\; A_{\rm int}, \qquad n\ge1,
\]
such that
\begin{align}\label{eq:strong_weak_conv}
    \Gammabb^k \;\Longrightarrow\; \Gammabb \quad \text{in the weak topology as } k\to\infty.
\end{align}

\medskip
Moreover, since $p$ is continuous and integrable, for any bounded continuous test function $F$ we have the following convergence: for every compact $V\subset\R^d$,
\begin{align*}
    &\lim_{k\to\infty}\,\lim_{n\to\infty}\,\sup_{(t,\pi,u,\hat u)}\;\sup_{y\in V\x[0,1]}
    \Bigg|\;
        \int_{\R^d\x[0,1]} F\big(\gammabb^{k,n}(t,z,\zh,\pi),\zh\big)\,p(\xh)\,\mathrm{d}\yh
    \\
    &\hspace{7cm}
        - \int_{\R^d\x[0,1]\x A_{\rm int}} F(e,\zh)\,\Gammabb^k(t,z,\zh,\pi)(\mathrm{d}e)\,p(\xh)\,\mathrm{d}\yh
    \;\Bigg| = 0,
\end{align*}
and symmetrically,
\begin{align*}
    &\lim_{k\to\infty}\,\lim_{n\to\infty}\,\sup_{(t,\pi,u,\hat u)}\;\sup_{\hat y\in V\x[0,1]}
    \Bigg|\;
        \int_{\R^d\x[0,1]} F\big(\gammabb^{k,n}(t,z,\zh,\pi),z\big)\,p(x)\,\mathrm{d}y
    \\
    &\hspace{7cm}
        - \int_{\R^d\x[0,1]\x A_{\rm int}} F(e,z)\,\Gammabb^k(t,z,\zh,\pi)(\mathrm{d}e)\,p(x)\,\mathrm{d}y
    \;\Bigg| = 0.
\end{align*}

\begin{proposition}[Stability under a.e.\ and weak convergence]  \label{prop:a.e.cong_plus_weak} If 
    \begin{align*}
        \lim_{n \to \infty} \mu^{\gammabb^n,\beta^n}_0 =\mu^{\gammabb,\beta}_0\quad\text{in Wasserstein topology}
    \end{align*}
    then
    \[
        \lim_{k\to\infty}\lim_{n\to\infty}\bigl(\mu^{\gammabb^{k,n},\beta},\,\Lambda^{\gammabb^{k,n},\beta}\bigr)
        \;=\;
        \bigl(\mu^{\gammabb,\beta},\,\Lambda^{\gammabb,\beta}\bigr)
        \quad\text{in the Wasserstein topology}.
    \]
\end{proposition}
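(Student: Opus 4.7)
The overall plan mirrors the architecture of the proof of \Cref{prop:a.e.cong}, but replaces the a.e.\ convergence of the controls by the integrated (Krylov--type) convergence of the selectors $\gammabb^{k,n}$ toward the measure--valued control $\Gammabb^k$, combined with the weak convergence $\Gammabb^k \Rightarrow \Gammabb$ from \eqref{eq:strong_weak_conv}. The ordering of the limits ($n\to\infty$ first, then $k\to\infty$) is crucial and is used exactly as it is provided by the hypothesis.

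\textbf{Step 1: relative compactness and conditional densities.}
Set $(\mu^{k,n},\Lambda^{k,n}):=(\mu^{\gammabb^{k,n},\beta},\Lambda^{\gammabb^{k,n},\beta})$ and denote by $X^{k,n}$ the associated state process. Boundedness of $(b,\sigma)$, non--degeneracy of $\sigma$, compactness of $A_{\rm int}\times A_{\rm reg}\times\Er$, and Wasserstein convergence of the initial distributions imply, exactly as in Step~1 of \Cref{prop:a.e.cong}, that the family $\{(\mu^{k,n},\Lambda^{k,n})\}$ is relatively compact in the Wasserstein topology, and that for a.e.\ $u\in[0,1]$ there exist conditional densities $f^{k,n}_u(t,x)$ of $\Lc(X^{k,n}_t\mid U=u)$ satisfying the uniform bound
\[
    \sup_{k,n,u}\int_0^T\!\!\int_{\R^d}\bigl|f^{k,n}_u(t,x)\bigr|^q\,\mathrm{d}x\,\mathrm{d}t<\infty
\]
for some $q>1$.

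\textbf{Step 2: identification via uniqueness.}
Extract a Wasserstein--convergent subsequence $(\mu^{k_j,n_j},\Lambda^{k_j,n_j})\to(\mu,\Lambda)$. The strategy is to show that $(\mu,\Lambda)$ solves the Fokker--Planck formulation associated with \eqref{eq:general_relaxed} for the pair $(\gammabb,\beta)$ with initial law $\mu_0^{\gammabb,\beta}$; then \Cref{prop:relaxed_uniqueness} forces $(\mu,\Lambda)=(\mu^{\gammabb,\beta},\Lambda^{\gammabb,\beta})$, and since every subsequential limit coincides, the full double limit converges.

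\textbf{Step 3: convergence of the interaction kernels (the main obstacle).}
The crucial step is to prove that, for a.e.\ $(s,x,u,v,\pi)$ and every bounded continuous test function $H$,
\[
    \lim_{k\to\infty}\lim_{n\to\infty}\bigl\langle H,\,M^{1,\gammabb^{k,n}}_{\mu^{k,n},s}(x,u,v,\pi)\bigr\rangle
    =\bigl\langle H,\,\Nb^{1,\gammabb_s(\pi)}_{\mu_s}(x,u,v)\bigr\rangle,
\]
and similarly for $M^{2}$. Expanding via the conditional densities,
\[
    \bigl\langle H,\,M^{1,\gammabb^{k,n}}_{\mu^{k,n},s}\bigr\rangle
    =\int_{[0,1]^2}\!\!\int_{\R^d} H\bigl(\gammabb^{k,n}(s,z,\zh,\pi),\,\zh\bigr)\,f^{k,n}_{\hat u}(s,\hat x)\,\mathrm{d}\hat x\,\mathrm{d}\hat u\,\mathrm{d}\hat v,
\]
whereas the hypothesis on the selectors is tested against the reference density $p$ uniformly in $(t,\pi,u,\hat u)$ and $y\in V\times[0,1]$ for every compact $V$. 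The main difficulty is therefore a \emph{density--swap}: the plan is to truncate to a compact $V$ (the tail being controlled uniformly by the $L^q$ bound of Step~1 and H\"older's inequality), then approximate $f^{k,n}_{\hat u}(s,\hat x)\,\mathrm{d}\hat x\,\mathrm{d}\hat u$ restricted to $V\times[0,1]$ by step--function combinations of the form $\sum_i c_i\,\mathbf{1}_{B_i}(\hat u)\,p(\hat x)\,\mathrm{d}\hat x\,\mathrm{d}\hat u$, the error being measured uniformly in $k,n$ via the $L^q$ bound. For each $\hat u$--slice the hypothesis \eqref{eq:strong_weak_conv} drives the selector integral against $p$ to the corresponding integral against $\Gammabb^k$ in the $n$--limit, uniformly in the remaining variables. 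Undoing the approximation and letting $k\to\infty$ produces, by the weak convergence $\Gammabb^k\Rightarrow\Gammabb$,
\[
    \int_{[0,1]^2}\!\!\int_{\R^d\times A_{\rm int}} H(e,\zh)\,\Gammabb(s,z,\zh,\pi)(\mathrm{d}e)\,f_{\hat u}(s,\hat x)\,\mathrm{d}\hat x\,\mathrm{d}\hat u\,\mathrm{d}\hat v,
\]
where $f_{\hat u}(s,\cdot)$ is the conditional density of $\mu_s$ (whose existence also follows from Step~1 passed to the limit). Since by construction $\Gammabb(s,z,\zh,\pi)(\mathrm{d}e)=\Lc(\gammabb(s,z,\zh,\Ub,\pi))(\mathrm{d}e)$ with $\Ub\sim{\rm Unif}([0,1])$, the last display equals $\langle H,\Nb^{1,\gammabb_s(\pi)}_{\mu_s}(x,u,v)\rangle$. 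The same argument applied to the second interaction component gives the analogous identity for $M^{2}$.

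\textbf{Step 4: passage to the limit in It\^o's formula.}
With the interaction convergence of Step~3 in hand, one mimics Step~5 of the proof of \Cref{prop:a.e.cong}: apply It\^o's formula to $J(U)\varphi(X^{k,n}_t)$ for $J\in L^\infty([0,1])$ and $\varphi\in C_b^2(\R^d)$, and pass to the limit using the convergence of $\mu^{k,n}$, the continuity of $\beta$ (which is independent of $(k,n)$), and the kernel convergence just established. The resulting relation identifies $(\mu,\Lambda)$ as a relaxed solution of \eqref{eq:general_relaxed} driven by $(\gammabb,\beta)$ with initial distribution $\mu_0^{\gammabb,\beta}$, and \Cref{prop:relaxed_uniqueness} yields the desired identification, completing the proof.
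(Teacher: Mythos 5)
The architecture of your proof is correct — relative compactness, subsequential extraction, Fokker--Planck identification, and appeal to the uniqueness result \Cref{prop:relaxed_uniqueness} — but there is a genuine gap in Step~3, the ``density--swap.'' You claim to approximate $f^{k,n}_{\hat u}(s,\hat x)\,\mathrm{d}\hat x\,\mathrm{d}\hat u$ on $V\times[0,1]$ by a \emph{fixed} step-function combination $\sum_i c_i\,\mathbf{1}_{B_i}(\hat u)\,p(\hat x)\,\mathrm{d}\hat x\,\mathrm{d}\hat u$ with error ``controlled uniformly in $k,n$ via the $L^q$ bound.'' That is not achievable from the $L^q$ bound alone. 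The estimate \eqref{eq:estimates} only gives $\sup_{k,n,u}\|f^{k,n}_u\|_{L^q}<\infty$, i.e.\ a uniform bound, which yields at best weak-$L^q$ compactness; it does not allow you to replace a \emph{varying} family of densities by a single fixed function uniformly in $(k,n)$. Moreover, Hölder duality against the $L^q$ bound requires the integrand $H(\gammabb^{k,n}(s,z,\zh,\pi),\zh)-\langle H(\cdot,\zh),\Gammabb^k(s,z,\zh,\pi)\rangle$ to be small in $L^{q'}$, but this quantity does \emph{not} converge pointwise in $\zh$ (the whole point of passing to the measure-valued control $\Gammabb^k$ is that the selectors $\gammabb^{k,n}$ oscillate), so the $L^{q'}$ norm stays bounded away from zero. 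Neither route closes the gap.

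What is actually needed — and what the paper uses — is \emph{strong} compactness of the conditional densities, obtained from parabolic Hölder estimates (estimate \eqref{eq:holder_estimates}, via Aronson--Serrin/Krylov-type regularity for the non-degenerate SDE with bounded coefficients). These estimates give uniform boundedness and equicontinuity of the family $f^{k,n}_u$ on compacts, so Arzelà--Ascoli together with a diagonal extraction (the paper's Step~2) yields uniform convergence on compacts of the densities along subsequences of any parameter sequence approaching the essential supremum. Only with this uniform convergence in hand can you replace $f^{k,n}_{\hat u}$ by its limit $F\cdot\mathbf{1}_{|\hat x|\le K}$, at which point the uniform-in-$(t,\pi,u,\hat u)$ hypothesis on the selectors \eqref{eq:strong_weak_conv} becomes applicable and the kernel identification goes through. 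Your Step~1 cites only the $L^q$ estimate \eqref{eq:estimates}; you need to additionally invoke the Hölder estimate \eqref{eq:holder_estimates} and the accompanying Arzelà--Ascoli diagonal argument, otherwise Step~3 cannot be made rigorous.
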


\begin{proof}

We prove the result by combining regularity estimates for the conditional densities with a diagonal extraction argument, which ensures uniform convergence on compact sets, and by carefully passing to the limit in the interaction kernels.  

\medskip
\noindent\emph{Step 1: Uniform regularity.}  
Fix $k,n\ge1$ and let $(X^{k,n}_t)_{t\in[0,T]}$ be the solution of the McKean--Vlasov SDE
\[
    \mathrm{d}X_t
    = \int_{[0,1]^3} b\!\left(t,X_t,
        \Nb^{1,\gammabb^{k,n}_t(\pi)}_{\mu^{k,n}_t}(X_t,U,v),
        \Nb^{2,\gammabb^{k,n}_t(\pi)}_{\mu^{k,n}_t}(X_t,U,v),
        \beta(t,X_t,U,v,\hat v,\pi)\right)\,
        \mathrm{d}v\,\mathrm{d}\hat v\,
        \mathrm{d}\pi\,\mathrm{d}t
    + \sigma(t,X_t)\,\mathrm{d}W_t,
\]
with $\mu^{k,n}_t=\Lc(X^{k,n}_t,U)$. 
Since $b$ and $\sigma$ are bounded, and $\left( \mu^{n,k}_0 \right)_{n \ge 1}$ is relatively compact for the weak topology, hence $\{\mu^{k,n}\}_{k,n}$ is relatively compact in $C([0,T];\Pc(\R^d \x [0,1]))$. Let $\mu$ denote the limit of a convergent subsequence (notation unchanged).

\medskip
Let us denote by $f^{k,n}_u(t,x)$ the density of the conditional law $\Lc(X^{k,n}_t \mid U = u)$. 
Observe that the map $u \mapsto \Lc(X^{k,n}_t \mid U = u)$ is measurable but is only defined up to $\mathrm{d}u$–null sets, as it represents an equivalence class of measurable functions. 
Consequently, there exists a Borel set $A^{k,n} \subset [0,1]$ with full Lebesgue measure such that the map $u \mapsto \Lc(X^{k,n}_t \mid U = u)$ admits a well--defined representative on $A^{k,n}$. 
In the sequel, whenever we take suprema or evaluate expressions involving this map, it is understood that the operations are performed over the set $A^{k,n}$. This mechanism will be adopted for any measurable map.

\medskip
From parabolic regularity results (see for instance \cite[Theorem 4]{AronsonSerrin67} or \cite[Theorem 6.2.7]{FK-PL-equations} or \cite[Proposition A.1.]{MFD-2020-closed}), for any compact $[s,t]\times Q\subset(0,T)\times\R^d$, there exists $\alpha\in(0,1)$ such that
\begin{align}\label{eq:holder_estimates}
    \sup_{k,n\ge1,\,u\in[0,1]}
    \Bigg(
        \sup_{(r,x)} |f^{k,n}_u(r,x)|
        + \sup_{(r,x)\neq(r',x')}\frac{|f^{k,n}_u(r',x')-f^{k,n}_u(r,x)}{|r-r'|^{\alpha/2}+|x-x'|^\alpha}
    \Bigg)<\infty.
\end{align}

\medskip
\noindent\emph{Step 2: Diagonal extraction.}  
For any $(\hat u_{k,n})_{k,n}\subset[0,1]$, set $F_{k,n}(t,x):=f^{k,n}_{\hat u_{k,n}}(t,x)$. By Arzelà--Ascoli and \eqref{eq:holder_estimates}, there exist subsequences $(k_j)$, $(n_l)$ and $F\in C((0,T)\times\R^d;\R_+)$ such that
\begin{align}\label{eq:conv_uniform}
    \lim_{j\to\infty}\lim_{l\to\infty}F_{k_j,n_l}
    =F\quad\text{uniformly on compacts}.
\end{align}
Consequently, for bounded test functions $(\varphi_{k,n})_{k,n}$ with $\sup_{k,n,t,x}|\varphi_{k,n}(t,x)|<\infty$, 
\begin{align}\label{eq:conv_switch}
    \lim_{K\to\infty}\lim_{j\to\infty}\lim_{l\to\infty}
    \int_{\R^d}\varphi_{k_j,n_l}(t,x)\Big(F_{k_j,n_l}(t,x)-F(t,x)\mathbf{1}_{|x|\le K}\Big)\,\mathrm{d}x=0.
\end{align}

\medskip
\noindent\emph{Step 3: Uniform approximation of kernels.}  
Let $(t_{k,n},\pi_{k,n},z_{k,n},\uh_{k,n})$ approach the supremum of
\begin{align*}
     &\sup_{t,z,\hat u}\;
     \Bigg|\;
        \int_{\R\x[0,1]}H\big(\gammabb^{k,n}(t,z,\zh,\pi),\,\xh,\uh,\,\Gr(u,\uh)\big) f^{k,n}_{\hat u}(t,\xh)\,\mathrm{d}\yh
     \\
     &\quad - \int_{\R\x[0,1]\x A_{\rm int}}H(e,\,\xh,\uh,\,\Gr(u,\uh))\,\Gammabb^k(t,z,\zh,\pi)(\mathrm{d}e)\, f^{k,n}_{\hat u}(t,\xh)\,\mathrm{d}\yh
     \;\Bigg|.
\end{align*}
Using \eqref{eq:conv_switch} along with the properties of the sequence $(\gamma^{k,n})_{n,k \ge 1}$, with $p_{k,n}=(t_{k,n},x_{k,n},u_{k,n},v_{k,n})$,  we obtain:
\begin{align*}
    &\lim_{j \to \infty}\lim_{l \to \infty} \int_{\R^d \times [0,1]} H \left( \gammabb^{k_j,n_l}(  p_{k_j,n_l},  \xh, \uh_{k_j,n_l}, \vh, \pi_{k_j,k_l}),\, \xh, \uh_{k_j,n_l},\, \Gr(u_{k_j,n_l}, \uh_{k_j,n_l}) \right) F_{k_j,n_l}(t, \xh)\, \mathrm{d}\yh
    \\
    &= \lim_{K \to \infty} \lim_{j \to \infty} \lim_{l \to \infty} \int_{\R^d \times [0,1]} H \left( \gammabb^{k_j,n_l}(  p_{k_j,n_l},  \xh, \uh_{k_j,n_l}, \vh, \pi_{k_j,k_l}),\, \xh, \uh_{k_j,n_l},\, \Gr(u_{k_j,n_l}, \uh_{k_j,n_l}) \right) F(t, \xh) \mathbf{1}_{|\hat x| \le K}\, \mathrm{d}\yh
    \\
    &= \lim_{K \to \infty} \lim_{j \to \infty} \lim_{l \to \infty} \int_{\R^d \times [0,1] \times A_{\rm int}} H \left( e,\, \xh, \uh_{k_j,n_l} ,\, \Gr(u_{k_j,n_l}, \uh_{k_j,n_l})\right)\, \Gammabb^{k_j}(p_{k_j,n_l},  \xh,\uh_{k_j,n_l}, \vh, \pi_{k_j,k_l})(\mathrm{d}e)\, F(t, \xh) \mathbf{1}_{|\hat x| \le K}\, \mathrm{d}\yh
    \\
    &= \lim_{j \to \infty}\lim_{l \to \infty} \int_{\R^d \times [0,1] \times A_{\rm int}} H \left( e,\, \hat x,\uh_{k_j,n_l},\, \Gr(u_{k_j,n_l} \uh_{k_j,n_l}) \right)\,\Gammabb^{k_j}(p_{k_j,n_l},  \xh, \uh_{k_j,n_l}, \vh, \pi_{k_j,k_l})(\mathrm{d}e)\, F(t, \xh)\, \mathrm{d}\yh.
\end{align*}

This conclusion holds for any subsequence of $\left(  p_{k,n},\uh_{k,n} \right)_{n,k \ge 1}$, allowing us to deduce that
\begin{align} \label{eq:sup_conv}
    \lim_{k \to \infty}\lim_{n \to \infty} \sup_{t,\pi,z, \hat u} \Bigg| \int_{\R^d \times [0,1]} & H \left( \gammabb^{k,n}(t,  z,  \zh,\pi),\, \xh,\uh,\, \Gr(u, \uh) \right) f^{k,n}_{\hat u}(t, \xh)\,\mathrm{d}\yh \nonumber
    \\
    &\quad - \int_{\R^d \times [0,1] \times A_{\rm int}} H \left( e,\, \xh,\uh,\, \Gr(u, \uh) \right)\, \Gammabb^k(t,  z,  \zh,\pi)(\mathrm{d}e)\, f^{k,n}_{\hat u}(t, \xh)\, \mathrm{d}\yh \Bigg|= 0.
\end{align}

\medskip
\noindent\emph{Step 4: Identification of the limit kernels.}  
For bounded continuous $H$,
\begin{align*}
    \langle H,\Nb^{1,\gammabb^{k,n}_t(\pi)}_{\mu^{k,n}_t}(z)\rangle
    &=\int_{\R\x[0,1]^2}H(\gammabb^{k,n}(t,z,\zh,\pi),\,\xh,\uh,\,\Gr(u,\uh))f^{k,n}_{\hat u}(t,\xh)\,\mathrm{d}\zh
    \\
    &=\int_{\R\x[0,1]^2\x A_{\rm int}}H(e,\xh,\uh,\,\Gr(u,\uh))\,\Gammabb^k(t,z,\zh,\pi)(\mathrm{d}e)f^{k,n}_{\hat u}(t,\xh)\,\mathrm{d}\zh
    +R^{k,n}(t,z,\pi),
\end{align*}
where $R^{k,n}(t,z,\pi)\to0$ by \eqref{eq:sup_conv}.  
Passing to the limit using \eqref{eq:conv_uniform} and $\Gammabb^k\Rightarrow\Gammabb$, we obtain
\[
    \lim_{k\to\infty}\lim_{n\to\infty}\langle H,\Nb^{1,\gammabb^{k,n}_t(\pi)}_{\mu^{k,n}_t}(z)\rangle
    =\langle H,\Nb^{1,\gammabb_t(\pi)}_{\mu_t}(z)\rangle.
\]
Thus, for a.e.\ $(t,z,\pi)$,
\[
    \Nb^{1,\gammabb^{k,n}_t(\pi)}_{\mu^{k,n}_t}(z)\to\Nb^{1,\gammabb_t(\pi)}_{\mu_t}(z).
\]
A symmetric argument yields the same convergence for the second kernel.

\medskip
\noindent\emph{Step 5: Convergence of the full system.}  
The almost everywhere convergence of both interaction kernels, the fact that $\lim_{n \to \infty} \mu^{\gammabb^n,\beta^n}_0 =\mu^{\gammabb,\beta}_0$, together with the convergence of the conditional densities and the techniques used in the proof of \Cref{prop:a.e.cong}, implies that the entire sequence $\{\mu^{k,n}\}_{k,n}$ converges to $\mu$. Consequently,
\[
    \lim_{k\to\infty}\lim_{n\to\infty}\bigl(\mu^{\gammabb^{k,n},\beta},\,\Lambda^{\gammabb^{k,n},\beta}\bigr)
    =(\mu^{\gammabb,\beta},\,\Lambda^{\gammabb,\beta})
    \quad\text{in Wasserstein}.
\]

\end{proof}

\medskip
In this section, we restrict attention to continuous feedback maps of the form 
\[
    (t,x,u,v,\hat x,\hat u,\hat v,\pi)\;\longmapsto\;\gammabb(t,x,u,v,\hat x,\hat u,\hat v,\pi),
    \qquad 
    (t,x,u,v,\tilde v,\pi)\;\longmapsto\;\beta(t,x,u,v,\tilde v,\pi),
\]
with the simplifying assumption that the auxiliary variable $\overline{v}$ does not enter into the definition of $\gammabb$.  
Our objective is to construct approximating sequences that progressively remove the dependence on the randomization variables $v,\hat v,\tilde v, \pi$, so as to recover \emph{strong controls} in the limit.  
This corresponds to replacing randomized relaxed strategies by deterministic measurable selectors while preserving convergence of the associated controlled dynamics.

\begin{proposition}[Approximation by strong controls] \label{prop:strong_approx}
    Let $(\gammabb,\beta)\in\Acb_{\rm int}\times\Acb_{\rm reg}$ be continuous as above and, $(\eta^n)_{n \ge 1} \subset \Pc(\R^d \x [0,1])$ be a sequence verifying $\Lim_{n \to \infty} \eta^n=\eta$ in weak topology for some $\eta$.
    Then there exists a sequence of strong controls $\{(\mu^n,\Lambda^n)\}_{n\ge1}\subset\Pc_S$ such that $\mu^n_0 =\eta^n$,
    \[
        \lim_{n\to\infty} (\mu^n,\Lambda^n) 
        \;=\; (\mu^{\gammabb,\beta},\,\Lambda^{\gammabb,\beta})
        \qquad\text{in the Wasserstein topology}
    \]
    and the initial distribution of $\mu^{\gammabb,\beta}$ is $\mu^{\gammabb,\beta}_0=\eta$.
\end{proposition}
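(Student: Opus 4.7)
The plan is a chattering-style construction that replaces the auxiliary uniform variables $(v,\hat v,\tilde v,\pi)$ by fast deterministic oscillations in space and time of feedback maps. Concretely, I will select bounded Borel ``oscillators''
\[
    v_n,\tilde v_n,\hat v_n : \R^d\times[0,1]\to[0,1], \qquad \tau_n:[0,T]\to[0,1],
\]
built from fractional parts at well-separated frequencies (or digit-interleaving) so that the following asymptotic equidistribution holds: for every sequence $\nu_n\to\nu$ in $C([0,T];\Pc(\R^d\times[0,1]))$ whose $u$-disintegration admits uniformly locally Hölder densities and every bounded continuous $\Phi$,
\[
    \int_0^T\!\!\int \Phi\bigl(t,x,u,v_n(x,u),\hat v_n(x,u),\tilde v_n(x,u),\tau_n(t)\bigr)\,\nu_n^t(\mathrm{d}x,\mathrm{d}u)\,\mathrm{d}t
    \;\longrightarrow\;
    \int_0^T\!\!\int_{[0,1]^4}\!\!\int \Phi\,\nu^t(\mathrm{d}x,\mathrm{d}u)\,\mathrm{d}v\,\mathrm{d}\hat v\,\mathrm{d}\tilde v\,\mathrm{d}\pi\,\mathrm{d}t,
\]
i.e.\ joint asymptotic uniformity on $[0,1]^4$ also independent of $(x,u)$. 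A Riemann--Lebesgue argument using Hölder equicontinuity of the densities secures this property.

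The candidate strong controls are then
\[
    \gamma^n(t,x,u,\hat x,\hat u):=\gammabb\bigl(t,x,u,v_n(x,u),\hat x,\hat u,\hat v_n(\hat x,\hat u),\tau_n(t)\bigr), \quad \alpha^n(t,x,u):=\beta\bigl(t,x,u,v_n(x,u),\tilde v_n(x,u),\tau_n(t)\bigr),
\]
which belong to $\Ac_{\rm int}\times\Ac_{\rm reg}$. Let $(\mu^n,\Lambda^n)\in\Pc_S$ denote the associated strong control with initial law $\eta^n$; existence follows from Veretennikov's strong existence theorem and uniqueness from \Cref{prop:relaxed_uniqueness}. The Krylov-type estimates used in the proofs of \Cref{prop:a.e.cong} and \Cref{prop:a.e.cong_plus_weak} deliver relative compactness of $\{\mu^n\}$ in $\Cc^d_{\rm pr}$ together with uniform local Hölder regularity of the conditional densities $f^n_u(t,x)$ of $\Lc(X^n_t\mid U=u)$, so the equidistribution hypothesis above can be applied along the sequence.

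Identification of any subsequential limit $(\mu^\infty,\Lambda^\infty)$ proceeds by passing to the limit both in the Fokker--Planck equation for $\mu^n$ and in the integral representation of $\Lambda^n$ against bounded continuous test functions. Applied at the inner level, equidistribution of $\hat v_n(\hat X,\hat U)$ against $\mu^n_t$ converts the strong interaction kernel
\[
    M^{1,\gamma^n(t,\cdot)}_{\mu^n_t}(x,u)=\Lc\bigl(\gammabb(t,x,u,v_n(x,u),\hat X,\hat U,\hat v_n(\hat X,\hat U),\tau_n(t)),\hat X,\hat U,\Gr(u,\hat U)\bigr)
\]
into the relaxed kernel $\Nb^{1,\gammabb_t(\tau_n(t))}_{\mu^n_t}(x,u,v_n(x,u))$ up to a vanishing error; applied at the outer level, equidistribution of $(v_n,\tilde v_n,\tau_n)$ against $\mu^n_t(\mathrm{d}x,\mathrm{d}u)\,\mathrm{d}t$ produces the integrations $\mathrm{d}v\,\mathrm{d}\tilde v\,\mathrm{d}\pi$. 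The symmetric argument covers $M^{2,\gamma^n}_{\mu^n_t}$ and $\alpha^n$. Combining this with the Itô/martingale-problem characterization as in Step~5 of \Cref{prop:a.e.cong} shows that $(\mu^\infty,\Lambda^\infty)$ is the relaxed solution associated with $(\gammabb,\beta)$ and initial law $\eta$; uniqueness then yields convergence of the full sequence $(\mu^n,\Lambda^n)\to(\mu^{\gammabb,\beta},\Lambda^{\gammabb,\beta})$ in Wasserstein.

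The main obstacle lies in securing the joint equidistribution along a typical trajectory: the source oscillator $v_n(X^n_t,U)$ and the target oscillator $\hat v_n(\hat X,\hat U)$ are both evaluated on realizations of the same evolving law $\mu^n_t$, so their frequencies must be chosen to avoid resonances and to produce jointly independent uniforms, also independent of $(X^n_t,U)$, in the limit. The uniform Hölder regularity of $f^n_u(t,\cdot)$ is what converts fast deterministic oscillations into genuine stochastic averaging via a Riemann--Lebesgue-type mechanism, thereby justifying the substitution of deterministic feedbacks for the relaxed auxiliary uniforms without distorting the nonlinear mean-field interaction structure.
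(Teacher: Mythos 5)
Your proposal follows essentially the same chattering strategy as the paper: replace the auxiliary uniforms $(v,\tilde v,\hat v,\pi)$ by fast deterministic feedback oscillators, then use Krylov/parabolic regularity of the conditional densities $f^n_u(t,\cdot)$ to convert the fast oscillation into genuine averaging. The paper's version is slightly more structured (it uses a two-stage construction $\Phi_k$ in space and $c_n$ in time anchored to a fixed reference density $p$, and proves the required equidistribution as \Cref{prop:appro_control} with the crucial $\mathrm{ess\,sup}$-in-$x$ uniformity), whereas you assert the analogous equidistribution for a single family of oscillators without proving it; the ``resonance'' concern you raise is real but is resolved exactly by the uniform H\"older estimates on $f^n_u$, which prevent the evolving law $\mu^n_t$ from developing structure at the chattering scale, and the joint independence of source and target oscillators is automatic since they are evaluated at independent draws from $\mu^n_t$.
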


\begin{proof}

We proceed in three steps.  

\medskip
\noindent\emph{Step 1: Construction of deterministic selectors.}  
Let $p:\R^d\to\R_+$ be the probability density introduced in the previous step.  
There exists a sequence of Borel measurable maps
\[
    \Phi_k=(\varphi_k,\widetilde \varphi_k):\R^d\longrightarrow [0,1]^2,\quad c_n:[0,T] \to [0,1], \qquad k,n\ge1,
\]
such that in $\Pc(\R^d\times[0,1]^2)$ and $\Pc([0,1] \x [0,T])$ respectively,
\begin{align} \label{eq:approx_selector}
    \lim_{k\to\infty}\delta_{\Phi_k(x)}(\mathrm{d}v,\mathrm{d}\tilde v)\,p(x)\,\mathrm{d}x
    \;=\; \mathrm{d}v\,\mathrm{d}\tilde v\,p(x)\,\mathrm{d}x
    \quad \mbox{and}\quad \Lim_{n \to \infty}\delta_{c_n(t)}(\mathrm{d}\pi)\,\mathrm{d}t=\mathrm{d}\pi \,\mathrm{d}t.
\end{align}
In other words, the measures generated by $\Phi_k$ approximate the product of the uniform distribution on $[0,1]^2$ with $p(x)\,\mathrm{d}x$ while the measures generated by $c_n$ approximate the product $\mathrm{d}\pi \,\mathrm{d}t$.  
We then define the approximating feedback maps
\[
    \gamma^{n,k}(t,x,u,\hat x,\hat u):=\gammabb\!\left(t,x,u,\varphi_k(x),\hat x,\hat u,\varphi_k(\hat x),c_n(t)\right),
    \qquad
    \beta^{n,k}(t,x,u):=\beta\!\left(t,x,u,\Phi_k(x),c_n(t)\right).
\]

\medskip
\noindent\emph{Step 2: Convergence of the interaction terms.}  
Let $F$ be a bounded continuous function.  
By construction of $\Phi_k$, for every compact $V\subset\R^d$ we have the uniform convergence
\begin{align}\label{eq:sup-conv_1_better}
    &\lim_{n\to\infty}\lim_{k\to\infty}\;\sup_{(t,x,u,\hat u)\in[0,T]\times V\times[0,1]^2}
    \Bigg|
        \int_{\R^d} F\!\left(\gamma^{n,k}(t,x,u,\hat x,\hat u),t,x,u,\Phi_k(x),\hat x,\hat u\right)\,p(\hat x)\,\mathrm{d}\hat x
    \\
    &\hspace{3cm}
        - \int_{\R^d\times[0,1]}F\!\left(\gammabb(t,x,u,\varphi_k(x),\hat x,\hat u,\hat v, c_n(t)),t,x,u,\Phi_k(x),\hat x,\hat u\right)\,\mathrm{d}\hat v\,p(\hat x)\,\mathrm{d}\hat x
    \Bigg|=0, \nonumber
\end{align}
and similarly
\begin{align}\label{eq:sup-conv_2_better}
    &\lim_{n\to\infty}\lim_{k\to\infty}\;\sup_{(t,u,\hat x,\hat u)\in[0,T]\times[0,1]\times V\times[0,1]}
    \Bigg|
        \int_{\R^d} F\!\left(\gamma^{n,k}(t,x,u,\hat x,\hat u),t,x,u,\Phi_k(x),\hat x,\hat u\right)\,p(x)\,\mathrm{d}x
    \\
    &\hspace{3cm}
        - \int_{\R^d\times[0,1]}F\!\left(\gammabb(t,x,u,v,\hat x,\hat u,\varphi_k(\hat x),c_n(t)),t,x,u,\Phi_k(x),\hat x,\hat u\right)\,\mathrm{d}v\,p(x)\,\mathrm{d}x
    \Bigg|=0. \nonumber
\end{align}
Indeed, let \(\left( t_{n,k}, x_{n,k}, u_{n,k}, \hat u_{n,k} \right)_{n,k \ge 1} \subset [0,T]\times V \times [0,1]^2\) be a sequence achieving the essential supremum in \eqref{eq:sup-conv_1_better}. Since the domain \([0,T] \times V \times [0,1]^3\) is compact, any continuous function on this set is uniformly continuous. Also, there exists a subsequence \((n_l,k_j,)_{j,l \ge 1}\) such that
\[
    \lim_{l \to \infty}\lim_{j \to \infty} \left( t_{n_l,k_j}, x_{n_l,k_j}, u_{n_l,k_j}, \Phi_{k_j}(x_{n_l,k_j}), \hat u_{n_l,k_j}, c_{n_l}(t_{n_l,k_j}) \right) = \left( t, x, u, \Phi=(\varphi,\widetilde{\varphi}), \hat u, \pi \right).
\]

We then have, with $q_{n,k}:=(t_{n,k},x_{n,k},u_{n,k})$,
\begin{align*}
    &\lim_{l \to \infty}\lim_{j \to \infty} \int_{\R^d} F\left(\gammabb(q_{n_l,k_j}, \varphi_{k_j}(x_{n_l,k_j}), \hat x, \hat u_{n_l,k_j}, \varphi_{k_j}(\hat x), c_{n_l}(t_{n_l,k_j})),\, q_{n_l,k_j}, \Phi_{k_j}(x_{n_l,k_j}), \hat x, \hat u_{n_l,k_j}\right) p(\hat x)\, \mathrm{d}\hat x \\
    &= \lim_{l \to \infty}\lim_{j \to \infty} \int_{\R^d} F\left(\gammabb(t, x, u, \varphi_{k_j}(x_{n_l,k_j}), \hat x, \hat u, \varphi_{k_j}(\hat x),\pi),\, t, x, u,\Phi, \hat x, \hat u\right) p(\hat x)\, \mathrm{d}\hat x \\
    &= \lim_{j \to \infty} \int_{\R^d} F\left(\gammabb(t, x, u, \varphi, \hat x, \hat u, \varphi_{k_j}(\hat x),\pi),\, t, x, u, \Phi, \hat x, \hat u\right) p(\hat x)\, \mathrm{d}\hat x \\
    &= \int_{\R^d \times [0,1]} F\left(\gammabb(t, x, u, \varphi, \hat x, \hat u, \hat v,\pi),\, t, x, u, \Phi, \hat x, \hat u\right) \mathrm{d}\hat v\, p(\hat x)\, \mathrm{d}\hat x
\end{align*}
where the last equality follows from \eqref{eq:approx_selector}.
Since this limit holds for any subsequence, we deduce the convergence in \eqref{eq:sup-conv_1_better}. A similar argument yields the convergence in \eqref{eq:sup-conv_2_better}.

\medskip
\noindent\emph{Step 3: Convergence of the controlled dynamics.}  
Let $(X^{n,k}_t)_{t\in[0,T]}$ be the solution to: $\Lc(X^{n,k}_0,U)=\eta^k$ and
\[
    \mathrm{d}X^{n,k}_t
    = b\Big(t,X^{n,k}_t,N^{1,\gamma^{n,k}_t}_{\mu^{n,k}_t}(X^{n,k}_t,U),N^{2,\gamma^{n,k}_t}_{\mu^{n,k}_t}(X^{n,k}_t,U),\beta^{n,k}(t,X^{n,k}_t,U)\Big)\,\mathrm{d}t
    + \sigma(t,X^{n,k}_t)\,\mathrm{d}W_t,
\]
with marginal $\mu^{n,k}_t=\Lc(X^{n,k}_t,U)$.  
By \eqref{eq:sup-conv_1_better} and \eqref{eq:sup-conv_2_better}, together with the arguments developed in the proof of \Cref{prop:a.e.cong_plus_weak}  (uniform regularity of densities and stability of McKean--Vlasov SDEs under weak convergence), we deduce that the sequence $(t\mapsto\mu^{n,k}_t)_{n,k\ge1}$ converges in $C([0,T];\Pc(\R^d\times[0,1]))$ to $t\mapsto\mu_t=\Lc(X_t,U)$, where $X$ is the solution associated with $(\gammabb,\beta)$ starting at the distribution $\eta$.

\medskip
\noindent\emph{Conclusion.}  
Thus, the sequence of strong controls $(\mu^{n,k},\Lambda^{n,k})$ associated with $(\gamma^{n,k},\beta^{n,k})$ with initial distribution $\eta^k$ converges in the Wasserstein topology to the relaxed control $(\mu^{\gammabb,\beta},\Lambda^{\gammabb,\beta})$ with initial distribution $\eta$. This establishes the proposition.

\end{proof}

\medskip
At this stage, recall that the set $\Pc_R$ of relaxed controls has already been shown to be closed under the Wasserstein topology (\Cref{prop:closed_relaxed}). 
We also established three key approximation results:  
\Cref{prop:a.e.cong} ensures stability under almost everywhere convergence of the control maps,  
\Cref{prop:a.e.cong_plus_weak} extends this stability to mixed regimes combining almost everywhere and weak convergence,  
and \Cref{prop:strong_approx} shows how specific relaxed controls can be approximated by strong ones.  

\medskip
By combining these ingredients, we can pass from general relaxed controls to limits of strong controls. 
This yields the following density property.

\begin{proposition} \label{prop:strong_relaxed_density}
    The set of strong controls $\Pc_S$ is dense in the set of relaxed controls $\Pc_R$ with respect to the weak convergence topology.
\end{proposition}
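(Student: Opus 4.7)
Let $(\mu,\Lambda) \in \Pc_R$ be arbitrary. By \Cref{prop:charac_relaxed}, there exist $(\gammabb,\beta) \in \Acb_{\rm int} \times \Acb_{\rm reg}$ such that $(\mu,\Lambda) = (\mu^{\gammabb,\beta}, \Lambda^{\gammabb,\beta})$. The plan is to cascade three layers of approximation, each supplied by one of the results already proved, and close with a diagonal extraction.

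First, I would regularize $(\gammabb,\beta)$ into continuous maps. Applying Lusin's theorem on compact exhaustions of the domain, followed by Tietze extensions composed with a Lipschitz projection onto the compact sets $A_{\rm int}$ and $A_{\rm reg}$, produces a sequence $(\gammabb^n,\beta^n) \in \Acb_{\rm int} \times \Acb_{\rm reg}$ of continuous maps converging to $(\gammabb,\beta)$ almost everywhere. Taking $\mu_0^{\gammabb^n,\beta^n}$ to coincide with $\mu_0^{\gammabb,\beta}$ for every $n$, \Cref{prop:a.e.cong} yields $(\mu^{\gammabb^n,\beta^n}, \Lambda^{\gammabb^n,\beta^n}) \to (\mu,\Lambda)$ in the weak topology. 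Next, each continuous $\gammabb^n$ may still depend on the auxiliary variable $\overline v$, which prevents direct application of \Cref{prop:strong_approx}. To remove this dependence, for each fixed $n$ I would set $\Gammabb^n(t,z,\zh,\pi)(\mathrm{d}e) := \Lc(\gammabb^n(t,z,\zh,\Ub,\pi))(\mathrm{d}e)$ and invoke the construction preceding \Cref{prop:a.e.cong_plus_weak}, producing continuous kernel approximations $\Gammabb^{n,k}$ together with continuous selectors $\gammabb^{n,k,j}$ independent of $\overline v$. By \Cref{prop:a.e.cong_plus_weak}, one gets $(\mu^{\gammabb^{n,k,j},\beta^n}, \Lambda^{\gammabb^{n,k,j},\beta^n}) \to (\mu^{\gammabb^n,\beta^n}, \Lambda^{\gammabb^n,\beta^n})$ as $j \to \infty$ and then $k \to \infty$.

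Finally, for each triple $(n,k,j)$ the pair $(\gammabb^{n,k,j},\beta^n)$ is continuous and $\gammabb^{n,k,j}$ is free of the $\overline v$ variable, so \Cref{prop:strong_approx} supplies a sequence of strong controls $(\mu^{n,k,j,m}, \Lambda^{n,k,j,m}) \in \Pc_S$ converging in the Wasserstein topology to $(\mu^{\gammabb^{n,k,j},\beta^n}, \Lambda^{\gammabb^{n,k,j},\beta^n})$ as $m \to \infty$, provided the associated initial laws $\eta^m$ converge weakly to the common initial law $\eta := \mu_0^{\gammabb,\beta}$ (which one can arrange by choosing $\eta^m = \eta$ for all $m$). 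A diagonal extraction over $(n,k,j,m)$, using the metrizability of the weak topology on $\Pc_R$ inherited from the Wasserstein structure, yields a single sequence in $\Pc_S$ converging weakly to $(\mu,\Lambda)$.

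The main obstacle will be the coordination of the four approximation levels, particularly making sure that the metric neighborhoods in the weak topology on $\Cc^d_{\rm pr} \times \M(\R^d \times [0,1] \times \Rc)$ can be shrunk consistently along the diagonal. A second subtle point is the compatibility of the initial distributions across layers: keeping $\mu_0$ fixed in layers one and two (which is permitted since \Cref{prop:a.e.cong} and \Cref{prop:a.e.cong_plus_weak} allow a common initial law) and exploiting the flexibility of weak convergence of initial laws in \Cref{prop:strong_approx} to absorb any residual discrepancy. Once these bookkeeping issues are handled, the closedness of $\Pc_R$ from \Cref{prop:closed_relaxed} guarantees that the limit indeed belongs to $\Pc_R$, and the construction shows it is approximable by elements of $\Pc_S$, proving density.
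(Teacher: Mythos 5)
Your proposal is correct and follows precisely the route the paper intends: the paper itself gives no explicit proof of \Cref{prop:strong_relaxed_density}, stating only that the density ``yields'' from combining \Cref{prop:a.e.cong}, \Cref{prop:a.e.cong_plus_weak}, and \Cref{prop:strong_approx}, and your three-layer cascade (Lusin/Tietze regularization feeding \Cref{prop:a.e.cong}, the kernel selector construction feeding \Cref{prop:a.e.cong_plus_weak} to remove the $\overline v$ dependence, then \Cref{prop:strong_approx} on the resulting continuous $\overline v$-free controls) together with a diagonal extraction on a metrizable Polish space is exactly that combination written out. Two small points worth cleaning up: the Lusin argument should be run over a compact exhaustion of $\R^d$ with $\epsilon_n$ summable (or replaced by mollification into the compact range $A_{\rm int},A_{\rm reg}$) to guarantee Lebesgue-a.e. convergence on the non-$\sigma$-finite-in-the-trivial-sense domain; and the final appeal to \Cref{prop:closed_relaxed} is superfluous for density, since the target $(\mu,\Lambda)$ is already given to be in $\Pc_R$ and you are constructing approximants, not identifying a limit.
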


\subsection{Proofs of \Cref{thm:equivalence_relaxed_strong}, \Cref{prop:existence_continuity} and \Cref{prop:equiv_open}   }

\subsubsection{Proof of \Cref{thm:equivalence_relaxed_strong}}

Let $\nu \in \Pc_p(\R^d)$ with $p \in \{0\} \cup [1,\infty)$. 
By a direct application of {\rm\Cref{prop:closed_relaxed}} with the initial distribution fixed at $\nu$, we obtain that the set $\Pib(\nu)$ is closed under the Wasserstein topology $\Wc_p$ (recalling that for $p = 0$, this corresponds to the weak topology). 
Furthermore, applying {\rm\Cref{prop:strong_relaxed_density}} yields that $\Pi(\nu)$ is dense in $\Pib(\nu)$ with respect to $\Wc_p$.

\medskip
Let us now establish the equivalence between the randomized/relaxed and the strong formulations. 
This result follows as a direct consequence of the previous propositions. 
We first observe that $V_{\rm MFC}(\nu) \le \Vb_{\rm MFC}(\nu)$, and notice that
\[
    \Vb_{\rm MFC}(\nu)
    :=
    \sup_{\eta \in \Pc_\nu} \sup_{(\gammabb,\beta) \in \Acb_{\rm int} \times \Acb_{\rm reg},\; \mu^{\gammabb,\beta}_0=\eta}
    \Gc\left( \mu^{\gammabb,\beta}, \Lambda^{\gammabb,\beta} \right),
\]
where, for any $(\varrho,q) \in \Cc_{\rm pr}^d \times \M(\R^d \times [0,1] \times \Rc)$,
\begin{align*}
    \Gc(\varrho,q)
    &:=
    \int_0^T \int_{\R^d \times [0,1] \times \Rc}
        L\!\left(s,x,u,r_1,r_2,a\right)
        q_s(\mathrm{d}x,\mathrm{d}u,\mathrm{d}r_1,\mathrm{d}r_2,\mathrm{d}a)\,\mathrm{d}s
    \\
    &\quad + \int_{\R^d \times [0,1]}
        g\!\left(x,
            \int_{\R^d \times [0,1]}
                \delta_{\left(y,\Gr(u,v)\right)}(\mathrm{d}x',\mathrm{d}e)\,
                \varrho_T(\mathrm{d}y,\mathrm{d}v)
        \right)\varrho_T(\mathrm{d}x,\mathrm{d}u).
\end{align*}

For any $(\gammabb,\beta) \in \Acb_{\rm int} \times \Acb_{\rm reg}$ with $\mu^{\gammabb,\beta}_0 = \eta$, 
the density of strong controls (see {\rm\Cref{prop:strong_relaxed_density}}) ensures the existence of a sequence 
$(\gamma^n,\beta^n)_{n \ge 1} \subset \Ac_{\rm int} \times \Ac_{\rm reg}$ such that 
$\mu^{\gamma^n,\beta^n}_0 = \eta$ and 
\[
    \Lim_{n \to \infty} 
    \left( \mu^{\gamma^n,\beta^n}, \Lambda^{\gamma^n,\beta^n} \right)
    =
    \left( \mu^{\gammabb,\beta}, \Lambda^{\gammabb,\beta} \right)
    \quad \text{in } \Wc_p.
\]
Under {\rm\Cref{assum:main1_MF_CI}}, the continuity of $L$ and $g$, together with the weak and stable convergence of the associated measures, yields
\[
    \Lim_{n \to \infty}
    \Gc\!\left( \mu^{\gamma^n,\beta^n}, \Lambda^{\gamma^n,\beta^n} \right)
    =
    \Gc\!\left( \mu^{\gammabb,\beta}, \Lambda^{\gammabb,\beta} \right),
\]
where the use of stable convergence is required to handle the potential discontinuity of the kernel $\Gr$, since for each $n \ge 1$ we have $\mu^{\gamma^n,\beta^n}_t(\R^d \times \mathrm{d}u) = \mathrm{d}u$. 
Consequently, we obtain 
\[
    \Gc\!\left( \mu^{\gammabb,\beta}, \Lambda^{\gammabb,\beta} \right) 
    \le V_{\rm MFC}(\nu),
\]
for any $(\gammabb,\beta) \in \Acb_{\rm int} \times \Acb_{\rm reg}$, which finally implies that
\[
    V_{\rm MFC}(\nu) = \Vb_{\rm MFC}(\nu).
\]

\subsubsection{Proof of \Cref{prop:existence_continuity}}
By combining the boundedness of $b$ and $\sigma$ with {\rm\Cref{prop:closed_relaxed}}, we deduce that for any $\nu \in \Pc_p(\R^d)$ with $p \in \{0\} \cup [1,\infty)$, the set of randomized (or relaxed) controls $\Pib(\nu)$ is not only closed but also compact for the weak topology. 
The compactness property essentially follows from two key ingredients: 
first, all admissible measures share the same fixed marginal in the state component, namely 
$\mu^{\gammabb,\betabb}_0(\mathrm{d}x,[0,1]) = \nu(\mathrm{d}x)$; 
second, the coefficients $(b,\sigma)$ are uniformly bounded. 
The fixed initial distribution prevents any dispersion at time~$0$, while the boundedness of the dynamics ensures uniform moment and tightness estimates, jointly yielding the desired compactness.

Since both $L$ and $g$ are continuous and bounded, the functional 
\[
    (\varrho,q) \longmapsto \Gc(\varrho,q)
\]
is continuous on $\Pib(\nu)$ under the weak topology. 
Consequently, the quantity $\Vb_{\rm MFC}(\nu)$, defined as the supremum of $\Gc$ over the compact set $\Pib(\nu)$, achieves its maximum. 
We therefore obtain the existence of an optimal control, as stated in the proposition.

\medskip
Let us now establish the continuity property of the value function. 
Let $(\nu^n)_{n \ge 1} \subset \Pc_p(\R^d)$ be a sequence such that $\Lim_{n \to \infty} \nu^n = \nu$ in $\Wc_p$ for some $\nu \in \Pc_p(\R^d)$. 
For each $n \ge 1$, let $(\eta^n, \gammabb^n, \beta^n)$ be a $1/n$--optimal control for $\Vb_{\rm MFC}(\nu^n)$, that is,
\begin{align*}
    \Vb_{\rm MFC}(\nu^n) 
    \le 
    \Jb(\eta^n, \gammabb^n, \beta^n) + \frac{1}{n}.
\end{align*}
The convergence $\nu^n \to \nu$ in $\Wc_p$ implies that $\sup_{n \ge 1} \Wc_p(\nu^n, \nu_1) < \infty$.
Let us consider the sequence $\bigl(\mu^{\gammabb^n, \beta^n}, \Lambda^{\gammabb^n, \beta^n}\bigr)_{n \ge 1}$, which belongs to the set
\begin{align*}
    \widehat{\Pi}
    :=
    \left\{
        (\mu, \Lambda) \text{ relaxed control such that } 
        \Wc_p\bigl( \mu_0(\mathrm{d}x' \x [0,1]), \nu_1 \bigr)
        \le 
        \sup_{n \ge 1} \Wc_p(\nu^n, \nu_1)
        < \infty
    \right\}.
\end{align*}
Using the boundedness of $b$ and $\sigma$, it follows that the set $\widehat{\Pi}$ is not only closed but also compact for the weak topology. 
Hence, there exists a subsequence $(n_k)_{k \ge 1}$ and a relaxed control $(\widetilde{\mu}, \widetilde{\Lambda})$ such that
\[
    \Lim_{k \to \infty} 
    \bigl(
        \mu^{\gammabb^{n_k}, \beta^{n_k}},
        \Lambda^{\gammabb^{n_k}, \beta^{n_k}}
    \bigr)
    =
    (\widetilde{\mu}, \widetilde{\Lambda})
    \quad \text{for the weak topology.}
\]
In particular, we have $\widetilde{\mu}_0(\mathrm{d}x \x [0,1]) = \nu$, and therefore $(\widetilde{\mu}, \widetilde{\Lambda}) \in \Pib(\nu)$. 
Using the continuity of $\Gc$ with respect to weak convergence, we obtain
\begin{align*}
    \limsup_{n \to \infty} \Vb_{\rm MFC}(\nu^n)
    \le 
    \limsup_{n \to \infty} \Jb(\eta^n, \gammabb^n, \beta^n)
    = 
    \lim_{k \to \infty}
    \Gc\bigl(
        \mu^{\gammabb^{n_k}, \beta^{n_k}},
        \Lambda^{\gammabb^{n_k}, \beta^{n_k}}
    \bigr)
    =
    \Gc(\widetilde{\mu}, \widetilde{\Lambda})
    \le 
    \Vb_{\rm MFC}(\nu).
\end{align*}

Conversely, let $(\gamma, \beta) \in \Pi(\nu)$ be a continuous control. 
By {\rm\Cref{prop:strong_approx}}, there exists a sequence $(\mu^n, \Lambda^n)_{n \ge 1}$ such that for each $n$, $(\mu^n, \Lambda^n) \in \Pi(\nu^n)$ and 
\[
    \Lim_{n \to \infty} (\mu^n, \Lambda^n)
    =
    (\mu^{\gamma, \beta}, \Lambda^{\gamma, \beta})
    \quad \text{for the weak topology.}
\]
By the continuity of $\Gc$, this yields
\[
    \Gc\bigl(\mu^{\gamma, \beta}, \Lambda^{\gamma, \beta}\bigr)
    =
    \Lim_{n \to \infty} \Gc\bigl(\mu^n, \Lambda^n\bigr)
    \le 
    \liminf_{n \to \infty} \Vb_{\rm MFC}(\nu^n).
\]
Since the supremum defining $V_{\rm MFC}(\nu)$ can be taken over continuous controls $(\gamma, \beta)$, we deduce that
\[
    V_{\rm MFC}(\nu)
    \le 
    \liminf_{n \to \infty} V_{\rm MFC}(\nu^n).
\]
Combining the two inequalities and using the equality $V_{\rm MFC} = \Vb_{\rm MFC}$, we conclude that
\[
    \Lim_{n \to \infty} V_{\rm MFC}(\nu^n)
    =
    V_{\rm MFC}(\nu),
\]
which proves the desired continuity property.

\subsubsection{Proof of \Cref{prop:equiv_open}}
Let $\left( \mu^{\circ,\gamma,\beta},\,\Lambda^{\circ,\gamma,\beta} \right) \in \Pi^\circ(\nu)$. 
It is straightforward to verify that $\left( \mu^{\circ,\gamma,\beta},\,\Lambda^{\circ,\gamma,\beta} \right)$ satisfies the admissibility conditions required in the relaxed formulation introduced previously. 
To conclude that $\left( \mu^{\circ,\gamma,\beta},\,\Lambda^{\circ,\gamma,\beta} \right) \in \Pib(\nu)$, it remains to check that, for almost every $t$, 
\[
    \Lambda^{\circ,\gamma,\beta}_t(\mathrm{d}x,\mathrm{d}u,\mathrm{d}r_1,\mathrm{d}r_2, A_{\rm reg}) 
    \in \Mcb_{\rm aux} = \Mcb.
\]

\medskip
To that end, we define the process $(X_t)_{t \in [0,T]}$ by setting $\Lc(X_0,U) = \Lc(X^{\gamma,\beta,u}_0)(\mathrm{d}x)\mathrm{d}u$ and letting
\begin{align*}
    \mathrm{d}X_t 
    &=
    b \Bigl(
        t, X_t, 
        R^1(t,X_0,U,W_{t \wedge \cdot}),
        R^2(t,X_0,U,W_{t \wedge \cdot}), 
        \beta(t,X_0,U,W_{t \wedge \cdot})
    \Bigr) \mathrm{d}t
    + 
    \sigma(t,X_t)\mathrm{d}W_t,
\end{align*}
where $R = (R^1,R^2)$ is defined for all $(t,x,u,w)$ by
\begin{align*}
    R^1(t,x,u,w)
    &:= \Lc\Bigl( 
        \gamma\bigl(t,x,u,w,X_0,U,W_{t \wedge \cdot}\bigr),\, 
        X_t, U,\, \Gr(u,U)
    \Bigr), 
    \\
    R^2(t,x,u,w)
    &:= \Lc\Bigl( 
        \gamma\bigl(t,X_0,U,W_{t \wedge \cdot},x,u,w\bigr),\, 
        X_t, U,\, \Gr(u,U)
    \Bigr).
\end{align*}

Since 
\[
    \Lc\bigl(X_0^{\gamma,\beta,u},W^u\bigr)(\mathrm{d}x,\mathrm{d}w)\mathrm{d}u
    =
    \Lc(X_0, W, U)(\mathrm{d}x,\mathrm{d}w,\mathrm{d}u),
\]
we deduce that 
\[
    \Lc(X^{\gamma,\beta,u}, W^u) 
    = 
    \Lc(X, W \mid U = u)
\]
for almost every $u \in [0,1]$. 
This yields the identification $\mu = \mu^{\circ,\gamma,\beta}$ and $\Lambda^{\circ,\gamma,\beta} = \Lambda$, where 
\[
    \mu_t := \Lc(X_t, U),
    \quad
    \Lambda := 
    \Lc\Bigl( 
        X_t, U,
        R^1(t,X_0,U,W_{t \wedge \cdot}),
        R^2(t,X_0,U,W_{t \wedge \cdot}),
        \beta(t,X_0,U,W_{t \wedge \cdot})
    \Bigr)
    (\mathrm{d}x,\mathrm{d}u,\mathrm{d}r_1,\mathrm{d}r_2,\mathrm{d}a)\mathrm{d}t.
\]

\medskip
Let $(\Xt_0, \Ut, \Wt)$ be an independent copy of $(X_0, U, W)$ with the same law, i.e. $\Lc(\Xt_0,\Ut,\Wt) = \Lc(X_0,U,W)$. 
Define $(\Xt_t)_{t \in [0,T]}$ as the solution of the same SDE as $(X_t)$, but driven by $(\Xt_0,\Ut,\Wt)$ instead of $(X_0,U,W)$. 
We then introduce, for each $t \in [0,T]$, the auxiliary variable
\begin{align*}
    \Gamma_t 
    := 
    \Bigl(
        \gamma(t, X_0,U,W_{t \wedge \cdot}, \Xt_0,\Ut,\Wt_{t \wedge \cdot}),
        (X_t,U),\, 
        G(t,X_0,U,W_{t \wedge \cdot}),
        (\Xt_t,\Ut),\, 
        G(t,\Xt_0,\Ut,\Wt_{t \wedge \cdot})
    \Bigr).
\end{align*}
By standard arguments, one can verify that $\Gamma_t$ satisfies the compatibility condition required in the definition of $\Mcb_{\rm aux}$. 
Consequently,
\[
    \Lambda^{\circ,\gamma,\beta}_t(\mathrm{d}x,\mathrm{d}u,\mathrm{d}r_1,\mathrm{d}r_2, A_{\rm reg})
    =
    \Lambda_t(\mathrm{d}x,\mathrm{d}u,\mathrm{d}r_1,\mathrm{d}r_2, A_{\rm reg})
    \in \Mcb_{\rm aux} = \Mcb.
\]
We thus conclude that 
\[
    \bigl( \mu^{\circ,\gamma,\beta}, \Lambda^{\circ,\gamma,\beta} \bigr) \in \Pib(\nu).
\]

\medskip
Let $(\gamma,\alpha) \in \Ac_{\rm int} \times \Ac_{\rm reg}$ be Lipschitz maps.  
Under {\rm\Cref{assum:main1_MF_CI}}, given the random variables $(X_0,U)$ and the Brownian motion $W$, the SDE satisfied by $X^{\gamma,\alpha}$ admits a unique strong solution.  
Hence, there exists a Borel measurable function 
\[
    F : [0,T] \times \R^d \times [0,1] \times \Cc^d \longrightarrow \R^d
\]
such that, for every $t \in [0,T]$,
\[
    X^{\gamma,\alpha}_t = F\bigl(t, X_0, U, W_{t \wedge \cdot}\bigr)
    \quad \text{a.s.}
\]

\medskip
We now define the pair $(\gamma^\circ,\alpha^\circ) \in \Ac^\circ_{\rm int} \times \Ac^\circ_{\rm reg}$ by
\begin{align*}
    \gamma^\circ\!\left(t,(x,u,w),(x',u',w')\right)
    &:=
    \gamma\!\left(
        t,\,
        F\bigl(t,x,u,w_{t \wedge \cdot}\bigr),\,u,\,
        F\bigl(t,x',u',w'_{t \wedge \cdot}\bigr),\,u'
    \right),
    \\
    \alpha^\circ(t,x,u,w)
    &:=
    \alpha\!\left(
        t,\,
        F\bigl(t,x,u,w_{t \wedge \cdot}\bigr),\,u
    \right).
\end{align*}

\medskip
By construction, for almost every $u \in [0,1]$, the processes $X^{\gamma,\alpha}$ and $X^{\gamma^\circ,\alpha^\circ,u}$ have the same conditional law, that is,
\[
    \Lc\bigl(X^{\gamma,\alpha} \mid U=u\bigr)
    =
    \Lc\bigl(X^{\gamma^\circ,\alpha^\circ,u}\bigr)\quad\mbox{whenever }\quad\Lc\bigl(X^{\gamma,\alpha}_0 \mid U=u\bigr)
    =
    \Lc\bigl(X^{\gamma^\circ,\alpha^\circ,u}_0\bigr).
\]
Consequently, we obtain the equalities
\[
    \bigl(\mu^{\circ,\gamma^\circ,\alpha^\circ},\,\Lambda^{\circ,\gamma^\circ,\alpha^\circ}\bigr)
    =
    \bigl(\mu^{\gamma,\alpha},\,\Lambda^{\gamma,\alpha}\bigr),
\]
and thus,
\[
    \bigl(\mu^{\gamma,\alpha},\,\Lambda^{\gamma,\alpha}\bigr)
    =
    \bigl(\mu^{\circ,\gamma^\circ,\alpha^\circ},\,\Lambda^{\circ,\gamma^\circ,\alpha^\circ}\bigr)
    \in 
    \Pi^\circ\!\bigl(\Lc(X^{\gamma,\alpha}_0)\bigr).
\]

\medskip
This implies that
\[
    J\!\left(\Lc(X^{\gamma,\alpha}_0,U),\,\gamma,\alpha\right)
    \le 
    V^\circ_{\mathrm{MFC}}\!\left(\Lc(X^{\gamma,\alpha}_0)\right).
\]
Since the supremum in $V_{\mathrm{MFC}}$ can be taken over Lipschitz controls, we deduce that 
\[
    V_{\mathrm{MFC}} \le V^\circ_{\mathrm{MFC}}.
\]
Moreover, from the previous proof, we already know that $V^\circ_{\mathrm{MFC}} \le \Vb_{\mathrm{MFC}}$.  
Using the equality $V_{\mathrm{MFC}} = \Vb_{\mathrm{MFC}}$, we finally obtain
\[
    V_{\mathrm{MFC}} = V^\circ_{\mathrm{MFC}},
\]
which concludes the proof.

\subsection{From strong formulation to {\it{n}}--particle: proof of \Cref{thm:strong_to_n} }

Let us briefly recall the framework. We consider
\[
(t, u, x, \hat u, \hat x) \mapsto \left( \gamma(t, u, x, \hat u, \hat x),\; \beta(t, u, x) \right)
\]
a Lipschitz continuous and bounded map together with the process $X^{\gamma,\beta}$. We choose to directly work with Lipschitz control because of the almost surely stability proved in \Cref{prop:a.e.cong}.  

\begin{comment}
    
On the probability space \((\Omega, \mathcal{F}, \mathbb{P})\) supporting a Brownian motion \(W\), a real-valued random variable \(X_0\), and a \( [0,1] \)-valued random variable \(U\). We assume that \(W\) is independent of the pair \((X_0, U)\).
We denote by \((Y_t)_{t \in [0,T]}\) the solution to the stochastic differential equation:
\begin{align*}
    \mathrm{d}Y_t &= b\left(t, Y_t, M^{1,\gamma}_{\mu,t}( Y_t, U), M^{2,\gamma}_{\mu,t}(  Y_t,U), \beta(t, Y_t, U)\right)\, \mathrm{d}t + \sigma(t, Y_t)\, \mathrm{d}W_t,
\end{align*}
where \(\mu_t = \mathcal{L}(Y_t,U)\) denotes the joint law of \((Y_t,U)\).

\medskip
\((\xi^n_{ij})_{1 \le i,j \le n}\) be a given \(n \times n\) matrix. We define the map \(\Gr^n : [0,1]^2 \to \R\), along with the collections \(\gammab^n := (\gamma_{ij}^n)_{1 \le i,j \le n}\) and \(\betab^n := (\beta^{1,n}, \dots, \beta^{n,n})\), as follows:
\begin{align*}
    \Gr^n(u,v) &:= \sum_{1 \le i,j \le n} \xi^n_{i,j} \, \mathbf{1}_{(u^i_n - \frac{1}{n},\, u^i_n]}(u)\, \mathbf{1}_{(u^j_n - \frac{1}{n},\, u^j_n]}(v), \\
    \beta^{i,n}(t,x_1,\dots,x_n) &:= \beta(t, x_i, u^i_n), \\
    \gamma^n_{ij}(t,x_1,\dots,x_n) &:= \gamma(t, x_i, u^i_n, x_j, u^j_n),
\end{align*}
for all \((t, x_1, \dots, x_n) \in [0,T] \times \R^n\), where \(u^i_n := \frac{i}{n}\) for \(i = 1, \dots, n\).

\end{comment}
\medskip
We also recall that
we introduced the collections \(\gammab^n := (\gamma_{ij}^n)_{1 \le i,j \le n}\) and \(\betab^n := (\beta^{1,n}, \dots, \beta^{n,n})\), as follows:
\begin{align*}
    \beta^{i,n}(t,x_1,\dots,x_n) &:= \beta(t, x_i, u^i_n), \\
    \gamma^n_{ij}(t,x_1,\dots,x_n) &:= \gamma(t, x_i, u^i_n, x_j, u^j_n),
\end{align*}
for all \((t, x_1, \dots, x_n) \in [0,T] \times (\R^d)^n\), where \(u^i_n := \frac{i}{n}\) for \(i = 1, \dots, n\).
Let \(\Xbb^n := (X^{1,n}, \dots, X^{n,n})\) denote the solution to \eqref{eq:n_particle}, associated with the interaction kernels \(\gammab^n\) and controls \(\betab^n\) with $\Lc(\Xbb^n_0)=\nu^n$. 

\begin{comment}  
Let us recall that the **cut norm** \(\|\cdot\|_{\Box}\) is defined in ?? (reference to be completed). We assume that there exists a measurable function \(\Gr : [0,1]^2 \to [0,1]\) such that:
\begin{align*}
    \lim_{n \to \infty} \| f \circ \Gr^n - f \circ \Gr \|_{\Box} = 0 \quad \text{for all } f \in C_c^\infty(\R;\R).
\end{align*}

\begin{proposition}
    Under the above assumptions and suitable regularity conditions (to be specified), we have convergence in the \(p\)--Wasserstein distance for any \(p < 2\): for each $t$,
    \begin{align*}
        \frac{1}{n} \sum_{i=1}^n \Lc\left( X^{i,n}_t, u^i_n,\; M^{1,i,n}_{\gammab^n,t},\; M^{2,i,n}_{\gammab^n,t},\; \beta^{i,n}(t, \Xbb^n) \right)
        \xrightarrow[n \to \infty]{\Wc_p}
        \Lc\left( Y_t,U,\; M^{1,\gamma}_{\mu,t}( Y_t,U),\; M^{2,\gamma}_{\mu,t}( Y_t,U),\; \beta(t,  Y_t,U) \right).
    \end{align*}
\end{proposition}
\end{comment}
\begin{proof}
    {\it{Step 1: Compactness and tightness.}} Let 
    $$
        \Zc := C([0,T]; \R^d) \x [0,1] \x C \left( [0,T];\R^d \x [0,1]^2 \x \Er  \right) \x C \left( [0,T];\R^d \x [0,1]^2  \right) \x C \left( [0,T]; \R^d \x [0,1]^2 \right)
    $$
    and define for \(n \ge 1\),
\[
    \Pr^n := \frac{1}{n} \sum_{i=1}^n 
    \Lc \Big( X^i,\,u^i_n,\, \muh^{i,n},\, \mub^n,\, \mu^n \Big)
    \in \Pc(\Zc),
\]
where, for \(t \in [0,T]\),
\[
    \muh^{i,n}_t := \sum_{j=1}^n 
    \delta_{(X^j_t,\,u^j_n )}(\mathrm{d}x,\mathrm{d}u)\,\mathbf{1}_{\{ v \in I^n_i \}}\mathrm{d}v \,\delta_{\xi^n_{ij}}( \mathrm{d}e)
    ,
    \quad
    \mub^n_t :=  \sum_{i=1}^n 
    \delta_{\bigl( X^i_t,\,u^i_n \bigr)}(\mathrm{d}x,\mathrm{d}u)\, \mathbf{1}_{\{ v \in I^n_i \}}\mathrm{d}v,
\]
\[
    \mu^n_t(\mathrm{d}x,\mathrm{d}u) := \mub^n_t( \mathrm{d}x,\mathrm{d}u \x [0,1])=\frac{1}{n}\sum_{i=1}^n 
    \delta_{\bigl( X^i_t,\,u^i_n \bigr)}(\mathrm{d}x,\mathrm{d}u),
    \qquad
    u^i_n := \frac{i}{n}, \quad I^n_i:=\bigl(u^{i-1}_n, u^i_n \bigr], \quad 1 \le i \le n.
\]
Since the sequence of initial distributions $\bigl( \mu^{n,\gammab^n,\betab^n}_0 \bigr)_{n \ge 1}$ converges in $\Wc_p$, it is then relatively compact in $\Wc_p$.
Combined with the boundedness of $(b,\sigma)$, it follows from the same techniques as in the proof of \Cref{prop:a.e.cong} that the sequence \((\Pr^n)_{n \ge 1}\) is relatively compact in \(\Wc_p\).  
Let \(\Pr\) be the limit of a convergent subsequence; for simplicity, we use the same notation for the sequence and its subsequence.  
We can write
\[
    \Pr = \P \circ (X, U, \muh, \mub, \mu)^{-1},
\]
where \((U, X, \muh, \mub, \mu)\) are defined on the original probability space \((\Omega, \F, \P)\) and  
\((X_t, \muh_t, \mub_t, \mu_t)_{t \in [0,T]}\) is \(\F\)-predictable.

\medskip
{\it{Step 2: Identification of interaction terms through $(\mub,\muh)$.}} 

\medskip
By using the weak convergence, it is immediate that, \(\P\)--a.e., for all \(t \in [0,T]\),
\begin{align} \label{eq:relation_mu}
    \mu_t(\mathrm{d}x, \mathrm{d}u) = \mub_t( \mathrm{d}x, \mathrm{d}u,[0,1]), 
    \qquad 
    \mub_t (\mathrm{d}x,\mathrm{d}u, \mathrm{d}v) = \muh_t( \mathrm{d}x,\mathrm{d}u, \mathrm{d}v, \Er).
\end{align}

\medskip

For any bounded continuous functions \(f\) and \(L\), we have
\[
    \E \big[ f(X_t,U)\, L(\mub) \big]
    = \lim_{n \to \infty} \frac{1}{n} \sum_{i=1}^n \E \big[ f(X^i_t,u^i_n)\, L(\mub^n) \big]
    = \E \big[ \langle f, \mu_t \rangle \, L(\mub) \big].
\]

Since this identity holds for all \((f,L)\) and all \(t \in [0,T]\), it follows that
\begin{align} \label{eq:FP}
    \mu_t = \Lc(X_t, U \mid \mub) \quad \text{a.e.}
\end{align}

By an application of \Cref{prop:limit_kernel}, if \(( \Xh,\Uh, \Vh, \Eh)\) denotes the canonical variable on \( \R^d \times [0,1]^2 \times \Er\), we obtain that, \(\P\)--a.e.\ \(\om\), for all \(t \in [0,T]\),
\[
    \muh_t(\om) \circ \bigl( \Xh,\Uh, \Vh, \Eh  \bigr)^{-1} = \muh_t(\om) \circ \big( \Xh,\, \Uh,\, \Uh,\, \Gr(U(\om), \Uh) \big)^{-1}.
\]

Consequently, by using the relationship of $\mu$, $\mub$ and $\muh$ mentioned in \eqref{eq:relation_mu} combined with \eqref{eq:FP}, we conclude that, \(\P\)--a.e.\ \(\om\), for all \(t \in [0,T]\),
\[
    \muh_t(\om) = \Lc \Big(  X_t,\,U,\, U,\, \Gr(U(\om), U) \;\big|\; \mub \Big).
\]

{\it{Step 3: Identification of the limit through $\mu$}}

\medskip
Notice that, we can express, for each \(1 \le i \le n\),
\begin{align} \label{eq:first_N}
    M^{1,i,n}_{\gamma,t} = \Nc^1(t, X^i_t,u^i_n, \muh^{i,n}_t),
    \quad 
    \Nc^1(t,x,u,m) := \Lc\big( \gamma(t,x,u, X^m, U^m),\, X^m,\,U^m,\, \, E^m\big),
\end{align}
and
\begin{align} \label{eq:second_N}
    M^{2,i,n}_{\gamma,t} = \Nc^2(t, X^i_t,u^i_n, \muh^{i,n}_t),
    \quad
    \Nc^2(t,x,u,m) := \Lc\big( \gamma(t, X^m,U^m, x,u),\, X^m,\,U^m,\, E^m \big),
\end{align}
where \(( X^m,U^m, V^m, E^m)\) is a canonical variable such that \(m = \Lc( X^m,U^m, V^m, E^m)\).  

\medskip
Hence, for any bounded smooth \(\varphi :  \R^d \times [0,1] \to \R\),
\begin{align*} %\label{eq:n_FP}
    \mathrm{d}\langle \varphi, \mu^n_t \rangle
    &=  \frac{1}{n} \sum_{i=1}^n \partial_x \varphi(X^i_t,u^i_n) \,
       b\big( t, X^i_t, \Nc^1(t,X^i_t,u^i_n,\muh^{i,n}_t), \Nc^2(t,X^i_t,u^i_n,\muh^{i,n}_t), \beta(t,X^i_t,u^i_n) \big)
       \, \mathrm{d}t \nonumber
    \\
    &\quad+ \frac12 \int_{\R^d \times [0,1]} {\rm Tr}\left(\partial_{xx}^2 \varphi(x,u) \,
       \sigma(t,x) \sigma(t,x)^\top \right) \, \mu^n_t(\mathrm{d}x,\mathrm{d}u) \, \mathrm{d}t
       + \mathrm{d}M^n_t,
\end{align*}
where
\[
    M^n_\cdot := \frac{1}{n} \sum_{i=1}^n \int_0^\cdot
    \partial_x \varphi(X^i_s,u^i_n) \, \sigma(s,X^i_s) \, \mathrm{d}W^i_s.
\]
By using the boundedness of $\sigma$ and $\varphi$, observe that $\lim_{n \to \infty} \E \left[ \sup_{t \in [0,T]} |M^n_t|^2 \right]=0$. 
Combining the previous observations with the weak convergence, for any smooth functional \(L\),
\begin{align*}
    \E \big[ \langle \varphi, \mu_{t_0} \rangle L(\mu) \big] 
    &= \lim_{n \to \infty} \E \big[ \langle \varphi, \mu^n_{t_0} \rangle L(\mu^n) \big] \\
    &= \lim_{n \to \infty} \E \big[ \langle \varphi, \mu^n_{0} \rangle L(\mu^n) \big] \\
    &\quad + \lim_{n \to \infty} \frac{1}{n} \sum_{i=1}^n \E \Bigg[ \int_0^{t_0} \partial_x \varphi(X^i_t,u^i_n) \,
       b\big( t, X^i_t, \Nc^1(t,X^i_t,u^i_n,\muh^{i,n}_t), \Nc^2(t,X^i_t,u^i_n,\muh^{i,n}_t), \beta(t,X^i_t,u^i_n) \big)
       \, \mathrm{d}t \, L(\mu^n) \Bigg] \\
    &\quad + \lim_{n \to \infty} \E \Bigg[ \int_0^{t_0} \frac12 \int_{\R^d \times [0,1]} {\rm Tr} \left(\partial_{xx}^2 \varphi(x,u) \,
       \sigma(t,x) \sigma(t,x)^\top \right) \, \mu^n_t(\mathrm{d}x,\mathrm{d}u) \, \mathrm{d}t \, L(\mu^n) \Bigg].
\end{align*}

Passing to the limit yields
\begin{align*}
    \E \big[ \langle \varphi, \mu_{t_0} \rangle L(\mu) \big] 
    &= \E \bigg[ \Big( \langle \varphi, \mu_{0} \rangle 
       + \int_0^{t_0} \partial_x \varphi(X_t,U) \,
         b\big( t, X_t, M^{1,\gamma}_{\mu,t}(t,X_t,U), M^{2,\gamma}_{\mu,t}(t,X_t,U), \beta(t,X_t,U) \big)
         \, \mathrm{d}t \\
    &\quad \quad \quad + \int_0^{t_0} \frac12 \int_{\R^d \times [0,1]} {\rm Tr} \left(\partial_{xx}^2 \varphi(x,u) \,
       \sigma(t,x) \sigma(t,x)^\top \right) \, \mu_t(\mathrm{d}x,\mathrm{d}u) \, \mathrm{d}t \Big) L(\mu) \bigg].
\end{align*}

Since this holds for all \(L\), we obtain \(\P\)-a.s., for all $t_0 \in [0,T]$,
\begin{align*}
    \langle \varphi, \mu_{t_0} \rangle 
    &= \langle \varphi, \mu_{0} \rangle 
       + \int_0^{t_0} \int_{ \R\times[0,1]} \partial_x \varphi(x,u) \,
         b\big( t, x, N^{1,\gamma}_{\mu_t}(t,  x,u), N^{2,\gamma}_{\mu_t}(t, x,u), \beta(t,x,u) \big)
         \, \mu_t(\mathrm{d}x,\mathrm{d}u) \, \mathrm{d}t \\
    &\quad + \frac12 \int_0^{t_0} \int_{ \R^d \times [0,1]} {\rm Tr} \left(\partial_{xx}^2 \varphi(x,u) \,
       \sigma(t,x) \sigma(t,x)^\top \right) \, \mu_t(\mathrm{d}x,\mathrm{d}u) \, \mathrm{d}t.
\end{align*}
Using the fact that $\Lim_{n \to \infty} \Lc\bigl( \mu^{n,\gammab^n,\betab^n}_0 \bigr)=\delta_{\eta} $ in $\Wc_p$, we check that $\mu_0=\eta$ $\P$--a.e.
By uniqueness of the Fokker--Planck equation derived above (see \Cref{prop:relaxed_uniqueness} for instance), we deduce that for all $t \in [0,T],$
\[
    \mu_t = \Lc\left(X^{\gamma,\beta}_t,U \right)\quad \mbox{with}\quad \Lc\left(X^{\gamma,\beta}_0,U \right)=\eta.
\]
This is true for any sub--sequence of $\left( \frac{1}{n} \sum_{i=1}^n 
    \Lc \left( \muh^{i,n},\, \mub^n,\, \mu^n \right) \right)_{n \ge 1}$. We can therefore deduce that the whole sequence converges and,
\begin{align*}
    \Lim_{n \to \infty} \frac{1}{n} \sum_{i=1}^n 
    \Lc \left( \muh^{i,n},\, \mub^n,\, \mu^n \right) = \P \circ (\muh, \mub, \mu)^{-1}\quad\mbox{with}\quad
\end{align*}
with for each $t \in [0,T]$, $\P$--a.e.
\begin{align*}
    \mu_t = \Lc\bigl(X^{\gamma,\beta}_t,U \bigr),\,\mub_t=\Lc\bigr(X^{\gamma,\beta}_t,U,U \bigl),\,\muh_t=V(t,U)\,\mbox{where}\,V(t,u)=\Lc\bigl(X^{\gamma,\beta}_t,U,U,\Gr(u,U) \bigr).
\end{align*}

This leads to $\Lim_{n \to \infty}\Lc\left( \mu^{n,\gammab^n,\betab^n}, \Lambda^{n,\gammab^n,\betab^n} \right)=\delta_{\left(\mu^{\gamma,\beta}, \Lambda^{\gamma,\beta} \right)}$ in $\Wc_p$. In addition,
\begin{align*}
    &\Lim_{n \to \infty} J_n(\nu^n,\gammab^n,\betab^n)
    \\
    &=\Lim_{n \to \infty}\frac{1}{n} \sum_{i=1}^n \E \left[ \int_0^T L \left( t, X^i_t, \Nc^1(t,X^i_t,u^i_n,\muh^{i,n}_t), \Nc^2(t,X^i_t,u^i_n,\muh^{i,n}_t), \beta(t,X^i_t,u^i_n) \right) \mathrm{d}t + g \left( X^i_T,\,\, \muh^i_T \circ (\Xh,\Eh)^{-1}\,\, \right) \right]
    \\
    &=J(\eta,\gamma,\beta).
\end{align*}
This completes the proof.

\end{proof}

\subsection{From {\it{n}}--particle to relaxed formulation: proof of \Cref{thm:from_n_to_limit}}

We fix $n\ge 1$ and let $\Xbb^n := (X^{1,n},\dots,X^{n,n})$ be the unique weak solution to \eqref{eq:n_particle} driven by the interaction kernels $\gammab^n$ and the regular controls $\alphab^n$.
Recall the time–indexed empirical objects
\[
    \mu^n_t \;:=\; \frac{1}{n}\sum_{i=1}^n \delta_{Z^{i,n}_t},
    \qquad 
    \Gamma^n_t(\mathrm{d}z,\mathrm{d}m,\mathrm{d}a)\,\mathrm{d}t
    \;:=\;
    \frac{1}{n}\sum_{i=1}^n 
    \delta_{\bigl(Z^{i,n}_t,\;M^{i,n}_{\gamma^n,t},\;\alpha^{i,n}(t,\Xbb^n)\bigr)}(\mathrm{d}z,\mathrm{d}m,\mathrm{d}a)\,\mathrm{d}t,
\]
where 
\[
    Z^{i,n}_t := (X^{i,n}_t,u^i_n),
    \qquad 
    M^{i,n}_{\gamma^n,t} := \bigl(M^{1,i,n}_{\gamma^n,t},\,M^{2,i,n}_{\gamma^n,t}\bigr).
\]
Thus, $\mu^n_t$ is the empirical state–label distribution at time $t$, while $\Gamma^n_t$ is the empirical occupation measure on the extended control space (state–label, interaction laws, regular action).

\medskip
\textbf{Auxiliary state spaces and lifted empirical environment.}
Introduce
\[
    \Xi 
    := \R^d \times [0,1] \times \Pc\!\bigl(A_{\rm int}\times \R^d \times \Er\bigr)^2 \times A_{\rm reg},
    \qquad
    \Delta 
    := A_{\rm int} \times \Er \times \Xi^2 \times \Pc(\Xi).
\]
An element $h\in\Xi$ encodes $(x,u)$, two interaction laws $(m^1,m^2)\in\Pc(A_{\rm int}\times\R^d\times\Er)^2$, and a regular action $a\in A_{\rm reg}$. 
An element of $\Delta$ collects one \emph{incoming} interaction mark $r\in A_{\rm int}$, an edge mark $e\in\Er$, two $\Xi$–states $h,\hat h$ describing an interacting pair, and a probability $\theta\in\Pc(\Xi)$ standing for the current empirical environment.

On this enlarged space we define the \emph{lifted} empirical interaction measure
\[
    \Gammabb^n_t\!\left(\mathrm{d}r,\mathrm{d}e,\mathrm{d}h,\mathrm{d}\hat h,\mathrm{d}\theta\right)\,\mathrm{d}t
    :=
    \frac{1}{n^2}\sum_{i,j=1}^n
    \delta_{\bigl(\gamma^{ij}_n(t,\Xbb^n),\;\xi^n_{ij},\;H^{i,n}_t,\;H^{j,n}_t,\;\Gamma^n_t\bigr)}
    \left(\mathrm{d}r,\mathrm{d}e,\mathrm{d}h,\mathrm{d}\hat h,\mathrm{d}\theta\right)\,\mathrm{d}t,
\]
where
\[
    H^{i,n}_t
    :=
    \bigl(Z^{i,n}_t,\;M^{i,n}_{\gamma^n,t},\;\alpha^{i,n}(t,\Xbb^n)\bigr) \in \Xi.
\]
We also set 
$$
    \left(\mub^{n}_t:=\frac{1}{n}\sum_{i=1}^n \delta_{\left( X^i_t,\, \Rr^{i,n}_t \right)}  \right)_{t \in [0,T]} \in C\left([0,T], \Pc\left(\R^d \x \Pc(\R^d \x \Er) \right) \right).
$$
For later compactness and identification arguments, we consider the joint laws
\[
    \overline{\Pr}^{\,n} 
    := \P\circ\bigl(\,\mu^n,\,\mub^n,\;\Gamma^n,\;\Gammabb^n\,\bigr)^{-1}
    \;\in\;
    \Pc\!\Bigl(C([0,T];\Pc(\R^d\times[0,1]))\,\times \,C\left([0,T], \Pc(\R^d \x \Er) \right)\times \M(\Xi)\times \M(\Delta)\Bigr).
\]
We can check the relative compactness in the weak topology by the same techniques as in the proof of \Cref{prop:a.e.cong}. Looking at $X^i=X^i-X^i_0 + X^i_0$ and using that $(X^i-X^i_0)$ is bounded and the sequence of initial distributions are relatively compact.   
Up to extraction, let $\overline{\Pr}^{n}\Rightarrow \overline{\Pr}:=\P\circ(\mu, \mub,\Gamma,\Gammabb)^{-1}$. 
On the canonical space $\Delta$ we write the coordinate random variables as
\[
    \bigl(\gamma,\;E,\;H=(X,U,M,\beta),\;\Hh=(\Xh,\Uh,\Mh,\betah),\;\Theta\bigr),
\]
with $M=(M^1,M^2)$ and $\Mh=(\Mh^1,\Mh^2)$.

\medskip
\textbf{Step 1: Limiting Fokker–Planck identity for $\mu$.}
Applying Itô’s formula to $\frac1n\sum_{i=1}^n\varphi(X^{i,n}_t,u^i_n)$ for any bounded $C^2$ test $\varphi:\R^d\times[0,1]\to\R$, and writing the drift through the lift $\Gammabb^n_t$, we obtain
\begin{align*}
    \mathrm{d}\langle \varphi,\mu^n_t\rangle
    &= \int_{\Xi} \partial_x\varphi(x,u)\;
       b\bigl(t,x,m,a\bigr)\;
       \Gammabb^n_t\!\bigl(A_{\rm int},\Er,\mathrm{d}h,\Xi,\mathrm{d}\theta\bigr)\,\mathrm{d}t
\\[-2pt]
    &\quad
    + \frac12 \int_{\R^d\times[0,1]}
        \mathrm{Tr}\!\Bigl(\partial_{xx}^2\varphi(x,u)\,\sigma\sigma^\top(t,x)\Bigr)
      \,\mu^n_t(\mathrm{d}x,\mathrm{d}u)\,\mathrm{d}t
    + \mathrm{d}M^n_t,
\end{align*}
where
\(
    M^n_t
    = \frac1n \sum_{i=1}^n \int_0^t \partial_x\varphi(X^{i,n}_s,u^i_n)\,\sigma(s,X^{i,n}_s)\,\mathrm{d}W^{i}_s,
\)
and $\E\bigl[\sup_{t\le T}|M^n_t|^2\bigr]\to 0$ by boundedness of $\sigma$ and $\varphi$.
Passing to the limit in distribution along $\overline{\Pr}^{\,n}\Rightarrow \overline{\Pr}$ yields, $\P$–a.s., for all $t\in[0,T]$,
\begin{align}
\label{eq:limit-FP}
    \mathrm{d}\langle \varphi,\mu_t\rangle
    &= \int_{\Xi} \partial_x\varphi(x,u)\; b\bigl(t,x,m,a\bigr)\;
       \Gammabb_t\!\bigl(A_{\rm int},\Er,\mathrm{d}h,\Xi,\Pc(\Xi)\bigr)\,\mathrm{d}t
\\[-2pt]
    &\quad
    + \frac12 \int_{\R^d\times[0,1]}
        \mathrm{Tr}\!\Bigl(\partial_{xx}^2\varphi(x,u)\,\sigma\sigma^\top(t,x)\Bigr)
      \,\mu_t(\mathrm{d}x,\mathrm{d}u)\,\mathrm{d}t.
\nonumber
\end{align}

\medskip
\textbf{Step 2: Structural properties of the limit $\Gammabb$.}
Recall that $(\gamma,E,H,\Hh,\Theta)$ are the canonical coordinates on $\Delta$, and for $\mathrm{d}\P\otimes\mathrm{d}t$–a.e.\ $(\omega,t)$, $\Gammabb_t(\omega)\in\Pc(\Delta)$.

\smallskip
\emph{(a) Conditional independence of $H$ and $\Hh$ given $\Theta$.}
For any bounded continuous $\phi,\varphi$ on $\Xi$ and $\Psi$ on $[0,T]\times \Pc(\Xi)$,
\begin{align*}
    &\E\!\left[\left|\int_0^T 
      \E^{\Gammabb_t}\!\Big[\Psi(t,\Theta)\Big(\phi(H)\varphi(\Hh) 
      - \langle\phi,\Theta\rangle\,\langle\varphi,\Theta\rangle\Big)\Big]\mathrm{d}t\right|\right]
\\
    &\qquad
    = \lim_{n\to\infty}
      \E\!\left[\left|\int_0^T \frac1{n^2}\sum_{i,j=1}^n 
        \Psi(t,\Gamma^n_t)\Big(\phi(H^{i,n}_t)\varphi(H^{j,n}_t) 
      - \langle\phi,\Gamma^n_t\rangle\,\langle\varphi,\Gamma^n_t\rangle\Big)\mathrm{d}t\right|\right]
      = 0,
\end{align*}
which implies, for $\P$–a.e.\ $\omega$ and $\Gammabb_t(\omega)$–a.e.,
\[
    \Lc^{\Gammabb_t(\omega)}(H,\Hh\mid\Theta)
    = \Lc^{\Gammabb_t(\omega)}(H\mid\Theta)\otimes \Lc^{\Gammabb_t(\omega)}(H\mid\Theta)
    = \Theta\otimes\Theta .
\]

\smallskip
\emph{(b) Identification of $M^1$ and $\Mh^2$ as conditional interaction laws.}
For bounded continuous $\phi$ on $\Xi$, $\varphi$ on $A_{\rm int}\times \R^d\times \Er$, and $\Psi$ as above,
\begin{align*}
    &\E\!\left[\left|\int_0^T 
      \E^{\Gammabb_t}\!\Big[\Psi(t,\Theta)\Big(\varphi(\gamma,\Xh,E)\,\phi(H) 
      - \langle\varphi,M^1\rangle\,\phi(H)\Big)\Big]\mathrm{d}t\right|\right]
\\
    &\qquad
    = \lim_{n\to\infty}
      \E\!\left[\left|\int_0^T \frac1{n^2}\sum_{i,j=1}^n 
        \Psi(t,\Gamma^n_t)\Big(\varphi(\gamma^{ij}_n(t,\Xbb^n),X^{j,n}_t,\xi^n_{ij})\,\phi(H^{i,n}_t)
      - \langle\varphi,M^{1,i,n}_t\rangle\,\phi(H^{i,n}_t)\Big)\mathrm{d}t\right|\right]
      = 0,
\end{align*}
hence, $\P$–a.e.\ $\omega$ and $\Gammabb_t(\omega)$–a.e.,
\[
    M^1 = \Lc^{\Gammabb_t(\omega)}(\gamma,\Xh,E\mid H,\Theta).
\]
By symmetry,
\[
    \Mh^2 = \Lc^{\Gammabb_t(\omega)}(\gamma,X,E\mid \Hh,\Theta).
\]

\smallskip
\emph{(c) Consistency of the edge mark.}
Using the convergence of the step–kernels $\Gr^n$ to $\Gr$ (in cut–norm, tested against Lipschitz functions) and \Cref{prop:limit_kernel_1}, we obtain, $\P$–a.e.\ $\omega$ and $\Gammabb_t(\omega)$–a.e.,
\[
    \Lc^{\Gammabb_t(\omega)}(E,H,\Hh\mid\Theta)
    = \Lc^{\Gammabb_t(\omega)}(\Gr(U,\Uh),H,\Hh\mid\Theta).
\]

\smallskip
\emph{(d) Marginal consistency.}
For any bounded continuous $\phi$ on $\R^d\times[0,1]$ and $F$ on $[0,T]\times\Pc(\Xi)$,
\begin{align*}
    &\E\!\left[\left|\int_0^T 
      \Big(\E^{\Gammabb_t}\!\big[\phi(X,U)\,F(t,\Theta)\big]
      - \langle\phi,\mu_t\rangle \,\E^{\Gammabb_t}[F(t,\Theta)]\Big)\mathrm{d}t\right|\right]
\\
    &\qquad
    = \lim_{n\to\infty}\E\!\left[\left|\int_0^T 
      \Big(\tfrac1n\sum_{i=1}^n \phi(X^{i,n}_t,u^i_n)\,F(t,\Gamma^n_t)
      - \langle\phi,\mu^n_t\rangle \,F(t,\Gamma^n_t)\Big)\mathrm{d}t\right|\right]=0,
\end{align*}
which implies $\mu_t(\omega)=\Lc^{\Gammabb_t(\omega)}(X,U\mid \Theta)$ for $\mathrm{d}t$–a.e.\ $t$.

\medskip
\textbf{Step 3: Membership in the relaxed set and identification of the limit of $(\mu^n,\Gamma^n)$.}
From (a)–(d) and the reformulation of the relaxed class $\Mcb$ (via $\Mcb_{\rm aux}$), we infer that, $\P$–a.e.\ $\omega$ and $\Gammabb_t(\omega)$–a.e.,
\[
    \Lc^{\Gammabb_t(\omega)}\bigl(Z,M\mid \Theta\bigr)
    \in \Mcb,
    \qquad 
    \text{with } \; Z=(X,U).
\]
In particular, under $\Gammabb_t(\omega)$, the conditional law $\Theta=\Lc^{\Gammabb_t(\omega)}(H\mid\Theta)$ has $(x,u)$–marginal $\mu_t(\omega)$, so the randomization occurs only at the interaction component (consistent with the definition of a relaxed control).

Moreover, by construction $\Gamma^n$ is the $\Xi$–marginal of $\Gammabb^n$:
\[
    \Gamma^n_t(\mathrm{d}h)\,\mathrm{d}t
    =
    \Gammabb^n_t\!\bigl(A_{\rm int},\Er,\mathrm{d}h,\Xi,\mathrm{d}\theta\bigr)\,\mathrm{d}t.
\]
Passing to the limit along the subsequence, we may define
\[
    \Gamma_t(\mathrm{d}h)\,\mathrm{d}t
    :=
    \Gammabb_t\!\bigl(A_{\rm int},\Er,\mathrm{d}h,\Xi,\Pc(\Xi)\bigr)\,\mathrm{d}t
    \;=\;
    \E^{\Gammabb_t}\!\bigl[\,\Lc^{\Gammabb_t}(H\mid\Theta)(\mathrm{d}h)\,\bigr]\mathrm{d}t,
\]
and set $\Pr := \P\circ(\mu,\Gamma)^{-1}$.
By the characterization above, $\P$–a.s.\ $(\mu,\Gamma)$ is a relaxed control in the sense of Section~\ref{sec:relaxed}. 
Since every subsequential limit of $\Pr^n:=\P\circ(\mu^n,\Gamma^n)^{-1}$ must coincide with such a $\Pr$, we conclude:

\begin{itemize}
\item The sequence $(\Pr^n)_{n\ge 1}$ is relatively compact in the topology of weak convergence on $\Pc\bigl(C([0,T];\Pc(\R^d\times[0,1]))\times \M(\Xi)\bigr)$.
\item Every limit point $\Pr$ is supported on the set of (admissible) relaxed controls $(\mu,\Gamma)$ i.e. $\P$--a.e. $\om$, $(\mu(\om),\Lambda(\om)) \in \Pib\left(\mu_0(\om)(\mathrm{d}x,[0,1]) \right)$ and $\mu_0(\om) \in \Pc(\R^d \x [0,1])$.
\end{itemize}

This completes the argument.

\medskip
\textbf{Step 4: convergence of the value function.}

In order to establish the convergence involving the value function, we start by detailing the variable $\mub$. Let us observe that, for any smooth maps $(F,\varphi)$, using the weak convergence we obtain
\begin{align*}
    &\E \left[\int_0^T F(t)\left( \E^{\Gammabb_t}\left[ \varphi\left(X,\,M^1(A_{\rm int}, \mathrm{d}x,[0,1],\mathrm{d}e) \right) \right]- \langle \varphi, \mub_t \rangle \right) \mathrm{d}t \right]
    \\
    &=
    \lim_{n \to \infty} \E \left[\int_0^T F(t)\left(  \frac{1}{n}\sum_{i=1}^n\varphi\left(X^i_t,\,M^{1,i,n}_{\gammab^n,\,t}(A_{\rm int}, \mathrm{d}x,[0,1],\mathrm{d}e) \right)- \langle \varphi, \mub^n_t \rangle \right) \mathrm{d}t \right]=0.
\end{align*}
This being true for any $(F,\varphi)$, we deduce that $\mathrm{d}\P \otimes\mathrm{d}t$--a.e. 
\begin{align*}
    &\mub_t=\Lc^{\Gammabb_t}\left(X,\,M^1(A_{\rm int}, \mathrm{d}x,[0,1],\mathrm{d}e)   \right)
    =
    \Lc^{\Gammabb_t}\left(X,\, \Lc^{\Gammabb_t}\left(\Xh,E \,\mid H,\Theta  \right)   \right) 
    \\
    &=
    \Lc^{\Gammabb_t}\left(X,\, \Lc^{\Gammabb_t}\left(\Xh,\Gr \bigl(U, \Uh \bigr) \,\mid U,\Theta  \right)   \right) =\Lc^{\Gammabb_t}\left(X,\, V(U,\mu_t)   \right)
\end{align*}
where $V(u,\mu_t):=\Lc^{\mu_t}\left(\Xh,\Gr \bigl(u, \Uh \bigr)  \right)$. We used the identities in $Step\,2$, and the independence of $(X,U)$ and $\Theta$. The processes $\mub$ and $\mu$ being continuous, we deduce that $\P$-a.e. $\mub_t=\Lc^{\mu_t}\left(X,\, V(U,\mu_t)   \right)$ for all $t \in [0,T]$. Consequently
\begin{align*}
    &\Lim_{n \to \infty}J_n(\nu^n,\gammab^n,\betab^n) = \Lim_{n \to \infty}\E \left[ \int_0^T \int_{\Xi}  L\bigl(t,x,m,a\bigr)\;
       \Gamma^n_t(\mathrm{d}h)\,\mathrm{d}t + \langle g, \mub^n_T \rangle \right] 
       \\
       &= \E \left[ \int_0^T \int_{\Xi}  L\bigl(t,x,m,a\bigr)\;
       \Gamma_t(\mathrm{d}h)\,\mathrm{d}t + \int_{\R^d} g (x, V(u,\mu_T)) \mu_T (\mathrm{d}x,\mathrm{d}u) \right].
\end{align*}
We can conclude the proof.

\subsection{Proof of \Cref{prop:conv_value_function}}

Let us start by checking that $\liminf_{n \to \infty} V_n(\nu^n) \ge V_{\rm MFC}(\nu)$. Indeed, let $\eta \in \Pc_\nu$ and $(\gamma,\beta) \in \Ac_{\rm int} \x \Ac_{\rm reg}$ be continuous Lipchitz maps. By \Cref{thm:strong_to_n}, we can construct $(\gammab^n,\betab^n) \in \Ac_{n, \rm int}^{n^2} \x \Ac_{n,\rm reg}^n$ s.t. $\lim_{n \to \infty} J_n(\nu^n,\gammab^n,\betab^n)=J(\eta,\gamma,\beta)$. Therefore, we obtain that $J(\eta,\gamma,\beta) \le \Liminf_{n \to \infty} V_n(\nu^n)$ for any $(\gamma,\beta)$ Lipschitz continuous. Since the supremum in $V_{\rm MFC}(\nu)$ can be taken over Lipschitz maps, we deduce that $V_{\rm MFC}(\nu) \le \Liminf_{n \to \infty} V_n(\nu^n)$. We now show that $\limsup_{n \to \infty} V_n(\nu^n) \le V_{\rm MFC}(\nu)$. Let $(\gammab^n,\betab^n) \in \Ac_{n, \rm int}^{n^2} \x \Ac_{n,\rm reg}^n$ s.t. $\lim_{n \to \infty}$ be an $1/n$--optimal control of $V_n(\nu^n)$ i.e. $V_n(\nu^n) \le J_n(\nu^n,\gammab^n,\betab^n) + 1/n$. Under the convergence conditions of the initial distributions $(\nu^n)_{n \ge 1}$, by applying \Cref{thm:from_n_to_limit}, the sequence $(\Pr^n:=\Lc\left( \mu^{n,\gammab^n,\betab^n},\,\Lambda^{n,\gammab^n,\betab^n} \right))_{n \ge 1}$ is relatively compact for the weak convergence, and each limit point $\Pr=\P \circ (\mu,\Lambda)^{-1}$ is supported by the set of relaxed controls. In addition, for the convergent subsequence $(\Pr^{n_k})_{k \ge 1}$, we have $\Lim_{k \to \infty} J_{n_k}(\nu^{n_k},\gammab^{n_k},\betab^{n_k})=\E^\P \left[ \Gc \left( \mu, \Lambda \right) \right].$ Since $\Lim_{n \to \infty} \Lc\left( \mu^{n,\gammab^n,\betab^n}_0 (\mathrm{d}x,[0,1]) \right)= \delta_{\nu}$, we deduce that $(\mu,\Lambda) \in \Pi(\nu)$ a.e. Consequently, $\E^\P \left[ \Gc \left( \mu, \Lambda \right) \right] \le V_{\rm MFC}(\nu)$. This leads to $\Limsup_{n \to \infty} V_n(\nu^n) \le \limsup_{n \to \infty} J_{n}(\nu^{n},\gammab^{n},\betab^{n}) \le V_{\rm MFC}(\nu)$. We can conclude the proof of the proposition.

\subsection{ Proof of \Cref{prop_example1} }

We establish the optimality of $\widehat{\gamma}$ by analyzing the $n$--player approximation of the mean--field control problem.
By the convergence result in \Cref{prop:conv_value_function}, we know that
\[
    V_{\mathrm{MFC}}(\nu)
    = \lim_{n \to \infty} V_n\bigl(\nu^{\otimes n}\bigr),
\]
where $V_n$ denotes the value function of the $n$--agent problem.
For each $n \ge 1$, consider the $\F$--adapted processes 
\[
    (Y^n,\Zbb^n:=(Z^{1,n},\dots,Z^{n,n}),\Xbb^n)
\]
satisfying, for all $t \in [0,T]$ and $1 \le i \le n$,
\begin{align*}
    X^{i,n}_t = X^i_0 + \int_0^t  \Phih\left(s,X^i_{s}, \mu^n_s \right)  \;\mathrm{d}s 
        +
        W^i_t,\quad Y^n_t
    =
    G\left( \mu^n_T\right)
    - \sum_{i=1}^n\int_t^T Z^{i,n}_s \mathrm{d}W^i_s
\end{align*}
where $\mu^n_t := \frac{1}{n}\sum_{i=1}^n \delta_{X^{i,n}_t}$ and the initial law satisfies 
$\Lc(X^1_0,\dots,X^n_0)=\nu^{\otimes n}$.
Applying the Clark--Ocone formula (see, e.g., \cite[Proposition~1.5]{nualart2006malliavin}), we obtain that
\[
    Z^{i,n}_t = \partial_{x^i} v^n\bigl(t,\Xbb^n_t\bigr),
\]
where $v^n$ denotes the value function of the $n$--player problem.  
Differentiating $v^n$, for $(t,\xbb) \in [0,T] \x (\R^d)^n$, we get
\[
    \partial_{x^i} v^n\bigl(t,\xbb\bigr)
    = \sum_{j=1}^n
        \E\!\left[
            J^{j,t,\xbb,i}_T
            \frac{1}{n}\partial_x \delta_m G
            \!\left(
                \frac{1}{n}\sum_{k=1}^n \delta_{X^{k,t,\xbb}_T}
            \right)
            \!\!\left(X^{j,t,\xbb}_T\right)
        \right],
\]
where $\Lc(X^{1,t,\xbb},\dots,X^{n,t,\xbb})
    = \Lc(\Xbb^n \mid \Xbb^n_{t \wedge \cdot}=\xbb)$, 
and for $s \in [t,T]$, the Jacobian processes $(J^{j,t,\xbb,i}_s)_{1\le j,i\le n}$ solve
\[
    J^{j,t,\xbb,i}_s = \mathbf{1}_{i=j}
    + \int_t^s J^{j,t,\xbb,i}_r
        \partial_x\Phih\!\left(r,X^{j,t,\xbb}_r,\nu^{n,t,\xbb}_r\right)\mathrm{d}r
    + \int_t^s \sum_{k=1}^n 
        J^{k,t,\xbb,i}_r
        \frac{1}{n}\partial_y\delta_m\Phih\!\left(r,X^{j,t,\xbb}_r,\nu^{n,t,\xbb}_r\right)
        \!\left(X^{k,t,\xbb}_r\right)\mathrm{d}r,
\]
with $\nu^{n,t,\xbb}_r := \frac{1}{n}\sum_{k=1}^n \delta_{X^{k,t,\xbb}_r}$. Notice that since the map $r \mapsto (r)^+$ involved in $\Phih$ is not differentiable in $0$, we have considered its weak derivative which exists because it is a Lipschitz map.
By the monotonicity assumptions $\partial_y\delta_m\Phih \ge 0$, it follows that $J^{j,t,\xbb,i}_s \ge 0$ for all $s \in [t,T]$, and therefore
\[
    Z^{i,n}_t = \partial_{x^i} v^n(t,\Xbb^n_t) \ge 0.
\]
This nonnegativity plays a key role in identifying the optimal control.
Recalling that the interaction term satisfies
\[
    \Phih(s,X^i_s,\mu^n_s)
    = \frac{1}{n}\sum_{j=1}^n
        \Phi\!\left(s,X^i_s,X^j_s,\mu^n_s\right)
        \mathbf{1}_{\{\Phi(s,X^i_s,X^j_s,\mu^n_s)\,nZ^{i,n}_s \ge 0\}},
\]
we deduce that the optimal control at the $n$--player level is given by
\[
    \gamma^{\star,n}_{ij}(s,\Xbb^n)
    := \mathbf{1}_{\{\Phi(s,X^i_s,X^j_s,\mu^n_s)\,nZ^{i,n}_s \ge 0\}}
    = \mathbf{1}_{\{\Phi(s,X^i_s,X^j_s,\mu^n_s) \ge 0\}},
\]
since $Z^{i,n}_s \ge 0$.  
Moreover, by construction, we can check that (see for instance \cite[Proposition 3.1.]{djete2024connectedproblemsprincipalmultiple})
\[
    V_n\bigl(\nu^{\otimes n}\bigr) = \E[Y^n_0].
\]

\medskip
Finally, using the continuity of $\Phih$ and the convergence 
$\mu^n \to \mu^\star$ as $n \to \infty$, we obtain
\[
    V_{\mathrm{MFC}}(\nu)
    = \lim_{n \to \infty} V_n\bigl(\nu^{\otimes n}\bigr)
    = \lim_{n \to \infty} 
        J_n\!\left(\nu^{\otimes n},
            (\gamma^{\star,n}_{ij})_{1\le i,j\le n},
            a_{\rm reg}
        \right)
    = \widehat{J}(\widehat{\gamma}),
\]
for some $a_{\rm reg} \in A_{\rm reg}$ where $\widehat{\gamma}(t,x,y) = \mathbf{1}_{\{\Phi(t,x,y,\mu^\star_t)\ge 0\}}$.
Hence $\widehat{\gamma}$ is indeed the optimal interaction control.

\subsection{ Proof of \Cref{prop_example2} }

\textbf{Step 1: Existence of an optimal relaxed control and its state dynamics.}
By the existence of optimal control in \Cref{prop:existence_continuity}, there exists an optimal \emph{relaxed} interaction control
\(
    \gammabb
\in \Acb_{\rm int}
\)
(maximizing the relaxed objective).
Under the standing assumptions in Example~2, the relaxed dynamics of the optimally
controlled state process \(X\) can be written as
\begin{align*}
    \mathrm{d}X_s
    &= \int_{[0,1]^3} \Bigg(
       \int_{\R \times [0,1]^2}
          \gammabb\!\left(s, X_s,U, v\,;\,x',u',v',\overline{v},\widehat v\right)\;
          b_1\!\big(s,\Gr(U,u'),X_s\big)\;
          \mu_s(\mathrm{d}x',\mathrm{d}u')\,
          \mathrm{d}v'
\\[-0.3em]
    &\hspace{7.8em}+
       \int_{\R \times [0,1]^2}
          \gammabb\!\left(s, x',u',v'\,;\,X_s,U, v,\overline{v},\widehat v\right)\;
          b_2\!\big(s,\Gr(U,u'),X_s\big)\;
          \mu_s(\mathrm{d}x',\mathrm{d}u')\,
          \mathrm{d}v'
    \Bigg)\,
    \mathrm{d}\overline{v}\,\mathrm{d}v\,\mathrm{d}\widehat v\,\mathrm{d}s
    \;+\; \sigma\,\mathrm{d}W_s,
\end{align*}
with \(\mu_s:=\Lc(X_s,U)\).
The corresponding relaxed objective reads
\begin{align*}
    &\overline{J} (\Lc(X_0,U),\,\gammabb, a_{\rm reg})
    \\
    &:=
    \E\!\left[
        \int_0^T \int_{[0,1]^3}
        \int_{\R \times [0,1]^2}
            L\!\Big(
                s,\,
                \gammabb\!\left(s, X_s,U, v\,;\,x',u',v',\overline{v},\widehat v\right),\,
                \Gr(U,u'),\,
                X_s
            \Big)\,
            \mu_s(\mathrm{d}x',\mathrm{d}u')\,\mathrm{d}v'\,
        \mathrm{d}\overline{v}\,\mathrm{d}v\,\mathrm{d}\widehat v\,
        \mathrm{d}s
    \right]
\end{align*}
for some $a_{\rm reg}  \in A_{\rm reg}$.

\medskip
\textbf{Step 2: Barycentric projection to a closed--loop control.}
Define the \emph{deterministic} closed--loop interaction control
\[
    \gamma(t,x,u,x',u')
    \;:=\;
    \int_{[0,1]^4}
        \gammabb\!\left(t, x,u, v\,;\,x',u',v',\overline{v},\widehat v\right)\,
    \mathrm{d}v\,\mathrm{d}v'\,\mathrm{d}\overline{v}\,\mathrm{d}\widehat v,
    \qquad (t,x,u,x',u')\in[0,T]\times(\R\times[0,1])^2.
\]
Since \(A_{\rm int}\) is convex and \(\gammabb\in A_{\rm int}\), we have \(\gamma\in A_{\rm int}\), hence
\(\gamma\in\Ac_{\rm int}\).

\medskip
\textbf{Step 3: Dynamics under \(\gamma\) coincide with the relaxed dynamics.}
In the drift, the control enters \emph{linearly} (as a scalar weight) in front of \(b_1\) and \(b_2\).
Therefore, by Fubini’s theorem,
replacing \(\gammabb\) by its average \(\gamma\) leaves the drift unchanged:
the state process driven by \(\gamma\) solves the same SDE (in law) as the one driven by \(\gammabb\).
Consequently, the associated marginal flow \(\mu=\Lc(X,U)\) is the same under \(\gamma\) and under \(\gammabb\).

\medskip
\textbf{Step 4: No loss under projection (payoff comparison).}
Assume the map \(a\mapsto L(s,a,r,x)\) is \emph{concave} on \(A_{\rm int}\) for every \((s,r,x)\).
Then, by Jensen’s inequality,
\[
    L\!\big(s,\gamma(t,x,u,x',u'),\Gr(U,u'),X_s\big)
    \;\ge\;
    \int_{[0,1]^4}
        L\!\Big(
            s,\,
            \gammabb\!\left(s, X_s,U, v\,;\,x',u',v',\overline{v},\widehat v\right),\,
            \Gr(U,u'),\,
            X_s
        \Big)\,
    \mathrm{d}v\,\mathrm{d}v'\,\mathrm{d}\overline{v}\,\mathrm{d}\widehat v.
\]
Integrating in \((x',u')\), then in time and expectation, we obtain
\[
    \widehat{J}(\gamma) \;\ge\; \overline{J} (\Lc(X_0,U),\,\gammabb, a_{\rm reg})\,\quad \mbox{for some }a_{\rm reg}  \in A_{\rm reg}.
\]
Since \(\gammabb\) is optimal in the relaxed class, it follows that \(\gamma\) is also optimal (there is no loss of optimality when projecting to a closed--loop control).

\medskip
\textbf{Conclusion.}
We have constructed a closed--loop control \(\gamma\in\Ac_{\rm int}\) which achieves the same (indeed, no smaller) value as the optimal relaxed control \(\gammabb\).
Hence \(\gamma\) is an optimal closed--loop interaction control for Example~2.

\begin{appendix}
\section{Technical results}

\subsection{A uniform weak convergence approximation of measures }

Let $(A, \Delta)$ be a compact metric space, and let $\R \times \R \ni (x,y) \mapsto \Lambda_{x,y}(\mathrm{d}a) \in \Pc(A)$ be a measurable family of probability measures. For each $k \in \N^*$, since $A$ is compact, there exists a finite partition $(A^k_i)_{1 \le i \le k}$ of $A$ and corresponding points $(a^k_i)_{1 \le i \le k} \subset A$ such that:
\begin{itemize}
    \item $A = \bigcup_{i=1}^k A^k_i$,
    \item $A^k_i \cap A^k_j = \emptyset$ for $i \neq j$,
    \item $a^k_i \in A^k_i$ and $\Delta(a, a^k_i) \le \frac{1}{k}$ for all $a \in A^k_i$.
\end{itemize}

\begin{proposition} \label{prop:appro_control}
\begin{itemize}
    \item[$(i)$] For each $k \ge 1$, there exists a collection of piecewise constant maps $\left( \Lambda^{i,k}_{x,y} \right)_{1 \le i \le k}$ from $\R^2$ to $[0,1]$ such that for all $(x,y) \in \R^2$,
    \[
        \sum_{i=1}^k \Lambda^{i,k}_{x,y} = 1,
    \]
    and such that the approximation
    \[
        \sum_{i=1}^k \delta_{a^k_i}(\mathrm{d}a) \Lambda^{i,k}_{x,y}
    \]
    converges weakly to $\Lambda_{x,y}(\mathrm{d}a)$ for almost every $(x,y)$, when restricted to compact sets:
    \[
        \lim_{k \to \infty} \sum_{i=1}^k \delta_{a^k_i}(\mathrm{d}a)\, \Lambda^{i,k}_{x,y} \cdot 1_{\{|x| \le k\}} \cdot 1_{\{|y| \le k\}} = \Lambda_{x,y}(\mathrm{d}a).
    \]
    
    \medskip
    \item[$(ii)$] Fix $k \ge 1$. Then there exists a sequence of continuous maps $\left( \alpha^n : \R^2 \to A \right)_{n \ge 1}$ such that for any continuous and bounded function $\varphi : A \times \R^2 \to \R$, the following convergence holds:
    \begin{align*}
        \lim_{n \to \infty} \esup_{x \in [-k,k]} \left| \int_{[-k,k]} \varphi\left( \alpha^n(x,y),\, x, y \right) \mathrm{d}y 
        - \sum_{i=1}^k \int_{A \times [-k,k]} \varphi(a^k_i, x, y)\, \Lambda^{i,k}_{x,y}(\mathrm{d}a)\, \mathrm{d}y \right| = 0,
    \end{align*}
    and similarly,
    \begin{align*}
        \lim_{n \to \infty} \esup_{y \in [-k,k]} \left| \int_{[-k,k]} \varphi\left( \alpha^n(x,y),\, x, y \right) \mathrm{d}x 
        - \sum_{i=1}^k \int_{A \times [-k,k]} \varphi(a^k_i, x, y)\, \Lambda^{i,k}_{x,y}(\mathrm{d}a)\, \mathrm{d}x \right| = 0.
    \end{align*}
\end{itemize}
\end{proposition}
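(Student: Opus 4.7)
\medskip
\textbf{Part (i).} For $(i)$, my plan is to combine the refining partition $(A^k_i)_{1 \le i \le k}$ of $A$ with a refining dyadic mesh of $\R^2$. I fix a dyadic grid of $\R^2$ into squares $Q^k_j$ of side $2^{-k}$, and set
\[
    \Lambda^{i,k}_{x,y} := \frac{1}{|Q^k_j|} \int_{Q^k_j} \Lambda_{\xi,\eta}(A^k_i)\,\mathrm{d}\xi\,\mathrm{d}\eta, \qquad (x,y) \in Q^k_j.
\]
These maps are piecewise constant, nonnegative and satisfy $\sum_{i=1}^k \Lambda^{i,k}_{x,y} = 1$ since $(A^k_i)_i$ partitions $A$. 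To verify weak convergence, I pick $f \in C(A)$ and write
\[
    \sum_{i=1}^k f(a^k_i)\, \Lambda^{i,k}_{x,y} = \frac{1}{|Q^k_j|} \int_{Q^k_j} \int_A f_k(a)\, \Lambda_{\xi,\eta}(\mathrm{d}a)\,\mathrm{d}\xi\,\mathrm{d}\eta, \quad f_k := \sum_i f(a^k_i) \mathbf{1}_{A^k_i}.
\]
Because $a^k_i$ is $1/k$-dense in the compact set $A$ and $f$ is uniformly continuous, $f_k \to f$ uniformly; and by the Lebesgue differentiation theorem applied to the locally integrable map $(\xi,\eta) \mapsto \int_A f\,\mathrm{d}\Lambda_{\xi,\eta}$, the outer dyadic average converges almost everywhere to $\int_A f\,\mathrm{d}\Lambda_{x,y}$. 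The indicator $\mathbf{1}_{|x|\le k}\mathbf{1}_{|y|\le k}$ merely localizes the argument.

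\medskip
\textbf{Part (ii): a doubly equidistributed construction.} I fix $k$ and reduce to a single cell $R = [a,b] \times [c,d]$ of the piecewise-constant partition on which $\Lambda^{i,k}_{\cdot,\cdot} \equiv p_i$ with $\sum_i p_i = 1$. Set $c_0 := 0$ and $c_i := p_1 + \cdots + p_i$; after an affine rescaling to $[0,1]^2$ I define the discrete-valued map
\[
    \beta^n(x,y) := a^k_i \quad \text{iff} \quad \{n(x+y)\} \in [c_{i-1}, c_i).
\]
The key observation is that, for every fixed $x \in [0,1]$, the map $y \mapsto n(x+y)$ is affine with integer slope $n$ on an interval of length $n$, so its fractional part equidistributes \emph{exactly}: the one-dimensional Lebesgue measure of $\{y \in [0,1] : \beta^n(x,y) = a^k_i\}$ equals $p_i$ on the nose, and the same holds with $x$ and $y$ swapped. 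A standard Riemann-sum estimate using the uniform continuity of $\varphi$ on $A \times [-k,k]^2$ then gives, uniformly in $x \in [0,1]$,
\[
    \int_0^1 \varphi(\beta^n(x,y), x, y)\,\mathrm{d}y = \sum_{i=1}^k p_i \int_0^1 \varphi(a^k_i, x, y)\,\mathrm{d}y + O(\omega_\varphi(1/n)),
\]
and symmetrically for the conjugate integral, where $\omega_\varphi$ denotes the modulus of continuity of $\varphi$.

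\medskip
\textbf{Continuity, gluing, and the main obstacle.} To upgrade the discontinuous $\beta^n$ to a continuous $\alpha^n : \R^2 \to A$, I assume $A$ is path-connected, which is the setting of every application of this proposition in the paper (where $A_{\rm int}$ carries a convex or linear structure). I mollify $\beta^n$ in thin tubes of width $\delta_n := 1/n^2$ around its $O(n)$ discontinuity lines by concatenating continuous paths in $A$ joining consecutive values $a^k_i$ and $a^k_{i+1}$. The transition tubes have total area $O(n\delta_n) = O(1/n)$, so they contribute at most $O(\|\varphi\|_\infty/n)$ to the slice integrals, which is absorbed in the $o(1)$ error. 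Globally on $\R^2$, I perform this construction separately on each constant cell of the $\Lambda^{i,k}$-partition and splice along inter-cell boundaries by the same thin-tube mollification, extending constantly outside $[-k,k]^2$. The hard point is the \emph{simultaneous} uniform convergence in both marginal directions, which rules out a naive vertical- or horizontal-stripe assignment; the diagonal fractional-part trick $\{n(x+y)\}$ is the key device producing a doubly equidistributed tiling in which both horizontal and vertical slices see each value $a^k_i$ with exactly the proportion $p_i$.
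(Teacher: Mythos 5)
Your diagonal fractional-part construction $\beta^n(x,y)=a^k_i$ whenever $\{n(x+y)\}\in[c_{i-1},c_i)$ is a clean and correct way to achieve the exact row- and column-equidistribution that Part~(ii) requires, and it is in fact a \emph{correction} of what the paper attempts: the paper's $U^{i,n}=\bigcup_{j,\ell}\bigl({}^1T^{i,n}_{j,\ell}\times{}^2T^{i,n}_{j,\ell}\bigr)$ is a union of diagonal \emph{squares} of side $p_i/n$, which cover only a fraction $\sum_i p_i^2<1$ of each cell $T^n_j\times T^n_\ell$, so the claimed ``disjoint partition'' of $T^n_j\times T^n_\ell$ is false, the selector $\alpha^n$ is left undefined on a set of positive measure, and the key cancellation $\int_{T^n_j}1_{U^{i,n}}(r,t)\,\mathrm{d}r=\tfrac1n\Lambda^{i,k}_{[j/n],[\ell/n]}$ fails for generic $t\in T^n_\ell$ (it actually gives $\tfrac{p_i}{n}\mathbf{1}_{t\in{}^2T^{i,n}_{j,\ell}}$). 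The correct object is the diagonal \emph{stripe}, i.e.\ the level sets of $\{n(s+t)\}$, which is precisely your construction. Your Part~(i) is also fine, with Lebesgue differentiation replacing the paper's convolve-then-discretize route.

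The genuine gap is in the continuity splicing across vertical cell boundaries of the piecewise-constant $\Lambda^{i,k}$, and it is a gap you share with the paper. For fixed $k$ the target $x\mapsto\sum_i\int_{[-k,k]}\varphi(a^k_i,x,y)\,\Lambda^{i,k}_{x,y}\,\mathrm{d}y$ has genuine jump discontinuities in $x$ at each vertical boundary of the $\Lambda^{i,k}$-partition, with a jump size that depends only on $k$ and $\varphi$ and is nonzero in general. On the other hand $x\mapsto\int_{[-k,k]}\varphi(\alpha^n(x,y),x,y)\,\mathrm{d}y$ is continuous in $x$ as soon as $\alpha^n$ is continuous, so their difference retains a jump that no choice of continuous $\alpha^n$ can erase; the essential supremum over $x$ is therefore bounded below, uniformly in $n$, by roughly half that jump size. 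Concretely, your thin tube of width $\delta_n$ around a vertical boundary produces a set of $x$'s of measure $\delta_n>0$ on which the error is $O(1)$, and this set is not excluded by the essential sup. The paper offers no argument here either, writing only ``by making another approximation, we can extend $\ldots$ and make it continuous.'' The chattering argument you give really proves the esup estimate for the \emph{measurable} selector $\beta^n$; to also obtain continuity together with a vanishing esup error one would need the jump sizes of $\Lambda^{i,k}$ --- hence the mesh of its piecewise-constancy partition --- to shrink with $n$, which is no longer a $k$-fixed statement.
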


\begin{remark}
    $(i)$ The proof of this proposition, and in particular Item~$(ii)$, is essentially based on an adaptation of {\rm\cite[Lemma~4.7]{el1987compactification}}. 
There are two main points to emphasize in Item~$(ii)$. 
First, the supremum appearing in the convergence results originates from the construction in {\rm \cite[Lemma~4.7]{el1987compactification}}. 
Second, this construction also allows us to employ the same sequence of maps $\alpha^n : \R^2 \to A$ in both convergence statements.

    \medskip
    $(ii)$ By a straightforward adaptation of the arguments (see below), the constructions involving the mappings $(x,y) \mapsto \Lambda_{x,y}$ and $(x,y) \mapsto \alpha^n(x,y)$ on $\R \times \R$ can be extended to the general case $(x,y) \in \R^d \times \R^\ell$, for any integers $(d,\ell) \in \{1,2,\ldots\}^2$. 
We restrict here to the one--dimensional setting only to simplify the exposition and the proof.

\end{remark}

\begin{proof}
    \textbf{Item 1}. It is straightforward to construct a continuous approximation in $(x,y)$ of $\Lambda_{x,y}$—e.g., via convolution with a smooth density. We then assume that $(x,y) \mapsto \Lambda_{x,y}$ is continuous. Now, for each $k \ge 1$, we can choose a partition 
\[
    -k = t_{-k} < t_{-k+1} < \dots < t_{k-1} < t_k = k,
\]
and define a piecewise constant family of probability measures $\left( \Lambda^k_{x,y}(\mathrm{d}a) \right)_{(x,y) \in \R^2}$ such that:
\begin{itemize}
    \item For any $(x,y)$ with $t_j \le x < t_{j+1}$ and $t_\ell \le y < t_{\ell+1}$, we set $\Lambda^k_{x,y} := \Lambda^k_{t_j,t_\ell}$;
    \item The following weak convergence holds almost everywhere:
    \[
        \lim_{k \to \infty} \Lambda^k_{x,y}(\mathrm{d}a)\, 1_{\{|x| \le k\}}\, 1_{\{|y| \le k\}} = \Lambda_{x,y}(\mathrm{d}a).
    \]
\end{itemize}

Defining $\Lambda^{i,k}_{x,y} := \Lambda^k_{x,y}(A^k_i)$ for the partition $(A^k_i)_{1 \le i \le k}$ introduced earlier, we obtain the desired approximation:
\[
    \lim_{k \to \infty} \sum_{i=1}^k \delta_{a^k_i}(\mathrm{d}a)\, \Lambda^{i,k}_{x,y} = \Lambda_{x,y}(\mathrm{d}a), \quad \text{a.e. } (x,y) \in \R^2.
\]

\begin{comment}
Let $f : A \to \R$ be a continuous function. Since $A$ is compact, $f$ is uniformly continuous, and we denote by $w_f$ its modulus of continuity. Consider a sequence $(s_k, t_k)_{k \ge 1} \subset [0,T]^2$ such that
\begin{align*}
    \left| \sum_{i=1}^k f(a^k_i) \Lambda^{i,k}_{s_k,t_k} - \int_A f(a)\,\Lambda_{s_k,t_k}(\mathrm{d}a) \right| 
    = \esup_{s,t \in [0,T]} \left| \sum_{i=1}^k f(a^k_i)\, \Lambda^{i,k}_{s,t} - \int_A f(a)\, \Lambda_{s,t}(\mathrm{d}a) \right|.
\end{align*}
Using the definition of $\Lambda^{i,k}_{s,t}$ and the partition $(A^k_i)_{1 \le i \le k}$, we rewrite the left-hand side as
\begin{align*}
    \left| \sum_{i=1}^k \int_{A^k_i} \left( f(a^k_i) - f(a) \right) \Lambda_{s_k,t_k}(\mathrm{d}a) \right| 
    \le \sum_{i=1}^k \int_{A^k_i} |f(a^k_i) - f(a)| \,\Lambda_{s_k,t_k}(\mathrm{d}a) 
    \le w_f(1/k),
\end{align*}
where we used that $\Delta(a, a^k_i) \le 1/k$ for all $a \in A^k_i$. Since $w_f(1/k) \to 0$ as $k \to \infty$, we conclude that
\begin{align*}
    \lim_{k \to \infty} \esup_{s,t \in [0,T]} \left| \sum_{i=1}^k f(a^k_i)\, \Lambda^{i,k}_{s,t} - \int_A f(a)\, \Lambda_{s,t}(\mathrm{d}a) \right| = 0.
\end{align*}
  
\end{comment}

\textbf{Item 2} To improve readability, we present the proof on the interval $[0,1]$ instead of $[-k,k]$. The arguments can be easily adapted to $[-k,k]$ by a straightforward translation and symmetry argument.

\medskip
Recall that $\Lambda^{i,k}_{s,t}$ is piecewise constant: there exists $m \in \N^*$ such that
\[
  \Lambda^{i,k}_{s,t}=\Lambda^{i,k}_{[s]^m,[t]^m},
  \qquad [r]^m:=\frac{q}{m}\ \text{ for } r\in\Big[\frac{q}{m},\frac{q+1}{m}\Big),\ q=0,\ldots,m-1,
\]
and we set $[1]^m:=1$. For notational convenience, we write $[\cdot]$ for $[\cdot]^m$. 
Fix $n\in m\N$ (so that the $1/n$-grid refines the $1/m$-grid). Define
\[
  T^n_j:=\Big(\frac{j}{n},\frac{j+1}{n}\Big],\qquad j=0,\ldots,n-1.
\]
For each $(j,\ell)\in\{0,\ldots,n-1\}^2$ and $i=1,\ldots,k$, define the subintervals
\begin{align*}
  {}^1T^{i,n}_{j,\ell}
  &:=\Big(\,\frac{j}{n}+\sum_{e=1}^{i-1}\frac{1}{n}\Lambda^{e,k}_{[j/n],[\ell/n]}\,,\ \frac{j}{n}+\sum_{e=1}^{i}\frac{1}{n}\Lambda^{e,k}_{[j/n],[\ell/n]}\,\Big],\\
  {}^2T^{i,n}_{j,\ell}
  &:=\Big(\,\frac{\ell}{n}+\sum_{e=1}^{i-1}\frac{1}{n}\Lambda^{e,k}_{[j/n],[\ell/n]}\,,\ \frac{\ell}{n}+\sum_{e=1}^{i}\frac{1}{n}\Lambda^{e,k}_{[j/n],[\ell/n]}\,\Big].
\end{align*}
Since $\sum_{e=1}^k\Lambda^{e,k}_{[j/n],[\ell/n]}=1$, the families $\big({}^1T^{i,n}_{j,\ell}\big)_{i=1}^k$ and $\big({}^2T^{i,n}_{j,\ell}\big)_{i=1}^k$ form disjoint partitions of $T^n_j$ and $T^n_\ell$, respectively. Hence the rectangles
\[
  \big({}^1T^{i,n}_{j,\ell}\times {}^2T^{i,n}_{j,\ell}\big)_{i=1}^k
\]
form a disjoint partition of $T^n_j\times T^n_\ell$ for each $(j,\ell)$. Set
\[
  U^{i,n}:=\bigcup_{j=0}^{n-1}\ \bigcup_{\ell=0}^{n-1}\Big({}^1T^{i,n}_{j,\ell}\times {}^2T^{i,n}_{j,\ell}\Big)\subset[0,1]^2.
\]

\medskip
Let $\phi:[0,1]\to\R$ be continuous with modulus of continuity $w_\phi$. Fix $t\in[0,1]$ and let $t\in T^n_\ell$ for some $\ell$. Because $n$ is a multiple of $m$, we have $[r]=[j/n]$ for all $r\in T^n_j$, and therefore
\[
  \Lambda^{i,k}_{r,t}=\Lambda^{i,k}_{[r],[t]}=\Lambda^{i,k}_{[j/n],[\ell/n]}\quad\text{for all }(r,t)\in T^n_j\times T^n_\ell.
\]
We decompose
\begin{align*}
  \int_0^1 \phi(r)\big(\Lambda^{i,k}_{r,t}-1_{U^{i,n}}(r,t)\big)\,\mathrm{d}r
  &=\sum_{j=0}^{n-1}\int_{T^n_j}\big(\phi(r)-\phi(j/n)\big)\big(\Lambda^{i,k}_{r,t}-1_{U^{i,n}}(r,t)\big)\,\mathrm{d}r\\
  &\quad+\sum_{j=0}^{n-1}\phi(j/n)\int_{T^n_j}\big(\Lambda^{i,k}_{r,t}-1_{U^{i,n}}(r,t)\big)\,\mathrm{d}r.
\end{align*}
On $T^n_j\times T^n_\ell$, $\Lambda^{i,k}_{r,t}$ is constant and equals $\Lambda^{i,k}_{[j/n],[\ell/n]}$. By construction of $U^{i,n}$,
\[
  \int_{T^n_j}1_{U^{i,n}}(r,t)\,\mathrm{d}r=\frac{1}{n}\Lambda^{i,k}_{[j/n],[\ell/n]},
\]
hence the second sum cancels termwise:
\[
  \int_{T^n_j}\big(\Lambda^{i,k}_{r,t}-1_{U^{i,n}}(r,t)\big)\,\mathrm{d}r
  =\frac{1}{n}\Lambda^{i,k}_{[j/n],[\ell/n]}-\frac{1}{n}\Lambda^{i,k}_{[j/n],[\ell/n]}=0.
\]
For the first sum, use $|\Lambda^{i,k}_{r,t}-1_{U^{i,n}}(r,t)|\le 1$ and $|\phi(r)-\phi(j/n)|\le w_\phi(1/n)$ on $T^n_j$ to obtain
\[
  \left|\int_{T^n_j}\big(\phi(r)-\phi(j/n)\big)\big(\Lambda^{i,k}_{r,t}-1_{U^{i,n}}(r,t)\big)\,\mathrm{d}r\right|
  \le \frac{1}{n}\,w_\phi(1/n).
\]
Summing over $j$ yields, for any $t\in[0,1]$,
\[
  \left|\int_0^1 \phi(r)\big(\Lambda^{i,k}_{r,t}-1_{U^{i,n}}(r,t)\big)\,\mathrm{d}r\right|
  \le w_\phi(1/n).
\]
By symmetry (interchanging the roles of $s$ and $t$), the same bound holds for integrals in $t$ with $s$ fixed.

\medskip
Now define a (Borel) selector
\[
  \alpha^n_{s,t}:=a^k_i\quad\text{whenever }(s,t)\in U^{i,n},\qquad i=1,\ldots,k.
\]
Let $\varphi:A\times[0,1]^2\to\R$ be continuous and bounded. For each fixed $t\in[0,1]$ and $i\in\{1,\ldots,k\}$, the map $s\mapsto \varphi(a^k_i,s,t)$ is continuous on $[0,1]$ with a modulus $w_{\varphi,i,t}$; since $A\times[0,1]^2$ is compact, we may take a common modulus $w_\varphi$ that works uniformly in $(a,s,t)$. Applying the previous estimate with $\phi(s)=\varphi(a^k_i,s,t)$ and summing over $i$ gives
\begin{align*}
  \Bigg|\int_0^1 \varphi\big(\alpha^n_{s,t},s,t\big)\,\mathrm{d}s
  \;-\;\sum_{i=1}^k\int_{[0,1]\times A}\varphi(a^k_i,s,t)\,\Lambda^{i,k}_{s,t}(\mathrm{d}a)\,\mathrm{d}s\Bigg|
  \le w_\varphi(1/n).
\end{align*}
Taking the essential supremum over $t\in[0,1]$ and letting $n\to\infty$ yields
\[
  \lim_{n\to\infty}\ \esup_{t\in[0,1]}\left|\int_0^1 \varphi\big(\alpha^n_{s,t},s,t\big)\,\mathrm{d}s
  -\sum_{i=1}^k\int_{[0,1]\times A}\varphi(a^k_i,s,t)\,\Lambda^{i,k}_{s,t}(\mathrm{d}a)\,\mathrm{d}s\right|=0.
\]
The analogous convergence with the roles of $s$ and $t$ interchanged follows from the symmetric estimate proved above:
\[
  \lim_{n\to\infty}\ \esup_{s\in[0,1]}\left|\int_0^1 \varphi\big(\alpha^n_{s,t},s,t\big)\,\mathrm{d}t
  -\sum_{i=1}^k\int_{[0,1]\times A}\varphi(a^k_i,s,t)\,\Lambda^{i,k}_{s,t}(\mathrm{d}a)\,\mathrm{d}t\right|=0.
\]

By making another approximation, we can extend $(t,s) \mapsto \alpha^n_{s,t}$ over $\R^2$ and make it continuous.

\end{proof}

\begin{corollary}
    In the setting of {\rm \Cref{prop:appro_control}}, {\rm Item (ii)}, let $p : \R \to \R$ be a continuous function satisfying $\int_\R |p(x)|\, \mathrm{d}x < \infty$. Then, for any continuous and bounded function $\varphi : A \times \R^2 \to \R$, the following convergences hold:
    \begin{align*}
        \lim_{k \to \infty}\lim_{n \to \infty} \esup_{x \in [-k,k]} \left| \int_{\R} \varphi\left( \alpha^n(x,y),\, x, y \right)\, p(y)\, \mathrm{d}y 
        - \sum_{i=1}^k \int_{A \times \R} \varphi(a^k_i, x, y)\, \Lambda^{i,k}_{x,y}(\mathrm{d}a)\, p(y)\, \mathrm{d}y \right| = 0,
    \end{align*}
    and similarly,
    \begin{align*}
        \lim_{k \to \infty}\lim_{n \to \infty} \esup_{y \in [-k,k]} \left| \int_{\R} \varphi\left( \alpha^n(x,y),\, x, y \right)\, p(x)\, \mathrm{d}x 
        - \sum_{i=1}^k \int_{A \times \R} \varphi(a^k_i, x, y)\, \Lambda^{i,k}_{x,y}(\mathrm{d}a)\, p(x)\, \mathrm{d}x \right| = 0.
    \end{align*}
\end{corollary}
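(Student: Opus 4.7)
The plan is to reduce the weighted statement on $\R$ to the compactly supported statement of \Cref{prop:appro_control}\,(ii) by splitting the domain of integration into the compact window $[-k,k]$ and its complement, and then controlling the tail uniformly using the $L^1$--integrability of $p$ together with the boundedness of $\varphi$. The first symmetric identity will follow by exchanging the roles of $x$ and $y$.

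First, for fixed $k\ge 1$, I would write $\int_{\R}=\int_{[-k,k]}+\int_{\R\setminus[-k,k]}$ in the first integral, and apply the identical splitting to the second (recalling that $\sum_{i=1}^{k}\Lambda^{i,k}_{x,y}=1$). The \emph{tail contribution} in both terms is bounded, uniformly in $x\in[-k,k]$ and in $n$, by
\[
    \|\varphi\|_{\infty}\int_{\R\setminus[-k,k]}|p(y)|\,\mathrm{d}y,
\]
which vanishes as $k\to\infty$ by the assumption $\int_{\R}|p(y)|\,\mathrm{d}y<\infty$.

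Second, to handle the \emph{bulk contribution} on $[-k,k]$, I would apply \Cref{prop:appro_control}\,(ii) with the test function $\widetilde{\varphi}_k(a,x,y):=\varphi(a,x,y)\,p(y)$. Since $p$ is continuous and hence bounded on $[-k,k]$, the map $\widetilde{\varphi}_k$ is continuous and bounded on $A\times[-k,k]^{2}$, so the hypothesis of the proposition is met for each fixed $k$. Applying it yields
\[
    \lim_{n\to\infty}\esup_{x\in[-k,k]}\left|\int_{[-k,k]}\varphi(\alpha^{n}(x,y),x,y)\,p(y)\,\mathrm{d}y
    -\sum_{i=1}^{k}\int_{A\times[-k,k]}\varphi(a_{i}^{k},x,y)\,\Lambda^{i,k}_{x,y}(\mathrm{d}a)\,p(y)\,\mathrm{d}y\right|=0.
\]
Combining this with the uniform tail bound, I take $n\to\infty$ first, then $k\to\infty$, to obtain the claimed convergence. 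The second statement is obtained by the same argument, with the roles of $x$ and $y$ interchanged, exploiting the fact that the same sequence $(\alpha^{n})_{n\ge1}$ is delivered by \Cref{prop:appro_control}\,(ii) for both symmetric convergences simultaneously.

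There is no genuine analytical obstacle here; the only point requiring mild care is the order of the two limits, since the proposition only provides the $n$--limit for a \emph{fixed} $k$, and the tail error depends on $k$ but not on $n$. Taking $\lim_n$ first neutralizes the bulk term for each $k$, and the outer $\lim_k$ then eliminates the tail, which matches the iterated limit $\lim_{k\to\infty}\lim_{n\to\infty}$ appearing in the statement.
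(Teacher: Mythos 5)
Your proposal is correct and follows essentially the same route as the paper: split the integral over $\R$ into the bulk on $[-k,k]$ and the tail, bound the tail uniformly in $x$ and $n$ using $\|\varphi\|_\infty$ and the integrability of $p$ (so it vanishes as $k\to\infty$), and dispose of the bulk term by invoking \Cref{prop:appro_control}\,(ii). Your explicit substitution $\widetilde\varphi_k(a,x,y)=\varphi(a,x,y)\,p(y)$ is in fact slightly more careful than the paper's terse ``apply Item (ii) on $[-k,k]$''; the only tiny point left implicit is that $\widetilde\varphi_k$ is only continuous and bounded on $A\times[-k,k]^2$ rather than on $A\times\R^2$ as the proposition literally demands, but the proposition's proof only uses $\varphi$ on that compact window, so this is harmless.
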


\begin{proof}
    Let us observe that
\begin{align*}
    &\int_{\R} \varphi\left( \alpha^n(x,y),\, x, y \right)\, p(y)\, \mathrm{d}y 
    - \sum_{i=1}^k \int_{A \times \R} \varphi(a^k_i, x, y)\, \Lambda^{i,k}_{x,y}(\mathrm{d}a)\, p(y)\, \mathrm{d}y 
    \\
    &= \int_{[-k,k]} \varphi\left( \alpha^n(x,y),\, x, y \right)\, p(y)\, \mathrm{d}y 
    - \sum_{i=1}^k \int_{A \times [-k,k]} \varphi(a^k_i, x, y)\, \Lambda^{i,k}_{x,y}(\mathrm{d}a)\, p(y)\, \mathrm{d}y + R^k_n(x),
\end{align*}
where $R^k_n(x)$ denotes the remainder term, and we have the estimate
\begin{align*}
    \sup_{x \in \R} |R^k_n(x)| \le 2 \sup_{(a,x,y)} |\varphi(a,x,y)| \int_{|y| \ge k} p(y)\, \mathrm{d}y \xrightarrow[k \to \infty]{} 0,
\end{align*}
using the integrability of $p$. The desired result then follows by applying {\rm \Cref{prop:appro_control}}, Item (ii), on the truncated domain $[-k,k]$.
\end{proof}

\subsection{Identification of the limit of a sequence of kernels I}

\medskip
On the fixed probability space $(\Om,\F,\P)$, we consider a sequence of random variables 
\[
    \big( \gamma^n_{ij} \big)_{1 \le i,j \le n} 
    \quad \text{and} \quad 
    \big( \beta^n_i \big)_{1 \le i \le n}
\]
such that, for each $i,j$, we have $\gamma^n_{ij} \in A_{\rm int}$ and $\beta^n_i \in \Vc$ almost surely, 
where $\Vc$ is a fixed Polish space.

\medskip
We introduce the sequence 
$$
    (\Pr^n)_{n \ge 1} \subset \Pc \left( \Pc \left( \left( A_{\rm int} \x \Er \x \Vc \x [0,1]^2 \right)^2 \right)\right)
$$ 
defined by
\begin{align*}
    \Pr^n:=\P \circ \left( \muh^n \right)^{-1}
    \quad \text{where} \quad
    \muh^n:=\sum_{i,j=1}^n 
    \delta_{\left( \gamma^n_{ij},\, \xi^n_{ij},\,\beta^n_i,\,u^n_i \right)} 1_{\{v \in I^n_i\}}  
    \delta_{\left(\gamma^n_{ji},\,\xi^n_{ji},\,\beta^n_j,\,u^n_j \right)} 1_{\{v' \in I^n_j\}}.
\end{align*}

\medskip
We denote by $\left( \gamma,\xi,\beta,U,V,\gammat,\xit,\betat,\Ut,\Vt \right)$ the canonical variables of $\Jc$ with $\Jc:=A_{\rm int} \x \Er \x \Vc \x [0,1]^2$.

\medskip
For each $n \ge 1$, let $\left( \xi^n_{ij} \right)_{1 \le i,j \le n}$ be a matrix with associated step--kernel $\Gr^n$.  
Assume that there exists a step--kernel $\Gr$ such that, for every Lipschitz function $f$,  
\[
    \lim_{n \to \infty} \| f \circ \Gr^n - f \circ \Gr \|_{\Box} = 0.
\]

\begin{proposition} \label{prop:limit_kernel_1}
    Let $\P \circ (\muh)^{-1}$ denote the weak limit of the sequence $(\Pr^n)_{n \ge 1}$, for some random variable $\muh$.  
    Then, $\P$--a.s., the following identities hold:
    \begin{align*}
        \muh \circ \left(\gamma, \xi, \beta, U, V, \betat, \Ut, \Vt \right)^{-1}
        &= \muh \circ \left(\gammat, \xit, \betat, \Ut, \Vt, \beta, U, V \right)^{-1}, \\
        \muh \circ (U,V)^{-1} &= \muh \circ (U,U)^{-1},
    \end{align*}
    \begin{align*}
        \muh \circ \left( \beta, U, V \right)^{-1} \otimes \muh \circ \left( \betat, \Ut, \Vt \right)^{-1}
        &= \muh \circ \left( \betat, \Ut, \Vt, \beta, U, V \right)^{-1},
    \end{align*}
    and
    \begin{align*}
        \muh \circ \!\left( \Gr(U,\Ut),\, \beta,\, U,\, \betat,\, \Ut \right)^{-1} 
        &= \muh \circ (\xi, \beta, U, \betat, \Ut)^{-1}, \\
        \muh \circ \!\left( \Gr(\Ut,U),\, \betat,\, \Ut,\, \beta,\, U \right)^{-1} 
        &= \muh \circ (\xit, \betat, \Ut, \beta, U)^{-1}.
    \end{align*}
\end{proposition}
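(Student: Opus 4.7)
The plan is to establish each of the four identities by first proving a finite--$n$ analogue at the level of the empirical measure $\muh^n$, and then passing to the weak limit $\muh^n \Rightarrow \muh$. Identities~(i), (ii) and (iii) admit exact finite--$n$ counterparts whose validity is preserved by weak convergence against continuous bounded test functions; identity~(iv) is the genuine difficulty and requires coupling weak convergence with the cut--norm convergence of $\Gr^n$ to $\Gr$, together with an $L^1$ approximation of $\Gr$ by continuous kernels.

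For identity~(i), the double sum $\sum_{i,j}$ defining $\muh^n$ is manifestly invariant under interchanging the tilded and untilded coordinates, so the identity already holds for every $n$ and passes to the weak limit when tested against bounded continuous $F$. For identity~(ii), a direct computation gives that the marginal of $\muh^n$ on $(U,V)$ equals the deterministic measure $\sum_{i=1}^n \delta_{u^n_i}(\mathrm{d}u)\mathbf{1}_{I^n_i}(v)\,\mathrm{d}v$, whose weak limit is the diagonal measure $\int_0^1 \delta_{(u,u)}\,\mathrm{d}u$; the marginal on $(U,U)$, namely $\sum_i \tfrac{1}{n}\delta_{(u^n_i,u^n_i)}$, shares the same deterministic limit, so both marginals of $\muh$ must agree $\P$--a.s. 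For identity~(iii), the product structure of $\sum_{i,j}$ decouples $(\beta^n_i,u^n_i)$ from $(\beta^n_j,u^n_j)$ after marginalizing out the doubly indexed data $\gamma, \xi$, yielding $\langle\phi\psi,\muh^n\rangle=\langle\phi,\muh^n\rangle\langle\psi,\muh^n\rangle$ exactly; passing to the weak limit delivers the factorization under $\muh$.

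The heart of the argument is identity~(iv). The decisive observation is that $\xi=\Gr^n(U,\Ut)$ holds \emph{exactly} under $\muh^n$, since $\Gr^n(u^n_i,u^n_j)=\xi^n_{ij}$ by construction of the step kernel. It suffices to test against tensor--product functions $F(e,w,u,w',u')=f(e)g(w,u)h(w',u')$ with $f$ Lipschitz and $g,h$ continuous bounded (general continuous bounded $F$ follows from a Stone--Weierstrass density argument on the compact target space), together with a bounded continuous $\Phi$ on measures. Starting from the exact identity
\[
\E\bigl[\Phi(\muh^n)\langle F(\xi,\cdot),\muh^n\rangle\bigr]=\E\bigl[\Phi(\muh^n)\langle F(\Gr^n(U,\Ut),\cdot),\muh^n\rangle\bigr],
\]
one rewrites the right--hand side as the expectation of $\Phi(\muh^n)\int_{[0,1]^2}f(\Gr^n(v,v'))A_n(v)B_n(v')\,\mathrm{d}v\,\mathrm{d}v'$ with $A_n(v):=g(\beta^n_{i(v)},u^n_{i(v)})$ and $B_n(v'):=h(\beta^n_{j(v')},u^n_{j(v')})$, both bounded uniformly in $n$ by $\|g\|_\infty$ and $\|h\|_\infty$. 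I would then approximate $\Gr$ in $L^1([0,1]^2)$ by a sequence of continuous kernels $\Gr^{(\eps)}$ and split the difference to the target $\E[\Phi(\muh)\langle F(\Gr(U,\Ut),\cdot),\muh\rangle]$ into three pieces: (A)~the swap $\Gr^n\mapsto\Gr^{(\eps)}$ under $\muh^n$, controlled by $\|f\circ\Gr^n-f\circ\Gr^{(\eps)}\|_\Box$ (via the simple--tensor characterisation of the cut--norm) together with a uniform--continuity estimate that replaces $\Gr^{(\eps)}(u^n_{i(v)},u^n_{j(v')})$ by $\Gr^{(\eps)}(v,v')$ with a vanishing error; (B)~a weak--convergence step $\muh^n\Rightarrow\muh$ applied to the now--continuous test $F(\Gr^{(\eps)}(U,\Ut),\cdot)$; and (C)~the swap $\Gr^{(\eps)}\mapsto\Gr$ under $\muh$, which, thanks to identities~(ii) and (iii) (that force the $(U,\Ut)$--marginal of $\muh$ to be Lebesgue on $[0,1]^2$), reduces to the $L^1$ norm of $f\circ\Gr^{(\eps)}-f\circ\Gr$ and vanishes as $\eps\to 0$. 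Letting $n\to\infty$ first and $\eps\to 0$ afterwards yields identity~(iv); the symmetric identity involving $\Gr(\Ut,U)$ then follows by the same argument combined with identity~(i).

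The principal obstacle is the lack of any continuity assumption on $\Gr$: the composition $F(\Gr(U,\Ut),\cdot)$ is not a continuous functional on the canonical space, so the weak convergence $\muh^n\Rightarrow\muh$ alone cannot transport $\langle F(\Gr^n(U,\Ut),\cdot),\muh^n\rangle$ into $\langle F(\Gr(U,\Ut),\cdot),\muh\rangle$. The cut--norm convergence~\eqref{eq:cond_kernel} is tailor--made to control precisely the simple--tensor integrals $\int A_n(v)B_n(v')(f\circ\Gr^n-f\circ\Gr^{(\eps)})(v,v')\,\mathrm{d}v\,\mathrm{d}v'$ that arise from the step--function structure of $\muh^n$, and coupled with the Lebesgue marginal of $(U,\Ut)$ under $\muh$ (already secured through identities~(ii) and (iii)) it bridges the continuity gap and delivers the identification.
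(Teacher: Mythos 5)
Your proposal is correct and the overall architecture (finite--$n$ identity plus passage to the limit) matches the paper, but the treatment of the hard identity (iv) takes a genuinely different route. The paper first replaces $\Gr^n$ by $\Gr$ at finite $n$ using the cut--norm bound applied to the simple--tensor integral $\int f\circ\Gr^n(v,v')\ell^n(v)\bar\ell^n(v')\,\mathrm{d}v\,\mathrm{d}v'$, and then passes $n\to\infty$ in one step by invoking \emph{stable convergence} \`a la \cite{jacod1981type}: since the $(v,v')$--marginal of $\muh^n$ is Lebesgue for every $n$, one may test the weak convergence of $(\Pr^n)_{n\ge1}$ against functionals $h$ that are merely measurable in $(v,v')$ and continuous in the remaining coordinates, which is exactly what the function $f\circ\Gr(v,v')\,g(\beta,U)\,h(\betat,\Ut)$ requires. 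You instead interpolate a continuous approximation $\Gr^{(\eps)}$ between $\Gr^n$ and $\Gr$ and pass to the limit in the order $n\to\infty$, then $\eps\to 0$: the $n$--limit is handled by ordinary weak convergence (the test function with $\Gr^{(\eps)}$ is now continuous), and the $\eps$--limit is controlled by the Lebesgue $(U,\Ut)$--marginal of $\muh$ which you correctly extract from identities (ii)--(iii). Your route is more elementary --- it avoids the stable--convergence machinery entirely --- at the cost of an extra approximation layer and an interchange--of--limits argument; the paper's route is shorter once one accepts the stable--convergence result. Both use the cut--norm assumption in the identical way, namely to bound the difference between $\Gr^n$ and its target across simple--tensor test functions.

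One wrinkle in your step (C) is worth flagging: you propose to ``approximate $\Gr$ in $L^1([0,1]^2)$ by a sequence of continuous kernels $\Gr^{(\eps)}$'', but $\Gr$ takes values in the abstract compact metric space $\Er$, and a Lusin/Tietze argument producing a \emph{continuous $\Er$--valued} approximant requires $\Er$ to be an absolute retract (or similar), which is not assumed. The cleaner fix, which does not change the structure of your argument, is to approximate the \emph{real--valued} composition $f\circ\Gr:[0,1]^2\to\R$ directly by continuous functions $\varphi_\eps$ in $L^1([0,1]^2)$ (standard, e.g.\ by mollification), and run steps (A)--(C) with $\varphi_\eps(v,v')$ in place of $f(\Gr^{(\eps)}(v,v'))$; the continuity needed for the weak--convergence step (B) then concerns $\varphi_\eps$ rather than an $\Er$--valued kernel, and everything else goes through verbatim. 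A second, purely cosmetic, remark: you could skip the uniform--continuity replacement of $\Gr^{(\eps)}(u^n_{i(v)},u^n_{j(v')})$ by $\Gr^{(\eps)}(v,v')$ altogether by phrasing step (B) in terms of the continuous test $\varphi_\eps(V,\Vt)\,g(\beta,U)\,h(\betat,\Ut)$ and using identity (ii) at the very end to convert $(V,\Vt)$ into $(U,\Ut)$ under $\muh$, which is also how the paper concludes.
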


\begin{remark}
The proposition shows that any weak limit point of the sequence $(\Pr^n)_{n \ge 1}$ inherits the natural exchangeability and structural symmetries of the particle system. 
Moreover, the consistency relations involving the step--kernel $\Gr$ ensure that the interaction structure encoded in the matrices $(\xi^n_{ij})_{1 \le i,j \le n}$ is preserved in the limit. 
In other words, $\muh$ can be interpreted as a limit law describing a continuum system whose pairwise interactions are governed by the kernel $\Gr$.
\end{remark}

\begin{proof}
We divide the proof into three steps, corresponding to the equality in distribution, the diagonal marginal identity, and the consistency with the limiting interaction kernel~$\Gr$.

\medskip
\emph{Step~1: Equality in distribution.}
Let $f$ and $\Phi$ be smooth test functions. Then, by definition of~$\muh^n$,
\begin{align*}
    &\E \Big[ \E^{\hat \mu} \Big[ f (\gamma, \xi, \beta, U, V, \betat, \Ut, \Vt) \Big] \, \Phi(\muh) \Big]
    \\
    &= \lim_{n \to \infty} \sum_{i,j=1}^n 
       \E \bigg[ \int_{I^n_i \times I^n_j} 
           f\!\left( \gamma^n_{ij}, \xi^n_{ij}, \beta^n_i, u^n_i, v, \beta^n_j, u^n_j, v' \right)
           \,\mathrm{d}v\,\mathrm{d}v' \, \Phi(\muh^n) \bigg] 
    \\
    &= \E \Big[ \E^{\hat \mu} \Big[ f (\gammat, \xit, \betat, \Ut, \Vt, \beta, U, V) \Big] \, \Phi(\muh) \Big].
\end{align*}
Since this equality holds for all smooth pairs~$(f,\Phi)$, we obtain, $\P$--a.s.,
\[
    \muh \circ (\gamma, \xi, \beta, U, V, \betat, \Ut, \Vt)^{-1}
    = \muh \circ (\gammat, \xit, \betat, \Ut, \Vt, \beta, U, V)^{-1},
\]
which expresses the equality in distribution involved in the limit measure~$\muh$.

\medskip
\emph{Step~2: Diagonal marginal identity.}
For any smooth maps $f$ and~$\Phi$, we have
\begin{align*}
    \E \big[ \E^{\hat \mu} [ f(U,V) ] \, \Phi(\muh) \big]
    &= \lim_{n \to \infty} \sum_{i=1}^n 
       \E \bigg[ \int_{I^n_i} f(u^n_i, v)\, \mathrm{d}v \, \Phi(\muh^n) \bigg] 
    \\
    &= \lim_{n \to \infty} \frac{1}{n} \sum_{i=1}^n 
       \E \big[ f(u^n_i, u^n_i)\, \Phi(\muh^n) \big]
    = \E \big[ \E^{\hat \mu} [ f(U,U) ] \, \Phi(\muh) \big].
\end{align*}
By the arbitrariness of $(f,\Phi)$, it follows that, $\P$--a.s.,
\[
    \hat \mu \circ (U,V)^{-1} = \hat \mu \circ (U,U)^{-1}.
\]
This shows that the law of $(U,V)$ under~$\muh$ coincides with that of $(U,U)$, i.e., $V$ and~$U$ coincide $\muh$--a.s.\ at the limit.  
The independence property mentioned in the statement follows by a parallel argument.

\medskip
\emph{Step~3: Consistency with the kernel $\Gr$.}
By assumption on the matrices $(\xi^n_{ij})_{1 \le i,j \le n}$, we have for any $f \in C_c^\infty(\R;\R)$,
\begin{align*}
    \| f \circ \Gr^n - f \circ \Gr \|_{\Box} 
    &:= \sup_{|\varphi| \le 1} \sum_{i=1}^n \int_{I^n_i} 
        \left| 
            \sum_{j=1}^n f(\xi^n_{ij}) \int_{I^n_j} \varphi(v)\,\mathrm{d}v 
            - \int_0^1 f \circ \Gr(u,v)\, \varphi(v)\, \mathrm{d}v 
        \right| \mathrm{d}u 
    \\
    &= \sup_{|\varphi| \le 1} \int_{0}^1 
        \left| 
            \int_{0}^1 f \circ \Gr^n(u,v)\, \varphi(v)\,\mathrm{d}v 
            - \int_0^1 f \circ \Gr(u,v)\, \varphi(v)\,\mathrm{d}v 
        \right| \mathrm{d}u
    \xrightarrow[n \to \infty]{} 0.
\end{align*}
This convergence ensures that the step--kernels $\Gr^n$ approximate $\Gr$ in the cut (or box) norm.

\medskip
For any family $(\ell_i, \overline{\ell}_i)_{1 \le i \le n} \subset \R \times \R$, define the step functions
\[
    \ell^n(u) := \sum_{i=1}^n \ell_i\, \mathbf{1}_{I^n_i}(u), 
    \qquad 
    \overline{\ell^n}(u) := \sum_{i=1}^n \overline{\ell}_i\, \mathbf{1}_{I^n_i}(u).
\]
A direct computation gives
\begin{align*}
    &\frac{1}{n^2} \sum_{i,j=1}^n f(\xi^n_{ij})\,\ell_i\, \overline{\ell}_j 
    - \int_{[0,1]^2} f \circ \Gr(u,v)\, \ell^n(u)\, \overline{\ell^n}(v)\,\mathrm{d}u\,\mathrm{d}v
    \\
    &= \int_{0}^1 
        \Big( 
            \int_0^1 \!\big[f \circ \Gr^n(u,v) - f \circ \Gr(u,v)\big]
            \overline{\ell^n}(v)\,\mathrm{d}v 
        \Big) \ell^n(u)\, \mathrm{d}u,
\end{align*}
and hence, by boundedness of $(\ell_i,\overline{\ell}_i)$,
\[
    \left| 
        \frac{1}{n^2} \sum_{i,j=1}^n f(\xi^n_{ij})\,\ell_i\, \overline{\ell}_j 
        - \int_{[0,1]^2} f \circ \Gr(u,v)\, \ell^n(u)\, \overline{\ell^n}(v)\,\mathrm{d}u\,\mathrm{d}v 
    \right| 
    \le \Big( \sup_{1 \le i \le n} (|\ell_i| + |\overline{\ell}_i|) \Big) 
        \| f \circ \Gr^n - f \circ \Gr \|_{\Box}.
\]

\medskip
Observe further that for all $n \ge 1$,
\begin{align*}
    &\muh^n \!\left( A_{\mathrm{int}} \times \Er \times \Vc \times [0,1] \times \mathrm{d}v \times 
    A_{\mathrm{int}} \times \Er \times \Vc \times [0,1] \times \mathrm{d}v' \right) 
    = \sum_{i,j=1}^n 1_{\{v \in I^n_i\}}\,\mathrm{d}v \; 1_{\{v' \in I^n_j\}}\,\mathrm{d}v' 
    = \mathrm{d}v\,\mathrm{d}v'.
\end{align*}
Hence, these marginals of $\muh^n$ are independent of $n$.  
By the stable convergence result of \cite{jacod1981type}, it follows that weak convergence of $(\Pr^n)_{n \ge 1}$ can be tested against functionals of the form
\[
    \E^{\hat \mu} \!\left[ 
        h\!\left( \gamma, \xi, \beta, U, V, \gammat, \xit, \betat, \Ut, \Vt \right) 
    \right]
\]
where 
\[
    h : (g,e,b,u,v,g',e',b',u',v') \mapsto h(g,e,b,u,v,g',e',b',u',v')
\]
is bounded, continuous in $(g,e,b,u,g',e',b',u')$, and measurable in $(v,v')$.

\medskip
Finally, taking smooth maps $(f,g,h,\Phi)$ and using the previous convergence properties, we obtain
\begin{align*}
    &\E \left[ \E^{\hat \mu} \left[ f(\xi)\, g(\beta,U)\, h(\betat,\Ut) \right] \Phi(\muh) \right] 
    = \lim_{n \to \infty} \frac{1}{n^2} \sum_{i,j=1}^n 
        \E \left[ f(\xi^n_{ij})\, g(\beta^n_i,u^n_i)\, h(\beta^n_j,u^n_j)\, \Phi(\muh^n) \right]
    \\
    &= \lim_{n \to \infty}  
        \E \left[ \int_{[0,1]^2} f \circ \Gr^n(v,v')\, \ell^n(v)\, \overline{\ell^n}(v')\, 
        \mathrm{d}v\,\mathrm{d}v'\, \Phi(\muh^n) \right]
    \\
    &= \lim_{n \to \infty}  
        \E \left[ \int_{[0,1]^2} f \circ \Gr(v,v')\, \ell^n(v)\, \overline{\ell^n}(v')\, 
        \mathrm{d}v\,\mathrm{d}v'\, \Phi(\muh^n) \right]
    = \E \left[ 
        \E^{\hat \mu} \left[ f \circ \Gr(V,\Vt)\, g(\beta,U)\, h(\betat,\Ut) \right] 
        \Phi(\muh)
    \right].
\end{align*}
Since this equality holds for all smooth $(f,g,h,\Phi)$, we conclude that, $\P$--a.s.,
\[
    \muh \circ \left( \xi, \beta, U, \betat, \Ut \right)^{-1}
    = \muh \circ \left( \Gr(V,\Vt), \beta, U, \betat, \Ut \right)^{-1}.
\]
The other identities follow by symmetry and the same reasoning, completing the proof.
\end{proof}

\subsection{Identification of the limit of a sequence of kernels II}

\medskip
As in the previous section, for each $n \ge 1$, we consider $\left( \xi^n_{ij} \right)_{1 \le i,j \le n}$ a matrix with associated step--kernel $\Gr^n$.  
We assume that there exists a step--kernel $\Gr$ such that, for every Lipschitz function $f$,  
\[
    \lim_{n \to \infty} \| f \circ \Gr^n - f \circ \Gr \|_{\Box} = 0.
\]

\medskip
Let us introduce the sequence \((\Pr^n)_{n \ge 1}\) defined by
\[
    \Pr^n := \Lc(\Nb^n,\,N^n),
    \qquad
    \Nb^n := \sum_{i=1}^n \delta_{( X^i,\,u^i_n)}\, 1_{\{v \in I^n_i\}}\, (\mathrm{d}x,\mathrm{d}u)\,\mathrm{d}v \; \delta_{ M^{i,n}}(\mathrm{d}m),
\]
where $(X^i)_{i \ge 1}$ is a sequence of $C([0,T];\R^d)$, and for each \(t \in [0,T]\),
\[
    M^{i,n} := \sum_{j=1}^n \delta_{( X^j,\,u^j_n)}\, 1_{\{v \in I^n_j\}}\, (\mathrm{d}x,\mathrm{d}u)\,\mathrm{d}v \; \delta_{ \xi^n_{ij}}(\mathrm{d}e),
    \quad
    N^n := \sum_{j=1}^n \delta_{( X^j,\,u^j_n)}\, 1_{\{v \in I^n_j\}}\, (\mathrm{d}x,\mathrm{d}u)\,\mathrm{d}v,
    \qquad
    u^i_n := \frac{i}{n},\; 1 \le i \le n.
\]

Let \(\P \circ (\Nb,\,N)^{-1}\) denote the weak limit of the sequence \((\Pr^n)_{n \ge 1}\).  
%For notational simplicity, we use the same symbol for both the sequence and its subsequence.  
We denote by \((\Xb,\Ub,\Vb,\Mb)\) the canonical variables on
\[
     C([0,T];\R^d)  \times [0,1]^2 \times \Pc(\Delta), 
    \qquad \text{with } \Delta := C([0,T];\R^d) \times [0,1]^2 \times  \Er,
\]
and by \((\Xh,\Uh,\Vh,\Eh)\) the canonical variables on \(\Delta\). 
We can observe that the canonical variable \(\Nb\) takes values in \(\Pc\!\left( C([0,T];\R^d)  \times [0,1]^2 \times \Pc(\Delta) \right)\), 
while \(N\) takes values in \(\Pc\!\left( C([0,T];\R^d)  \times [0,1]^2 \right)\).

\begin{proposition} \label{prop:limit_kernel}
For \(\P\)--a.e.\ \(\om \in \Om\), the following hold:
\begin{align*}
    m(  \mathrm{d}x', \mathrm{d}u',\mathrm{d}v', \Er)
    &= \Nb(\om)(\mathrm{d}x', \mathrm{d}u',\mathrm{d}v', \Pc(\Delta))= N(\om)(\mathrm{d}x', \mathrm{d}u',\mathrm{d}v'), 
    \qquad \Nb(\om)\text{--a.e. }(x,u,v,m),\\[2mm]
    m  \circ \big(  \Xh,\,\Uh,\, \Vh,\, \Eh \big)^{-1}
    &= m \circ \big(  \Xh,\,\Uh,\, \Uh,\, \Gr(u,\Uh) \big)^{-1}, 
    \quad \Nb(\om)\text{--a.e. }(x,u,v,m)
\end{align*}
\end{proposition}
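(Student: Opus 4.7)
\textbf{Proof proposal for Proposition \ref{prop:limit_kernel}.}

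My plan is to mirror the strategy used for \Cref{prop:limit_kernel_1}: test each identity against a rich enough family of bounded test functionals, do the computation at the finite-$n$ level where everything is explicit in terms of the empirical sums defining $\Nb^n$ and $N^n$, and then pass to the limit using weak (actually stable) convergence together with the cut-norm assumption on the kernels $\Gr^n$.

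For the first identity, the key observation is that, at every finite level $n$,
\[
    M^{i,n}(\mathrm{d}x,\mathrm{d}u,\mathrm{d}v,\Er)
    \;=\; \sum_{j=1}^n \delta_{(X^j,u^j_n)}(\mathrm{d}x,\mathrm{d}u)\,\mathbf{1}_{I^n_j}(v)\,\mathrm{d}v
    \;=\; N^n(\mathrm{d}x,\mathrm{d}u,\mathrm{d}v),
\]
with equality (not only in distribution). Thus, for any bounded continuous $\phi$, $f$, $\Phi$, the quantity
\[
    \E\!\left[\,\int f(x,u,v)\,\langle \phi, m(\cdot,\Er)\rangle\,\Nb^n(\mathrm{d}x,\mathrm{d}u,\mathrm{d}v,\mathrm{d}m)\,\Phi(\Nb^n,N^n)\right]
\]
coincides exactly, with no error, with
$\E\!\left[\int f(x,u,v)\,\langle\phi,N^n\rangle\,\Nb^n(\cdots)\,\Phi(\Nb^n,N^n)\right]$. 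Passing to the limit and using that $f,\phi,\Phi$ are arbitrary yields, $\P$-a.e., the required identity $m(\mathrm{d}x,\mathrm{d}u,\mathrm{d}v,\Er)=\Nb(\mathrm{d}x,\mathrm{d}u,\mathrm{d}v,\Pc(\Delta))=N(\mathrm{d}x,\mathrm{d}u,\mathrm{d}v)$ for $\Nb$-a.e.\ tuple.

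For the second identity, I will compute, for smooth bounded $f$, $g$, $\Phi$, the limit of
\[
    \E\!\left[\,\int\!\!\int f(\hat e,\hat x,\hat u,\hat v)\,m(\mathrm{d}\hat x,\mathrm{d}\hat u,\mathrm{d}\hat v,\mathrm{d}\hat e)\,g(x,u,v)\,\Nb^n(\mathrm{d}x,\mathrm{d}u,\mathrm{d}v,\mathrm{d}m)\,\Phi(\Nb^n,N^n)\right].
\]
Unpacking the definitions this equals
\[
    \E\!\left[\frac{1}{n^2}\sum_{i,j=1}^n \int_{I^n_i\times I^n_j} f(\xi^n_{ij},X^j,u^j_n,\hat v)\,g(X^i,u^i_n,v)\,\mathrm{d}v\,\mathrm{d}\hat v\;\Phi(\Nb^n,N^n)\right],
\]
and introducing the step-functions $\ell^n_\omega(v):=\sum_i g(X^i(\omega),u^i_n,v)\mathbf{1}_{I^n_i}(v)$ and an analogous $\overline{\ell^n}$ built from $f$, the double sum becomes an integral against $f\circ \Gr^n$. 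The cut-norm convergence $\|f\circ\Gr^n-f\circ\Gr\|_\Box \to 0$, applied exactly as in the last step of the proof of \Cref{prop:limit_kernel_1}, replaces $\Gr^n$ by $\Gr$ at the cost of an error uniform in $\omega$. Combining this with the diagonal identification $V\stackrel{d}{=}U$ under $\Nb$ (which also follows verbatim from the argument of Step~2 of the previous proof, using that $\mathbf{1}_{I^n_i}(v)\mathrm{d}v$ has total mass $1/n$ concentrated near $u^i_n$) gives the identity $m\circ(\Xh,\Uh,\Vh,\Eh)^{-1}=m\circ(\Xh,\Uh,\Uh,\Gr(u,\Uh))^{-1}$ for $\Nb$-a.e.\ $(x,u,v,m)$.

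The main technical subtlety, just as in \Cref{prop:limit_kernel_1}, is that the limiting kernel $\Gr$ is only assumed measurable, so I cannot test ordinary weak convergence against the discontinuous map $(u,u')\mapsto \Gr(u,u')$. This is resolved by invoking the stable-convergence result of \cite{jacod1981type}: the $v$- and $\hat v$-marginals of $\Nb^n$ are deterministically Lebesgue measure on $[0,1]$ for every $n$, so weak convergence upgrades to stability in these coordinates, and test functions that are merely Borel in $(v,\hat v)$ (but continuous in the remaining variables) are admissible. This is exactly what is needed to pass to the limit in the integrand $f\circ \Gr(\cdot,\cdot)$. The rest is a routine density argument in the test functions, which yields both displayed identities of the proposition.
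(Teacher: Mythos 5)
Your proposal follows essentially the same route as the paper's own proof: establish the marginal identity by observing that $M^{i,n}(\cdot,\Er)=N^n(\cdot)$ holds exactly at each finite level; establish the diagonal identity $\Nb\circ(\Ub,\Vb)^{-1}=\Nb\circ(\Ub,\Ub)^{-1}$ from the fact that the mass of $\mathbf{1}_{I^n_i}(v)\mathrm{d}v$ concentrates near $u^i_n$; reduce the kernel identity to empirical double sums, reorganize them as integrals of step functions against $f\circ\Gr^n$, and pass to the limit using cut--norm convergence plus the stable--convergence lemma of Jacod--Mémin to accommodate the merely Borel kernel $\Gr$ in the $v,\hat v$ coordinates. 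The only substantive presentational difference is that the paper passes to the limit in the expectation of an absolute difference, which directly yields an $\Nb(\om)$--a.e.\ identity, whereas you test against signed functionals and close with a density argument; the two are equivalent.

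One small slip worth fixing: in your expansion of the second expectation you wrote a prefactor $\tfrac{1}{n^2}$ in front of $\sum_{i,j=1}^n \int_{I^n_i\times I^n_j}$, but the Lebesgue integrals over $I^n_i\times I^n_j$ already carry the $1/n^2$ normalization (each interval has length $1/n$), so the extra prefactor double--counts. The correct identity, matching the definition of $\Nb^n$ as an integral of Dirac masses against $\mathbf{1}_{I^n_i}(v)\mathrm{d}v$, is simply $\sum_{i,j}\int_{I^n_i\times I^n_j}\cdots$; equivalently one can write $\tfrac{1}{n^2}\sum_{i,j}$ with the integrands evaluated at $(u^i_n,u^j_n)$ (which is how the paper organizes the terms before invoking continuity of the test functions to move back to integrals). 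Also, "$V\stackrel{d}{=}U$ under $\Nb$" should be read as the joint--law statement $\Nb\circ(\Ub,\Vb)^{-1}=\Nb\circ(\Ub,\Ub)^{-1}$, i.e.\ $\Vb=\Ub$ $\Nb$--a.s., which is stronger than equality of the one--dimensional marginals and is precisely what you need to replace $\Gr(v,\cdot)$ by $\Gr(u,\cdot)$.
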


\begin{remark}
Intuitively, this proposition characterizes the structure of \(\Mb\) as a conditional law: the first identity shows that the marginal distribution of \((\Xh,\Uh)\) under \(\Mb\) matches \(\Nb\) and $N$, while the second identity encodes the interaction pattern given by the graph kernel \(\Gr\). In other words, \(\Mb\) captures both the distribution of individual particles and the way they interact according to the underlying graph, providing a bridge between the empirical particle system and its limiting measure--valued description.
\end{remark}

\begin{proof}
We split the proof into two parts: (a) identification of the $(x,u,v)$--marginals, and (b) identification of the conditional interaction structure encoded by $\Gr$.

\medskip
\emph{(a) Identification of the marginals.}
By construction, for each $i$ and $n$,
\begin{align*}
    M^{i,n}( \mathrm{d}x,  \mathrm{d}u, \mathrm{d}v, \Er)
    = \Nb^n(\mathrm{d}x,  \mathrm{d}u, \mathrm{d}v, \Pc(\Delta))
    = N^n(\mathrm{d}x,  \mathrm{d}u, \mathrm{d}v),
    \qquad \text{a.e.}
\end{align*}
Passing to the limit along the sequence defining $\Pr^n \Rightarrow \P\circ(\Nb,N)^{-1}$ and arguing as in the proof of \Cref{prop:limit_kernel_1} (testing against bounded continuous functions of $(x,u,v)$), we obtain that, for $\P$--a.e.\ $\om$,
\[
    \Mb( \mathrm{d}x,  \mathrm{d}u, \mathrm{d}v, \Er) 
    = \Nb(\om)(\mathrm{d}x,  \mathrm{d}u, \mathrm{d}v, \Pc(\Delta))
    = N(\om)(\mathrm{d}x,  \mathrm{d}u, \mathrm{d}v),
    \qquad \Nb(\om)\text{--a.e.}
\]
Here $(\Xb,\Ub,\Vb,\Mb)$ is the canonical variable on $C([0,T];\R^d)\times[0,1]^2\times\Pc(\Delta)$ and $\Nb(\om)$ is a probability on the same space.

\medskip
Moreover, by weak convergence and the structure of the subdivision $(I^n_j)_{1\le j\le n}$ exactly as in \Cref{prop:limit_kernel_1}, for any smooth $(f,\Phi)$ we have
\begin{align*}
    \E \Big[ \E^{\Nb} \big[ f(\Ub,\Vb) \big] \,\Phi(\Nb) \Big]
    &= \lim_{n \to \infty} \sum_{i=1}^n \E \!\left[ \int_{I^n_i} f(u^n_i,v)\,\mathrm{d}v \, \Phi(\Nb^n) \right] \\
    &= \lim_{n \to \infty} \frac{1}{n}\sum_{i=1}^n \E \!\left[ f(u^n_i,u^n_i)\, \Phi(\Nb^n) \right] = \E \Big[ \E^{\Nb} \big[ f(\Ub,\Ub) \big] \,\Phi(\Nb) \Big],
\end{align*}
where we used the identity $\int_{I^n_i}\mathrm{d}v = 1/n$ and boundedness of $f$ to pass to the limit.
Since $(f,\Phi)$ are arbitrary, it follows that
\[
    \P\text{--a.e.} \qquad \Nb \circ (\Ub,\Vb)^{-1} \;=\; \Nb \circ (\Ub,\Ub)^{-1}.
\]

\medskip
\emph{(b) Identification of the interaction kernel.}
By assumption on $(\xi^n_{ij})_{1\le i,j\le n}$, for any $f\in C_c^\infty(\R;\R)$,
\begin{align*}
    \| f \circ \Gr^n - f \circ \Gr \|_{\Box} 
    &:= \sup_{|\varphi| \le 1} \sum_{i=1}^n \int_{I^n_i} \left| \sum_{j=1}^n f(\xi^n_{i,j}) \int_{I^n_j} \varphi(v)\,\mathrm{d}v - \int_0^1 f \circ \Gr(u,v)\, \varphi(v)\, \mathrm{d}v \right| \mathrm{d}u 
    \\
    &= \sup_{|\varphi| \le 1} \int_{0}^1 \left| \int_{0}^1 f \circ \Gr^n(u,v)  \varphi(v)\,\mathrm{d}v - \int_0^1 f \circ \Gr(u,v)\, \varphi(v)\, \mathrm{d}v \right| \mathrm{d}u
    \xrightarrow[n \to \infty]{} 0.
\end{align*}
This is the cut--norm convergence needed to pass from discrete interactions to the continuous kernel $\Gr$.

\medskip
Let $h$ and $\varphi$ be smooth maps, and set
\begin{align*}
    \overline{L}\bigl( \Mb \bigr)
    :=
    \E^{\Mb} \big[ h(\Eh)\, \varphi(\Uh,\Xh) \big]
    \quad \mbox{and}\quad 
    L\bigl(\Mb, v \bigr) :=\E^{\Mb} \big[ h \circ \Gr(v,\Vh)\, \varphi(\Vh,\Xh) \big].
\end{align*}
Then, by the same stable convergence argument used in \Cref{prop:limit_kernel_1}, we obtain
\begin{align*}
    &\E \Big[ \E^{\Nb} \Big[ \Big| \overline{L}\bigl( \Mb \bigr) 
    -L\bigl(\Mb, \Vb \bigr) \Big| \Big] \Big] \\
    &= \lim_{n \to \infty} \E \left[ \E^{\Nb^n} \Big[ \Big| \overline{L}\bigl( \Mb \bigr) 
    -L\bigl(\Mb, \Vb \bigr) \Big| \Big] \right] \\
    &= \lim_{n \to \infty} \sum_{i=1}^n \E \left[ \int_{I^n_i} 
    \Big| \frac{1}{n} \sum_{j=1}^n h(\xi^n_{ij}) \varphi(u^j_n,X^j)
    - \sum_{j=1}^n \int_{I^n_j} h \circ \Gr(v,v') \varphi(v',X^j)\, \mathrm{d}v' \Big|\, \mathrm{d}v \right] \\
    &= \lim_{n \to \infty} \sum_{i=1}^n \E \left[ \int_{I^n_i} 
    \Big| \sum_{j=1}^n \int_{I^n_j} h \circ \Gr^n(v,u)\, \varphi(u^j_n,X^j)\, \mathrm{d}u 
    - \sum_{j=1}^n \int_{I^n_j} h \circ \Gr(v,v')\, \varphi(v',X^j)\, \mathrm{d}v' \Big|\, \mathrm{d}v \right] \\
    &= \lim_{n \to \infty} \sum_{i=1}^n \E \left[ \int_{I^n_i} 
    \Big| \sum_{j=1}^n \int_{I^n_j} h \circ \Gr(v,u)\, \varphi(u,X^j)\, \mathrm{d}u 
    - \sum_{j=1}^n \int_{I^n_j} h \circ \Gr(v,v')\, \varphi(v',X^j)\, \mathrm{d}v' \Big|\, \mathrm{d}v \right] = 0,
\end{align*}
where the last equality uses $\|h\circ\Gr^n - h\circ\Gr\|_{\Box}\to 0$ and boundedness of $\varphi$.

\medskip
Consequently, for $\P$--a.e.\ $\om \in \Om$,
\[
    \overline{L}\bigl( \Mb \bigr) 
    = L\bigl(\Mb, \Vb \bigr), 
    \quad \Nb(\om)\text{--a.e.}
    \qquad \text{equivalently, } \ 
    \overline{L}(m) = L(m,v) \ \ \text{for } \Nb(\om)\text{--a.e. } (x,u,v,m).
\]
Since this holds for all smooth $h$ and $\varphi$, we conclude that, for $\P$--a.e.\ $\om \in \Om$, and for $\Nb(\om)$--a.e.\ $(x,u,v,m)$,
\[
    m \circ \big( \Uh,\, \Vh,\, \Xh,\, \Eh \big)^{-1} 
    = m \circ \big( \Uh,\, \Uh,\, \Xh,\, \Gr(u,\Uh) \big)^{-1},
\]
which is exactly the second identity in the statement.
\end{proof}

\end{appendix}

\bibliographystyle{plain}
%\bibliography{McKean-Vlasov_bib}
%\bibliography{bibliographyFabrice_01-2024}

%\bibliography{PA-McKeanVlasov_arxiv_version}

\bibliography{BSDE_McKVlasov_arxivVersion}

%\bibliography{bibliographyFabrice_24-08-22}

%\bibliography{bibliographyFabrice_24-08-22_bis}

%bibliographyFabrice_24-08-22_bis

\end{document}